\documentclass[reqno]{amsart}
\usepackage[T1]{fontenc}
\usepackage{amssymb}
\usepackage[textwidth=15.5cm,hcentering,top=1in,bottom=1in]{geometry}
\usepackage{enumitem}
\usepackage{physics}
\usepackage[foot]{amsaddr}
\usepackage{tikz}
\usepackage{mathtools}
\usepackage{subcaption}
\usepackage[maxbibnames=99]{biblatex}
\theoremstyle{remark}
\newtheorem*{remark}{Remark}
\newenvironment{mycases}
   {\begin{dcases*}}
   {\end{dcases*}}
\theoremstyle{plain}
\newcommand{\supp}{\mathrm{supp}\,}
\newcommand{\fin}{f_{\mathrm{in}}}
\newcommand{\fVPin}{f^{\mathrm{VP}}_{\mathrm{in}}}
\newcommand{\fVP}{f^{\mathrm{VP}}}
\newcommand{\EVP}{E^{\mathrm{VP}}}
\newtheorem{theorem}{Theorem}[section]
\newtheorem{lemma}[theorem]{Lemma}

\newtheorem{proposition}[theorem]{Proposition}
\newtheorem{corollary}[theorem]{Corollary}
\counterwithout{figure}{section}
\counterwithin{equation}{section}
\newcommand{\vp}{\varphi}
\newcommand{\br}[1]{\left(#1\right)}
\newcommand{\ak}{\abs{k}}
\newcommand{\ax}{\abs{x}}
\newcommand{\aal}{\abs{\alpha}}
\newcommand{\aab}{\abs{\beta}}
\newcommand{\sbr}[1]{\left[#1\right]}

\newcommand{\p}{\partial}
\newcommand{\jb}[1]{\left\langle#1\right\rangle}
\newcommand{\set}[1]{\left\{#1\right\}}
\newcommand{\qaq}{\quad\text{and}\quad}
\newcommand{\ep}{\varepsilon}
\newcommand{\R}{\mathbb{R}}
\newcommand{\C}{\mathbb{C}}
\newcommand{\lam}{\lambda}
\newcommand{\ETin}{E_{\mathrm{in},T}}
\newcommand{\ELin}{E_{\mathrm{in},L}}
\newcommand{\Ein}{E_{\mathrm{in}}}
\newcommand{\Bin}{B_{\mathrm{in}}}
\newcommand{\Jin}{J_{\mathrm{in}}}
\newcommand{\Bes}{\mathrm{Bes}}
\newcommand{\ETosc}{E_T^{\mathrm{osc}}}
\newcommand{\Bosc}{B^{\mathrm{osc}}}
\usepackage[hidelinks]{hyperref}
\hypersetup{pdftitle={The uniform-in-time electrostatic limit of the linearised Vlasov--Maxwell system: the Poisson equilibrium},pdfauthor={Marnie Smith}}

\title[The uniform-in-time electrostatic limit of linearised Vlasov--Maxwell]{The uniform-in-time electrostatic limit of the linearised Vlasov--Maxwell system: the Poisson equilibrium}
\author{Marnie Smith}
\address{Department of Pure Mathematics and Mathematical Statistics, University of Cambridge}
\email{ms2724@cam.ac.uk}
\begin{document}

\date{20 September 2026}

\begin{abstract}
The Vlasov--Maxwell system with Newtonian particle transport is linearised about the Poisson equilibrium on~$\R^3$, with the speed of light~$c$ treated as a large parameter. For uniformly controlled initial data, with transverse fields that may remain of order one, the difference between the Vlasov--Maxwell perturbation and its linearised Vlasov--Poisson counterpart is bounded uniformly in time by their initial discrepancy plus a term of order~$c^{-1}$. The same estimate holds for their scattering states, and the rate in~$c$ is sharp. The longitudinal dynamics are governed by the same Volterra equation as in linearised Vlasov--Poisson and undergo Landau damping. The transverse fields exhibit Klein--Gordon-type dispersion, and control of their accumulated effect along free transport yields the uniform comparison.\end{abstract}
\maketitle
\tableofcontents

\section{Introduction}
\subsection{The Vlasov--Maxwell system and its electrostatic limit}
Landau damping is a collisionless relaxation phenomenon. Most rigorous results concern the Vlasov--Poisson system, which retains only the longitudinal electrostatic interaction. There, phase mixing in the particle dynamics shears spatial inhomogeneity into increasingly fine velocity-space structure, producing cancellation under velocity integration and driving the self-consistent electric field to zero despite the absence of dissipation. The Vlasov--Maxwell system additionally contains magnetic and transverse electric fields, which together support Klein--Gordon-type electromagnetic waves that decay dispersively. This paper studies the electrostatic limit $c\to\infty$ for Vlasov--Maxwell with Newtonian particle transport, linearised about a spatially homogeneous equilibrium. The central question is whether linearised Vlasov--Poisson nevertheless provides an approximation to the Vlasov--Maxwell particle dynamics that remains valid uniformly for all $t\geq0$ and extends to their scattering states.

The starting point is the Vlasov--Maxwell system
\begin{align}\label{fullVM}
    \begin{mycases}
\p_tf+v\cdot\nabla_xf+\br{E+\frac{v}{c}\times B}\cdot\nabla_vf=0,\\
\p_tB+c\nabla_x\times E=0,\qquad \nabla_x\cdot E=\rho[f]-n_0,\\
-\p_t E+c\nabla_x\times B=j[f],\qquad \nabla_x\cdot B=0.
    \end{mycases}
\end{align}
Here, for a function $g=g(x,v)$,
\begin{align*}
    \rho[g]:=\int_{\R^3}g(\cdot,v)\,dv,
    \qquad
    j[g]:=\int_{\R^3}v g(\cdot,v)\,dv
\end{align*}
denote the associated charge and current densities, and $n_0>0$ is the density of the neutralising homogeneous background. The second line contains Faraday's and Gauss's laws, while the third contains the Amp\`ere--Maxwell law and the magnetic divergence constraint. The particle transport is Newtonian, whereas the electromagnetic field obeys Maxwell's equations with propagation speed~$c$. Consequently, the model is not Lorentz invariant and the velocity variable is not constrained by $\abs{v}<c$.

Formally, as $c\to\infty$, the magnetic force is suppressed and Faraday's law forces the electric field to be irrotational. The limiting dynamics are those of the Vlasov--Poisson system, whose particle distribution, electric field and potential are denoted $\fVP$, $\EVP$ and $\phi^{\mathrm{VP}}$:
\begin{align}\label{VP}
    \begin{mycases}
        \p_t\fVP+v\cdot\nabla_x\fVP+\EVP\cdot\nabla_v\fVP=0,\\
        \EVP=-\nabla_x\phi^{\mathrm{VP}},\qquad -\Delta_x\phi^{\mathrm{VP}}=\rho[\fVP]-n_0.
    \end{mycases}
\end{align}
For smooth solutions with suitably matched initial data, convergence to Vlasov--Poisson on a common finite interval of existence is classical. But Landau damping and scattering concern the behaviour as $t\to\infty$. The transverse electromagnetic sector need not be small initially, and its influence on the particle dynamics may accumulate over long times. 

Both systems are linearised about the Poisson equilibrium
\begin{align}\label{Poisson}
    \mu(v)=\frac{1}{\pi^2(1+\abs{v}^2)^2},
    \qquad
    \widehat\mu(\eta)=e^{-\abs{\eta}},
\end{align}
for which $n_0=\int_{\R^3}\mu(v)\,dv=1$. Then $\nabla_v\mu$ is parallel to $v$, and $(E,B,f)=(0,0,\mu)$ is a formal equilibrium of the Vlasov--Maxwell system: $\abs{v}\mu\notin L^1_v$, so $j[\mu]$, which vanishes by oddness, is not absolutely convergent. Writing the particle distribution as $\mu+f$ in~\eqref{fullVM}, so that $f$ now denotes the perturbation, and discarding the terms quadratic in the perturbation, while using $(v\times B)\cdot\nabla_v\mu=0$, gives the linearised Vlasov--Maxwell system studied here:
\begin{align}\label{linVM}
    \begin{mycases}
\p_tf+v\cdot\nabla_xf+E\cdot\nabla_v\mu=0,\\
\p_tB+c\nabla_x\times E=0,\qquad \nabla_x\cdot E=\rho[f],\\
-\p_t E+c\nabla_x\times B=j[f],\qquad \nabla_x\cdot B=0.
    \end{mycases}
\end{align}
The initial data for~\eqref{linVM},
\begin{align*}
    f|_{t=0}=\fin,\qquad E|_{t=0}=\Ein,\qquad B|_{t=0}=\Bin,
\end{align*}
are assumed to satisfy the compatibility conditions
\begin{align}
    \nabla_x\cdot \Ein=\rho[\fin],\qquad \nabla_x\cdot \Bin=0,\qquad \int_{\R^3}\int_{\R^3}\fin(x,v)\,dvdx=0.\label{compatibilityconditions}
\end{align}

Likewise, writing the particle distribution as $\mu+\fVP$ in~\eqref{VP} and retaining only the terms that are linear in the perturbation gives the linearised Vlasov--Poisson system, the comparison system studied here,
\begin{align}\label{linVP}
    \begin{mycases}
        \p_t\fVP+v\cdot\nabla_x\fVP+\EVP\cdot\nabla_v\mu=0,\\
        \EVP=-\nabla_x\phi^{\mathrm{VP}},\qquad -\Delta_x\phi^{\mathrm{VP}}=\rho[\fVP],
    \end{mycases}
\end{align}
with initial datum $\fVP|_{t=0}=\fVPin$ satisfying \begin{align}
    \int_{\R^3}\int_{\R^3}\fVPin(x,v)\,dvdx=0.\label{zeromassVP}
\end{align}
For the initial data considered below, both linearised systems admit unique global mild solutions by standard linear theory.

The two linearised systems share their longitudinal structure, which is exhibited by writing~\eqref{linVM} in Coulomb gauge. Introduce scalar and vector potentials $\phi$ and $A$ for the Vlasov--Maxwell fields by writing
\begin{align}\label{Coulombgauge}
    E=-\nabla_x\phi-\frac{1}{c} \p_tA,
    \qquad
    B=\nabla_x\times A,
    \qquad
    \nabla_x\cdot A=0,
\end{align}
so that Faraday's law and $\nabla_x\cdot B=0$ hold automatically. The Helmholtz decomposition $E=E_L+E_T$ is therefore given by
\begin{align*}
    E_L=-\nabla_x\phi,
    \qquad
    E_T=-\frac{1}{c}\p_tA.
\end{align*}
Then $\nabla_x\cdot E_T=0$, and the initial field decomposes as $\Ein=\ELin+\ETin$. In these variables, Maxwell's equations in~\eqref{linVM} reduce to
\begin{align}\label{APhi}
    -\Delta_x\phi=\rho[f],
    \qquad
    \frac{1}{c^2}\p_t^2A-\Delta_xA=\frac{1}{c} \mathbb Pj[f],
\end{align}
where $\mathbb P:=I-\nabla_x\Delta_x^{-1}\nabla_x\cdot$ is the Leray projection onto divergence-free fields. The second equation in~\eqref{APhi} is the forced wave equation for the Coulomb-gauge vector potential $A$. The first equation determines the longitudinal electric field from the density through the same Poisson equation as in~\eqref{linVP}.

Throughout, the Fourier transform is taken with the convention $\mathcal F[g](k)=\widehat g(k)=\int_{\R^3}e^{-ik\cdot x}g(x)\,dx$ in the spatial variable, and likewise in $(x,v)$ with the pair $(k,\eta)$. For $k\neq0$, the Vlasov--Maxwell longitudinal field and the Vlasov--Poisson field satisfy
\begin{align}\label{longitudinal-fields}
\widehat E_L(t,k)=-i\frac{k}{\ak^2}\widehat\rho(t,k),
\qquad
\widehat \EVP(t,k)=-i\frac{k}{\ak^2}\widehat{\rho^{\mathrm{VP}}}(t,k),
\end{align}
where $\rho:=\rho[f]$ and $\rho^{\mathrm{VP}}:=\rho[\fVP]$. 

A key structural feature of the present problem is that the longitudinal sector $(\rho,E_L)$ decouples exactly from the transverse sector $(A,E_T,B)$. This decoupling is special to the linearisation about a homogeneous radial equilibrium: radiality eliminates the magnetic term at linear order, while linearisation removes the Lorentz force acting on $f$ itself. Indeed, solving the linearised Vlasov equation along the free-transport characteristics and integrating in velocity shows that the electric field enters only through $k\cdot\widehat E(t,k)$. The transverse contribution vanishes because $k\cdot\widehat E_T(t,k)=0$, leaving the closed Volterra equation
\begin{align}\label{Volterra}
\widehat\rho(t,k)=\widehat\fin(k,kt)
-\int_0^t(t-s)\widehat\mu\br{k(t-s)}\widehat\rho(s,k)\,ds.
\end{align}
The Vlasov--Poisson density $\rho^{\mathrm{VP}}$ satisfies the same equation with $\fVPin$ in place of $\fin$. By contrast, the transverse dynamics are encoded by the Coulomb-gauge vector potential $A$. Its equation in~\eqref{APhi} has propagation speed $c$, and it determines the transverse fields $E_T$ and $B$. This sector contains the propagating electromagnetic waves absent from~\eqref{linVP}. If $\fin=\fVPin$, the difference between $f$ and $\fVP$ is therefore driven solely by $E_T$.

\subsection{Main results}\label{mainresultssubsection}

The following notations and conventions are used. For a non-negative integer $m$ and $1\leq p\leq\infty$, define the Sobolev norms
\begin{align*}
\norm{g}_{W^{m,p}_x}
:=\sum_{\abs{\alpha}\leq m}\norm{\nabla_x^\alpha g}_{L^p_x},
\qquad
\norm{f}_{W^{m,p}_{x,v}}
:=\sum_{\abs{\alpha}+\aab\leq m}
\norm{\nabla_x^\alpha\nabla_v^\beta f}_{L^p_{x,v}}.
\end{align*}
Japanese brackets are defined by $\jb{a}:=(1+\abs{a}^2)^{1/2}$ and $\jb{a,b}:=(1+\abs{a}^2+\abs{b}^2)^{1/2}$ for scalar or vector $a$ and $b$. Mixed norms are read from right to left: for function spaces $X$ and $Y$, $\norm{f}_{XY}:=\norm{\norm{f}_{Y}}_{X}$. For a symbol $m(k)$, scalar or matrix-valued, $m(i\nabla_x)$ denotes the Fourier multiplier with symbol $m(k)$, that is, $m(i\nabla_x)g$ is the inverse Fourier transform of $m(k)\widehat g(k)$. 

Fix a radial $\chi\in C_c^\infty(\R^3)$ with $\chi=1$ on $\set{\ak\leq1/2}$ and $\supp\chi\subset\set{\ak\leq1}$, and set $\vp:=\chi(\cdot/2)-\chi$. Define $\vp_{-1}:=\chi$ and $\vp_\ell:=\vp(2^{-\ell}\cdot)$ for $\ell\geq0$, with associated projections $P_\ell:=\vp_\ell(i\nabla_x)$ for every $\ell\geq-1$. Only Besov norms with dyadic summability index $1$ are used below. For $s\in\R$ and $1\leq p\leq\infty$, define the Besov norm by
\begin{align}\label{def:Besovnorm}
    \norm{f}_{\Bes^s_{p,1}}
    :=\norm{P_{-1}f}_{L^p}+
    \sum_{\ell\geq0}2^{s\ell}\norm{P_\ell f}_{L^p}.
\end{align}
The symbols form an inhomogeneous Littlewood--Paley partition of unity, since the sum
\begin{align}\label{eq:LP-partition}
    1=\chi+\sum_{\ell\geq0}\vp(2^{-\ell}\cdot)
\end{align}
telescopes pointwise. Each $\vp_\ell$ is supported in $\set{2^{\ell-1}\leq\ak\leq2^{\ell+1}}$ for every $\ell\geq0$ and $\vp$ is supported in $\set{1/2\leq\ak\leq2}$. The finite overlap of the cutoffs yields the fixed-frequency splitting
\begin{align}\label{eq:chi-splitting}
    \norm{f}_{\Bes^s_{p,1}}
    \sim
    \norm{\chi(i\nabla_x)f}_{L^p}+\norm{(1-\chi(i\nabla_x))f}_{\Bes^s_{p,1}},
\end{align}
as verified in Appendix~\ref{app:LP}, which also collects the elementary properties of these norms and the multiplier and embedding estimates used throughout.

For a non-negative integer $N$, $1\leq p\leq\infty$ and $\delta\in (0,1)$, the unified data norm of an initial triple $(\Ein,\Bin,\fin)$ is defined by
\begin{align}\label{eq:data-norm}
\begin{split}
    &\hspace{-1cm}\mathcal D_{N,p,\delta}(\Ein,\Bin,\fin)\\
    &:=\norm{\abs{\nabla_x}^{-\delta}\ETin}_{\Bes^{N+4}_{1,1}}
    +\norm{\abs{\nabla_x}^{-\delta}\Bin}_{\Bes^{N+4}_{1,1}}
    +\norm{\abs{\nabla_x}^{-\delta}\Delta_x^{-1}\nabla_x\times\Bin}_{\Bes^{N+4}_{1,1}} \\
    &\quad+\norm{\chi(i\nabla_x)\abs{\nabla_x}^{-\delta}\Jin}_{L^1_x}
+
\norm{\jb{v}^{4}\fin}_{W^{N+4,1}_{x,v}}+\norm{\fin}_{L^p_vW^{N,\infty}_x},
\end{split}
\end{align}
where $\Jin:=j[\fin]
=\int_{\R^3}v\fin(\cdot,v)\,dv$. For a non-negative integer $N$ and $1\leq p\leq\infty$, the phase-space discrepancy of two initial perturbations is
\begin{align}\label{eq:phase-space-matching}
    \mathcal M_{N,p}(\fin,\fVPin):=\norm{\fin-\fVPin}_{L^p_vW^{N,\infty}_x}
    +\norm{\jb{v}(\fin-\fVPin)}_{W^{N+3,1}_{x,v}}.
\end{align}

In the theorem statement and in Figure~\ref{map}, a superscript $c$ marks the Vlasov--Maxwell objects that are allowed to vary with the light speed: the initial data, the fields, the perturbation and its scattering state. For a family $\set{(\Ein^c,\Bin^c,\fin^c)}_c$ of Vlasov--Maxwell data indexed by the light speed, write $\mathcal D^c_{N,p,\delta}:=\mathcal D_{N,p,\delta}(\Ein^c,\Bin^c,\fin^c)$. On the other hand, the Vlasov--Poisson datum, solution and scattering state used for comparison are fixed independently of~$c$. For a family $\set{\fin^c}_c$ of Vlasov--Maxwell data and a single Vlasov--Poisson datum $\fVPin$, write $\mathcal M^c_{N,p}:=\mathcal M_{N,p}(\fin^c,\fVPin)$.

\begin{theorem}[Landau damping, scattering and the electrostatic limit]\label{maintheorem}
Fix a non-negative integer~$N$, $1\leq p\leq\infty$ and $\delta\in(0,1)$, and let $\mu$ be the Poisson equilibrium~\eqref{Poisson}. There exists $c_0\geq1$ with the following properties, in which all implicit constants depend only on $N$, $p$ and $\delta$. 

Let $\set{(\Ein^c,\Bin^c,\fin^c)}_{c\geq c_0}$ be a family of initial data satisfying~\eqref{compatibilityconditions}, with $\mathcal D^c_{N,p,\delta}<\infty$ for every $c\geq c_0$. Then, for every $c\geq c_0$, there exists a unique global solution $(E^c,B^c,f^c)$ of the linearised Vlasov--Maxwell system~\eqref{linVM} with initial data $(\Ein^c,\Bin^c,\fin^c)$, and the following hold.
\begin{enumerate}[label=(\roman*)]
\item \label{maintheorem-longitudinal}For every multi-index $\alpha$ with $\aal\leq N$ and every $t\geq0$,
\begin{align*}
\norm{\nabla_x^\alpha E_L^c(t)}_{L^\infty}
\lesssim
\frac{\mathcal D^c_{N,p,\delta}}
{\jb{t}^{2+\aal}}.
\end{align*}

\item \label{maintheorem-transverse} The transverse fields decompose into oscillatory and remainder parts:
\begin{align*}
E_T^c=E_T^{c,\mathrm{osc}}+E_T^{c,\mathrm r},
\qquad
B^c=B^{c,\mathrm{osc}}+B^{c,\mathrm r},
\end{align*}
where, for $t\geq0$,
\begin{align}\label{maintheoremosc}
\norm{E_T^{c,\mathrm{osc}}(t)}_{W^{N,\infty}}
+\norm{B^{c,\mathrm{osc}}(t)}_{W^{N,\infty}}
\lesssim
\frac{\mathcal D^c_{N,p,\delta}}
{\jb{ct,c^{\frac{1}{2}}t^{\frac{3}{2}}}},
\end{align}
and, for every multi-index $\alpha$ with $\aal\leq N$ and $t\geq0$,
\begin{align}
\norm{\nabla_x^\alpha E_T^{c,\mathrm r}(t)}_{L^\infty}
\lesssim
\frac{\mathcal D^c_{N,p,\delta}}
{c^2\jb{t}^{\frac{4}{3}+\frac{\aal}{3}}},\qquad
\norm{\nabla_x^\alpha B^{c,\mathrm r}(t)}_{L^\infty}
\lesssim
\frac{\mathcal D^c_{N,p,\delta}}
{c\jb{t}^{\frac{4}{3}+\frac{\aal}{3}}}.\label{maintheoremrem}
\end{align}

\item \label{maintheorem-scattering}There exists $f_\infty^c\in L^p_vW^{N,\infty}_x$ such that, for $t\geq0$,
\begin{align*}
\norm{f^c(t,x+tv,v)-f_\infty^c(x,v)}_{L^p_vW^{N,\infty}_x}
\lesssim
\frac{\mathcal D^c_{N,p,\delta}}
{\jb{t}^{\frac{1}{3}}}.
\end{align*}
\end{enumerate}

Moreover, let $\fVPin$ be a single datum, fixed independently of $c$, satisfying~\eqref{zeromassVP}, with $\mathcal M^c_{N,p}<\infty$ for every $c\geq c_0$, and let~$\fVP$ be the corresponding solution of the linearised Vlasov--Poisson system~\eqref{linVP}. Then~$\fVP(t,x+tv,v)$ converges in $L^p_vW^{N,\infty}_x$ as $t\to\infty$ to a scattering state $\fVP_\infty\in L^p_vW^{N,\infty}_x$, and, for every $c\geq c_0$,
\begin{align}\label{maintheoremelectrostaticlimit}
    \sup_{t\geq0}
    \norm{f^c(t)-\fVP(t)}_{L^p_vW^{N,\infty}_x}
    +
    \norm{f_\infty^c-\fVP_\infty}_{L^p_vW^{N,\infty}_x}
    \lesssim \mathcal M^c_{N,p}
    +\frac{1}{c}\mathcal D^c_{N,p,\delta}.
\end{align}
The rate $c^{-1}$ in~\eqref{maintheoremelectrostaticlimit} is sharp within the stated data class. 
\end{theorem}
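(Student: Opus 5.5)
The proof splits along the longitudinal/transverse decoupling described above. I would handle the two sectors separately and then integrate the field contributions along free transport.

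\emph{Longitudinal sector.} First I solve the Volterra equation~\eqref{Volterra} by Laplace transform in $t$ for each fixed $k$. For the Poisson equilibrium the kernel $K(t,k)=t\,e^{-\ak t}$ is explicit. Its Laplace transform is $(\lambda+\ak)^{-2}$, so the resolvent equation reads $\widehat\rho=(1+\ak^{-2}\cdot\ldots)$ applied to the source. This gives the closed form
\begin{align*}
\widehat\rho=S-R_k*S,\qquad R_k(t)=\sin(t)\,e^{-\ak t},
\end{align*}
where $S(t,k)=\widehat\fin(k,kt)$. The factor $e^{-\ak t}$ is the Landau damping, and the Penrose condition is trivial here. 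Writing $\widehat E_L=-ik\ak^{-2}\widehat\rho$, the source term gives decay of $\widehat\fin(k,kt)$ through $\jb{v}^4$-weighted derivatives in $v$, which trade powers of $\ak t$. The low-frequency singularity $k/\ak^2$ is tamed by $\rho$ having zero mean, with moments controlled by $\jb{v}^4\fin$. Together these give the $\jb{t}^{-2-\aal}$ bound of~(i) by a standard dispersive $L^\infty$ estimate. The Vlasov--Poisson field $\EVP$ is treated identically, and the two differ only through the source $\fin-\fVPin$, which yields a contribution bounded by $\mathcal M^c_{N,p}$.

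\emph{Transverse sector.} Next I take the curl/time derivative of~\eqref{APhi} and use $\p_t\mathbb P j[f]$. Since $f$ itself carries a transverse electric forcing, the transverse current satisfies a second Volterra-type relation. In Fourier space, $(\p_t^2+c^2\ak^2)\widehat E_T=-\p_t\mathbb P\widehat j$, and $\mathbb P\widehat j$ contains $-\int_0^t K_T(t-s,k)\widehat E_T(s)\,ds$ with $K_T$ the transverse susceptibility, plus free-transport data terms. Laplace transforming gives the dispersion relation $\lambda^2+c^2\ak^2+\omega_p^2+\ldots=0$. For large $c$ its roots sit near $\pm i\sqrt{1+c^2\ak^2}$ with a small damping correction, which is the Klein--Gordon dispersion. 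The oscillatory part $E_T^{c,\mathrm{osc}}$ is the residue at these roots. I would estimate it by stationary phase for $e^{it\sqrt{1+c^2\ak^2}}$, giving decay $(ct)^{-1}$ from the wave regime and $c^{-3/2}\cdot c\,t^{-3/2}$ from the Klein--Gordon regime. This requires Besov data with $\abs{\nabla}^{-\delta}$ to handle $k\to0$, where the symbol $c\ak/\sqrt{1+c^2\ak^2}$ degenerates. The remainder $E_T^{c,\mathrm r}$ comes from the branch contribution driven by free transport. It carries an extra factor of $(c^2\ak^2+1)^{-1}$, which yields the $c^{-2}$ bound, while $B=\nabla\times A$ loses one power of $c$. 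Its time decay $\jb{t}^{-4/3}$ comes from balancing the frequency scale $\ak\sim t^{-1/3}$ between phase mixing and the low-frequency singularity.

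\emph{Scattering and comparison.} Set $g(t,x,v)=f(t,x+tv,v)$, so that $\p_t g=-E(t,x+tv)\cdot\nabla_v\mu$. Then $f_\infty=\fin-\int_0^\infty E(s,x+sv)\cdot\nabla_v\mu\,ds$, and~(iii) follows from the integrated tails: $E_L$ is integrable, while $E_T^{\mathrm r}$ has a tail of size $c^{-2}t^{-1/3}$. The main obstacle is $E_T^{\mathrm{osc}}$, which is not absolutely integrable in $t$ uniformly in $c$, since its $L^\infty$ size is only $\jb{ct}^{-1}$. I would integrate by parts in $s$ along the characteristic, using that the phase $e^{is(\pm c\jb{k}_c+k\cdot v)}$ is non-stationary because $\abs{v}\ll c$ is irrelevant at linear order. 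The resulting divisor $c\ak+k\cdot v\gtrsim c\ak$ gains a factor $c^{-1}$. Where $\abs{v}\gtrsim c$ this fails, and I instead use the decay of $\nabla_v\mu\sim\jb{v}^{-5}$ in $L^p_v$. With this, $\int_0^\infty E^{\mathrm{osc}}_T(s,x+sv)\,ds=O(c^{-1}\mathcal D)$, and the difference $f-\fVP$ is the time integral of $(E_L-\EVP)+E_T$ along free transport. Bounding each piece gives~\eqref{maintheoremelectrostaticlimit}.

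For sharpness I would take $\fin=\fVPin$ with a nonzero transverse datum $\ETin$ fixed independently of $c$. The boundary term of the integration by parts is then exactly of order $c^{-1}$ and nonvanishing, so the rate $c^{-1}$ is attained.
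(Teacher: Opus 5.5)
\textbf{Main gap: the transverse remainder.} Clearing the pole in $\int_0^\infty e^{-\lam t}\widehat\mu(kt)\,dt=(\lam+\ak)^{-1}$ turns your dispersion relation into the cubic $(\lam+\ak)(\ep^2\lam^2+\ak^2)+\ep^2\lam=0$, with $\ep=c^{-1}$. For the Poisson equilibrium there is no branch contribution. Besides the complex pair you locate, there is a third, real root $\lam_0\in(-\ak,0)$. This non-oscillatory mode carries $E_T^{\mathrm r}$ and $B^{\mathrm r}$, together with the endpoint terms from integrating the complex-mode Duhamel term by parts. It acts on the field data $A^0=-\Delta_x^{-1}\nabla_x\times\Bin$ and $A^1=-c\ETin$, not only on the current.

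Its damping degenerates: $\lam_0\le-\frac12\min\set{c^2\ak^3,\ak}$. So the frequencies surviving to time $t$ are $\ak\lesssim(c^2t)^{-1/3}$. This scale is set by $\lam_0$, not by phase mixing, which localises to $\ak\lesssim t^{-1}$. The rate $\jb{t}^{-4/3}$ then needs one extra power of $\ak$ at low frequency. It comes from the residue bound $\abs{R_0}\lesssim c^2\ak\jb{ck}^{-4}$, and from measuring the equilibrium term through $A^0$ rather than $\Bin$.

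Nothing in your sketch supplies this. An amplitude that stays of order one as $k\to0$, integrated against $e^{-c^2\ak^3t}$ over $\ak\lesssim c^{-1}$, gives only $c^{-2}t^{-1}$. That is not time-integrable, and it would break both (iii) and the profile comparison.

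The longitudinal step has a similar omission. The $\ak^{-1}$ singularity is harmless in $\R^3$, so no zero-mean or spatial-moment argument is needed; in any case $\mathcal D_{N,p,\delta}$ controls no spatial moments. But the resolvent term must be integrated by parts in $s$, using $(\p_s-\ak)\br{e^{-\ak(t-s)}\cos(t-s)}=e^{-\ak(t-s)}\sin(t-s)$. This replaces $S$ by $(\p_s+\ak)S$ and gains a factor $\ak$. Bounding $R_k*S$ with absolute values loses $\min\set{t,\ak^{-1}}$ and gives only $\jb{t}^{-1}$.

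\textbf{The electrostatic comparison.} Your route here is a workable variant. Dividing each fixed-data complex mode by $\lam_\pm+ik\cdot v$ for $\abs{v}\lesssim c$ amounts to bounding $\ep A$. The paper does this for all of $E_T$ at once: it writes $E_T=-\ep\p_tA$ and integrates by parts along $x+sv$, obtaining $-\ep\br{A(t,x+tv)-A^0(x)}+\ep\int_0^t(v\cdot\nabla_x)A(s,x+sv)\,ds$. It then uses $\sup_t\norm{A(t)}_{W^{N,\infty}_x}\lesssim\mathcal D_{N,p,\delta}$ and the uniform time-integrability of $\nabla_xA$. No velocity cutoff is needed, only $\jb{v}\nabla_v\mu\in L^p_v$. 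Contrary to your remark, (ii) gives $\Lambda_\ep(t)\le\min\set{1,t^{-3/2}}$, so $E_T^{\mathrm{osc}}$ is integrable uniformly in $c$. The real obstruction is only that $\int_0^\infty\Lambda_\ep\sim c^{-1}\log c$.

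\textbf{Sharpness.} Your argument fails as stated. With $\Bin=0$ one has $A^0=0$, and $\widehat{E}_T(0)=\widehat{\ETin}$ splits essentially equally between $\lam_\pm=\gamma\pm i\omega$. So the $s=0$ boundary terms $-\frac12\widehat{\ETin}/(\lam_\pm+ik\cdot v)$ of the two modes cancel to leading order. What remains has size $\ak\abs{v}\omega^{-2}\lesssim c^{-2}\abs{v}\ak^{-1}$. More generally, the identity above and the bounds on $\nabla_xA$ show that whenever $\Bin=0$ the scattering states differ only by $O(\mathcal M^c_{N,p}+c^{-2}\log c\,\mathcal D^c_{N,p,\delta})$.

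In your example the order $c^{-1}$ is visible only in the $\sup_t$ term at times $t\sim c^{-1}$, where $A(t)\approx-ct\,\ETin$. Using it would require a separate short-time lower bound, which you do not give. The paper instead takes $\Ein=0$, $\fin=\fVPin=0$ and $\Bin=\nabla_x\times A_{\mathrm{in}}$ with $\widehat A_{\mathrm{in}}$ supported in an annulus. Then the boundary term $-\ep\nabla_v\mu\cdot A^0$ survives and $c\,f_\infty^c\to-\nabla_v\mu\cdot A_{\mathrm{in}}\neq0$, so the scattering states themselves differ at exact order $c^{-1}$.
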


The uniform-in-time estimate~\eqref{maintheoremelectrostaticlimit} concerns the particle distributions and their scattering states. It does not assert uniform convergence of the full electromagnetic fields on $[0,\infty)$, which cannot hold in general in the present data class because $\ETin^c$ and $\Bin^c$ may remain of order one as $c\to\infty$. For every fixed $\tau>0$, however, part~\ref{maintheorem-transverse} and the longitudinal comparison~\eqref{eq:VP-longitudinal-decay} give, for every $c\geq c_0$,
\begin{align*}
    \sup_{t\geq\tau}
    \br{
    \norm{E^c(t)-\EVP(t)}_{W^{N,\infty}}
    +\norm{B^c(t)}_{W^{N,\infty}}
    }
    \lesssim_\tau
    \mathcal M^c_{N,p}
    +\frac{1}{c}\mathcal D^c_{N,p,\delta}.
\end{align*}

The decay rates, the choice of norms and the assumptions on the initial data of Theorem~\ref{maintheorem} are now discussed in turn.

\begin{remark}[Field decay rates]
In the longitudinal sector, the density satisfies the same closed Volterra equation as in linearised Vlasov--Poisson. For the Poisson equilibrium, its resolvent preserves the phase-mixing decay inherited from free transport, giving the base rate $\jb{t}^{-2}$ for $E_L$ and one additional power of time decay for each spatial derivative.

The oscillatory component of the transverse fields is formed by spatial Fourier modes that oscillate in time. In physical space, it is a dispersive Klein--Gordon-type wave with propagation speed $c$. The bound in~\eqref{maintheoremosc} separates three temporal regimes. On the initial interval $t\lesssim c^{-1}$, dispersion has not yet produced decay and the fields may remain of order one. For $c^{-1}\lesssim t\lesssim c$, angular dispersion near the wave cone $\abs{x}\sim ct$ gives the wave-like rate $(ct)^{-1}$. For $t\gtrsim c$, radial curvature supplies an additional half-power of time decay, giving the Klein--Gordon rate $c^{-1/2}t^{-3/2}$. Spatial derivatives do not improve these time-decay rates.

By contrast, the non-oscillatory transverse remainders decay through frequency-dependent damping and free-transport velocity averaging rather than wave dispersion. Their electric and magnetic components carry different powers of $c$, but share the time-decay factor $\jb{t}^{-(4+\aal)/3}$ in~\eqref{maintheoremrem}, each spatial derivative improving the decay by one third of a power. For $t\geq1$, both parts of the transverse fields are $O(c^{-1})$.
\end{remark}

\begin{remark}[Choice of norms]
The choice of norms follows the decomposition of the field. The oscillatory component is governed by Klein--Gordon Fourier multipliers, whose oscillatory factors are of unit modulus and so preserve $L^2$-based norms rather than decay in them. The decay comes instead from the curvature of the phase and is measured from $L^1$ to $L^\infty$ on each dyadic block. Summing the blocks in $\ell^1$ places the norms $\Bes^s_{1,1}$ on the initial data and makes the conclusion pointwise, through the embedding $\Bes^0_{\infty,1}\hookrightarrow L^\infty$ of Lemma~\ref{lem:embeddings}\ref{emb:besov-sobolev}. 

The longitudinal field and the non-oscillatory remainder are estimated instead by direct Fourier integration: for each spatial derivative the proof controls the Fourier $L^1_k$ norm, which dominates the~$\Bes^0_{\infty,1}$ norm by Lemma~\ref{lem:embeddings}\ref{emb:fourier-L1} and hence the $L^\infty$ norm as well. 

Both routes to a pointwise bound therefore pass through the stronger norm $\Bes^0_{\infty,1}$, so the field estimates of Theorem~\ref{maintheorem} also hold with $\Bes^0_{\infty,1}$ in place of $L^\infty$ and with $\Bes^{N}_{\infty,1}$ in place of $W^{N,\infty}$. 
\end{remark}

\begin{remark}[Initial data]
The factors $\abs{\nabla_x}^{-\delta}$ in~\eqref{eq:data-norm} are introduced in the present proof to obtain the low-frequency estimates with constants independent of $c$. The oscillatory estimates involve order-zero multipliers that need not be bounded on $L^1$. Transferring a factor $\ak^\delta$ to the multiplier makes its localised kernel integrable, at the cost of applying $\abs{\nabla_x}^{-\delta}$ to the data. At high frequency, these factors merely lower the Besov index.

Finiteness of the negative-order terms requires cancellation conditions on the data. Since $\ETin$ and~$\Bin$ are integrable and divergence-free, both have zero spatial mean. Finiteness also requires
\begin{align*}
\int_{\R^3}\Delta_x^{-1}\nabla_x\times\Bin\,dx=0,
\qquad
\int_{\R^3}\int_{\R^3}v\fin(x,v)\,dvdx=0.
\end{align*}
When the first spatial moment of $\Bin$ is finite, the condition $\nabla_x\cdot\Bin=0$ makes its first-moment matrix antisymmetric. The first cancellation condition is then equivalent to $\int_{\R^3}x\times\Bin(x)\,dx=0$, which implies that all first moments of $\Bin$ vanish. The second is the zero-total-current condition and is distinct from the zero-mass condition in~\eqref{compatibilityconditions}.

For $\delta\in(0,1)$, convenient sufficient conditions for $\mathcal D_{N,p,\delta}<\infty$ are
\begin{align*}
\ETin, \Bin\in W^{N+4,1},
\qquad
\jb{x}\ETin, \jb{x}^2\Bin\in L^1,
\qquad
\int_{\R^3}x\times\Bin(x)\,dx=0,
\end{align*}
together with
\begin{align*}
\norm{\jb{x}\jb{v}\fin}_{L^1_{x,v}}
+\norm{\jb{v}^4\fin}_{W^{N+4,1}_{x,v}}
+\norm{\fin}_{L^p_vW^{N,\infty}_x}<\infty,
\qquad
\int_{\R^3}\int_{\R^3}v\fin(x,v)\,dvdx=0.
\end{align*}
To verify sufficiency, the high-frequency terms are controlled by the stated Sobolev regularity. At low frequency, the zero means and first spatial moments control the inverse derivatives of $\ETin$, $\Bin$ and~$\Jin$. The term $\Delta_x^{-1}\nabla_x\times\Bin$, which is one order more singular, is controlled by the second spatial moment of $\Bin$ together with the vanishing of its mean and first moments. For the current term, the required spatial moment is bounded by $\norm{\jb{x}\Jin}_{L^1}\lesssim\norm{\jb{x}\jb{v}\fin}_{L^1_{x,v}}$.
\end{remark}

\begin{figure}[ht]
\centering
    \begin{tikzpicture}[domain=0:3,xscale=0.28,yscale=0.28]
\node at (-15,0) {$\fVPin$};
\node at (-15,14) {$\fin^c$};
\draw [->] (-13.25,0)--(-3,0);
\draw [->] (-13.25,14)--(-1.75,14);
\draw [->] (3,0)--(13.25,0);
\draw [->] (1.75,14)--(13.25,14);
\draw [->] (0,12.5)--(0,1.5);
\draw [->] (15,12.5)--(15,1.5);
\draw [->] (-15,12.5)--(-15,1.5);
\node at (0,0) {$\fVP(t)$};
\node at (15,0) {$\fVP_\infty$};
\node at (15,14) {$f_\infty^c$};
\node at (0,14) {$f^c(t)$};
\node [below] at (7.5,0) {\footnotesize{$t\to\infty$}};
\node [above] at (7.5,14) {\footnotesize{$t\to\infty$}};
\node [left] at (0,7) {\footnotesize{$c\to\infty$}};
\node [left] at (15,7) {\footnotesize{$c\to\infty$}};
\node [left] at (-15,7) {\footnotesize{$c\to\infty$}};
\end{tikzpicture}
\captionsetup{width=.8\linewidth}
\caption{Commutation between the electrostatic limit and the scattering map, in the setting of Theorem~\ref{maintheorem}. Convergence as $t\to\infty$ is understood along free transport. For initial data controlled independently of $c$ and matched to order~$c^{-1}$, convergence as $c\to\infty$ holds uniformly in time at rate $c^{-1}$.}
\label{map}
\end{figure}

\subsection{Existing results}
Three lines of work meet in the present setting: Landau damping for the Vlasov--Poisson system, linear damping for the Vlasov--Maxwell system at fixed light speed, and the electrostatic limit.

\subsubsection*{The Vlasov--Poisson system}
Landau predicted damping of the electric field for the linearised equation about a Maxwellian equilibrium in 1946~\cite{Landau1946}. Mouhot and Villani proved nonlinear damping for small analytic perturbations of Penrose-stable equilibria on the torus~\cite{MouhotVillani2011}, where nonlinear plasma echoes obstruct a general extension of such damping estimates to Sobolev regularity~\cite{Bedrossian2021Echoes}. On the whole space, the spatial spreading produced by free transport permits finite-regularity arguments. For screened interactions, asymptotic stability holds with essentially the rates of free transport~\cite{BedrossianMasmoudiMouhot2018,HanKwanNguyenRousset2021Screened}, whereas for the Coulomb interaction the long-wave response depends strongly on the velocity tails of the equilibrium~\cite{GlasseySchaeffer1994,GlasseySchaeffer1995}. Thin-tailed equilibria such as the Maxwellian support weakly damped Langmuir modes, whose Klein--Gordon-type dispersion produces the rate $t^{-3/2}$~\cite{HanKwanNguyenRousset2021Linearized,BedrossianMasmoudiMouhot2022}, while for sufficiently heavy-tailed profiles the field decays on the phase-mixing time scale. Detailed linear descriptions for broad classes of radial equilibria were obtained in~\cite{NguyenSurvival2026,IonescuPausaderWangWidmayer2023Stability}.

For the unscreened Coulomb Vlasov--Poisson system on the whole space, Ionescu, Pausader, Wang and Widmayer proved the first nonlinear asymptotic-stability result near a smooth non-trivial homogeneous equilibrium. For the Poisson equilibrium~\eqref{Poisson}, they proved scattering at a polynomial rate~\cite{IonescuPausaderWangWidmayer2024Poisson}, and Nguyen, Wei and Zhang subsequently gave a streamlined proof~\cite{NguyenWeiZhang2025}. This equilibrium has heavy velocity tails, which prevent Langmuir waves from persisting beyond the phase-mixing time scale, and~$\widehat\mu(\eta)=e^{-\abs{\eta}}$ makes the linear response rational and the resolvent explicit.

\subsubsection*{The Vlasov--Maxwell system}
On the torus, Han-Kwan, Nguyen and Rousset~\cite{HanKwanNguyenRousset2018LongTime} proved Sobolev stability for the nonlinear relativistic system near Penrose-stable equilibria with constants independent of $c$, on time intervals controlled by powers of $c$ and of the inverse perturbation size. For unstable equilibria, Han-Kwan and Nguyen~\cite{HanKwanNguyen2016Instability} constructed perturbations initially polynomially small in $c^{-1}$ that become order one within time $O(\log c)$.

On the whole space, global classical existence for the nonlinear three-dimensional relativistic system remains open for general large data. Near vacuum, global existence and decay for small data were established by Glassey and Strauss~\cite{GlasseyStrauss1987}, with subsequent extensions to polynomially decaying data~\cite{Bigorgne2020SharpAsymptotics,Wang2022VM}. The initial Maxwell field can also be large, provided the initial particle distribution is sufficiently small~\cite{WeiYang2021,Bigorgne2022VM}. The long-time dynamics in this regime have been described through modified scattering~\cite{Bigorgne2022VM,PankavichBenArtzi2025,Breton2026ModifiedScattering} and the construction of a scattering map~\cite{Bigorgne2023ScatteringMap}. Under a generic condition on the asymptotic charge, Breton showed that linear scattering in $L^1$ fails~\cite{Breton2026Completeness}.

Han-Kwan, Nguyen and Rousset~\cite{HanKwanNguyenRousset2025LinearVM} established linear Landau damping for the relativistic Vlasov--Maxwell system about homogeneous equilibria on the whole space. At fixed light speed, they decomposed the electromagnetic field into oscillatory and regular components and proved algebraic decay of both for radial equilibria under analyticity and velocity-decay assumptions; the velocity-decay assumption excludes the Poisson equilibrium studied here.

\subsubsection*{The electrostatic limit}
The classical theory of the electrostatic limit concerns convergence on finite time intervals. For smooth solutions on the whole space, Asano and Ukai~\cite{AsanoUkai1986} and Degond~\cite{Degond1986} proved convergence from Vlasov--Maxwell with Newtonian particle transport to Vlasov--Poisson on a time interval independent of $c$. Schaeffer~\cite{Schaeffer1986} established the corresponding limit from relativistic Vlasov--Maxwell to Vlasov--Poisson at rate $O(c^{-1})$ under additional assumptions on the initial data. Brigouleix and Han-Kwan~\cite{BrigouleixHanKwan2022} extended the theory to measure-valued solutions under macroscopic bounds. The convergence bounds in these results depend on the length of the time interval. 

For the long-time problem, Hong and Pankavich~\cite{HongPankavich2026} used decay estimates to obtain uniform-in-time convergence of transported profiles and convergence of scattering states for near-vacuum Vlasov equations with short-range interactions, without Maxwell coupling. A uniform-in-time electrostatic limit with Maxwell coupling near a non-trivial homogeneous equilibrium is not covered by these results.

\subsection{Extensions and limitations}
The assumptions of the main theorem, namely the choice of the Poisson profile, its radiality, the Newtonian transport and the linearisation itself, play different roles and the corresponding extensions present difficulties of different kinds.

\vspace{-0.145cm}
\subsubsection*{The Poisson equilibrium}
The restriction to this equilibrium is a feature of the present proof rather than an expected restriction of the damping mechanism. Its usefulness rests on two distinct features: the rationality of its response functions and its heavy velocity tails. The same profile is also the equilibrium treated in the nonlinear Vlasov--Poisson theory of Ionescu, Pausader, Wang and Widmayer~\cite{IonescuPausaderWangWidmayer2024Poisson}, providing a direct point of comparison between the present electromagnetic analysis and an established nonlinear electrostatic theory.

The rationality of the response functions is a special algebraic feature of the chosen profile. Indeed, for $\Re\lam+\ak>0$,
\begin{align*}
    \int_0^\infty e^{-\lam t}\widehat\mu(kt)\,dt
    =
    \frac{1}{\lam+\ak}.
\end{align*}
Consequently, at each spatial frequency, the longitudinal and transverse spectral problems reduce to algebraic equations, permitting a complete analysis of the roots and residues uniformly in $c$. 

The second feature concerns the longitudinal conclusion, which is more sensitive to the velocity tails of the equilibrium. Qualitatively similar longitudinal damping behaviour is expected for a broader class of equilibria with sufficiently heavy tails, even when their response functions are no longer rational. For thin-tailed equilibria such as the Maxwellian, the longitudinal field instead contains weakly damped, dispersive Langmuir modes~\cite{IonescuPausaderWangWidmayer2023Stability,BedrossianMasmoudiMouhot2022,HanKwanNguyenRousset2021Linearized,NguyenSurvival2026}, so the present longitudinal decay rate is not expected to transfer unchanged. The transverse dispersive structure is expected to be less sensitive to the equilibrium tails. For the relativistic system at fixed light speed, Han-Kwan, Nguyen and Rousset~\cite{HanKwanNguyenRousset2025LinearVM} obtained a similar decomposition into Klein--Gordon-type oscillations and a remainder decaying essentially like $t^{-4/3}$ for a class of rapidly decaying radial equilibria that includes the Maxwellian. The mechanism behind the electrostatic limit may be more robust, since the two systems share the same longitudinal response for every radial equilibrium and the factor $c^{-1}$ rests on bounds for the vector potential uniform in $c$ and on the integrability of $\jb{v}\nabla_v\mu$.

For non-radial equilibria, the term $(v\times B)\cdot\nabla_v\mu$ need not vanish at the linear level, and the longitudinal and transverse equations no longer decouple in general. Such equilibria can also exhibit growing transverse modes, as exemplified by the Weibel instability~\cite{Weibel1959}.

\vspace{-0.145cm}
\subsubsection*{Newtonian particle transport}
Newtonian transport plays a structural role in the uniform-in-time comparison. The Vlasov--Maxwell and Vlasov--Poisson evolutions share the same free transport, so translation invariance in $x$ makes comparison of their distributions in $L^p_vW^{N,\infty}_x$ equivalent to comparison of their transported profiles. With relativistic transport, the velocity corresponding to momentum $v$ is
\begin{align*}
u_c(v):=\frac{v}{\sqrt{1+c^{-2}\abs{v}^2}}.
\end{align*}
Although $u_c(v)-v=O(c^{-2})$ on bounded momentum sets, the displacement $t(u_c(v)-v)$ is of order one on times of order $c^2$ and prevents, already for free transport, a general uniform-in-time comparison of the distributions in the same coordinates.

A comparison with relativistic Vlasov--Poisson at the same $c$ would preserve the common free transport and avoid this kinematic obstruction, although the comparison system would then also depend on $c$. For comparison with Newtonian Vlasov--Poisson, a natural formulation would instead compare the profiles in their respective free-transport frames, and their scattering states when these exist. Hong and Pankavich~\cite{HongPankavich2026} establish such profile and scattering-state comparisons for nonlinear Vlasov equations near vacuum with short-range interactions and without Maxwell coupling. Extending this approach to the present setting would require additional analysis.

Relativistic transport also changes the longitudinal response and removes the rational structure used here. Bounded particle speeds can prevent resonant damping of long-wave oscillations even for heavy momentum tails, so the longitudinal damping mechanism discussed above does not transfer directly to the relativistic setting~\cite{HanKwanNguyenRousset2025LinearVM}. Nevertheless, the Newtonian response is recovered as $c\to\infty$ on fixed time and frequency scales, so the associated benefit of heavy tails remains relevant in this regime.

\vspace{-0.145cm}
\subsubsection*{Linearity}

For the nonlinear problem with Newtonian transport around the Poisson equilibrium, the perturbation equation contains the additional bilinear forcing
\begin{align*}
\p_tf+v\cdot\nabla_xf+E\cdot\nabla_v\mu
=-\br{E+\frac{v}{c}\times B}\cdot\nabla_vf.
\end{align*}
The nonlinear density and current sources depend on the full Lorentz force and the evolving distribution, so the longitudinal and transverse dynamics are coupled.

Treating the right-hand side as a source gives a forced version of the Volterra equation for the density~\eqref{Volterra} with the same linear operator, while the transverse equation is inverted by the same resolvent kernels with an additional nonlinear current source. This is analogous to the formulations used in the nonlinear Vlasov--Poisson theory near the Poisson equilibrium~\cite{IonescuPausaderWangWidmayer2024Poisson,NguyenWeiZhang2025}. Estimates for these sources must account for interactions between the oscillatory fields and the evolving distribution, together with the deviation of the particle trajectories from free transport. These interactions are resonant when the field and transport frequencies match. In~\cite{IonescuPausaderWangWidmayer2024Poisson}, the nonlinear Vlasov--Poisson analysis combines a static--oscillatory field decomposition with different representations near and away from resonance and estimates for the nonlinear characteristics. 

\subsection{Outline of proof}
The main steps of the proof are outlined below, with the dependencies summarised in Figure~\ref{proof-structure}.
\begin{itemize}
\item Section~\ref{sectionlongitudinalfielddecay} isolates the longitudinal dynamics. At each spatial frequency, the transverse field disappears from the density equation, leaving the same scalar Volterra equation for the Vlasov--Maxwell and Vlasov--Poisson densities. For the Poisson equilibrium this equation can be inverted explicitly, and the resulting representation preserves the phase-mixing decay of the free-transport source. Formulating the estimate for a general initial datum allows the same argument to control $E_L$, $\EVP$ and $E_L-\EVP$.
\item Sections~\ref{sec:wave} and~\ref{resolventSection} analyse the transverse dynamics through the Coulomb-gauge vector potential. For $k\neq0$, Fourier and Laplace transformation yield a rational dispersion function with one negative real zero and a complex-conjugate pair; Laplace inversion then gives a three-mode representation of the resolvent kernels, leading to a decomposition of the transverse fields into oscillatory and remainder components.
\item Section~\ref{sectionSf} establishes the estimates for the free-transport current needed by both parts of this decomposition. 
\item Section~\ref{complexrootSection} treats the complex modes with frequency $\omega(\ak)=\beta(c\ak)$, where $\beta$ has the uniform curvature bounds of a Klein--Gordon phase, and Section~\ref{remainderSection} the non-oscillatory remainders.
\item Section~\ref{sec:proofs} first compares the data functionals adapted to the separate longitudinal, oscillatory and remainder estimates with the unified norm $\mathcal D_{N,p,\delta}$. The resulting field bounds are then combined to prove damping and the integrability in time required for scattering. Finally, the Vlasov--Maxwell and Vlasov--Poisson profile equations are compared along free transport. Since $E_T=-c^{-1}\p_tA$, integration by parts along these characteristics expresses the transverse impulse through the vector potential, and bounds on $A$ and on the time integral of $\nabla_xA$, uniform in $c$, then give the impulse estimate needed for both the uniform-in-time electrostatic limit and the convergence of the scattering states.
\end{itemize}

\begin{figure}[h]
\centering
\scalebox{0.95}{%
    \begin{tikzpicture}[
      box/.style={draw,rounded corners=2pt,align=center,inner sep=5pt},
      data/.style={draw,densely dashed,rounded corners=2pt,align=center,inner sep=5pt},
      flow/.style={->,shorten <=3pt,shorten >=3pt},
      op/.style={font=\footnotesize,align=center,inner sep=2pt,fill=white}
    ]
\node [data,minimum width=3.2cm] (kin) at (-4.35,4.7) {$\fVPin,\ \fin$};
\node [data,minimum width=1.5cm] (Bin) at (3.0,4.7) {$\Bin$};
\node [data,minimum width=1.5cm] (ETin) at (4.8,4.7) {$\ETin$};
\node [box,minimum width=5cm] (rho) at (-4.35,2.7)
    {$\rho^{\mathrm{VP}}$, $\rho$ and $\rho-\rho^{\mathrm{VP}}$\\[1pt]
     {\footnotesize common Volterra equation~\eqref{Volterra}}};
\node [box,minimum width=5cm] (EL) at (-4.35,-1.85)
    {$\EVP$, $E_L$ and $E_L-\EVP$\\[1pt]
     {\footnotesize\hyperref[sectionlongitudinalfielddecay]{Section~\ref*{sectionlongitudinalfielddecay}}}};
\node [box,minimum width=3.4cm] (cur) at (0.1,2.7)
    {$S^{\mathrm{cur}}$\\[1pt]
     {\footnotesize free-transport source}\\
     {\footnotesize\hyperref[sectionSf]{Section~\ref*{sectionSf}}}};
\node [box,minimum width=1.3cm] (A0) at (3.0,2.7) {$A^0$};
\node [box,minimum width=1.3cm] (A1) at (4.8,2.7) {$A^1$};
\node [box,minimum width=3.4cm] (A) at (3.0,0.35)
    {$A$\\[1pt]
     {\footnotesize dispersion function $D$}\\
     {\footnotesize\hyperref[sec:wave]{Sections~\ref*{sec:wave}}--\hyperref[resolventSection]{\ref*{resolventSection}}}};
\node [box,minimum width=3.9cm] (ET) at (0.6,-1.85)
    {$E_T=\ETosc+E_T^{\mathrm r}$\\[1pt]
     {\footnotesize\hyperref[complexrootSection]{Sections~\ref*{complexrootSection}}--\hyperref[remainderSection]{\ref*{remainderSection}}}};
\node [box,minimum width=3.4cm] (B) at (4.9,-1.85)
    {$B=\Bosc+B^{\mathrm r}$\\[1pt]
     {\footnotesize\hyperref[complexrootSection]{Sections~\ref*{complexrootSection}}--\hyperref[remainderSection]{\ref*{remainderSection}}}};
\node [box,minimum width=3.3cm] (vp) at (-4.8,-4.4)
    {$\fVP$\\[1pt]
     {\footnotesize electric forcing $\EVP$}\\
     {\footnotesize\hyperref[sec:proofs]{Section~\ref*{sec:proofs}}}};
\node [box,minimum width=6cm] (dist) at (1.0,-4.4)
    {$\begin{array}{c@{\qquad}c}
        \text{\footnotesize perturbation} & \text{\footnotesize electric forcing}\\[2pt]
        f              & E_L+E_T\\
        f-\fVP         & E_L-\EVP+E_T
     \end{array}$\\[2pt]
     {\footnotesize\hyperref[sec:proofs]{Section~\ref*{sec:proofs}}}};
\draw [flow] (kin)--(rho);
\draw [flow] (kin)--node [op,inner sep=1pt,pos=.46,anchor=south west] {$\fin$ only} (cur);
\draw [flow] (Bin)--node [op,pos=.42,left=1pt] {$-\Delta_x^{-1}\nabla_x\times$} (A0);
\draw [flow] (ETin)--node [op,pos=.68,left=1pt] {$-c$} (A1);
\draw [flow] (rho)--node [op,pos=.4,right=1pt] {$-\nabla_x(-\Delta_x)^{-1}$} (EL);
\draw [flow] (cur)--node [op,pos=.25,anchor=north east,xshift=-2pt,yshift=-2pt] {$c^{-1}\mathbb P$} (A);
\draw [flow] (A0)--(A);
\draw [flow] (A1)--(A);
\draw [flow] (A)--node [op,pos=.6,anchor=south east,xshift=-2pt,yshift=1pt] {$-c^{-1}\p_t$} (ET);
\draw [flow] (A)--node [op,pos=.6,anchor=south west,xshift=2pt,yshift=1pt] {$\nabla_x\times$} (B);
\draw [flow] (EL)--(vp);
\draw [flow] (EL)--(dist);
\draw [flow] (ET)--(dist);
\end{tikzpicture}}
\captionsetup{width=.92\linewidth}
\caption{Dependencies in the proof.}
\label{proof-structure}
\end{figure}

\section{Decay of the longitudinal electric field}\label{sectionlongitudinalfielddecay}
This section proves a common decay estimate for the longitudinal electric fields of the two systems and for their difference. The argument begins with their shared Vlasov equation,
\begin{align*}
    \p_tf+v\cdot\nabla_xf+E\cdot\nabla_v\mu=0,
\end{align*}
where $(f,E)$ denotes the perturbation and electric field of either~\eqref{linVM} or~\eqref{linVP}. Fourier transformation in $(x,v)$ gives
\begin{align*}
    \p_t\widehat f(t,k,\eta)-k\cdot\nabla_\eta\widehat f(t,k,\eta)
    +i\widehat E(t,k)\cdot\eta\widehat\mu(\eta)=0.
\end{align*}
Duhamel's formula along the free-transport characteristics then yields
\begin{align}\label{eq:vlasov-duhamel}
    \widehat f(t,k,\eta)
    =\widehat\fin(k,\eta+kt)
    -i\int_0^t\widehat E(s,k)\cdot\br{\eta+k(t-s)}
    \widehat\mu\br{\eta+k(t-s)}\,ds.
\end{align}
Since $\widehat\rho(t,k)=\widehat f(t,k,0)$, evaluating at $\eta=0$ gives the density equation
\begin{align*}
    \widehat\rho(t,k)
    =\widehat\fin(k,kt)
    -i\int_0^t(t-s)\widehat E(s,k)\cdot k\widehat\mu\br{k(t-s)}\,ds.
\end{align*}
The electric field therefore enters the density equation only through $\widehat E(s,k)\cdot k$. At $k=0$, the zero-mass conditions~\eqref{compatibilityconditions} and~\eqref{zeromassVP} give $\widehat\rho(t,0)=\widehat\fin(0,0)=0$ in either system. In the Vlasov--Maxwell system, only the longitudinal field contributes because $k\cdot\widehat E_T=0$; in Vlasov--Poisson, the field is already longitudinal. In either case, Gauss's law~\eqref{longitudinal-fields} gives $k\cdot\widehat E=-i\widehat\rho$. Substitution yields the same scalar Volterra equation~\eqref{Volterra} for both densities, with their respective initial data and no dependence on the transverse field or on~$c$.

By linearity, a single estimate for a general initial datum $F_{\mathrm{in}}$ will apply to both systems and their difference. Denoting the corresponding density by $\rho_F$, equation~\eqref{Volterra} becomes, for $k\neq0$,
\begin{align}\label{VolterraF}
    \widehat\rho_F(t,k)
    =\widehat F_{\mathrm{in}}(k,kt)
    -\int_0^t(t-s) e^{-\ak(t-s)} \widehat\rho_F(s,k)\,ds,
\end{align}
where the identity $\widehat\mu\br{k(t-s)}=e^{-\ak(t-s)}$ follows from the choice of the Poisson equilibrium~\eqref{Poisson}. The associated longitudinal field $E_F$ is then defined, as in~\eqref{longitudinal-fields}, by
\begin{align}\label{EF-rhoF}
    \widehat E_F(t,k)=-i\frac{k}{\ak^2} \widehat\rho_F(t,k),
    \qquad k\neq0.
\end{align}
For an integer $s\geq0$, set
\begin{align}\label{eq:longitudinal-data}
    \mathcal H_s\sbr{F_{\mathrm{in}}}:=\sup_{k,\eta\in\R^3}\jb{k,\eta}^s\br{\abs{\widehat F_{\mathrm{in}}(k,\eta)}+\abs{\nabla_\eta\widehat F_{\mathrm{in}}(k,\eta)}}.
\end{align}
This functional is controlled by a weighted Sobolev norm. Indeed, multiplication by $k$ and $\eta$ in Fourier space corresponds to differentiation in $x$ and $v$, respectively, while the Fourier transform maps $L^1$ into $L^\infty$ and $\nabla_\eta\widehat F_{\mathrm{in}}=\widehat{-ivF_{\mathrm{in}}}$. Hence
\begin{align}\label{Hs-data-control}
    \mathcal H_s\sbr{F_{\mathrm{in}}}
    \lesssim\norm{F_{\mathrm{in}}}_{W^{s,1}_{x,v}}+\norm{vF_{\mathrm{in}}}_{W^{s,1}_{x,v}}
    \lesssim\norm{\jb{v}F_{\mathrm{in}}}_{W^{s,1}_{x,v}},
\end{align}
the last inequality by Lemma~\ref{lem:velocity-weights}.

One elementary frequency integral is used here and again in Section~\ref{remainderSection}: for every integer $a\geq0$, every $m>a+2$ and every $t\geq0$,
\begin{align}\label{eq:freq-integral}
    \int_{\R^3}\frac{\ak^{a-1}}{\jb{k,kt}^{m}}\,dk\lesssim\frac{1}{\jb{t}^{a+2}},
\end{align}
by the identity $\jb{k,kt}=\jb{k\jb{t}}$ and the substitution $k\mapsto k/\jb{t}$, the resulting integral converging at the origin because $a\geq0$ and at infinity because $m>a+2$.

\begin{proposition}[Longitudinal decay]\label{ELtheorem}
Let $\jb{v}F_{\mathrm{in}}\in L^1_{x,v}$, let $\rho_F$ satisfy~\eqref{VolterraF} with initial datum $F_{\mathrm{in}}$, with $\widehat\rho_F(\cdot,k)$ continuous on $[0,\infty)$ for each $k\neq0$, and let $E_F$ be given by~\eqref{EF-rhoF}. If $M\geq3$ is an integer and $\mathcal H_M\sbr{F_{\mathrm{in}}}<\infty$, then, for every multi-index $\alpha$ with $\aal<M-2$ and every $t\geq0$,
    \begin{align*}
        \norm{\nabla_x^\alpha E_F(t)}_{L^\infty}\lesssim\frac{\mathcal H_M\sbr{F_{\mathrm{in}}}}{\jb{t}^{2+\aal}}.
    \end{align*}
\end{proposition}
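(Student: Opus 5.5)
The plan is to solve the Volterra equation~\eqref{VolterraF} explicitly at each fixed $k\neq0$ and then to bound $\norm{\ak^{\aal-1}\widehat\rho_F(t,\cdot)}_{L^1_k}$. Since $\abs{\widehat{\nabla_x^\alpha E_F}(t,k)}\le\ak^{\aal-1}\abs{\widehat\rho_F(t,k)}$ by~\eqref{EF-rhoF}, and the Fourier $L^1_k$ norm dominates $L^\infty$, such a bound gives the proposition. Write $a:=\ak$ and $S(t,k):=\widehat F_{\mathrm{in}}(k,kt)$. The kernel $K(t)=te^{-at}$ has Laplace transform $(\lam+a)^{-2}$. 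Hence the resolvent $R$ of $\rho=S-K*\rho$ has transform $\bigl(1+(\lam+a)^2\bigr)^{-1}$, so $R(t)=e^{-at}\sin t$ and
\begin{align*}
\widehat\rho_F(t,k)=S(t,k)-\int_0^t e^{-a(t-s)}\sin(t-s)\,S(s,k)\,ds .
\end{align*}
I would verify this formula directly by substitution, using $R+K*R=K$, which is an elementary identity of exponential--trigonometric integrals. Uniqueness of continuous solutions of a linear Volterra equation with locally bounded kernel (Gronwall) then identifies it with the given $\widehat\rho_F$. This is where the continuity hypothesis is used.

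The direct term is immediate. We have $\abs{S(t,k)}\le\mathcal H_M\jb{k,kt}^{-M}$, and~\eqref{eq:freq-integral} with exponent $\aal$ and $m=M>\aal+2$ gives $\jb{t}^{-2-\aal}$. The resolvent term cannot be estimated in absolute value. For small $\ak$, the factor $e^{-\ak(t-s)}$ decays only on the time scale $\ak^{-1}$, and a crude bound yields only $\jb{t}^{-1-\aal}$, one power short. I expect this to be the main obstacle. The remedy is to exploit the oscillation of $\sin$ by integrating by parts in $s$; this is why $\mathcal H_M$ contains $\nabla_\eta\widehat F_{\mathrm{in}}$. Set
\begin{align*}
H(u):=\frac{e^{-au}(a\sin u+\cos u)}{1+a^2}.
\end{align*}
A direct computation gives $\p_s[H(t-s)]=e^{-a(t-s)}\sin(t-s)$. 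Therefore
\begin{align*}
\int_0^t e^{-a(t-s)}\sin(t-s)S(s)\,ds=\frac{S(t)}{1+a^2}-H(t)S(0)-\int_0^tH(t-s)\,k\cdot\nabla_\eta\widehat F_{\mathrm{in}}(k,ks)\,ds .
\end{align*}

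There are three pieces to bound, after multiplying by $\ak^{\aal-1}$ and integrating in $k$. The first is again controlled by~\eqref{eq:freq-integral}. For the boundary term, $\abs{H(t)S(0)}\lesssim\mathcal H_Me^{-\ak t}\jb{k}^{-M}$, and the substitution $k\mapsto k/\jb{t}$ gives $\int\ak^{\aal-1}e^{-\ak t}\jb{k}^{-M}dk\lesssim\jb{t}^{-2-\aal}$ (for $t\le1$ one simply uses $\jb{k}^{-M}$). For the last integral, use $\abs{H(u)}\lesssim e^{-au}$ and $\abs{k\cdot\nabla_\eta\widehat F_{\mathrm{in}}(k,ks)}\le\ak\,\mathcal H_M\jb{k,ks}^{-M}$, and split the time integral into two ranges.
\begin{itemize}
\item On $s\le t/2$, bound the exponential by $e^{-\ak t/2}$. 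Use $\int_0^{t/2}\jb{k,ks}^{-M}ds\lesssim\jb{k}^{-M+1}\min(t,\ak^{-1})$. This leaves $\int\ak^{\aal-1}e^{-\ak t/2}\jb{k}^{-M+1}dk\lesssim\jb{t}^{-2-\aal}$; the gained factor $\ak$ exactly recovers the missing power.
\item On $s\ge t/2$, use $\jb{k,ks}\ge\jb{k,kt/2}$ and $\int e^{-\ak(t-s)}\ak\,ds\le1$. This leaves $\ak^{\aal-1}\jb{k,kt/2}^{-M}$, which~\eqref{eq:freq-integral} again controls.
\end{itemize}
Collecting the three pieces gives $\norm{\ak^{\aal-1}\widehat\rho_F(t)}_{L^1_k}\lesssim\mathcal H_M\jb{t}^{-2-\aal}$ whenever $\aal<M-2$, which proves the proposition.
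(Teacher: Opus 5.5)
Your argument is essentially the paper's. Both use the explicit resolvent $e^{-\ak t}\sin t$, one integration by parts in $s$ that moves a time derivative onto $\widehat F_{\mathrm{in}}(k,ks)$ and gains the factor $\ak$, a split at $s=t/2$, and~\eqref{eq:freq-integral}. The paper uses the primitive $(\p_s-\ak)\sbr{e^{-\ak(t-s)}\cos(t-s)}=e^{-\ak(t-s)}\sin(t-s)$. With it the free term cancels exactly, and the paper gets the pointwise bound $\abs{\widehat\rho_F(t,k)}\lesssim\mathcal H_M\jb{k,kt}^{-M}$ before integrating in $k$. Your exact primitive $H$, and your identification of the solution by Volterra uniqueness rather than Laplace inversion, are equally valid.

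One estimate, however, is too lossy as written.
\begin{itemize}
\item On $s\le t/2$ you bound $e^{-\ak(t-s)}\le e^{-\ak t/2}$, integrate $\jb{k,ks}^{-M}$ in $s$, and then replace $\min(t,\ak^{-1})$ by $\ak^{-1}$. This costs a power of $\jb{k}$.
\item The resulting claim $\int\ak^{\aal-1}e^{-\ak t/2}\jb{k}^{-M+1}\,dk\lesssim\jb{t}^{-2-\aal}$ fails as $t\to0$ when $\aal=M-3$. After the Jacobian $4\pi r^2$, the radial integrand behaves like $r^{-1}e^{-rt/2}$ for $r=\ak\gg1$, so the integral grows like $\log(1/t)$.
\item This endpoint is allowed by $\aal<M-2$. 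It is exactly the case $M=N+3$, $\aal=N$ used in Theorem~\ref{maintheorem}.
\end{itemize}

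The repair is immediate, in either of two ways.
\begin{itemize}
\item Use $\abs{H(u)}\le e^{-\ak u}/\jb{k}$, which follows from your formula since $\abs{\ak\sin u+\cos u}\le\jb{k}$. This restores the lost power.
\item Or argue as the paper does: bound $\jb{k,ks}\ge\jb{k}$ and integrate the exponential, using $\int_0^{t/2}\ak e^{-\ak(t-s)}\,ds\le e^{-\ak t/2}$. This leaves $\ak^{\aal-1}e^{-\ak t/2}\jb{k}^{-M}$, which is integrable uniformly for $t\ge0$ and gives the decay $\jb{t}^{-2-\aal}$.
\end{itemize}
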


\begin{proof}[Proof of Proposition~\ref{ELtheorem}]
Fix $k\neq0$ and set $G_k(t):=\widehat F_{\mathrm{in}}(k,kt)$. The assumptions on $F_{\mathrm{in}}$ imply that $G_k$ is bounded and continuously differentiable, while standard Volterra theory gives a unique continuous solution of exponential order. For a function of time $h$, write
\begin{align*}
    \widetilde h(\lam):=\int_0^\infty e^{-\lam t}h(t)\,dt
\end{align*}
for its Laplace transform, defined wherever the integral converges absolutely. For quantities depending on $x$, the tilde denotes the Laplace transform of the spatial Fourier transform. Transforming~\eqref{VolterraF} therefore gives, for $\Re\lam$ sufficiently large,
\begin{align*}
    \widetilde\rho_F(\lam,k)
    =\frac{\widetilde G_k(\lam)}{1+(\lam+\ak)^{-2}}
    =\widetilde G_k(\lam)\br{1-\frac{1}{(\lam+\ak)^2+1}}.
\end{align*}
Since the transform of $e^{-\ak t}\sin t$ is $\br{(\lam+\ak)^2+1}^{-1}$, inverting gives
\begin{align*}
    \widehat\rho_F(t,k)=G_k(t)-\int_0^tG_k(s)e^{-\ak(t-s)}\sin(t-s)\,ds.
\end{align*}
As $\br{\p_s-\ak}\br{e^{-\ak(t-s)}\cos(t-s)}=e^{-\ak(t-s)}\sin(t-s)$, integrating by parts in $s$ turns this into
\begin{align}\label{rho-ibp-longitudinal}
    \widehat\rho_F(t,k)=\widehat F_{\mathrm{in}}(k,0)e^{-\ak t}\cos t+\int_0^t\br{\p_s+\ak}\widehat F_{\mathrm{in}}(k,ks)e^{-\ak(t-s)}\cos(t-s)\,ds,
\end{align}
the upper endpoint cancelling the free term $G_k(t)$ and the lower contributing $G_k(0)e^{-\ak t}\cos t$.

The definition of $\mathcal H_M\sbr{F_{\mathrm{in}}}$, together with the identity $\p_s\widehat F_{\mathrm{in}}(k,ks)=k\cdot\nabla_\eta\widehat F_{\mathrm{in}}(k,ks)$, gives
\begin{align*}
    \abs{\widehat F_{\mathrm{in}}(k,0)}\lesssim\frac{\mathcal H_M\sbr{F_{\mathrm{in}}}}{\jb{k}^{M}}\qaq \abs{\br{\p_s+\ak}\widehat F_{\mathrm{in}}(k,ks)}\lesssim\frac{\ak\mathcal H_M\sbr{F_{\mathrm{in}}}}{\jb{k,ks}^{M}}.
\end{align*}
Hence
\begin{align*}
    \abs{\widehat\rho_F(t,k)}\lesssim\mathcal H_M\sbr{F_{\mathrm{in}}}\frac{e^{-\ak t}}{\jb{k}^M}+\mathcal H_M\sbr{F_{\mathrm{in}}}\int_0^t\frac{\ak e^{-\ak(t-s)}}{\jb{k,ks}^M}\,ds,
\end{align*}
where the boundary term satisfies $e^{-\ak t}\jb{k}^{-M}\lesssim\jb{k,kt}^{-M}$. 

Split the integral at $s=t/2$. On $[0,t/2]$, the bound $\jb{k,ks}\geq\jb{k}$ gives
\begin{align*}
    \int_0^{\frac{t}{2}}\frac{\ak e^{-\ak(t-s)}}{\jb{k,ks}^M}\,ds
    \leq\frac{e^{-\frac{\ak t}{2}}}{\jb{k}^M}
    \lesssim\frac{1}{\jb{k,kt}^M},
\end{align*}
where the last inequality follows from $\jb{k,kt}\leq\jb{k}\jb{kt}$ and exponential decay. On $[t/2,t]$, the lower bound $\jb{k,ks}\gtrsim\jb{k,kt}$ gives
\begin{align*}
    \int_{\frac{t}{2}}^t\frac{\ak e^{-\ak(t-s)}}{\jb{k,ks}^M}\,ds\lesssim\frac{1}{\jb{k,kt}^M}\int_{\frac{t}{2}}^t\ak e^{-\ak(t-s)}\,ds\lesssim\frac{1}{\jb{k,kt}^M}.
\end{align*}
Hence $\abs{\widehat\rho_F(t,k)}\lesssim\mathcal H_M\sbr{F_{\mathrm{in}}}\jb{k,kt}^{-M}$ and, by~\eqref{EF-rhoF}, $\abs{\widehat E_F(t,k)}\lesssim\mathcal H_M\sbr{F_{\mathrm{in}}}\ak^{-1}\jb{k,kt}^{-M}$. 

Integrating in $k$ and applying~\eqref{eq:freq-integral} with $a=\aal$, which is admissible because $M>\aal+2$, gives
\begin{align*}
    \norm{\nabla_x^\alpha E_F(t)}_{L^\infty}
    \lesssim\mathcal H_M\sbr{F_{\mathrm{in}}}\int_{\R^3}\frac{\ak^{\aal-1}}{\jb{k,kt}^M}\,dk
    \lesssim\frac{\mathcal H_M\sbr{F_{\mathrm{in}}}}{\jb{t}^{\aal+2}}.\tag*{\qedhere}
\end{align*}
\end{proof}

\section{Coulomb-gauge vector potential and transverse dispersion}\label{sec:wave}

This section derives a closed equation for the Coulomb-gauge vector potential $A$ and passes to its Fourier--Laplace representation. The resulting scalar transverse dispersion function has one real root and a complex-conjugate pair, which are then located and estimated. Here, and throughout, set~$\ep=c^{-1}$.

\subsection{Closed equation for the vector potential}

Recall from the Coulomb-gauge formulation~\eqref{Coulombgauge} of the introduction that $E=-\nabla_x\phi-\ep\p_tA$ and $B=\nabla_x\times A$, with $\nabla_x\cdot A=0$. In particular, $E_L=-\nabla_x\phi$ and $E_T=-\ep\p_tA$. Set $A^0:=A|_{t=0}$ and $A^1:=\p_tA|_{t=0}$. Since
\begin{align*}
    \nabla_x\times B=\nabla_x\times\br{\nabla_x\times A}=\nabla_x(\nabla_x\cdot A)-\Delta_xA=-\Delta_xA,
\end{align*}
and $\ETin=-\ep A^1$, the initial data for the potential satisfy
\begin{align*}
    A^0=-\Delta_x^{-1}\br{\nabla_x\times \Bin},\qquad \ep A^1=-\ETin.
\end{align*}
Introduce the free-transport current of the initial perturbation,
\begin{align}
    S^{\mathrm{cur}}(t,x)
    :=\int_{\R^3}v\fin(x-tv,v)\,dv,\qquad \widehat S^{\mathrm{cur}}(t,k)
    =i\nabla_\eta\widehat\fin(k,kt),
    \label{Scurdef}
\end{align}
with $S^{\mathrm{cur}}(0)=\Jin$. Fourier--Laplace transforms are denoted by a tilde, following the convention of Section~\ref{sectionlongitudinalfielddecay}. The Fourier symbol of the Leray projection $\mathbb P$ of~\eqref{APhi} is $\mathbb P_k=I-\ak^{-2}k\otimes k$.

\begin{lemma}[Fourier--Laplace equation for the vector potential]\label{lem:transverse-wave-equation}
Let $\mu$ be given by~\eqref{Poisson}, and let $A$ be the Coulomb-gauge vector potential of a solution of~\eqref{linVM} whose initial datum satisfies $\jb{v}\fin\in L^1_{x,v}$. Then, for $k\neq0$ and $\Re\lam$ sufficiently large, its Fourier--Laplace transform $\widetilde A$ satisfies
\begin{align}\label{eq:wave-final}
    D(\lam,\ak)\widetilde A(\lam,k)
    =\ep\mathbb P_k\widetilde S^{\mathrm{cur}}(\lam,k)
    +\ep^2\br{\lam\widehat A^0(k)+\widehat A^1(k)+\frac{\widehat A^0(k)}{\lam+\ak}},
\end{align}
where the transverse dispersion function is
\begin{align}
    D(\lam,r):=\ep^2\lam^2+r^2+\frac{\ep^2\lam}{\lam+r}.
    \label{eq:wave-dispersion-function}
\end{align}
\end{lemma}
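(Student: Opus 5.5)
The plan is to derive~\eqref{eq:wave-final} by computing the Leray-projected current $\mathbb P_k\widehat{j[f]}$ explicitly from the Duhamel formula~\eqref{eq:vlasov-duhamel}, substituting it into the wave equation~\eqref{APhi} for $A$, and then taking the Laplace transform in time. The point is that, after projection, the current consists of the free-transport source $\mathbb P_k\widehat S^{\mathrm{cur}}$ plus a memory term in $E_T=-\ep\p_tA$ alone. Together with the wave operator, this closes the equation for $A$.

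\emph{Step 1: the current from the Duhamel formula.} Since $\nabla_\eta\widehat f=\widehat{-ivf}$, we have $\widehat{j[f]}(t,k)=i\nabla_\eta\widehat f(t,k,0)$. Differentiate~\eqref{eq:vlasov-duhamel} in $\eta$ and evaluate at $\eta=0$.
\begin{itemize}
\item The first term gives $i\nabla_\eta\widehat\fin(k,kt)=\widehat S^{\mathrm{cur}}(t,k)$, by~\eqref{Scurdef}.
\item For the second, write $\zeta=\eta+k(t-s)$ and use $\widehat\mu(\zeta)=e^{-\abs{\zeta}}$. Then
\begin{align*}
\nabla_\zeta\br{\widehat E\cdot\zeta\, e^{-\abs{\zeta}}}=\widehat E\, e^{-\abs{\zeta}}-(\widehat E\cdot\zeta)\frac{\zeta}{\abs{\zeta}}e^{-\abs{\zeta}}.
\end{align*}
At $\zeta=k(t-s)$ the last term is parallel to $k$, so it is annihilated by $\mathbb P_k$.
\item Also $\mathbb P_k\widehat E=\widehat E_T=-\ep\p_t\widehat A$.
\end{itemize}
Combining these, and noting that the factors $i\cdot(-i)=1$, gives
\begin{align*}
\mathbb P_k\widehat{j[f]}(t,k)=\mathbb P_k\widehat S^{\mathrm{cur}}(t,k)-\ep\int_0^te^{-\ak(t-s)}\p_s\widehat A(s,k)\,ds.
\end{align*}
Differentiating under the velocity integral is justified by $\jb{v}\fin\in L^1_{x,v}$ and the integrability of $\jb{v}\nabla_v\mu$. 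The latter holds because $\abs{\nabla_v\mu}\lesssim\jb{v}^{-5}$, so $\eta\widehat\mu(\eta)$ is $C^1$ with bounded derivative.

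\emph{Step 2: the Laplace transform.} The Fourier form of~\eqref{APhi} is $\ep^2\p_t^2\widehat A+\ak^2\widehat A=\ep\,\mathbb P_k\widehat{j[f]}$. This equation itself follows from the Amp\`ere--Maxwell law, $\nabla_x\times B=-\Delta_xA$ and $E=-\nabla_x\phi-\ep\p_tA$, after projecting out the longitudinal part. We transform it using three facts:
\begin{itemize}
\item $\widetilde{\p_t^2\widehat A}=\lam^2\widetilde A-\lam\widehat A^0-\widehat A^1$;
\item the transform of the convolution is $(\lam+\ak)^{-1}\br{\lam\widetilde A-\widehat A^0}$;
\item the cross terms collect into $\frac{\ep^2\lam}{\lam+\ak}\widetilde A$ on the left and $\frac{\ep^2}{\lam+\ak}\widehat A^0$ on the right.
\end{itemize}
Moving everything involving $\widetilde A$ to the left then yields exactly $D(\lam,\ak)\widetilde A$ with $D$ as in~\eqref{eq:wave-dispersion-function}, and the right-hand side of~\eqref{eq:wave-final}.

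\emph{Main obstacle.} The algebra above is routine; the care lies in the justification. One must check that $\widehat A(\cdot,k)$ and $\p_t\widehat A(\cdot,k)$ have at most exponential growth, so that the Laplace transforms converge for $\Re\lam$ large. I would obtain this from a Gr\"onwall bound on the coupled system at fixed $k$. That system consists of:
\begin{itemize}
\item a second-order ODE with bounded forcing, since $\abs{\widehat S^{\mathrm{cur}}}\le\norm{v\fin}_{L^1}$;
\item a convolution kernel bounded by $1$.
\end{itemize}
This bound gives exponential order, and it also justifies exchanging the Laplace integral with the time convolution.
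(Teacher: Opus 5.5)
Your proposal is correct and follows essentially the same route as the paper. You project Amp\`ere's law to obtain the forced wave equation, then compute $\mathbb P_k\widehat{j[f]}$ from the Duhamel formula, where the radial gradient term is parallel to $k$ and the remaining term gives $-\ep\int_0^t e^{-\ak(t-s)}\p_s\widehat A\,ds$. You get exponential order from the closed Volterra equation via Gr\"onwall and finish with the Laplace transform. The only cosmetic difference is that the paper handles the point $\xi_s=0$ through $\widehat\mu(\xi)=1+O(\abs{\xi})$, whereas you note that $\eta\widehat\mu(\eta)$ is $C^1$; the two observations are equivalent.
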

\begin{proof}
The Leray projection annihilates gradients and fixes divergence-free fields, so that $\mathbb PE=E_T=-\ep\p_tA$ and $\mathbb P\br{\nabla_x\times B}=\nabla_x\times B$. Applying $\mathbb P$ to Amp\`ere's law $\nabla_x\times B=\ep j[f]+\ep\p_tE$ and recalling $\nabla_x\times B=-\Delta_xA$ therefore gives the forced wave equation $\ep^2\p_t^2A-\Delta_xA=\ep \mathbb Pj[f]$ of~\eqref{APhi}, which in Fourier variables reads
\begin{align*}
    \br{\ep^2\p_t^2+\ak^2}\widehat A(t,k)=\ep\mathbb P_k\widehat{j[f]}(t,k).
\end{align*}
The current on the right is computed from $\widehat{j[f]}(t,k)=i\nabla_\eta\widehat f(t,k,0)$ and the Duhamel formula~\eqref{eq:vlasov-duhamel}, in which $\xi_s:=\eta+k(t-s)$ is written for the shifted variable. Since $\nabla_\eta\xi_s=I$,
\begin{align*}
    \nabla_\eta\sbr{\widehat E(s,k)\cdot\xi_s\,\widehat\mu(\xi_s)}
    =\widehat\mu(\xi_s)\widehat E(s,k)+\br{\widehat E(s,k)\cdot\xi_s}\nabla_\eta\widehat\mu(\xi_s)
\end{align*}
wherever $\xi_s\neq0$. The profile $\widehat\mu(\eta)=e^{-\abs{\eta}}$ is not differentiable at the origin, but $\widehat\mu(\xi)=1+O(\abs{\xi})$, so $\xi\mapsto(\widehat E(s,k)\cdot\xi)\widehat\mu(\xi)$ is differentiable at $\xi=0$ with gradient $\widehat\mu(0)\widehat E(s,k)$. The identity therefore persists at $\xi_s=0$.

Evaluate at $\eta=0$, so that $\xi_s=k(t-s)$, which vanishes only at the endpoint $s=t$. As $\widehat\mu$ is radial, $\nabla_\eta\widehat\mu(k(t-s))$ is parallel to $k$ for $s<t$ and the second term is annihilated by $\mathbb P_k$, while in the first $\mathbb P_k\widehat E(s,k)=\widehat E_T(s,k)=-\ep\p_s\widehat A(s,k)$. Hence
\begin{align*}
    \mathbb P_k\widehat{j[f]}(t,k)
    =\mathbb P_k\widehat S^{\mathrm{cur}}(t,k)-\ep\int_0^t\widehat\mu(\ak(t-s))\p_s\widehat A(s,k)\,ds,
\end{align*}
and substituting into the forced wave equation gives the closed transverse Volterra equation with memory
\begin{align}\label{eq:wave-closed}
\br{\ep^2\p_t^2+\ak^2}\widehat A(t,k)
=\ep\mathbb P_k\widehat S^{\mathrm{cur}}(t,k)-\ep^2\int_0^t\widehat\mu(\ak(t-s))\p_s\widehat A(s,k)\,ds.
\end{align}

For each $k\neq0$, the source $\widehat S^{\mathrm{cur}}(t,k)=i\nabla_\eta\widehat f_{\mathrm{in}}(k,kt)$ is bounded and continuous by the assumption $\jb{v}\fin\in L^1_{x,v}$. Standard Volterra theory gives a unique global solution of~\eqref{eq:wave-closed} with initial data $(\widehat A^0(k),\widehat A^1(k))$, and Gr\"onwall's inequality shows that $\widehat A(\cdot,k)$ and $\p_t\widehat A(\cdot,k)$ are of exponential order. Equation~\eqref{eq:wave-closed} may therefore be transformed in time for $\Re\lam$ sufficiently large. There the second derivative contributes the initial data through $\int_0^\infty e^{-\lam t}\p_t^2\widehat A\,dt=\lam^2\widetilde A-\lam\widehat A^0-\widehat A^1$, while the convolution becomes the product of
\begin{align*}
    \int_0^\infty e^{-\lam t}\widehat\mu(\ak t)\,dt=\frac{1}{\lam+\ak},
    \qquad
    \int_0^\infty e^{-\lam t}\p_t\widehat A(t,k)\,dt=\lam\widetilde A(\lam,k)-\widehat A^0(k),
\end{align*}
the first by $\widehat\mu(\eta)=e^{-\abs{\eta}}$ from~\eqref{Poisson}. Therefore
\begin{align*}
    \br{\ep^2\lam^2+\ak^2}\widetilde A(\lam,k)-\ep^2\br{\lam\widehat A^0(k)+\widehat A^1(k)}
    =\ep\mathbb P_k\widetilde S^{\mathrm{cur}}(\lam,k)-\frac{\ep^2}{\lam+\ak}\br{\lam\widetilde A(\lam,k)-\widehat A^0(k)},
\end{align*}
which rearranges to~\eqref{eq:wave-final} with $D$ as in~\eqref{eq:wave-dispersion-function}.
\end{proof}

\subsection{Roots of the transverse dispersion relation}
As $D$ is rational in $\lam$, defining
\begin{align}\label{P}
    P(\lam,\ak) := (\lam+\ak)D(\lam,\ak)
    = (\lam+\ak)\br{\ep^2\lam^2+\ak^2}+\ep^2\lam,
\end{align}
gives a cubic in $\lam$ with real coefficients. The value $P(-\ak,\ak)=-\ep^2\ak\neq0$ shows that the factor $\lam+\ak$ contributes no zero, so the roots of $P(\cdot,\ak)$ coincide with those of $D(\cdot,\ak)$. Differentiating in $\lam$, and writing $r=\ak$,
\begin{align}\label{eq:P-derivative}
    \p_\lam P(\lam,r)= 3\ep^2\br{\lam+\frac{r}{3}}^2 + r^2\br{1-\frac{\ep^2}{3}}+\ep^2>0
\end{align}
for all $r>0$ and $\ep\in(0,1]$, so $P(\cdot,r)$ is strictly increasing on $\R$. Since $P(\lam,r)\to\pm\infty$ as $\lam\to\pm\infty$, strict monotonicity gives exactly one real root $\lam_0(r)$, necessarily simple, since $\p_\lam P>0$ excludes a repeated root. The remaining two roots are non-real and, the coefficients of $P(\cdot,r)$ being real, occur as a complex conjugate pair $\lam_\pm(r)=\gamma(r)\pm i\omega(r)$ with $\omega(r)>0$. Thus $P(\cdot,r)$ has three distinct roots, all of which are simple, and the implicit function theorem yields their smooth dependence on $r$. These roots depend on $\ep$ through $D$, although this dependence is suppressed in the notation.

\begin{lemma}[Roots of the dispersion relation]\label{lem:real-root}
For each $r>0$, the dispersion relation $D(\cdot,r)=0$ has a unique real root $\lam_0(r)$ and a complex conjugate pair
\begin{align*}
    \lam_\pm(r)=\gamma(r)\pm i\omega(r),
\end{align*}
with
\begin{align*}
    \lam_0\in C^\infty\br{(0,\infty),\R},
    \qquad
    \lam_\pm\in C^\infty\br{(0,\infty),\C}.
\end{align*}
For $\rho>0$, set
\begin{align*}
    \theta(\rho):=\frac{1}{\ep}\br{\lam_0(\ep\rho)+\ep\rho},
    \qquad
    \zeta(\rho):=\frac{1}{\ep}\gamma(\ep\rho),
    \qquad
    \beta(\rho):=\omega(\ep\rho).
\end{align*}
There exists $\ep_0\in(0,1]$ such that, for every integer $n\geq0$, the bounds below hold for all $\ep\in(0,\ep_0]$ and $\rho>0$, with constants $C_n>0$ independent of $\ep$ and $\rho$.
\begin{enumerate}[label=(\roman*)]
\item\label{realroot} The rescaled correction $\theta$ for the real root satisfies
\begin{align}\label{realrootupperandlowerbound}
    \frac{\rho}{2\jb{\rho}^2}
    \leq\theta(\rho)
    \leq\frac{\rho}{\jb{\rho}^2},\qquad\abs{\theta^{(n)}(\rho)}
    \leq\frac{C_n}{\jb{\rho}^{n+1}}.
\end{align}

\item\label{complexroot-zeta} The rescaled real part $\zeta$ of the complex pair satisfies
\begin{align*}
    -\frac{\rho}{2\jb{\rho}^2}
    \leq\zeta(\rho)
    \leq-\frac{\rho}{4\jb{\rho}^2},\qquad\abs{\zeta^{(n)}(\rho)}
    \leq\frac{C_n}{\jb{\rho}^{n+1}}.
\end{align*}

\item\label{complexroot-beta} The rescaled oscillation frequency $\beta$ of the complex pair satisfies, for some absolute constants $c_\beta,C_\beta>0$,
\begin{align*}
    c_\beta\jb{\rho}\leq\beta(\rho)\leq C_\beta\jb{\rho},\quad c_\beta\frac{\rho}{\jb{\rho}}\leq\beta'(\rho)\leq C_\beta\frac{\rho}{\jb{\rho}},\quad \frac{c_\beta}{\jb{\rho}^3}\leq\beta''(\rho)\leq\frac{C_\beta}{\jb{\rho}^3},
\end{align*}and
\begin{align*}
    \abs{\beta^{(n)}(\rho)}
    \leq\frac{C_n}{\jb{\rho}^{n-1}}.
\end{align*}
Moreover, the radial function $k\mapsto\beta(\ak)$ is smooth on $\R^3$.
\end{enumerate}
\end{lemma}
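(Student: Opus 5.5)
The plan is to remove $\ep$ from the leading structure by rescaling, and then read off all three roots from exact algebraic identities (Vieta's relations) instead of perturbation series. With $r=\ep\rho$, dividing $P$ from~\eqref{P} by $\ep^2$ gives the monic cubic
\begin{align*}
    Q(\lam,\rho):=(\lam+\ep\rho)(\lam^2+\rho^2)+\lam=\lam^3+\ep\rho\lam^2+(1+\rho^2)\lam+\ep\rho^3 .
\end{align*}
Its roots are $\lam_0(\ep\rho)$ and $\lam_\pm(\ep\rho)$, and at $\ep=0$ they are $0,\pm i\jb{\rho}$. Existence, simplicity and smoothness of the roots are already established above via~\eqref{eq:P-derivative}, so only the quantitative bounds remain.

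\emph{Step 1: the real root.} Substitute $\lam_0=\ep(\theta-\rho)$ into $Q=0$. Using $\lam_0+\ep\rho=\ep\theta$ and dividing by $\ep$ gives the exact fixed-point relation
\begin{align*}
    \theta=\frac{\rho}{1+\rho^2+\ep^2(\rho-\theta)^2}.
\end{align*}
Since $\lam_0<0$ (because $Q(0)=\ep\rho^3>0$ and $Q$ is increasing), we get $\theta<\rho$, and also $\theta>0$. The upper bound $\theta\leq\rho/\jb{\rho}^2$ in~\eqref{realrootupperandlowerbound} is then immediate. The same bound gives $0\leq\rho-\theta\leq\rho$, so the denominator is at most $(1+\ep^2)\jb{\rho}^2\leq2\jb{\rho}^2$, which yields the lower bound.

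For the derivative bounds, apply the implicit function theorem to
\begin{align*}
    G(\theta,\rho)=\theta\br{1+\rho^2+\ep^2(\rho-\theta)^2}-\rho .
\end{align*}
Here $\p_\theta G=1+\rho^2+\ep^2(\rho-\theta)(\rho-3\theta)\gtrsim\jb{\rho}^2$ for $\ep\leq\ep_0$. All coefficients of $G$ are symbols in $\rho$ of order at most $2$, uniformly in $\ep\leq1$. Differentiating $G(\theta(\rho),\rho)=0$ repeatedly and inducting on $n$ then gives $\abs{\theta^{(n)}}\lesssim\jb{\rho}^{-n-1}$.

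\emph{Step 2: the real part $\zeta$.} The sum of the roots of $Q$ is $-\ep\rho$, so $2\gamma=-\ep\rho-\lam_0=-\ep\theta$. Hence $\zeta=-\theta/2$ exactly, and part~\ref{complexroot-zeta} follows directly from part~\ref{realroot}.

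\emph{Step 3: the frequency $\beta$.} The product of the roots is $-\ep\rho^3$, so
\begin{align*}
    \gamma^2+\beta^2=\frac{\rho^3}{\rho-\theta}.
\end{align*}
Write $\theta/\rho=(1+Y)^{-1}$ with $Y=\rho^2\br{1+\ep^2(1-\theta/\rho)^2}$. This gives the exact formula
\begin{align*}
    h(\rho):=\beta(\rho)^2=\frac{1+Y}{1+\ep^2(1-\theta/\rho)^2}-\frac{\ep^2\theta^2}{4}.
\end{align*}
The fixed-point relation is invariant under $(\theta,\rho)\mapsto(-\theta,-\rho)$. So $\theta$ extends to an odd smooth function of $\rho$, $\theta/\rho$ is even and smooth, and $h$ is an even smooth function with $h\geq c\jb{\rho}^2>0$. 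Consequently $\beta=\sqrt h$ is even and smooth, and $k\mapsto\beta(\ak)$ is smooth on $\R^3$.

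Next write $h=\rho^2+m$. From the formula and Step 1, $m$ is a symbol of order $0$ with $m\geq1/2$. Its derivatives carry either a factor $\ep^2$ or extra decay: $\abs{m^{(n)}}\lesssim\ep^2\jb{\rho}^{-n-1}$ for $n\geq1$, because $1-\theta/\rho$ is $1$ minus an order $-2$ symbol. The bounds on $\beta$ and $\beta'=h'/(2\sqrt h)$, and the higher-order bounds $\abs{\beta^{(n)}}\lesssim\jb{\rho}^{1-n}$, then follow from symbol calculus.

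\emph{Main obstacle: the two-sided bound on $\beta''$.} This must be uniform in $\ep$. I would use the identity
\begin{align*}
    \beta''\beta^3=\tfrac12hh''-\tfrac14h'^2=m+\tfrac12(\rho^2+m)m''-\rho m'-\tfrac14m'^2 .
\end{align*}
The leading term satisfies $m\geq1/2$. The remaining terms are $O(\ep^2)$ by the symbol bounds on $m'$ and $m''$, since $\rho^2\jb{\rho}^{-3}$ and $\rho\jb{\rho}^{-2}$ are bounded. Choosing $\ep_0$ small therefore gives $\beta''\beta^3\sim1$, and with $\beta\sim\jb{\rho}$ this yields $\beta''\sim\jb{\rho}^{-3}$. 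Once $\beta''>0$ is known, the lower bound on $\beta'$ follows from $\beta'(0)=0$ by integrating $\beta''$. The delicate point is checking that the $\ep^2$-corrections in $m$ really decay like $\jb{\rho}^{-n-1}$ after $n$ derivatives, and do not merely remain bounded. I would verify this by expanding $1-\theta/\rho$ as $1$ minus an order $-2$ symbol.
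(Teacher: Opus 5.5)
Your proposal is correct and follows essentially the paper's route. Both arguments use the same rescaled equation $\theta\,(1+\rho^2+\ep^2(\rho-\theta)^2)=\rho$ with implicit-differentiation induction, Vieta's sum relation to get $\zeta=-\theta/2$, an odd extension of $\theta$ for the smoothness of $k\mapsto\beta(\abs{k})$, and, since your $m$ equals $1+\ep^2\theta(\tfrac34\theta-\rho)$ (the paper's pairwise-product Vieta expression), the same $\beta^3\beta''$ identity; your only deviations are reading the two-sided $\theta$ bound directly off the fixed-point form and obtaining the $\beta'$ bounds by integrating $\beta''$ from $\beta'(0)=0$ via $\int_0^\rho\jb{t}^{-3}\,dt=\rho/\jb{\rho}$ rather than from $\beta\beta'=\rho+\tfrac12 m'$.
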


The proof of Lemma~\ref{lem:real-root} is deferred to Appendix~\ref{appendixroots}. Translating its estimates from the rescaled variable $\rho=r/\ep$ to the original frequency variable $r$ gives the following corollary.

\begin{corollary}[Root bounds in the original variables]\label{cor:roots}
There exists $\ep_0\in(0,1]$ such that, for every integer $n\geq0$, the bounds below hold for all $\ep\in(0,\ep_0]$ and $r>0$, with constants $C_n>0$ independent of $\ep$ and $r$.
\begin{enumerate}[label=(\roman*)]
\item\label{realrootitem} The real root satisfies $-r<\lam_0(r)<0$ and
\begin{align*}
\lam_0(r)\leq-\frac{1}{2}\min\set{\frac{r^3}{\ep^2},r}.
\end{align*}

\item\label{realpartitem} The real and imaginary parts of the complex pair $\lam_\pm=\gamma\pm i\omega$ satisfy
\begin{align*}
    -\frac{r}{2\jb{\frac{r}{\ep}}^2}
    \leq\gamma(r)
    \leq-\frac{r}{4\jb{\frac{r}{\ep}}^2},
    \qquad
    r^n\abs{\p_r^n\gamma(r)}\leq C_n\frac{r}{\jb{\frac{r}{\ep}}^2},\qquad\abs{\omega(r)}\gtrsim\jb{\frac{r}{\ep}}.
\end{align*}

\item\label{imaginarypartitem} The complex roots and their reciprocals satisfy
\begin{align*}
r^n\abs{\p_r^n\lam_\pm(r)}
\leq C_n\jb{\frac{r}{\ep}},
\qquad
r^n\abs{\p_r^n\sbr{\frac{1}{\lam_\pm(r)}}}
\leq\frac{C_n}{\jb{\frac{r}{\ep}}}.
\end{align*}
\end{enumerate}
\end{corollary}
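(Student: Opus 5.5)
The corollary is a direct translation of Lemma~\ref{lem:real-root} through the rescaling $r=\ep\rho$. I take $\ep_0$ to be the one given there and write $\rho=r/\ep$, so that $\jb{\rho}=\jb{r/\ep}$. The three root functions are then recovered as
\begin{align*}
    \lam_0(r)=\ep\theta(r/\ep)-r,\qquad \gamma(r)=\ep\zeta(r/\ep),\qquad \omega(r)=\beta(r/\ep).
\end{align*}
I also use the chain-rule identities $r^n\p_r^n[g(r/\ep)]=\rho^ng^{(n)}(\rho)$ and $\rho^n\jb{\rho}^{-n-1}\leq\jb{\rho}^{-1}$.

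\emph{Item~\ref{realrootitem}.} From~\eqref{realrootupperandlowerbound}, $\theta>0$, which gives $\lam_0>-r$. For the upper bound, $\ep\theta(\rho)\leq\ep\rho\jb{\rho}^{-2}=r\jb{\rho}^{-2}$, so
\begin{align*}
    \lam_0(r)\leq -r\br{1-\jb{\rho}^{-2}}=-\frac{r\rho^2}{1+\rho^2}.
\end{align*}
I then split into two cases. If $\rho\geq1$, this is at most $-r/2$. If $\rho\leq1$, it is at most $-r\rho^2/2=-r^3/(2\ep^2)$. In both cases $\lam_0\leq-\frac12\min\{r^3/\ep^2,r\}<0$.

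\emph{Item~\ref{realpartitem}.} Multiplying the bounds of Lemma~\ref{lem:real-root}\ref{complexroot-zeta} by $\ep$ gives $-\frac{r}{2\jb{\rho}^2}\leq\gamma\leq-\frac{r}{4\jb{\rho}^2}$ directly. For the derivatives, $r^n\p_r^n\gamma=\ep\rho^n\zeta^{(n)}(\rho)$. When $n\geq1$, its modulus is at most $C_n\ep\rho^n\jb{\rho}^{-n-1}$, and I need this to be $\lesssim\ep\rho\jb{\rho}^{-2}=r\jb{\rho}^{-2}$, i.e.\ $\rho^{n-1}\jb{\rho}^{-n+1}\lesssim1$, which holds. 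The case $n=0$ is the bound just proved. Finally, $\omega=\beta(\rho)\geq c_\beta\jb{\rho}$ by Lemma~\ref{lem:real-root}\ref{complexroot-beta}.

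\emph{Item~\ref{imaginarypartitem}.} I write $\lam_\pm=\ep\zeta(\rho)\pm i\beta(\rho)$. Then
\begin{align*}
    r^n\p_r^n\lam_\pm=\ep\rho^n\zeta^{(n)}\pm i\rho^n\beta^{(n)}.
\end{align*}
The first term is $\lesssim\ep\jb{\rho}^{-1}\leq\jb{\rho}$. For the second term, when $n=0,1$ I use the sharp bounds $\beta\lesssim\jb{\rho}$ and $\rho\beta'\lesssim\rho^2/\jb{\rho}\leq\jb{\rho}$. When $n\geq2$, $\rho^n\jb{\rho}^{1-n}\leq\jb{\rho}$.

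For the reciprocal, I first record $|\lam_\pm|\geq\omega\gtrsim\jb{\rho}$ from item~\ref{realpartitem}. I then apply Fa\`a di Bruno to $1/\lam$: the quantity $r^n\p_r^n(1/\lam)$ is a finite sum of terms of the form
\begin{align*}
    \lam^{-1-j}\prod_{i=1}^{j}\br{r^{m_i}\p_r^{m_i}\lam},\qquad \textstyle\sum_i m_i=n.
\end{align*}
The operator $r\p_r$ distributes correctly because of the homogeneous weighting. Each such term is then bounded by $\jb{\rho}^{-1-j}\jb{\rho}^{j}=\jb{\rho}^{-1}$.

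The only step needing care is this last Fa\`a di Bruno bookkeeping with the weights $r^{m_i}$. It works because the bound on $r^m\p_r^m\lam_\pm$ is uniform in $m$, with the same size $\jb{\rho}$ as $|\lam_\pm|$ itself.
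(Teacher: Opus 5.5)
Your proposal is correct and follows the paper's proof: it uses the same rescaling $r=\ep\rho$, the same lower bound $\rho^2/(1+\rho^2)\geq\frac12\min\{\rho^2,1\}$ for the real root, and the same chain-rule translation of the bounds on $\zeta$ and $\beta$. The only difference is cosmetic: you get the reciprocal bound from Fa\`a di Bruno's formula, whereas the paper inducts by differentiating $\lam_\pm\lam_\pm^{-1}=1$. Both versions rest on $\abs{\lam_\pm}\gtrsim\jb{r/\ep}$ and the uniform bound on $r^m\p_r^m\lam_\pm$.
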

\begin{proof}
By the definitions of Lemma~\ref{lem:real-root}, the real root satisfies $\lam_0(r)=-r+\ep\theta(r/\ep)$, so~\eqref{realrootupperandlowerbound} gives $0<\ep\theta(r/\ep)\leq r\jb{r/\ep}^{-2}<r$ and hence $-r<\lam_0(r)<0$, together with
\begin{align*}
    \lam_0(r)\leq -r+\frac{r}{\jb{\frac{r}{\ep}}^2}
    =-r\frac{\br{\frac{r}{\ep}}^2}{1+\br{\frac{r}{\ep}}^2}.
\end{align*}
The elementary inequality $x^2/(1+x^2)\geq\min\set{x^2,1}/2$ at $x=r/\ep$ then yields the two cases of~\ref{realrootitem}. The remaining bounds are rescalings of Lemma~\ref{lem:real-root}\ref{complexroot-zeta} and~\ref{complexroot-beta} through $\gamma(r)=\ep\zeta(r/\ep)$ and $\omega(r)=\beta(r/\ep)$. For $n=0$ they follow directly from the pointwise bounds there. For $n\geq1$, the chain rule gives $\p_r^n\gamma(r)=\ep^{1-n}\zeta^{(n)}(r/\ep)$ and $\p_r^n\omega(r)=\ep^{-n}\beta^{(n)}(r/\ep)$, so $r^n\abs{\p_r^n\gamma(r)}\lesssim r(r/\ep)^{n-1}\jb{r/\ep}^{-n-1}\leq r\jb{r/\ep}^{-2}$ and $r^n\abs{\p_r^n\omega(r)}\lesssim(r/\ep)^{n}\jb{r/\ep}^{1-n}\leq\jb{r/\ep}$. The first estimate in~\ref{imaginarypartitem} follows from $\lam_\pm=\gamma\pm i\omega$ and $r\leq\jb{r/\ep}$. Since $\abs{\lam_\pm(r)}\geq\abs{\omega(r)}\gtrsim\jb{r/\ep}$, the reciprocal estimate holds for $n=0$. The cases $n\geq1$ follow inductively by differentiating $\lam_\pm\lam_\pm^{-1}=1$ and using the first estimate in~\ref{imaginarypartitem}.
\end{proof}

\section{Transverse resolvent kernels and field decomposition}\label{resolventSection}
The Fourier--Laplace equation for the Coulomb-gauge vector potential is now inverted. The resulting three-mode representation of $A$ splits the transverse fields $B=\nabla_x\times A$ and $E_T=-\ep\p_tA$ into an oscillatory part carried by the complex pair and a non-oscillatory remainder.

\subsection{The kernels and the solution representation}\label{sec:representation}

To invert~\eqref{eq:wave-final}, introduce the resolvent kernel
\begin{align*}
    \widetilde H(\lam,\ak):=\frac{1}{D(\lam,\ak)},
\end{align*}
which is meromorphic on $\C$, with simple poles at the three roots $\lam_0,\lam_\pm$ of $D(\cdot,\ak)$. Since
\begin{align*}
    P(\lam,\ak)=(\lam+\ak)D(\lam,\ak)=\ep^2(\lam-\lam_0)(\lam-\lam_+)(\lam-\lam_-),
\end{align*}
partial fractions give
\begin{align}\label{defnR}
    \widetilde H(\lam,\ak)=\frac{\lam+\ak}{P(\lam,\ak)}
    =\sum_{j\in\set{0,\pm}}\frac{R_j(\ak)}{\lam-\lam_j(\ak)},
    \qquad
    R_j(\ak):=\frac{\lam_j(\ak)+\ak}{\p_\lam P(\lam_j(\ak),\ak)}.
\end{align}
Since $\widetilde H(\cdot,\ak)$ is rational in $\lam$, decays at least like $\abs{\lam}^{-2}$ as $\abs{\lam}\to\infty$ and has all its poles in $\set{\Re\lam<0}$ by Corollary~\ref{cor:roots}\ref{realrootitem} and~\ref{realpartitem}, its Bromwich integral may be closed to the left and evaluated by residues, which yields the time-domain kernel
\begin{align*}
    H(t,\ak):=\sum_{j\in\set{0,\pm}}R_j(\ak)e^{\lam_j(\ak)t}.
\end{align*}

Dividing~\eqref{eq:wave-final} by $D$ shows that $\widetilde H$ carries the free-transport current $S^{\mathrm{cur}}$ and the field data $A^0$, $A^1$. The equilibrium boundary term generated by $A^0$ enters with the additional factor $(\lam+\ak)^{-1}$ and is therefore carried by the second kernel
\begin{align}\label{eq:Hmu}
    \widetilde H^{\mu}(\lam,\ak):=\frac{\widetilde H(\lam,\ak)}{\lam+\ak}=\frac{1}{P(\lam,\ak)}.
\end{align}
Since $\widetilde H(\lam,\ak)=(\lam+\ak)/P(\lam,\ak)$ and $P(-\ak,\ak)=-\ep^2\ak\neq0$, the pole of the factor $(\lam+\ak)^{-1}$ is cancelled by the simple zero of $\widetilde H$ at $\lam=-\ak$, so $\widetilde H^{\mu}$ has poles only at the three roots of $D$. Partial fractions and the residue theorem give the time-domain kernel
\begin{align}\label{eq:Hmu-time}
    H^{\mu}(t,\ak):=\sum_{j\in\set{0,\pm}}R^{\mu}_j(\ak)e^{\lam_j(\ak)t},
    \qquad
    R^{\mu}_j(\ak):=\frac{1}{\p_\lam P(\lam_j(\ak),\ak)}.
\end{align}

\begin{lemma}[Solution representation]\label{lem:solution-representation}
Let $A$ be as in Lemma~\ref{lem:transverse-wave-equation}. Then, for $k\neq0$ and $t\geq0$,
\begin{align}\label{eq:solution-representation}
    \widehat A(t,k)
    =\ep H(t,\ak)*_t\mathbb P_k\widehat S^{\mathrm{cur}}(t,k)
    +\ep^2\br{\p_tH(t,\ak)\widehat A^0(k)
    +H(t,\ak)\widehat A^1(k)
    +H^{\mu}(t,\ak)\widehat A^0(k)},
\end{align}
where $*_t$ denotes time convolution over $[0,t]$.
\end{lemma}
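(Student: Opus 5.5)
The plan is to divide \eqref{eq:wave-final} by $D(\lam,\ak)$ for $\Re\lam$ large and identify each term as the Laplace transform of the corresponding term in \eqref{eq:solution-representation}. The first step is to record the transforms of the kernels. By \eqref{defnR}, $\widetilde H(\lam,\ak)=\sum_j R_j(\lam-\lam_j)^{-1}$, and this is exactly the Laplace transform of $H(t,\ak)=\sum_jR_je^{\lam_jt}$ for $\Re\lam>\max_j\Re\lam_j$. Likewise, $\widetilde H^\mu$ is the transform of $H^\mu$ by \eqref{eq:Hmu-time}. Dividing \eqref{eq:wave-final} by $D$ gives
\begin{align*}
\widetilde A=\ep\widetilde H\,\mathbb P_k\widetilde S^{\mathrm{cur}}+\ep^2\br{\lam\widetilde H\widehat A^0+\widetilde H\widehat A^1+\widetilde H^\mu\widehat A^0}.
\end{align*}
This is valid wherever $D(\lam,\ak)\neq0$, which holds for $\Re\lam$ large because all roots lie in $\set{\Re\lam<0}$ (Corollary~\ref{cor:roots}).

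The second step identifies the individual terms. The product $\widetilde H\,\mathbb P_k\widetilde S^{\mathrm{cur}}$ is the transform of the time convolution $H*_t\mathbb P_k\widehat S^{\mathrm{cur}}$, since both factors are of at most exponential order. The source is in fact bounded and continuous for $\jb{v}\fin\in L^1_{x,v}$, as noted in the proof of Lemma~\ref{lem:transverse-wave-equation}, so the convolution theorem applies. For the term $\lam\widetilde H$, I would use that the transform of $\p_tH$ is $\lam\widetilde H-H(0,\ak)$, so I need $H(0,\ak)=\sum_jR_j=0$. This follows from the large-$\lam$ expansion of the partial fractions:
\begin{align*}
\widetilde H=\sum_j R_j\lam^{-1}+O(\lam^{-2}),
\end{align*}
while $\widetilde H=(\lam+\ak)/P=O(\lam^{-2})$ because $P$ is cubic with leading coefficient $\ep^2$. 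Comparing the two expansions gives $\sum_jR_j=0$. Hence $\lam\widetilde H\widehat A^0$ is the transform of $\p_tH\,\widehat A^0$.

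The final step is to conclude by uniqueness of the Laplace transform. Both sides of \eqref{eq:solution-representation} are continuous functions of exponential order in $t$: the left side by the Gr\"onwall bound in Lemma~\ref{lem:transverse-wave-equation}, and the right side because the kernels are finite exponential sums and the source is bounded. Their transforms agree on a right half-plane, so Lerch's theorem gives equality for every $t\geq0$ and each fixed $k\neq0$.

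The only genuinely delicate point is the vanishing $\sum_jR_j=0$, which is what allows $\lam\widetilde H$ to be read as $\p_tH$ without a delta contribution; the large-$\lam$ expansion above is the argument I would use for it. The remaining steps are routine once exponential order has been justified.
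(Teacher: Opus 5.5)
Your proposal is correct and follows essentially the same route as the paper. You divide \eqref{eq:wave-final} by $D$, invert the source term by the convolution theorem, and justify reading $\lambda\widetilde H$ as the transform of $\partial_t H$ through $\sum_j R_j=0$, obtained by comparing the large-$\lambda$ expansions; your explicit exponential-order and Laplace-uniqueness remarks simply spell out what the paper leaves implicit in ``inverting the transformed equation''.
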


\begin{proof}
Dividing~\eqref{eq:wave-final} by $D$ and using~\eqref{eq:Hmu} gives
\begin{align*}
    \widetilde A(\lam,k)=\widetilde H(\lam,\ak)
    \sbr{\ep\mathbb P_k\widetilde S^{\mathrm{cur}}(\lam,k)
    +\ep^2\br{\lam\widehat A^0(k)+\widehat A^1(k)}}+\ep^2\widetilde H^{\mu}(\lam,\ak)\widehat A^0(k).
\end{align*}
The only inverse transform requiring justification is that of $\lam\widetilde H(\lam,\ak)$. From the partial-fraction expansion, as $\abs{\lam}\to\infty$,
\begin{align*}
    \widetilde H(\lam,\ak)
    =\frac{1}{\lam}\sum_{j\in\set{0,\pm}}R_j(\ak)
    +O\br{\abs{\lam}^{-2}}.
\end{align*}
Since $P(\lam,\ak)$ is cubic in $\lam$, the definition of $\widetilde H$ also gives
\begin{align*}
    \widetilde H(\lam,\ak)
    =\frac{\lam+\ak}{P(\lam,\ak)}
    =O\br{\abs{\lam}^{-2}}.
\end{align*}
Comparison yields $\sum_jR_j(\ak)=0$, so the time-domain representation gives $H(0,\ak)=0$. Therefore
\begin{align*}
    \mathcal L\sbr{\p_tH}(\lam,\ak)
    =\lam\widetilde H(\lam,\ak)-H(0,\ak)
    =\lam\widetilde H(\lam,\ak).
\end{align*}
The source term $\ep\widetilde H \mathbb P_k\widetilde S^{\mathrm{cur}}$ inverts by the convolution theorem, the remaining products invert term by term, and inverting the transformed equation gives~\eqref{eq:solution-representation}.
\end{proof}

For each $j\in\set{0,\pm}$, write
\begin{align*}
H_j(t,\ak):=R_j(\ak)e^{\lam_j(\ak)t},
\qquad
H^{\mu}_j(t,\ak):=R^{\mu}_j(\ak)e^{\lam_j(\ak)t},
\end{align*}
so that $H=\sum_jH_j$ and $H^{\mu}=\sum_jH^{\mu}_j$, with $\p_tH_j=\lam_jH_j$ and hence $\p_tH=\sum_j\lam_jH_j$.

\subsection{Residue bounds}
The following estimates give the uniform bound on the residues.

\begin{lemma}[Residue $R_0$]\label{lem:R0-bounds}
There exists $\ep_0\in(0,1]$ such that, for all $\ep\in(0,\ep_0]$ and $r>0$, the residue $R_0$ satisfies
\begin{align}\label{eq:R0-global-bound}
\abs{R_0(r)}
\lesssim\frac{r}{\ep^2\jb{\frac{r}{\ep}}^4}
\lesssim\min\set{\frac{r}{\ep^2},\frac{\ep^2}{r^3}}.
\end{align}
\end{lemma}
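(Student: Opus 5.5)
We work directly from the definition $R_0(r)=(\lam_0(r)+r)/\p_\lam P(\lam_0(r),r)$ in~\eqref{defnR} and estimate the numerator and denominator separately, using the rescaled quantities of Lemma~\ref{lem:real-root}.

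\emph{Numerator.} By definition $\lam_0(r)+r=\ep\theta(r/\ep)$. The upper bound in~\eqref{realrootupperandlowerbound} then gives
\begin{align*}
0<\lam_0(r)+r\leq \ep\,\frac{r/\ep}{\jb{r/\ep}^2}=\frac{r}{\jb{r/\ep}^2}.
\end{align*}

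\emph{Denominator.} We bound $\p_\lam P(\lam_0,r)$ from below by $\gtrsim\ep^2\jb{r/\ep}^2=\ep^2+r^2$. The exact formula~\eqref{eq:P-derivative} contains the nonnegative square $3\ep^2(\lam_0+r/3)^2$, so dropping it leaves $r^2(1-\ep^2/3)+\ep^2\geq \tfrac23 r^2+\ep^2$ for $\ep\leq1$. This is $\gtrsim r^2+\ep^2$, and no root information is needed here.

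\emph{Combining.} Dividing the two bounds gives
\begin{align*}
\abs{R_0(r)}\lesssim \frac{r}{\jb{r/\ep}^2(\ep^2+r^2)}=\frac{r}{\ep^2\jb{r/\ep}^4},
\end{align*}
since $\ep^2+r^2=\ep^2\jb{r/\ep}^2$. For the second inequality in~\eqref{eq:R0-global-bound} we use $\jb{r/\ep}^4\geq\max\{1,(r/\ep)^4\}$. This gives $\abs{R_0(r)}\lesssim r/\ep^2$ and $\abs{R_0(r)}\lesssim r\ep^4/(\ep^2r^4)=\ep^2/r^3$.

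The argument is short and has no real obstacle. The only point requiring care is the numerator: the cancellation in $\lam_0+r$ is not visible from Corollary~\ref{cor:roots}\ref{realrootitem} alone. That corollary gives only $\lam_0+r<r$, which would lose the factor $\jb{r/\ep}^{-2}$. The sharp upper bound on $\theta$ from Lemma~\ref{lem:real-root}\ref{realroot} is therefore essential. The smallness $\ep\leq\ep_0\leq1$ is used only to keep $1-\ep^2/3$ bounded below.
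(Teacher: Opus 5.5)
Your proof is correct and is essentially the paper's argument. The paper likewise bounds the numerator by $\ep\theta(\rho)\le r\jb{r/\ep}^{-2}$, and it bounds the denominator from below by completing the square. The only difference is that the paper writes the denominator in the rescaled variable as $\p_\lam P(\lam_0,r)=\ep^2 b(\rho)$ with $b(\rho)\ge\frac23\jb{\rho}^2$, which is exactly your bound $\p_\lam P(\lam_0,r)\ge\frac23(r^2+\ep^2)$ obtained from~\eqref{eq:P-derivative}.
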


\begin{proof}
In the scaled variable $\rho:=r/\ep$, substituting $\lam_0=\ep(\theta(\rho)-\rho)$ and $r=\ep\rho$ gives
\begin{align*}
    \p_\lam P(\lam_0,r)&=3\ep^2\lam_0^2+2\ep^2r\lam_0+r^2+\ep^2=\ep^2 b(\rho),\\
    b(\rho)&:=1+\rho^2+\ep^2\br{\rho^2-4\rho\theta+3\theta^2},
\end{align*}
so that, by~\eqref{defnR},
\begin{align}\label{eq:R0-scaled}
    R_0(r)=\frac{\ep\theta(\rho)}{\ep^2 b(\rho)}=\frac{1}{\ep}\frac{\theta(\rho)}{b(\rho)}.
\end{align}
Completing the square,
\begin{align*}
    \rho^2-4\rho\theta+3\theta^2=3\br{\theta-\frac23\rho}^2-\frac13\rho^2\geq-\frac13\rho^2,
\end{align*}
so that, since $\ep\leq1$,
\begin{align}\label{eq:b-lower}
    b(\rho)\geq1+\rho^2-\frac{\ep^2}{3}\rho^2\geq\frac23(1+\rho^2)=\frac23\jb{\rho}^2.
\end{align}
With $\abs{\theta(\rho)}\leq\rho/\jb{\rho}^2$ from Lemma~\ref{lem:real-root}\ref{realroot}, \eqref{eq:R0-scaled} and \eqref{eq:b-lower} give
\begin{align*}
    \abs{R_0(r)}
    \lesssim\frac{1}{\ep}\frac{\frac{\rho}{\jb{\rho}^2}}{\jb{\rho}^2}
    =\frac{\rho}{\ep\jb{\rho}^4}
    =\frac{r}{\ep^2\jb{\frac{r}{\ep}}^4},
\end{align*}
the first bound in \eqref{eq:R0-global-bound}. The two cases follow from $\jb{r/\ep}\geq\max\set{r/\ep,1}$.
\end{proof}

In the scaled variable $\rho=r/\ep$, the derivative bounds at the complex pair rest on the root differences
\begin{align}\label{eq:pq}
    p_\pm(\rho):=\lam_\pm(\ep\rho)+\ep\rho,
    \qquad
    q_\pm(\rho):=\lam_\pm(\ep\rho)-\lam_0(\ep\rho),
    \qquad
    \lam_\pm(\ep\rho)-\lam_\mp(\ep\rho)=\pm2i\beta(\rho),
\end{align}
with $\beta$ as in Lemma~\ref{lem:real-root}. The identity and $q_-(\rho)=\overline{q_+(\rho)}$ follow from $\lam_\pm=\gamma\pm i\omega$ with $\gamma$ and $\lam_0$ real. The bounds below serve again for the equilibrium residues in Lemma~\ref{lem:Rmu-bounds}.

\begin{lemma}[Bounds on the root differences]\label{lem:factorisations}
There exists $\ep_0\in(0,1]$ such that, for all $\ep\in(0,\ep_0]$ and $\rho>0$, the root differences satisfy
\begin{align}\label{eq:q-beta-lower}
    \abs{q_\pm(\rho)}\gtrsim\jb{\rho},
    \qquad
    \abs{\beta(\rho)}\gtrsim\jb{\rho}.
\end{align}
Moreover, for every pair of integers $m,n\geq0$,
\begin{align}
    \abs{p_\pm^{(m)}(\rho)}+\abs{q_\pm^{(m)}(\rho)}
    &\lesssim\jb{\rho}^{1-m},\label{eq:Rpm-factor-derivs}\\
    \abs{\br{\frac{1}{q_\pm}}^{(n)}(\rho)}+\abs{\br{\frac{1}{\beta}}^{(n)}(\rho)}
    &\lesssim\jb{\rho}^{-n-1}.\label{eq:reciprocal-derivs}
\end{align}
The implicit constants are independent of $\ep$ and $\rho$.
\end{lemma}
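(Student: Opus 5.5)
The plan is to reduce everything to the scaled quantities of Lemma~\ref{lem:real-root}. Since $\lam_0(\ep\rho)=\ep(\theta(\rho)-\rho)$ and $\lam_\pm(\ep\rho)=\ep\zeta(\rho)\pm i\beta(\rho)$, we have
\begin{align*}
    p_\pm(\rho)=\ep\zeta(\rho)+\ep\rho\pm i\beta(\rho),
    \qquad
    q_\pm(\rho)=\ep\zeta(\rho)-\ep\theta(\rho)+\ep\rho\pm i\beta(\rho).
\end{align*}
The lower bound on $\beta$ in~\eqref{eq:q-beta-lower} is then immediate from Lemma~\ref{lem:real-root}\ref{complexroot-beta}, which gives $\beta(\rho)\geq c_\beta\jb{\rho}$. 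For $q_\pm$, bound the modulus from below by the imaginary part, $\abs{q_\pm(\rho)}\geq\abs{\Im q_\pm(\rho)}=\beta(\rho)\geq c_\beta\jb{\rho}$. No smallness of $\ep$ is needed here beyond the range $\ep\leq\ep_0$ in which Lemma~\ref{lem:real-root} holds.

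For the derivative bounds~\eqref{eq:Rpm-factor-derivs}, differentiate the two formulas term by term.
\begin{itemize}
\item The terms $\ep\zeta^{(m)}$ and $\ep\theta^{(m)}$ are bounded by $C_m\jb{\rho}^{-m-1}\leq C_m\jb{\rho}^{1-m}$, using parts~\ref{realroot} and~\ref{complexroot-zeta} and $\ep\leq1$.
\item The term $\beta^{(m)}$ is bounded by $C_m\jb{\rho}^{1-m}$ from part~\ref{complexroot-beta}.
\item The linear term $\ep\rho$ contributes $\ep\rho\leq\jb{\rho}$ when $m=0$, the constant $\ep\leq1\lesssim\jb{\rho}^{0}$ when $m=1$, and nothing when $m\geq2$.
\end{itemize}
Summing these gives $\abs{p_\pm^{(m)}}+\abs{q_\pm^{(m)}}\lesssim\jb{\rho}^{1-m}$ for all $m$.

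For the reciprocals~\eqref{eq:reciprocal-derivs}, treat $g\in\set{q_\pm,\beta}$ uniformly. Each such $g$ satisfies $\abs{g}\gtrsim\jb{\rho}$ and $\abs{g^{(m)}}\lesssim\jb{\rho}^{1-m}$. By Faà di Bruno, $(1/g)^{(n)}$ is a finite linear combination of terms
\begin{align*}
    g^{-1-j}\prod_{i=1}^{j}g^{(m_i)},
    \qquad m_1+\dots+m_j=n,\quad m_i\geq1.
\end{align*}
Each such term is bounded by $\jb{\rho}^{-1-j}\prod_{i}\jb{\rho}^{1-m_i}=\jb{\rho}^{-1-n}$, which is the claimed rate. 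Alternatively, one can argue by induction on $n$, differentiating $g\cdot(1/g)=1$ with the Leibniz rule.

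The only delicate point is the cancellation between $-\ep\rho$ inside $\lam_\pm$ and $+\ep\rho$ in $p_\pm$, and likewise in $q_\pm$. It is essential to write everything through $\theta$ and $\zeta$, whose derivatives decay, rather than differentiating $\lam_\pm$ naively. The residual linear term $\ep\rho$ is harmless because the target rate $\jb{\rho}^{1-m}$ already allows linear growth at $m=0$ and a constant at $m=1$. All constants inherit independence of $\ep$ from Lemma~\ref{lem:real-root}.
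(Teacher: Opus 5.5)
Your proof is correct and is essentially the paper's argument: the paper likewise writes $p_\pm=\ep u\pm i\beta$ and $q_\pm=\ep w\pm i\beta$ with $u=\rho-\frac12\theta$ and $w=\rho-\frac32\theta$ (it uses $\zeta=-\frac12\theta$, whereas you invoke Lemma~\ref{lem:real-root}\ref{complexroot-zeta} directly). It then bounds $\abs{q_\pm}$ below by $\beta$, differentiates term by term, and handles the reciprocals by induction on $(1/q_\pm)q_\pm=1$. One small aside: your closing remark about a cancellation is off, since $\Re\lam_\pm(\ep\rho)=\ep\zeta(\rho)$ contains no $-\ep\rho$ term; the linear term $\ep\rho$ in $p_\pm$ and $q_\pm$ simply survives, and, as you say, it is harmless.
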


The proof, deferred to Appendix~\ref{appendixroots}, expresses $p_\pm$ and $q_\pm$ through $\theta$ and $\beta$ by Vieta's formulas.

\begin{lemma}[Residues $R_\pm$]\label{lem:Rpm-bounds}
There exists $\ep_0\in(0,1]$ such that, for all $\ep\in(0,\ep_0]$ and $r>0$,
\begin{align*}
    r^n\abs{\p_r^nR_\pm(r)}\lesssim\frac{1}{\ep^2\jb{\frac{r}{\ep}}},
    \quad n\geq0.
\end{align*}
\end{lemma}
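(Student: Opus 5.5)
We work in the scaled variable $\rho=r/\ep$ and factor the residue through the root differences of Lemma~\ref{lem:factorisations}. Since $P(\lam,r)=\ep^2(\lam-\lam_0)(\lam-\lam_+)(\lam-\lam_-)$, evaluating $\p_\lam P$ at the simple root $\lam_\pm$ gives
\begin{align*}
    \p_\lam P(\lam_\pm,r)=\ep^2(\lam_\pm-\lam_0)(\lam_\pm-\lam_\mp)=\ep^2q_\pm(\rho)\br{\pm2i\beta(\rho)}.
\end{align*}
Hence, by~\eqref{defnR} and~\eqref{eq:pq},
\begin{align*}
    R_\pm(r)=\frac{\lam_\pm(r)+r}{\p_\lam P(\lam_\pm(r),r)}
    =\frac{\pm1}{2i\ep^2}\,p_\pm(\rho)\,\frac{1}{q_\pm(\rho)}\,\frac{1}{\beta(\rho)}.
\end{align*}
So $R_\pm(r)=\ep^{-2}g_\pm(r/\ep)$, where $g_\pm:=\pm(2i)^{-1}p_\pm q_\pm^{-1}\beta^{-1}$ is a product of three factors with $\ep$-independent symbol-type bounds.

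For $n=0$, the bound $\abs{p_\pm}\lesssim\jb{\rho}$ from~\eqref{eq:Rpm-factor-derivs} and $\abs{q_\pm^{-1}}+\abs{\beta^{-1}}\lesssim\jb{\rho}^{-1}$ from~\eqref{eq:reciprocal-derivs} give $\abs{g_\pm(\rho)}\lesssim\jb{\rho}^{-1}$. This is the claimed $\abs{R_\pm(r)}\lesssim\ep^{-2}\jb{r/\ep}^{-1}$.

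For $n\geq1$, the chain rule gives $r^n\p_r^nR_\pm(r)=\ep^{-2}\rho^ng_\pm^{(n)}(\rho)$. By the Leibniz rule, $g_\pm^{(n)}$ is a sum of terms $p_\pm^{(a)}(1/q_\pm)^{(b)}(1/\beta)^{(c)}$ with $a+b+c=n$. By~\eqref{eq:Rpm-factor-derivs} and~\eqref{eq:reciprocal-derivs}, each term is bounded by
\begin{align*}
    \jb{\rho}^{1-a}\,\jb{\rho}^{-b-1}\,\jb{\rho}^{-c-1}=\jb{\rho}^{-1-n}.
\end{align*}
Thus $\rho^n\abs{g_\pm^{(n)}(\rho)}\lesssim\rho^n\jb{\rho}^{-1-n}\leq\jb{\rho}^{-1}$, which is the claim, with constants independent of $\ep$ through Lemma~\ref{lem:factorisations}.

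The only step needing care is the factorisation of $\p_\lam P$ at the complex roots, including the convention $\lam_\pm-\lam_\mp=\pm2i\beta$. Everything else is symbol calculus resting on the already established Lemma~\ref{lem:factorisations}. That lemma is where the real difficulty (uniform lower bounds on $q_\pm$ and $\beta$) lies, and it is assumed here.
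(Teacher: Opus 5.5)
Your proposal is correct and follows essentially the same route as the paper: you factor $\p_\lam P(\lam_\pm,r)=\pm2i\ep^2\beta q_\pm$, write $R_\pm=\frac{\mp i}{2\ep^2}\,p_\pm\,q_\pm^{-1}\beta^{-1}$ in the scaled variable (your $\pm1/(2i)$ is the same constant), and then combine the Leibniz rule with Lemma~\ref{lem:factorisations} and $r^n\ep^{-n}=\rho^n\leq\jb{\rho}^n$. The proof is complete as written, with $\ep_0$ inherited from Lemma~\ref{lem:factorisations}.
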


\begin{proof}
Since $P(\lam,r)=\ep^2(\lam-\lam_0)(\lam-\lam_+)(\lam-\lam_-)$, the identities~\eqref{eq:pq} give, at $r=\ep\rho$,
\begin{align*}
    \p_\lam P(\lam_\pm,r)
    =\ep^2(\lam_\pm-\lam_0)(\lam_\pm-\lam_\mp)
    =\pm2i\ep^2 \beta(\rho) q_\pm(\rho),
    \qquad
    \lam_\pm+r=p_\pm(\rho),
\end{align*}
so that
\begin{align*}
    R_\pm(r)=\frac{\mp i}{2\ep^2} p_\pm(\rho)\frac{1}{q_\pm(\rho)}\frac{1}{\beta(\rho)}.
\end{align*}
The three factors on the right each lose one power of $\jb{\rho}$ per derivative, by~\eqref{eq:Rpm-factor-derivs} and~\eqref{eq:reciprocal-derivs}. By the Leibniz rule, the $n$-th $\rho$-derivative of the product $p_\pm(1/q_\pm)(1/\beta)$ is a sum of terms $p_\pm^{(i)}(1/q_\pm)^{(j)}(1/\beta)^{(k)}$ with $i+j+k=n$, each of size $\jb{\rho}^{1-i}\jb{\rho}^{-1-j}\jb{\rho}^{-1-k}=\jb{\rho}^{-n-1}$ up to constants independent of $\ep$ and $\rho$. Since each derivative $\p_r$ of a function of $\rho=r/\ep$ produces a factor $\ep^{-1}$, and $r^n\ep^{-n}=\rho^n\leq\jb{\rho}^n$, the claimed bound follows.
\end{proof}

The same root-difference bounds give sharper decay for the equilibrium residues $R_j^{\mu}$ of the kernel $H^\mu$, defined in~\eqref{eq:Hmu-time}, since these residues contain no factor $\lam_j+r$ in the numerator.

\begin{lemma}[Equilibrium residues]\label{lem:Rmu-bounds}
There exists $\ep_0\in(0,1]$ such that, for all $\ep\in(0,\ep_0]$ and $r>0$,
\begin{align*}
    \abs{R^{\mu}_0(r)}\lesssim\frac{1}{\ep^2\jb{\frac{r}{\ep}}^2},
    \qquad
    r^{n}\abs{\p_r^{n}R^{\mu}_\pm(r)}\lesssim\frac{1}{\ep^2\jb{\frac{r}{\ep}}^2},
    \quad n\geq0.
\end{align*}
\end{lemma}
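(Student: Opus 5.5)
The plan is to follow the proofs of Lemmas~\ref{lem:R0-bounds} and~\ref{lem:Rpm-bounds}, now without the numerator factor $\lam_j+r$. By~\eqref{eq:Hmu-time}, $R^\mu_j=1/\p_\lam P(\lam_j,r)$. Thus, for the real root, the computation in the proof of Lemma~\ref{lem:R0-bounds} gives $\p_\lam P(\lam_0,r)=\ep^2 b(\rho)$ at $r=\ep\rho$. The lower bound~\eqref{eq:b-lower}, $b(\rho)\geq\frac23\jb{\rho}^2$, then yields
\begin{align*}
    \abs{R^\mu_0(r)}=\frac{1}{\ep^2 b(\rho)}\leq\frac{3}{2\ep^2\jb{\rho}^2}=\frac{3}{2\ep^2\jb{\frac{r}{\ep}}^2},
\end{align*}
which is the first claim. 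No derivative bounds are required for $R^\mu_0$, so this case is complete. Note also that $\p_\lam P>0$ by~\eqref{eq:P-derivative}, so no sign issue arises.

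For the complex pair, I use the factorisation from the proof of Lemma~\ref{lem:Rpm-bounds}, namely $\p_\lam P(\lam_\pm,r)=\pm2i\ep^2\beta(\rho)q_\pm(\rho)$. This gives
\begin{align*}
    R^\mu_\pm(r)=\frac{\mp i}{2\ep^2}\,\frac{1}{q_\pm(\rho)}\,\frac{1}{\beta(\rho)},\qquad \rho=\frac{r}{\ep}.
\end{align*}
By~\eqref{eq:reciprocal-derivs}, $\abs{(1/q_\pm)^{(j)}(\rho)}\lesssim\jb{\rho}^{-1-j}$ and $\abs{(1/\beta)^{(k)}(\rho)}\lesssim\jb{\rho}^{-1-k}$, uniformly in $\ep\in(0,\ep_0]$. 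The Leibniz rule then bounds the $n$-th $\rho$-derivative of the product by $\jb{\rho}^{-n-2}$. Converting to $r$ through $\p_r^n=\ep^{-n}\p_\rho^n$ and $r^n\ep^{-n}=\rho^n\leq\jb{\rho}^n$ gives
\begin{align*}
    r^n\abs{\p_r^nR^\mu_\pm(r)}\lesssim\frac{\rho^n}{\ep^2\jb{\rho}^{n+2}}\leq\frac{1}{\ep^2\jb{\frac{r}{\ep}}^2}.
\end{align*}

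There is no real obstacle here. All the substantive input, namely the uniform lower bounds $\abs{q_\pm}\gtrsim\jb{\rho}$ and $\beta\gtrsim\jb{\rho}$ together with their symbol-type derivative bounds, is contained in Lemma~\ref{lem:factorisations}. The only points to check are that $\ep_0$ is taken as the minimum of the thresholds in Lemmas~\ref{lem:real-root} and~\ref{lem:factorisations}, and that all constants remain independent of $\ep$.
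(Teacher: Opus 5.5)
Your proof is correct and follows the paper's argument. For the complex pair you use the same factorisation $\p_\lam P(\lam_\pm,r)=\pm2i\ep^2\beta q_\pm$ and the Leibniz rule with~\eqref{eq:reciprocal-derivs}, exactly as in Lemma~\ref{lem:Rpm-bounds} with $p_\pm$ removed. The only variation is for $R^\mu_0$, where you reuse the lower bound~\eqref{eq:b-lower} on $b(\rho)$ from Lemma~\ref{lem:R0-bounds}; the paper instead writes $\p_\lam P(\lam_0,r)=\ep^2q_+q_-=\ep^2\abs{q_\pm}^2$ and applies~\eqref{eq:q-beta-lower}, and both routes give the same bound $\ep^{-2}\jb{r/\ep}^{-2}$.
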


\begin{proof}
Set $\rho:=r/\ep$. As $\p_\lam P(\lam_j)=\ep^2\prod_{j'\neq j}(\lam_j-\lam_{j'})$, the identities~\eqref{eq:pq} give
\begin{align*}
    R^{\mu}_0=\frac{1}{\p_\lam P(\lam_0)}=\frac{1}{\ep^2 q_+q_-}=\frac{1}{\ep^2\abs{q_\pm}^2},
    \qquad
    R^{\mu}_\pm=\frac{1}{\p_\lam P(\lam_\pm)}=\frac{1}{\ep^2 q_\pm(\pm2i\beta)}
    =\frac{\mp i}{2\ep^2 \beta q_\pm}.
\end{align*}
The lower bounds~\eqref{eq:q-beta-lower} give $\abs{R^{\mu}_0}\lesssim\ep^{-2}\jb{\rho}^{-2}$ and $\abs{R^{\mu}_\pm}\lesssim\ep^{-2}\jb{\rho}^{-2}$. The complex-pair derivative bounds follow as in Lemma~\ref{lem:Rpm-bounds}, with the factor $p_\pm$ absent.
\end{proof}

\subsection{Decomposition of the transverse field}\label{sec:decomposition}
The transverse electric and magnetic fields are decomposed into oscillatory and remainder parts. The convolutions of the complex-root kernels with the free-transport current are integrated by parts three times in time. Each integration supplies a factor $\lam_\pm^{-1}$ in the amplitude and a time derivative on the current, and both the resulting frequency decay of the amplitudes and the time decay of $\p_t^3S^{\mathrm{cur}}$ are needed for the convolution estimate in Section~\ref{complexrootSection}.

Throughout, the roots $\lam_*$ and the residues $R_*$, $R^{\mu}_*$, $*\in\set{0,\pm}$, act on functions of $x$ as Fourier multipliers with symbols $\lam_*(\ak)$, $R_*(\ak)$ and $R^{\mu}_*(\ak)$, as does $e^{\lam_*t}$ with symbol $e^{\lam_*(\ak)t}$.

\begin{proposition}[Oscillatory--remainder decomposition] 
\label{prop:transverse-decomposition}
Assume $\jb{v}^4\fin\in L^1_{x,v}$. Then the transverse fields split into oscillatory and remainder parts,
\begin{align}\label{eq:field-split}
    E_T=\ETosc+E_T^{\mathrm r},
    \qquad
    B=\Bosc+B^{\mathrm r},
\end{align}
where $\ETosc:=E^{\mathrm{osc}}_{T,+}+E^{\mathrm{osc}}_{T,-}$ and $\Bosc:=B^{\mathrm{osc}}_++B^{\mathrm{osc}}_-$, and these fields admit the representations
\begin{align}\label{eq:osc-explicit}
    \begin{split}E^{\mathrm{osc}}_{T,\pm}&:=\br{\tau^0_\pm +\tau^\mu_\pm}e^{\lam_\pm t}A^0+\tau^1_\pm e^{\lam_\pm t} A^1 +\sum_{j=0}^2\tau^{\mathrm{cur},j+1}_\pm e^{\lam_\pm t}\p_t^jS^{\mathrm{cur}}(0)+\tau^{\mathrm{cur},3}_\pm e^{\lam_\pm t}*_t\p_t^3 S^{\mathrm{cur}}(t),\\
    B^{\mathrm{osc}}_{\pm}&:=\br{\upsilon^0_\pm +\upsilon^\mu_\pm}e^{\lam_\pm t}A^0+\upsilon^1_\pm e^{\lam_\pm t} A^1 +\sum_{j=0}^2\upsilon^{\mathrm{cur},j+1}_\pm e^{\lam_\pm t}\p_t^jS^{\mathrm{cur}}(0)+\upsilon^{\mathrm{cur},3}_\pm e^{\lam_\pm t}*_t\p_t^3 S^{\mathrm{cur}}(t),
    \end{split}
\end{align}
and
\begin{align}\label{eq:rem-explicit}\begin{split}
E_T^{\mathrm r}&:=\br{\tau_0^0+\tau_0^\mu}e^{\lam_0t}A^0+\tau_0^1e^{\lam_0t}A^1 +\tau^{\mathrm{cur},1}_0\br{e^{\lam_0t}S^{\mathrm{cur}}(0)+e^{\lam_0t}*_t \p_tS^{\mathrm{cur}}(t)}\\
&\qquad-\sum_\pm\sum_{j=1}^2\tau_\pm^{\mathrm{cur},j+1}\p_t^{j}S^{\mathrm{cur}}(t),\\
    B^{\mathrm r}&:=\br{\upsilon_0^0+\upsilon_0^\mu} e^{\lam_0t}A^0+\upsilon_0^1e^{\lam_0t}A^1+\upsilon_0^{\mathrm{cur},0}e^{\lam_0t}*_t S^{\mathrm{cur}}(t)-\sum_\pm\sum_{j=0}^2\upsilon_\pm^{\mathrm{cur},j+1}\p_t^{j}S^{\mathrm{cur}}(t).
    \end{split}
\end{align}
For the indices occurring above and $*\in\set{0,\pm}$, the amplitudes are given by
\begin{align*}
    \tau^0_*=-\ep^3\lam_*^2R_*,\quad \tau^1_*=-\ep^3\lam_*R_*,\quad \tau^\mu_*=-\ep^3\lam_*R^\mu_*,\quad \tau^{\mathrm{cur},j}_*=-\ep^2\lam_*^{1-j}R_*\mathbb P,
\end{align*}
\begin{align*}
    \upsilon_*^0=\ep^2\lam_*R_*\nabla_x\times,\;\; \upsilon_*^1=\ep^2 R_*\nabla_x\times,\;\; \upsilon^\mu_*=\ep^2R^\mu_*\nabla_x\times,\;\; \upsilon^{\mathrm{cur},j}_*=\ep\lam_*^{-j}R_*\nabla_x\times.
\end{align*}
The potential itself splits as $A=A^{\mathrm{osc}}_++A^{\mathrm{osc}}_-+A^{\mathrm r}$, where $A^{\mathrm{osc}}_\pm$ and $A^{\mathrm r}$ are defined by the formulas for $B^{\mathrm{osc}}_\pm$ and $B^{\mathrm r}$ with $\nabla_x\times$ replaced by $\mathbb P$ in the amplitudes $\upsilon$, so that $B^{\mathrm{osc}}_\pm=\nabla_x\times A^{\mathrm{osc}}_\pm$ and $B^{\mathrm r}=\nabla_x\times A^{\mathrm r}$.
\end{proposition}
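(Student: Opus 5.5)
The plan is to derive everything from the solution representation of Lemma~\ref{lem:solution-representation}, working at a fixed frequency $k\neq0$. In Fourier variables $\widehat E_T=-\ep\p_t\widehat A$ and $\widehat B=ik\times\widehat A$, so the identities~\eqref{eq:field-split}--\eqref{eq:rem-explicit} are pointwise algebraic identities in $k$ for each $t\geq0$. The proof therefore has three parts: differentiate~\eqref{eq:solution-representation} in time, split every kernel into its three modes $H_j=R_je^{\lam_jt}$ and $H^\mu_j=R^\mu_je^{\lam_jt}$, and integrate the mode convolutions by parts.

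\textbf{Step 1: field data.} Because $\p_tH_j=\lam_jH_j$, the field-data part of $E_T$ is
\begin{align*}
    -\ep^3\bigl(\p_t^2H\,\widehat A^0+\p_tH\,\widehat A^1+\p_tH^\mu\,\widehat A^0\bigr)
    =\sum_j\bigl(-\ep^3\lam_j^2R_j\widehat A^0-\ep^3\lam_jR_j\widehat A^1-\ep^3\lam_jR^\mu_j\widehat A^0\bigr)e^{\lam_jt}.
\end{align*}
These are the amplitudes $\tau^0_*$, $\tau^1_*$, $\tau^\mu_*$. The magnetic amplitudes $\upsilon^0_*$, $\upsilon^1_*$, $\upsilon^\mu_*$ come out the same way, with $\nabla_x\times$ in place of $-\ep\p_t$. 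The modes $j=\pm$ go into the oscillatory part and $j=0$ into the remainder.

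\textbf{Step 2: current term of $E_T$.} Recall $H(0,\ak)=0$, i.e.\ $\sum_jR_j=0$, from the proof of Lemma~\ref{lem:solution-representation}. Hence
\begin{align*}
    \p_t\bigl(H*_tg\bigr)=H(0)g(t)+(\p_tH)*_tg=(\p_tH)*_tg,
\end{align*}
and the current part of $\widehat E_T$ is $-\ep^2\sum_j\lam_jR_je^{\lam_jt}*_t\mathbb P_k\widehat S^{\mathrm{cur}}$.

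\textbf{Step 3: integration by parts.} For $\lam\neq0$ and $g\in C^1$ there is the elementary identity
\begin{align*}
    e^{\lam t}*_tg=\lam^{-1}\bigl(e^{\lam t}g(0)-g(t)+e^{\lam t}*_tg'\bigr).
\end{align*}
For $E_T$, apply it three times to each complex mode $j=\pm$ and once to the real mode $j=0$. After $m$ applications the coefficient of $e^{\lam_jt}\p_t^{m-1}S^{\mathrm{cur}}(0)$ is $-\ep^2\lam_j^{1-m}R_j\mathbb P=\tau^{\mathrm{cur},m}_j$. The non-exponential boundary term is $-\tau^{\mathrm{cur},m}_j\p_t^{m-1}S^{\mathrm{cur}}(t)$.

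The first-step non-exponential terms combine to $\ep^2\bigl(\sum_jR_j\bigr)\mathbb P S^{\mathrm{cur}}(t)=0$. This cancellation is exactly why the real mode is integrated by parts once and why the $\p_t^jS^{\mathrm{cur}}$ sum in $E_T^{\mathrm r}$ starts at $j=1$. The remaining boundary terms $-\tau^{\mathrm{cur},j+1}_\pm\p_t^jS^{\mathrm{cur}}(t)$ for $j=1,2$ are not oscillatory, so they go into $E_T^{\mathrm r}$.

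\textbf{Step 4: current term of $B$.} The current part of $\widehat B$ is $\ep\sum_jR_j\,ik\times e^{\lam_jt}*_t\widehat S^{\mathrm{cur}}$, with no time derivative. Here the real mode is left as a convolution, giving the term with $\upsilon^{\mathrm{cur},0}_0$. The complex modes are integrated by parts three times, producing the coefficients $\ep\lam_\pm^{-j}R_\pm\nabla_x\times$. All three boundary terms $-\upsilon^{\mathrm{cur},j+1}_\pm\p_t^jS^{\mathrm{cur}}(t)$, $j=0,1,2$, go into $B^{\mathrm r}$, since no cancellation is available.

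\textbf{Step 5: the potential.} The same computation with $\nabla_x\times$ replaced by $\mathbb P$ yields the splitting $A=A^{\mathrm{osc}}_++A^{\mathrm{osc}}_-+A^{\mathrm r}$. Curl commutes with all these radial multipliers, so $B^{\mathrm{osc}}_\pm=\nabla_x\times A^{\mathrm{osc}}_\pm$ and $B^{\mathrm r}=\nabla_x\times A^{\mathrm r}$.

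\textbf{Main obstacle: regularity.} The only real difficulty is justifying the three integrations by parts, which needs $t\mapsto\widehat S^{\mathrm{cur}}(t,k)$ to be $C^3$ with bounded derivatives for each $k\neq0$. By~\eqref{Scurdef},
\begin{align*}
    \p_t^m\widehat S^{\mathrm{cur}}(t,k)=i\,(k\cdot\nabla_\eta)^m\nabla_\eta\widehat\fin(k,kt),
\end{align*}
so the $m=3$ case involves four $\eta$-derivatives of $\widehat\fin$. These are bounded and continuous exactly when $\jb{v}^4\fin\in L^1_{x,v}$. The divisions by $\lam_\pm$ are legitimate because Corollary~\ref{cor:roots}\ref{realpartitem} gives $\abs{\lam_\pm}\geq\abs{\omega}\gtrsim1$, and $\lam_0\neq0$ by Corollary~\ref{cor:roots}\ref{realrootitem}. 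Once this regularity is in place, every identity holds for each $k\neq0$, and the proposition follows by inverse Fourier transform.
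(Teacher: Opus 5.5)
Your proposal is correct and is essentially the paper's proof. Both split the representation of Lemma~\ref{lem:solution-representation} into its three modes, integrate the complex-mode source convolutions by parts three times (justified by $\jb{v}^4\fin\in L^1_{x,v}$), sort the terms by kernel, and use $k\times\mathbb P_k=k\times$, $\mathbb P_k\widehat A^0=\widehat A^0$ and $\mathbb P_k\widehat A^1=\widehat A^1$ for the curl and potential statements. The only difference is the bookkeeping for $E_T$: you differentiate first via $H(0,\ak)=0$ and let the undifferentiated endpoint terms cancel across the three modes through $\sum_jR_j=0$, whereas the paper integrates by parts in $\widehat A_\pm$ before applying $-\ep\p_t$ (so the derivative of the convolution's upper limit cancels that of the $j=2$ endpoint term) and treats the real mode directly via $\p_t\br{e^{\lam_0t}*_tF}=e^{\lam_0t}F(0)+e^{\lam_0t}*_t\p_tF$.
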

\begin{proof}
    Substituting the modal kernels into~\eqref{eq:solution-representation} splits the potential into one contribution per root, $\widehat A=\widehat A_0+\widehat A_++\widehat A_-$, where, for $j\in\set{0,\pm}$,
\begin{align}\label{eq:mode-split}
    \widehat A_j(t,k)
    :=\ep H_j(t,\ak)*_t \mathbb P_k\widehat S^{\mathrm{cur}}(t,k)
    +\ep^2\lam_j H_j \widehat A^0
    +\ep^2H_j \widehat A^1
    +\ep^2H^{\mu}_j \widehat A^0.
\end{align}
The transverse fields are recovered through $E_T=-\ep\p_tA$ and $B=\nabla_x\times A$, that is $\widehat E_T=-\ep\p_t\widehat A$ and $\widehat B=ik\times\widehat A$. Since $H_j=R_j e^{\lam_j t}$ and $H^{\mu}_j=R^{\mu}_j e^{\lam_j t}$, the three data terms of~\eqref{eq:mode-split} give the terms of~\eqref{eq:osc-explicit} and~\eqref{eq:rem-explicit} with the amplitudes $\tau^0_j,\tau^1_j,\tau^\mu_j$ and $\upsilon^0_j,\upsilon^1_j,\upsilon^\mu_j$.

For the source convolution, let $h(\tau)=Re^{\lam\tau}$ with $\lam\in\C\setminus\set{0}$, so that $h(t-s)=-\lam^{-1}\p_s\sbr{h(t-s)}$. Repeated integration by parts gives, for $g\in C^m([0,t])$ and every integer $m\geq1$,
\begin{align}\label{eq:osc-ibp}
    \int_0^th(t-s)g(s)\,ds
    =\sum_{j=0}^{m-1}\frac{1}{\lam^{j+1}}\sbr{h(t) g^{(j)}(0)-h(0) g^{(j)}(t)}
    +\frac{1}{\lam^m}\int_0^th(t-s) g^{(m)}(s)\,ds.
\end{align}
For the complex pair this is applied with $\lam=\lam_\pm(\ak)$, $h=H_\pm(\cdot,\ak)$, $g=\mathbb P_k\widehat S^{\mathrm{cur}}(\cdot,k)$ and $m=3$. The assumption $\jb{v}^4\fin\in L^1_{x,v}$ ensures that $\widehat S^{\mathrm{cur}}(\cdot,k)\in C^3([0,\infty))$ for each fixed $k$, justifying these integrations by parts. Since $H_\pm(0,\ak)=R_\pm(\ak)$, the source term of $\widehat A_\pm$ becomes
\begin{align*}
    \ep\int_0^t&H_\pm(t-s,\ak)\mathbb P_k\widehat S^{\mathrm{cur}}(s,k)\,ds
    \\
    &=\sum_{j=0}^{2}\frac{\ep R_\pm}{\lam_\pm^{j+1}}\mathbb P_k\sbr{e^{\lam_\pm t}\p_t^{j}\widehat S^{\mathrm{cur}}(0,k)-\p_t^{j}\widehat S^{\mathrm{cur}}(t,k)}+\frac{\ep R_\pm}{\lam_\pm^{3}}\,e^{\lam_\pm t}*_t\mathbb P_k\p_t^{3}\widehat S^{\mathrm{cur}}(t,k).
\end{align*}
Applying $ik\times$, with $k\times\mathbb P_k=k\times$, gives the terms of~\eqref{eq:osc-explicit} with amplitudes $\upsilon^{\mathrm{cur},j+1}_\pm$ and $\upsilon^{\mathrm{cur},3}_\pm$ and the terms of~\eqref{eq:rem-explicit} with amplitudes $-\upsilon^{\mathrm{cur},j+1}_\pm$. Applying $-\ep\p_t$, the terms at $s=0$ acquire the factor $\lam_\pm$ and give the amplitudes $\tau^{\mathrm{cur},j+1}_\pm$, while differentiating the upper limit of the convolution contributes
\begin{align}\label{cancellation}
    -\frac{\ep^2R_\pm}{\lam_\pm^{3}}\mathbb P_k\p_t^3\widehat S^{\mathrm{cur}}(t,k),
\end{align}
which the derivative of the term $j=2$ at $s=t$ cancels exactly. The remaining convolution carries the amplitude $\tau^{\mathrm{cur},3}_\pm$, and the terms at $s=t$ give the amplitudes $-\tau^{\mathrm{cur},j+1}_\pm$ on $\p_t^{j}\widehat S^{\mathrm{cur}}(t,k)$ for $j=1,2$.

For the real root, the magnetic and transverse electric fields are recovered directly from $\widehat A_0$ by applying $ik\times$ and $-\ep\p_t$, respectively. The magnetic source term therefore carries the amplitude $\upsilon^{\mathrm{cur},0}_0$. For the electric field, the time derivative in $-\ep\p_t\widehat A_0$ acts on the source convolution through the identity
\begin{align*}
    \p_t\br{e^{\lam_0t}*_tF(t)}
    =e^{\lam_0t}F(0)+e^{\lam_0t}*_t\p_tF(t),
\end{align*}
which, applied with $F=\mathbb P_k\widehat S^{\mathrm{cur}}(\cdot,k)$, gives the two terms with amplitude $\tau^{\mathrm{cur},1}_0$. Collecting the terms carrying a complex-root kernel into $E^{\mathrm{osc}}_{T,\pm}$ and $B^{\mathrm{osc}}_\pm$ and the others into $E_T^{\mathrm r}$ and $B^{\mathrm r}$ gives~\eqref{eq:field-split}.

Collecting the terms of $\widehat A$ in the same way, before $ik\times$ is applied, gives the decomposition of $A$ with the stated amplitudes, and $k\times\mathbb P_k=k\times$, $\mathbb P_k\widehat A^0=\widehat A^0$ and $\mathbb P_k\widehat A^1=\widehat A^1$ give $B^{\mathrm{osc}}_\pm=\nabla_x\times A^{\mathrm{osc}}_\pm$ and $B^{\mathrm r}=\nabla_x\times A^{\mathrm r}$.
\end{proof}

\section{Estimates for the free-transport current}\label{sectionSf}
This section establishes decay estimates and initial-time low-frequency bounds for the free-transport current~\eqref{Scurdef}, in terms of weighted norms of~$\fin$.

The following elementary facts are used repeatedly. Differentiation under the integral gives, for every multi-index $\alpha$ and integer $n\geq0$,
\begin{align}\label{eq:Sf-derivatives}
    \nabla_x^\alpha\p_t^nS^{\mathrm{cur}}(t,x)
    =(-1)^n\sum_{\abs{\gamma}=n}c_\gamma
    \int_{\R^3}v v^\gamma\br{\nabla_x^{\alpha+\gamma}\fin}(x-tv,v)\,dv,
\end{align}
where $c_\gamma$ are the multinomial coefficients. Moreover, for every integrable $g$, Fubini's theorem and the translation invariance of $L^1_x$ give
\begin{align}\label{eq:transport-L1}
    \norm{\int_{\R^3}g(x-tv,v)\,dv}_{L^1_x}
    \leq\norm{g}_{L^1_{x,v}}.
\end{align}

\begin{lemma}[Decay for free-transport velocity averages]\label{lem:free-transport-velocity-average-L1}
For every multi-index $\alpha$ and every integer $n\geq0$, uniformly for $t\geq0$,
\begin{align*}
    \norm{\nabla_x^\alpha\p_t^nS^{\mathrm{cur}}(t)}_{L^1_x}
    \lesssim_{\alpha,n}
    \jb{t}^{-\aal-n}
    \norm{\jb{v}^{n+1}\fin}_{W^{\aal+n,1}_{x,v}}.
\end{align*}
\end{lemma}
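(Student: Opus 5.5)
The plan is to start from the representation~\eqref{eq:Sf-derivatives}, which writes $\nabla_x^\alpha\p_t^nS^{\mathrm{cur}}(t)$ as a finite sum of velocity averages
\begin{align*}
    \int_{\R^3}v\,v^\gamma\br{\nabla_x^{\alpha+\gamma}\fin}(x-tv,v)\,dv,\qquad \abs{\gamma}=n,
\end{align*}
and to convert each of the $\aal+n$ spatial derivatives into a factor $t^{-1}$ using the standard commutation identity of free transport. I will treat the two time regimes separately.

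For $t\leq1$ no gain is needed. By~\eqref{eq:transport-L1}, each term is bounded by $\norm{v\,v^\gamma\nabla_x^{\alpha+\gamma}\fin}_{L^1_{x,v}}\leq\norm{\jb{v}^{n+1}\fin}_{W^{\aal+n,1}_{x,v}}$. This is the claim, since $\jb{t}\sim1$ in this regime.

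For $t\geq1$, write $g(x,v)$ for a function and $g_t(x,v):=g(x-tv,v)$. The chain rule gives
\begin{align*}
    \nabla_v\sbr{g_t}=-t\br{\nabla_xg}_t+\br{\nabla_vg}_t,
    \qquad\text{hence}\qquad
    \br{\nabla_xg}_t=\frac1t\br{\br{\nabla_vg}_t-\nabla_v\sbr{g_t}}.
\end{align*}
I apply this to one spatial derivative at a time inside $\int w(v)\br{\nabla_x^{\beta}\fin}_t\,dv$, where $w$ is a polynomial weight. The term $\br{\nabla_vg}_t$ trades a spatial derivative for a velocity derivative on $\fin$. In the term $\nabla_v[g_t]$ I integrate by parts in $v$, which moves the derivative onto the polynomial weight and lowers its degree. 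Each application therefore produces a factor $t^{-1}$, removes one $x$-derivative, and leaves a sum of terms of the same form. In each such term the total number of derivatives on $\fin$ does not increase, and the weight is a polynomial of degree at most $n+1$.

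After $\aal+n$ iterations, every term has the form $t^{-\aal-n}\int \tilde w(v)\br{\nabla_x^{a}\nabla_v^{b}\fin}_t\,dv$. Here $\abs{a}+\abs{b}\leq\aal+n$ and $\abs{\tilde w}\lesssim\jb{v}^{n+1}$. Applying~\eqref{eq:transport-L1} then bounds each term by $t^{-\aal-n}\norm{\jb{v}^{n+1}\nabla^a_x\nabla^b_v\fin}_{L^1_{x,v}}$. To pass to the norm $\norm{\jb{v}^{n+1}\fin}_{W^{\aal+n,1}_{x,v}}$, I use the fact that derivatives of $\jb{v}^{n+1}$ are bounded by $\jb{v}^{n+1}$ (Lemma~\ref{lem:velocity-weights}). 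Since $t\sim\jb{t}$ for $t\geq1$, the two regimes combine into the stated estimate.

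The main technical point, though routine, is justifying the integrations by parts in $v$: the boundary terms must vanish and all integrals must converge. I would handle this by a density argument, proving the estimate first for $\fin\in C_c^\infty$ and then extending by continuity, since both sides are continuous in the weighted Sobolev norm. The remaining work is careful bookkeeping to confirm that the weight degree never exceeds $n+1$ and the derivative count never exceeds $\aal+n$.
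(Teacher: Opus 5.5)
Your proposal is correct and is essentially the paper's proof. The paper also treats $t\le 1$ directly via \eqref{eq:transport-L1}, and for $t\ge 1$ it applies the identity $\int(\partial_{x_j}g)(x-tv,v)\,dv=t^{-1}\int(\partial_{v_j}g)(x-tv,v)\,dv$, justified by density, $\abs{\alpha}+n$ times. The only difference is cosmetic: the paper absorbs the $x$-independent weight $v\,v^\gamma$ into $g$ and expands $\nabla_v^{\alpha+\gamma}(v\,v^\gamma\fin)$ by the Leibniz rule, which is exactly what your integration by parts onto the weight produces.
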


\begin{proof}
For $0\leq t\leq1$, since $\abs{v v^\gamma}\leq\jb{v}^{n+1}$, applying~\eqref{eq:transport-L1} to each term of~\eqref{eq:Sf-derivatives} gives
\begin{align*}
    \norm{\nabla_x^\alpha\p_t^nS^{\mathrm{cur}}(t)}_{L^1_x}
    \lesssim_{\alpha,n}
    \sum_{\abs{\gamma}=n}\norm{\jb{v}^{n+1}\nabla_x^{\alpha+\gamma}\fin}_{L^1_{x,v}}.
\end{align*}
For $t\geq1$, the spatial derivatives are converted into velocity derivatives. For any smooth decaying $g$,
\begin{align*}
    \p_{v_j}\sbr{g(x-tv,v)}=-t\br{\p_{x_j}g}(x-tv,v)+\br{\p_{v_j}g}(x-tv,v),
\end{align*}
and the left side vanishes upon integration in $v$ (for $g\in W^{1,1}_{x,v}$ by density and~\eqref{eq:transport-L1}), so that $\int\br{\p_{x_j}g}(x-tv,v)\,dv=t^{-1}\int\br{\p_{v_j}g}(x-tv,v)\,dv$. Since $v v^\gamma$ does not depend on $x$ and $\nabla_x$ and $\nabla_v$ commute, $\aal+n$ applications of this identity turn~\eqref{eq:Sf-derivatives} into
\begin{align*}
    \nabla_x^\alpha\p_t^nS^{\mathrm{cur}}(t,x)
    =(-1)^nt^{-\aal-n}\sum_{\abs{\gamma}=n}c_\gamma
    \int_{\R^3}\br{\nabla_v^{\alpha+\gamma}(v v^\gamma\fin)}(x-tv,v)\,dv.
\end{align*}
By the Leibniz rule, $\nabla_v^{\alpha+\gamma}(v v^\gamma\fin)$ is a finite linear combination of terms bounded by $\jb{v}^{n+1}\abs{\nabla_v^\beta\fin}$ with $\aab\leq\aal+n$, so by~\eqref{eq:transport-L1}
\begin{align*}
    \norm{\nabla_x^\alpha\p_t^nS^{\mathrm{cur}}(t)}_{L^1_x}
    \lesssim_{\alpha,n}
        t^{-\aal-n}\sum_{\aab\leq\aal+n}\norm{\jb{v}^{n+1}\nabla_v^\beta\fin}_{L^1_{x,v}}.
\end{align*}
Since $\jb{t}^{-\aal-n}\sim1$ for $0\leq t\leq1$, applying Lemma~\ref{lem:velocity-weights} to the two bounds gives the claim for all $t\geq0$.
\end{proof}

Away from zero frequency, the free-transport current decays like $\jb{t}^{-N}$ for arbitrarily large $N$, at the cost of higher regularity of~$\fin$.

\begin{lemma}[High-frequency control of the free-transport current]
\label{lem:high-frequency-projected-S}
Fix a multi-index $\alpha$ and integers $n\geq0$, $s\geq0$ and $N>s+\aal+n$. Then, uniformly for $t\geq0$,
\begin{align}
\label{eq:high-frequency-projected-S}
\begin{split}
    \norm{(1-\chi(i\nabla_x))\nabla_x^\alpha\p_t^nS^{\mathrm{cur}}(t)}_{\Bes^s_{1,1}}
    &\lesssim_{\alpha,n,s,N}
    \jb{t}^{-N}\norm{\jb{v}^{n+1}\fin}_{W^{N,1}_{x,v}}.
\end{split}
\end{align}
\end{lemma}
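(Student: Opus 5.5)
The plan is to exploit that, away from zero frequency, each additional spatial derivative can be traded for a factor $t^{-1}$ through the velocity-derivative identity from the proof of Lemma~\ref{lem:free-transport-velocity-average-L1}. The cutoff $1-\chi(i\nabla_x)$ lets us recover the lost derivatives from $\abs{\nabla_x}^{-1}$-type multipliers without low-frequency singularity.

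First reduce the Besov norm to Sobolev-type $L^1$ bounds. Since $(1-\chi)$ vanishes on $\set{\ak\leq1/2}$, the splitting~\eqref{eq:chi-splitting} and the support properties show that only blocks $\ell\geq-1$ with frequencies $\ak\gtrsim1$ contribute. For each $\ell\geq0$, I would bound
\begin{align*}
2^{s\ell}\norm{P_\ell(1-\chi(i\nabla_x))g}_{L^1}\lesssim 2^{(s-m)\ell}\sum_{\abs{\beta}=m}\norm{\nabla_x^\beta g}_{L^1},
\end{align*}
with $m:=N-\aal-n>s$. This uses the standard Bernstein-type estimate from Appendix~\ref{app:LP}: the multiplier $\vp_\ell(k)\ak^{-2m'}k^{\beta}$ has an $L^1$ kernel with norm $\lesssim2^{-m\ell}$, by scaling. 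Summing over $\ell$ converges because $m>s$. The block $P_{-1}$ together with the low blocks are handled similarly: $(1-\chi)\chi(\cdot)\ak^{-2m'}k^\beta$ is a smooth compactly supported symbol away from the origin, so its kernel lies in $L^1$. Hence the left-hand side of~\eqref{eq:high-frequency-projected-S} is controlled by $\sum_{\abs{\beta}= m}\norm{\nabla_x^{\alpha+\beta}\p_t^nS^{\mathrm{cur}}(t)}_{L^1_x}$, up to a harmless even-integer adjustment of $m$ to make the symbols polynomial. If $m$ is not exactly representable, I take the integer $m$ with $s<m\leq N-\aal-n$.

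Next, apply Lemma~\ref{lem:free-transport-velocity-average-L1} with the multi-index $\alpha+\beta$, where $\abs{\alpha+\beta}=\aal+m$. This gives the decay $\jb{t}^{-\aal-m-n}=\jb{t}^{-N}$ and the norm $\norm{\jb{v}^{n+1}\fin}_{W^{\aal+m+n,1}_{x,v}}\leq\norm{\jb{v}^{n+1}\fin}_{W^{N,1}_{x,v}}$, which is the claim.

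The main obstacle is the first step: checking carefully that the multipliers $(1-\chi)\vp_\ell\,k^\beta\ak^{-2m}$ and their analogues in the low blocks are bounded on $L^1$ with the right dyadic gain, uniformly in $\ell$. Here I would invoke the multiplier estimates of Appendix~\ref{app:LP} (rescaled Schwartz kernels), so that no $\delta$-loss of the kind discussed for order-zero multipliers appears. This works because the symbols are smooth and supported away from $k=0$.
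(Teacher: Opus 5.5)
Your proposal is correct and follows essentially the paper's proof: reduce to the dyadic blocks $\ell\geq0$ (the paper uses the high-block equivalence~\eqref{eq:high-blocks}), apply the Bernstein estimate with $M_*=N-\aal-n>s$ derivatives to gain $2^{-M_*\ell}$ on each block, and then apply Lemma~\ref{lem:free-transport-velocity-average-L1} with $\aal+M_*+n=N$. Since $N$, $s$, $\aal$ and $n$ are integers, $M_*=N-\aal-n$ is already an integer strictly larger than $s$, so no adjustment of $m$ is needed; taking any smaller $m$ would in fact lose decay.
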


\begin{proof}
Set $M_*:=N-\aal-n$, so that $M_*>s$ by hypothesis. The cutoff $1-\chi(i\nabla_x)$ retains only the frequencies $\ak\gtrsim1$, on which the Besov norm reduces to its dyadic part: by~\eqref{eq:high-blocks} and the Bernstein estimate~\eqref{eq:block-derivatives} with $M=M_*$,
\begin{align*}
    \norm{(1-\chi(i\nabla_x))\nabla_x^\alpha\p_t^nS^{\mathrm{cur}}(t)}_{\Bes^s_{1,1}}
    &\sim\sum_{\ell\geq0}2^{s\ell}\norm{P_\ell \nabla_x^\alpha\p_t^nS^{\mathrm{cur}}(t)}_{L^1_x}
    \\
    &\lesssim\sum_{\abs{\sigma}=M_*}\sum_{\ell\geq0}2^{(s-M_*)\ell}
    \norm{P_\ell \nabla_x^{\alpha+\sigma}\p_t^nS^{\mathrm{cur}}(t)}_{L^1_x}.
\end{align*}
The operators $P_\ell$ are bounded on $L^1_x$ uniformly in $\ell$ by~\eqref{eq:scaled-compact-multiplier}, and $\abs{\alpha+\sigma}+n=\aal+M_*+n=N$, so Lemma~\ref{lem:free-transport-velocity-average-L1} bounds \begin{align*}
    \norm{P_\ell \nabla_x^{\alpha+\sigma}\p_t^nS^{\mathrm{cur}}(t)}_{L^1_x}\lesssim\jb{t}^{-N}\norm{\jb{v}^{n+1}\fin}_{W^{N,1}_{x,v}},
\end{align*} uniformly in $\ell$, in $\sigma$ and in $t\geq0$. The remaining dyadic sum $\sum_{\ell\geq0}2^{(s-M_*)\ell}$ converges because $M_*>s$, which gives~\eqref{eq:high-frequency-projected-S}.
\end{proof}

At the initial time, every positive time derivative of $S^{\mathrm{cur}}$ contains the same number of spatial derivatives of $\fin$, which compensate for the negative-order multiplier at low frequency.

\begin{lemma}[Low-frequency control at the initial time]\label{lem:initial-current-low-frequency}
Fix $\delta\in(0,1)$. For every integer $n\geq1$,
\begin{align*}
\norm{\chi(i\nabla_x)\abs{\nabla_x}^{-\delta}
\p_t^nS^{\mathrm{cur}}(0)}_{L^1}
\lesssim_{n,\delta}
\norm{\jb{v}^{n+1}\fin}_{L^1_{x,v}}.
\end{align*}
\end{lemma}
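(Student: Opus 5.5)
The plan is to exploit the fact, recorded in~\eqref{eq:Sf-derivatives} at $t=0$, that every time derivative of order $n\geq1$ produces at least one spatial derivative on $\fin$. Writing $\abs{\gamma}=n\geq1$, evaluation at $t=0$ gives
\begin{align*}
    \p_t^nS^{\mathrm{cur}}(0,x)=(-1)^n\sum_{\abs{\gamma}=n}c_\gamma\nabla_x^{\gamma}\int_{\R^3}v v^\gamma\fin(x,v)\,dv=:(-1)^n\sum_{\abs{\gamma}=n}c_\gamma\nabla_x^{\gamma}g_\gamma(x),
\end{align*}
where $g_\gamma:=\int v v^\gamma\fin\,dv$ satisfies $\norm{g_\gamma}_{L^1_x}\leq\norm{\jb{v}^{n+1}\fin}_{L^1_{x,v}}$. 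The spatial derivatives are pulled out of the velocity integral, which is legitimate at $t=0$ because there is no transport shift; for merely integrable $\fin$ this is read distributionally, or established first for smooth data and extended by density, since the final bound involves only the $L^1$ norm.

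Choose one index $j$ with $\gamma_j\geq1$ and write $\nabla_x^\gamma=\p_{x_j}\nabla_x^{\gamma-e_j}$. The operator to bound is then
\begin{align*}
    T_\gamma:=\chi(i\nabla_x)\abs{\nabla_x}^{-\delta}\nabla_x^{\gamma},
\end{align*}
with symbol $m_\gamma(k)=\chi(k)\ak^{-\delta}(ik)^\gamma$ up to sign conventions. Since $\abs{\gamma}\geq1$, this symbol is compactly supported and homogeneous-like of degree $\abs{\gamma}-\delta\geq1-\delta>0$ near the origin. The claim follows once $T_\gamma$ is shown to be bounded on $L^1_x$, that is, once the inverse Fourier transform $K_\gamma$ of $m_\gamma$ is shown to lie in $L^1(\R^3)$. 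Then
\begin{align*}
    \norm{\chi(i\nabla_x)\abs{\nabla_x}^{-\delta}\p_t^nS^{\mathrm{cur}}(0)}_{L^1}\lesssim\sum_{\abs{\gamma}=n}\norm{K_\gamma}_{L^1}\norm{g_\gamma}_{L^1}\lesssim_{n,\delta}\norm{\jb{v}^{n+1}\fin}_{L^1_{x,v}}.
\end{align*}

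The main step is the integrability of $K_\gamma$, and this is where $\delta<1$ matters. Smoothness away from $k=0$ and compact support give rapid decay of $K_\gamma$ from the high-frequency side, and $K_\gamma$ is bounded because $m_\gamma\in L^1_k$. So only the spatial decay at infinity, which is governed by the singularity of $m_\gamma$ at $k=0$, needs care. I would decompose $m_\gamma=\sum_{\ell\leq0}m_\gamma\,\vp(2^{-\ell}\cdot)$ dyadically near the origin. Each piece is a rescaled bump of amplitude $2^{\ell(\abs{\gamma}-\delta)}$, so its kernel has $L^1$ norm $\lesssim2^{\ell(\abs{\gamma}-\delta)}$ by scaling, using the uniform bounds on compactly supported multipliers that Appendix~\ref{app:LP} supplies for~\eqref{eq:scaled-compact-multiplier}. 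The sum over $\ell\leq0$ converges because $\abs{\gamma}-\delta\geq1-\delta>0$. Equivalently, one can cite the standard fact that a compactly supported symbol that is smooth away from the origin and homogeneous of positive degree there has an integrable kernel; for the degree $1-\delta$ piece the kernel decays like $\abs{x}^{-3-(1-\delta)}$.

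The only delicate point is that $\chi\ak^{-\delta}(ik)^\gamma$ is not exactly homogeneous, because of the cutoff. The dyadic argument handles this uniformly, since $\vp(2^{-\ell}k)\chi(k)=\vp(2^{-\ell}k)$ for $\ell\leq-2$ and only finitely many blocks near $\ak\sim1$ see $\chi$. Each of those finitely many blocks is a smooth compactly supported symbol, so its kernel is a Schwartz function and is integrable.
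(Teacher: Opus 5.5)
Your proof is correct and is essentially the paper's argument. Both reduce via~\eqref{eq:Sf-derivatives} at $t=0$ to the symbol $\chi\abs{k}^{-\delta}(ik)^\gamma$ acting on the velocity moment $\int v v^\gamma\fin\,dv$. Both then use a dyadic shell decomposition near the origin with kernel bounds $2^{\ell(n-\delta)}$, summable because $n>\delta$, and conclude by Young's inequality.
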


\begin{proof}
By~\eqref{eq:Sf-derivatives},
\begin{align*}
\p_t^nS^{\mathrm{cur}}(0)
=(-1)^n\sum_{\abs{\gamma}=n}c_\gamma\int_{\R^3}v v^\gamma\nabla_x^\gamma\fin(x,v)\,dv.
\end{align*}
For $\abs{\gamma}=n$ the symbol $q_\gamma:=\chi\ak^{-\delta}(ik)^{\gamma}$ is supported in $\set{\ak\leq1}$ and satisfies~\eqref{mikh} with order $n-\delta>0$, so the shell decomposition $q_\gamma=\sum_{\ell\leq2}q_\gamma\vp(2^{-\ell}\cdot)$ in the proof of Lemma~\ref{lem:multiplier-bounds} and the kernel bound established there give $\norm{\mathcal F^{-1}q_\gamma}_{L^1}\lesssim\sum_{\ell\leq2}2^{\ell(n-\delta)}\lesssim_{n,\delta}1$, the series converging because $n>\delta$. Young's inequality now gives the claim.
\end{proof}

In Fourier variables the velocity average reduces to the single evaluation $\widehat S^{\mathrm{cur}}(t,k)=i\nabla_\eta\widehat\fin(k,kt)$, from which decay of any prescribed order follows directly from the regularity of $\fin$.

\begin{lemma}[Pointwise bound on the current in Fourier variables]\label{lem:current-pointwise}
For all integers $j\geq0$ and $m\geq0$, and for all $t\geq0$ and $k\neq0$,
\begin{align}\label{eq:rem-current}
    \abs{\p_t^{j}\widehat S^{\mathrm{cur}}(t,k)}
    \lesssim
    \frac{\ak^{j}}{\jb{k,kt}^{m}} \norm{\jb{v}^{j+1}\fin}_{W^{m,1}_{x,v}}.
\end{align}
\end{lemma}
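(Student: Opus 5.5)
The plan is to compute $\p_t^j\widehat S^{\mathrm{cur}}$ explicitly in Fourier variables and then trade the vector $kt$ appearing in the $\eta$-slot for regularity of $\fin$, exactly as in the definition~\eqref{eq:longitudinal-data} of $\mathcal H_s$.

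\textbf{Step 1: the time derivatives.} Start from $\widehat S^{\mathrm{cur}}(t,k)=i\nabla_\eta\widehat\fin(k,kt)$ in~\eqref{Scurdef}. Each time derivative brings down one factor of $k\cdot\nabla_\eta$ evaluated along $\eta=kt$. Therefore
\begin{align*}
\p_t^j\widehat S^{\mathrm{cur}}(t,k)=i\sum_{\abs{\gamma}=j}c_\gamma k^\gamma\,\nabla_\eta\p_\eta^\gamma\widehat\fin(k,kt),
\end{align*}
with multinomial coefficients $c_\gamma$. Equivalently, this is the Fourier transform of~\eqref{eq:Sf-derivatives}. This immediately gives the factor $\ak^j$, since $\abs{k^\gamma}\leq\ak^j$.

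\textbf{Step 2: reduce to a uniform bound on $G:=\nabla_\eta\p_\eta^\gamma\widehat\fin$.} Using $\p_\eta^\gamma\widehat g=\widehat{(-iv)^\gamma g}$, the function $G$ is, up to a unimodular constant, the Fourier transform in $(x,v)$ of $v\,v^\gamma\fin$. It then suffices to show
\begin{align*}
\sup_{k,\eta}\jb{k,\eta}^m\abs{G(k,\eta)}\lesssim\norm{\jb{v}^{j+1}\fin}_{W^{m,1}_{x,v}},
\end{align*}
and to evaluate at $\eta=kt$, which gives $\jb{k,kt}^{-m}$.

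For the displayed bound, note that $\jb{k,\eta}^m\lesssim\sum_{\abs{\alpha}+\abs{\beta}\leq m}\abs{k^\alpha\eta^\beta}$. Moreover, $k^\alpha\eta^\beta G$ is, up to constants, the Fourier transform of $\nabla_x^\alpha\nabla_v^\beta(v\,v^\gamma\fin)$, so it is bounded by the $L^1_{x,v}$ norm of that function.

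\textbf{Step 3: Leibniz.} Expanding $\nabla_v^\beta(v\,v^\gamma\fin)$ by the Leibniz rule, each term is a polynomial in $v$ of degree at most $j+1$ times a derivative $\nabla_x^\alpha\nabla_v^{\beta'}\fin$. Its $L^1$ norm is therefore at most $\norm{\jb{v}^{j+1}\nabla_x^\alpha\nabla_v^{\beta'}\fin}_{L^1}$. Lemma~\ref{lem:velocity-weights}, already used in~\eqref{Hs-data-control}, converts the sum of these into $\norm{\jb{v}^{j+1}\fin}_{W^{m,1}_{x,v}}$.

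The only mild obstacle is this last weight commutation, moving $\jb{v}^{j+1}$ past $\nabla_v$. It is handled by that lemma, since derivatives of $\jb{v}^{j+1}$ are bounded by $\jb{v}^{j+1}$. Differentiation under the integral in Step 1 is justified by $\jb{v}^{j+1}\fin\in L^1$, and we may assume the right-hand side of~\eqref{eq:rem-current} is finite, since otherwise there is nothing to prove.
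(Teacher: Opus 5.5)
Your proposal is correct and follows essentially the same route as the paper: write $\p_t^j\widehat S^{\mathrm{cur}}(t,k)=i(k\cdot\nabla_\eta)^j\nabla_\eta\widehat\fin(k,kt)$, extract $\ak^j$, and bound $\jb{k,\eta}^m\abs{\nabla_\eta^\kappa\widehat\fin}$ for $\abs{\kappa}=j+1$ by the argument behind~\eqref{Hs-data-control} together with Lemma~\ref{lem:velocity-weights}. The only difference is that you expand $\nabla_v^\beta(v\,v^\gamma\fin)$ by the Leibniz rule yourself and then use the first estimate of that lemma, whereas the paper cites its second estimate applied to $v^\kappa\fin$.
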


\begin{proof}
By~\eqref{Scurdef}, $\widehat S^{\mathrm{cur}}(t,k)=i\nabla_\eta\widehat\fin(k,kt)$, so that differentiation in $t$ gives
\begin{align*}
    \p_t^{j}\widehat S^{\mathrm{cur}}(t,k)=i(k\cdot\nabla_\eta)^{j}\nabla_\eta\widehat\fin(k,kt),
\end{align*}
and therefore
\begin{align*}
    \abs{\p_t^{j}\widehat S^{\mathrm{cur}}(t,k)}
    \lesssim\ak^{j}\max_{\abs{\kappa}=j+1}\abs{\br{\nabla_\eta^{\kappa}\widehat\fin}(k,kt)}.
\end{align*}
Since $\nabla_\eta^{\kappa}\widehat\fin=\widehat{(-iv)^{\kappa}\fin}$, the argument giving~\eqref{Hs-data-control}, with Lemma~\ref{lem:velocity-weights} applied to $v^{\kappa}\fin$, shows that $\jb{k,\eta}^{m}\abs{\nabla_\eta^{\kappa}\widehat\fin(k,\eta)}\lesssim\norm{\jb{v}^{j+1}\fin}_{W^{m,1}_{x,v}}$ for $\abs{\kappa}=j+1$. Evaluating at $\eta=kt$ gives the claim.
\end{proof}

\section{The oscillatory transverse field}\label{complexrootSection}
This section proves the dispersive estimate for the oscillatory fields of Proposition~\ref{prop:transverse-decomposition}. Here, and throughout, $\ep_0\in(0,1/4]$ denotes a single constant depending only on the equilibrium. 

For a non-negative integer $N$ and $\delta\in(0,1)$, the data functional for the oscillatory estimates is
\begin{align}\label{eq:osc-data}
\begin{split}
    \mathcal D^{\mathrm{osc}}_{N,\delta}
    &:=\norm{\chi(i\nabla_x)\abs{\nabla_x}^{-\delta}\ETin}_{L^1}
    +\norm{\chi(i\nabla_x)\abs{\nabla_x}^{-\delta}\Bin}_{L^1}
    +\norm{\chi(i\nabla_x)\abs{\nabla_x}^{-\delta}\Delta_x^{-1}\nabla_x\times\Bin}_{L^1}\\
    &\quad+\norm{(1-\chi(i\nabla_x))\ETin}_{\Bes^{N+3}_{1,1}}
    +\norm{(1-\chi(i\nabla_x))\Bin}_{\Bes^{N+3}_{1,1}}\\
    &\quad+\norm{\chi(i\nabla_x)\abs{\nabla_x}^{-\delta}\Jin}_{L^1}+\norm{\jb{v}^4\fin}_{W^{N+4,1}_{x,v}}.
\end{split}
\end{align}
Define the envelope $\Lambda_\ep$ for $t>0$ by
\begin{align}\label{Lambda}
    \Lambda_\ep(t)
    :=\frac{1}{\jb{\frac{t}{\ep},\frac{t^{\frac{3}{2}}}{\ep^{\frac{1}{2}}}}}
    \sim\min\set{1,\frac{\ep}{t},\frac{\ep^{\frac{1}{2}}}{t^{\frac{3}{2}}}},
\end{align}
and set $\Lambda_\ep(0):=1$.

\begin{proposition}[Oscillatory bounds]\label{prop:complex-osc}
Fix a non-negative integer $N$ and $\delta\in(0,1)$. For every multi-index $\alpha$ with $\aal\leq N$, every $t\geq0$ and each choice of sign, uniformly in $0<\ep\leq\ep_0$,
\begin{align*}
    \norm{\nabla_x^\alpha B^{\mathrm{osc}}_\pm(t)}_{L^\infty}
    +\norm{\nabla_x^\alpha E^{\mathrm{osc}}_{T,\pm}(t)}_{L^\infty}+\norm{\nabla_x^\alpha\nabla_xA^{\mathrm{osc}}_\pm(t)}_{L^\infty}
    \lesssim\Lambda_\ep(t) \mathcal D^{\mathrm{osc}}_{N,\delta},
\end{align*}
where $\Lambda_\ep$ is defined in~\eqref{Lambda}.
\end{proposition}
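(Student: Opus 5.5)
The plan is to reduce every term in \eqref{eq:osc-explicit} to a single dispersive estimate for the Klein--Gordon-type propagator $e^{\lam_\pm t}$ acting on dyadically localised data. Each term has the form $m(i\nabla_x)e^{\lam_\pm(\abs{\nabla_x})t}g$, where $m$ is one of the amplitudes $\tau_\pm$ or $\upsilon_\pm$ (composed with $\nabla_x^\alpha$, or with $\nabla_x^\alpha\nabla_x$ for $A^{\mathrm{osc}}_\pm$). Here $g$ is one of $A^0$, $A^1$, $\p_t^jS^{\mathrm{cur}}(0)$ for $j\le2$, or $\p_t^3S^{\mathrm{cur}}(s)$ inside a time convolution.

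\textbf{Step 1: size of the symbols.} Using Lemma~\ref{lem:Rpm-bounds}, Lemma~\ref{lem:Rmu-bounds} and Corollary~\ref{cor:roots}\ref{imaginarypartitem}, I would first check that every amplitude, after absorbing the factor relating $g$ to the data, is a symbol of order zero in the following sense. At low frequency we use $A^1=-\ep^{-1}\ETin$ and $A^0=-\Delta_x^{-1}\nabla_x\times\Bin$. In Mikhlin form, the symbol is bounded by $\jb{r/\ep}^{0}$ times a power of $r$ that can be traded against the data. For example, $\tau^1_\pm A^1=\ep^2\lam_\pm R_\pm\ETin$ has symbol of size $\ep^2\jb{r/\ep}\cdot\ep^{-2}\jb{r/\ep}^{-1}=O(1)$. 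Likewise, $\tau^{\mathrm{cur},3}_\pm$ has size $\ep^2\jb{r/\ep}^{-2}\ep^{-2}\jb{r/\ep}^{-1}\lesssim\jb{r/\ep}^{-3}$. This frequency gain is what pays for the $\nabla_x^\alpha$ at high frequency and for the time integration. At low frequency I would move a factor $\ak^\delta$ onto the symbol, so that the localised kernels are integrable, as explained in the remark on initial data. The data then appear as $\chi\abs{\nabla_x}^{-\delta}(\cdot)$, matching $\mathcal D^{\mathrm{osc}}_{N,\delta}$.

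\textbf{Step 2: dyadic dispersive estimate (main obstacle).} For each dyadic block $P_\ell$, including the blocks $2^\ell\lesssim1$ obtained by splitting $\chi$ into shells, I would prove
\[
\norm{P_\ell\,m(i\nabla_x)e^{\lam_\pm t}g}_{L^\infty}\lesssim 2^{\ell(3+\delta')}\,\Lambda_\ep(t)\,\norm{g}_{L^1}\cdot(\text{symbol size on the shell}),
\]
with the amplitude treated as a smooth symbol on the shell. The factor $e^{\gamma t}$ is harmless because $\gamma\le0$ and its derivatives obey $r^n\abs{\p_r^n\gamma}\lesssim r\jb{r/\ep}^{-2}$, so $t\abs{\gamma}$ controls them. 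The phase is $t\beta(\abs{k}/\ep)$. Rescaling $k=\ep\rho$ brings in the curvature bounds of Lemma~\ref{lem:real-root}\ref{complexroot-beta}: the radial second derivative satisfies $\beta''\sim\jb{\rho}^{-3}$, and the angular curvature is $\beta'/\rho\sim\jb{\rho}^{-1}$. Stationary phase on the shell $\abs{k}\sim2^\ell$ then gives decay $(t2^\ell\beta'(2^\ell/\ep)/\ep)^{-1}$ from the two angular directions and $(t\beta''2^{2\ell}/\ep^2)^{-1/2}$ from the radial one. Optimising over $\ell$, together with the trivial bound $2^{3\ell}$, should produce $\min\{1,\ep/t,\ep^{1/2}t^{-3/2}\}$. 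The regimes are $2^\ell\lesssim\ep$, which is Klein--Gordon-like, and $2^\ell\gg\ep$, which is wave-like. I expect the uniformity in $\ep$, and the low shells where $2^{3\ell}$ must absorb the losses, to be the delicate bookkeeping. I would try a Littman-type lemma stated in the rescaled variable $\rho$, so that all constants come from Lemma~\ref{lem:real-root}.

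\textbf{Step 3: summation and the convolution term.} Summing over $\ell$ in $\ell^1$ uses the Besov data norms $\Bes^{N+3}_{1,1}$ and the $\ak^{-\delta}$ low-frequency norms. The $S^{\mathrm{cur}}(0)$ terms are handled by Lemma~\ref{lem:initial-current-low-frequency} and Lemma~\ref{lem:high-frequency-projected-S}; for $j=0$ this gives the $\Jin$ term. The convolution $\int_0^t e^{\lam_\pm(t-s)}\p_t^3S^{\mathrm{cur}}(s)\,ds$ is bounded by $\int_0^t\Lambda_\ep(t-s)\jb{s}^{-3}\,ds\cdot\norm{\jb{v}^4\fin}_{W^{N+4,1}}$, using Lemma~\ref{lem:free-transport-velocity-average-L1} with $n=3$ at low frequency and Lemma~\ref{lem:high-frequency-projected-S} at high frequency. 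I would split the integral at $s=t/2$. On $[0,t/2]$ the kernel gives $\Lambda_\ep(t/2)\sim\Lambda_\ep(t)$ and the source is integrable. On $[t/2,t]$ the source gives $\jb{t}^{-3}$ and $\int\Lambda_\ep\lesssim\ep^{1/2}$ or less. This yields $\jb{t}^{-3}\lesssim\Lambda_\ep(t)$, at least for $t\ge\ep$; the short-time case is trivial, with $\Lambda_\ep\sim1$.
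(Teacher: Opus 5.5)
\textbf{Step 3 does not give a bound uniform in $\ep$.} Your low-frequency source estimate carries no power of $\ep$, and you pair it with the full envelope. On $[t/2,t]$ this produces $\jb{t}^{-3}\int_0^{t/2}\Lambda_\ep(u)\,du$. For $t\sim1$ the wave branch gives $\int_\ep^{1/2}\ep u^{-1}\,du\sim\ep\log(1/\ep)$, while $\Lambda_\ep(1)\sim\ep$. Your concluding claim $\jb{t}^{-3}\lesssim\Lambda_\ep(t)$ for $t\geq\ep$ is false: at $t=1$ the left side is of order one and the right side is of order $\ep$. Even $\ep^{1/2}\jb{t}^{-3}$ is too large there.

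There is a second problem in the same step. Lemma~\ref{lem:free-transport-velocity-average-L1} with $n=3$ does not by itself control the low block, because the amplitude contains $\mathbb P_k$ and so is not bounded on $L^1$.

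The missing idea is to turn the scaled order $-3$ of $\sigma_\pm$ into a power of $\ep$ below frequency one. This costs one of the three spatial derivatives carried by $\p_t^3S^{\mathrm{cur}}$. The paper writes $\sigma_\pm e^{\gamma u}=\ep\,a_{\pm,u}\abs{\nabla_x}^{-1}$, where $a_{\pm,u}=\ep^{-1}\ak\sigma_\pm e^{\gamma u}$ has scaled order $-2$, and treats $\abs{\nabla_x}^{-1}\p_{x_a}$ as a Riesz transform. It then bounds the low block by the interpolation estimate of Lemma~\ref{lem:multiplier-bounds}\ref{lowfreq-interpolation}, which gives $\ep\jb{s}^{-\aal-3/2}$; the high block gives $\ep^3\jb{s}^{-\aal-4}$. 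With this factor $\ep$, only the Klein--Gordon branch $\jb{(t-s)^{3/2}\ep^{-1/2}}^{-1}$ is needed. The time convolution is then $\lesssim\jb{t}^{-3/2}$, and $\ep\jb{t}^{-3/2}\leq\Lambda_\ep(t)$. In Step~1 you say the gain $\jb{k/\ep}^{-3}$ should pay for the time integration, but Step~3 never uses it below frequency one, which is exactly where it is needed.

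\textbf{The per-shell bound displayed in Step 2 is incorrect below frequency one.}
\begin{enumerate}
\item For $\ep\lesssim2^\ell\leq1$, the wave branch of a shell is $\ep2^{2\ell}/t$, not $\ep2^{3\ell}/t$.
\item For $2^\ell\lesssim\ep$, the phase has Hessian of size $t/\ep^2$ on the shell. Stationary phase then gives $\min\set{2^{3\ell},\ep^3t^{-3/2}}$, and this is sharp. So every shell with $\ep t^{-1/2}\lesssim2^\ell\lesssim\ep$ contributes the same amount $\ep^3t^{-3/2}$.
\item Summing these Klein--Gordon shells without a weight therefore loses a factor $\log\jb{t}$. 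This rules out a bound uniform in time at the rate $\ep^{1/2}t^{-3/2}$.
\end{enumerate}
The paper avoids this loss in two ways. It rescales $k=\ep\rho$ and applies one stationary-phase estimate on the whole ball $\abs{\rho}\lesssim1$, where the Hessian of $\beta(\abs{\rho})$ is uniformly positive (Lemma~\ref{lem:low-frequency-dispersive}). It uses dyadic van der Corput estimates only for $\abs{\rho}\gtrsim1$, where the shell bounds grow geometrically and the top shell dominates.

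In your set-up, the weight $2^{\ell\delta}$ produced by the $\abs{\nabla_x}^{-\delta}$ trade could absorb the logarithm instead. That would require correcting the per-shell bounds and carrying the weight through the summation, including for the convolution term. As written, the claim that optimising over $\ell$ produces $\Lambda_\ep$ is not justified.
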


The dispersion comes from the oscillating factor $e^{\pm i\omega t}$ of the propagator $e^{\lam_\pm t}$, where $\lam_\pm=\gamma\pm i\omega$ with $\gamma\leq0$ by Corollary~\ref{cor:roots}\ref{realpartitem}. By Lemma~\ref{lem:real-root}\ref{complexroot-beta}, $\omega(\ak)=\beta(\ak/\ep)$, where $\beta$ satisfies Klein--Gordon derivative bounds.

\begin{lemma}[Klein--Gordon dispersive estimate]\label{lem:KG-dispersive}
Let $\omega$ be the frequency of the complex pair of Lemma~\ref{lem:real-root}. Then, for all $\ep\in(0,\ep_0]$, $t\geq0$ and $f\in\Bes^3_{1,1}$,
\begin{align}\label{eq:KG-dispersive}
    \norm{e^{\pm i\omega(i\nabla_x)t}f}_{L^\infty}
    \lesssim\Lambda_\ep(t) \norm{f}_{\Bes^3_{1,1}},
\end{align}
where $\Lambda_\ep$ is defined in~\eqref{Lambda}.
\end{lemma}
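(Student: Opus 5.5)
Decompose $f$ with the Littlewood--Paley projections, $f=P_{-1}f+\sum_{\ell\geq0}P_\ell f$, and prove a blockwise $L^1\to L^\infty$ bound. The target is
\begin{align*}
\norm{e^{\pm i\omega(i\nabla_x)t}P_\ell f}_{L^\infty}\lesssim 2^{3\ell}\Lambda_\ep(t)\norm{P_\ell f}_{L^1},
\end{align*}
uniformly in $\ell\geq-1$ and $\ep\in(0,\ep_0]$. Summing over $\ell$ then gives~\eqref{eq:KG-dispersive}. Using a fattened projection $\tilde P_\ell$ with $\tilde P_\ell P_\ell=P_\ell$, Young's inequality reduces the block bound to a kernel estimate:
\begin{align*}
K_\ell(t,x):=\int_{\R^3}e^{ik\cdot x\pm i\beta(\ak/\ep)t}\tilde\vp_\ell(k)\,dk,\qquad \norm{K_\ell(t)}_{L^\infty_x}\lesssim 2^{3\ell}\Lambda_\ep(t).
\end{align*}
The trivial bound $\abs{K_\ell}\lesssim2^{3\ell}$ handles the regime $\Lambda_\ep\sim1$, that is, $t\lesssim\ep$. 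For the low block $\ell=-1$, the smoothness of $k\mapsto\beta(\ak)$ from Lemma~\ref{lem:real-root}\ref{complexroot-beta} makes it part of the same analysis with $\ak\lesssim1$.

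The frequency factor $\ep^{-1}$ is removed by rescaling $k=\ep\xi$. This gives $K_\ell(t,x)=\ep^3\int e^{i\xi\cdot x\ep\pm i\beta(\abs\xi)t}\tilde\vp_\ell(\ep\xi)\,d\xi$. Passing to polar coordinates, the radial phase $\Phi(r)=r\ep\abs{x}\pm\beta(r)t$ has $\Phi''=\pm\beta''t$ with $\beta''\sim\jb{r}^{-3}$. The angular integral is the Fourier transform of surface measure; its stationary-phase expansion gives $e^{\pm ir\ep\abs x}(r\ep\abs x)^{-1}$ plus smooth corrections. We then split according to whether $\ep\abs x$ is comparable to $\beta'(r)t$, which lies in $[c_\beta,C_\beta]\cdot rt/\jb r$, on the support $r\sim2^\ell/\ep$ (after dyadic refinement in $r$ where needed).

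\emph{Non-resonant region}, where $\ep\abs x\not\approx\beta'(r)t$. Integrating by parts in $r$ gives arbitrary decay in $t\beta'$ or $\ep\abs x$. The symbol bounds $\abs{\beta^{(n)}}\lesssim\jb r^{1-n}$ make each integration cost at most $r^{-1}$, consistent with the dyadic width.

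\emph{Resonant region.} Here the $\ep$ from the angular factor and the radial stationary-phase factor $(t\beta'')^{-1/2}$ combine. Roughly, this gives
\begin{align*}
\ep^3\int r^2\,(r\ep\abs x)^{-1}\min\set{r,(t\jb r^{-3})^{-1/2}}\,dr,
\end{align*}
with $\ep\abs x\sim rt/\jb r$. For $r\gtrsim1$, this is $\ep^3 r^3/(\ep t)$ per block, that is, $2^{3\ell}\,\ep/t$ after undoing the scaling. For $r\lesssim1$, it is $\ep^3 r^{2}\,t^{-1/2}/(\ep t)\lesssim 2^{3\ell}\ep^{1/2}t^{-3/2}$, using $r\lesssim1$ and $\ep^{-2}\cdot$ bookkeeping, which is the Klein--Gordon regime. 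Taking the minimum of the bounds gives $\Lambda_\ep$. A cleaner alternative is to invoke a standard dispersive lemma for radial phases $\beta(\abs{\xi})$ whose curvature satisfies the bounds of Lemma~\ref{lem:real-root}\ref{complexroot-beta}. That lemma gives $\abs{\int e^{i(\xi\cdot y+\beta t)}\psi(\xi/R)\,d\xi}\lesssim R^3\min\set{1,(Rt\beta'(R)/R)^{-1}\cdots}$. I would prove it directly as above.

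The main obstacle is uniformity in both $\ep$ and $\ell$. In particular, the lowest block needs care: there $\beta''\sim1$ but $\beta'\sim r$ degenerates at $r=0$, so the angular and radial stationary points interact. Both the wave-type $(ct)^{-1}$ and Klein--Gordon-type $c^{-1/2}t^{-3/2}$ bounds must come out with constants depending only on the $C_n$ and $c_\beta,C_\beta$ of Lemma~\ref{lem:real-root}. For the $r\lesssim1$ piece I would first try the standard $\abs{\xi}^{-1}$-weighted Littlewood--Paley sub-decomposition in $\xi$ at scale $2^{j}$, $j\leq0$. For each sub-block I would combine the $t^{-3/2}$ stationary-phase bound (curvature $\sim1$) with the trivial bound $2^{3j}$, and sum the resulting geometric series. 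The loss of $2^{3\ell}$, rather than a smaller power, is exactly what $\Bes^3_{1,1}$ pays for.
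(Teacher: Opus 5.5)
Your skeleton is the paper's: Littlewood--Paley blocks with a fattened cutoff and Young's inequality, the rescaling $k=\ep\xi$, the radial reduction splitting $4\pi\sin(s)/s$ into $e^{\pm is}$ times symbols of order $-1$, integration by parts off the resonance $\ep\abs{x}\approx\beta'(r)t$, and van der Corput on it. The bookkeeping that is meant to produce $\Lambda_\ep$ does not close, however, and there are two concrete problems.

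\emph{The Klein--Gordon branch is taken from the wrong region.} You derive $\ep^{1/2}t^{-3/2}$ only for $r=\abs{\xi}\lesssim1$ and then take ``the minimum of the bounds''. Those bounds hold on disjoint frequency regions, so no minimum may be taken. For $\ell\geq0$, every frequency in the block has $r\sim R:=2^\ell/\ep\geq1/\ep$. The only explicit output you have there is the wave bound; its correct value is $\ep^3R^2/t=\ep2^{2\ell}/t$, and your intermediate $\ep^3r^3/(\ep t)$ is off by a factor $r/\ep$. This bound fails once $t\gg2^{2\ell}/\ep$, where $\Lambda_\ep(t)\sim\ep^{1/2}t^{-3/2}$. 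The missing step is van der Corput on the resonant set at $r\sim R$. There $\abs{\Phi''}=t\beta''(r)\gtrsim tR^{-3}$, and $\ep\abs{x}\sim t$. The radial amplitude $g(r)=a(\ep\abs{x}r)\,\widetilde\vp_\ell(\ep r)\,r^2$, with $a$ the order $-1$ symbol from the angular integral, satisfies $\int\abs{g'}\lesssim R/(\ep\abs{x})\sim R/t$. This gives $\ep^3R^{5/2}t^{-3/2}=\ep^{1/2}2^{5\ell/2}t^{-3/2}$, so the $c^{-1/2}t^{-3/2}$ rate comes from radial curvature at $\abs{k}\sim2^\ell$. The same applies to the low block, which in the variable $\xi$ is the large ball $\abs{\xi}\leq4/\ep$. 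Its Klein--Gordon branch comes from summing $\ep^3R^{5/2}t^{-3/2}$ over dyadic $1\leq R\lesssim1/\ep$. The region $\abs{\xi}\lesssim1$ contributes only $O(\ep^3t^{-3/2})$, since there the resonance gives $r\ep\abs{x}\sim r^2t$.

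\emph{The treatment of $\abs{\xi}\lesssim1$ loses a logarithm.} You bound each shell by $\min\set{2^{3j},t^{-3/2}}$. Every shell with $t^{-1/2}\lesssim2^j\leq1$ then contributes $t^{-3/2}$, so for $t\geq1$ the series is not geometric: $\sum_{j\leq0}\min\set{2^{3j},t^{-3/2}}\sim t^{-3/2}\log(2+t)$. After the Jacobian $\ep^3$, the bound $\ep^3t^{-3/2}\log(2+t)$ is not dominated by $\ep^{1/2}t^{-3/2}$ uniformly in $t$ at fixed $\ep$. One repair is to note that only the shell containing the critical radius $r_*$, defined by $\beta'(r_*)=\ep\abs{x}/t$, is stationary, and to bound all other shells by non-stationary phase. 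The paper instead uses the smoothness of $\xi\mapsto\beta(\abs{\xi})$. On $\set{\abs{\xi}\leq6}$ the phase $\beta(\abs{\xi})t+\ep x\cdot\xi$ has Hessian $\geq\kappa\,\mathrm{Id}$, with eigenvalues $\beta''$ and $\beta'(r)/r$. So when $\ep\abs{x}/t\leq\beta'(5)$, a three-dimensional stationary-phase bound on the whole ball gives $t^{-3/2}$ directly; its proof decomposes dyadically around the critical point rather than the origin, which is what makes the series geometric. When $\ep\abs{x}/t\geq\beta'(5)$, radial van der Corput with $\int\abs{g'}\lesssim(\ep\abs{x})^{-1}\lesssim t^{-1}$ suffices.
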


\begin{proof}
Lemma~\ref{lem:real-root}\ref{complexroot-beta} verifies the hypotheses of Lemma~\ref{lem:short-time-trivial-besov} with constants independent of $\ep$: it gives the bounds on $\beta'$, $\beta''$ and $\beta'''$ and the smoothness of $k\mapsto\beta(\ak)$ on $\R^3$, which, together with the derivative bounds through order four, gives a uniform bound on $\norm{\beta(\abs{\cdot})}_{C^3(\set{\ak\leq6})}$. Since $e^{\pm i\omega(i\nabla_x)t}$ is the Fourier multiplier with symbol $e^{\pm i\beta(\ak/\ep)t}$, that lemma and the embedding $\Bes^0_{\infty,1}\hookrightarrow L^\infty$ give the claim.
\end{proof}
The amplitudes in~\eqref{eq:osc-explicit} are Fourier multipliers whose symbols are products of $\ep$, $\lam_\pm$, $R_\pm$, $R^{\mu}_\pm$, the Leray symbol $\mathbb P_k$ and~$k$. A symbol $m\in C^\infty(\R^3\setminus\set{0})$ is said to have scaled order $\nu\in\R$ if
\begin{align}\label{eq:scaled-order}
    \ak^{\abs{\kappa}}\abs{\nabla_k^\kappa m(k)}\lesssim\jb{\frac{k}{\ep}}^{\nu}
\end{align}
for every multi-index $\kappa$, uniformly in $\ep\in(0,\ep_0]$ and $k\neq0$. Scaled orders add under products, and a symbol of scaled order $\nu$ also has scaled order $\nu'$ for every $\nu'\geq\nu$. On $\set{\ak\geq\frac14}$, where $\jb{k/\ep}\sim\ak/\ep$, scaled order $\nu$ implies~\eqref{mikh} with implicit constant $\lesssim\ep^{-\nu}$.

\begin{lemma}[Scaled orders of the building blocks]\label{lem:mikhlin-multipliers}
The symbols $\ep^2R_\pm$, $\ep^2R^{\mu}_\pm$, $\lam_\pm$, $\lam_\pm^{-1}$ and $\mathbb P_k$ are of scaled orders $-1$, $-2$, $1$, $-1$ and $0$, and $e^{\gamma(\ak)t}$ has scaled order $0$ uniformly in $t\geq0$. The symbol $k$ is $\ep$ times a symbol of scaled order $1$: each factor $k$, and in particular the curl symbol $ik\times\cdot$, raises the scaled order by one and contributes a factor $\ep$.
\end{lemma}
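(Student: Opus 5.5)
The lemma is essentially bookkeeping on top of Lemmas~\ref{lem:Rpm-bounds}, \ref{lem:Rmu-bounds} and Corollary~\ref{cor:roots}. The plan is to treat each building block as a radial function $g(\ak)$ and convert the radial derivative bounds $r^n\abs{\p_r^n g(r)}$ into the full-gradient bounds~\eqref{eq:scaled-order}. For a radial symbol $m(k)=g(\ak)$, the chain rule expresses $\nabla_k^\kappa m$ as a finite sum of terms $g^{(n)}(\ak)\,\psi_{\kappa,n}(k)$ with $1\leq n\leq\abs{\kappa}$. Here each $\psi_{\kappa,n}$ is homogeneous of degree $n-\abs{\kappa}$ and smooth on the sphere. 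Hence
\begin{align*}
\ak^{\abs{\kappa}}\abs{\nabla_k^\kappa m(k)}\lesssim_\kappa\sum_{n\leq\abs{\kappa}}\ak^n\abs{g^{(n)}(\ak)},
\end{align*}
so scaled order $\nu$ follows from $r^n\abs{\p_r^ng(r)}\lesssim\jb{r/\ep}^\nu$ for all $n$.

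The radial bounds come directly from the earlier results.
\begin{itemize}
\item For $\ep^2R_\pm$, Lemma~\ref{lem:Rpm-bounds} gives $r^n\abs{\p_r^n(\ep^2R_\pm)}\lesssim\jb{r/\ep}^{-1}$.
\item For $\ep^2R^\mu_\pm$, Lemma~\ref{lem:Rmu-bounds} gives the exponent $-2$.
\item For $\lam_\pm$ and $\lam_\pm^{-1}$, Corollary~\ref{cor:roots}\ref{imaginarypartitem} gives orders $1$ and $-1$.
\item The Leray symbol $\mathbb P_k=I-k\otimes k/\ak^2$ is homogeneous of degree $0$ and smooth away from the origin. So $\ak^{\abs{\kappa}}\abs{\nabla^\kappa\mathbb P_k}\lesssim1\leq\jb{k/\ep}^0$.
\item For $k$ itself, write $k=\ep\cdot(k/\ep)$. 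The components of $k/\ep$ satisfy $\ak^{\abs{\kappa}}\abs{\nabla^\kappa(k_i/\ep)}\lesssim\ak/\ep\leq\jb{k/\ep}$, since only $\abs{\kappa}\leq1$ contributes. So $k/\ep$ has scaled order $1$, and the curl symbol $ik\times$ is $\ep$ times a symbol of scaled order $1$.
\end{itemize}
The additivity under products, already asserted in the text, follows from the Leibniz rule, because the weights $\ak^{\abs{\kappa}}$ split as $\ak^{\abs{\kappa_1}}\ak^{\abs{\kappa_2}}$.

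The one genuinely new point is $e^{\gamma(\ak)t}$ uniformly in $t\geq0$. Set $g(r)=e^{\gamma(r)t}$. By Faà di Bruno, $r^n\p_r^n g$ is a sum of terms
\begin{align*}
e^{\gamma t}\prod_{i}\br{t\,r^{m_i}\p_r^{m_i}\gamma},\qquad \sum_i m_i=n .
\end{align*}
Corollary~\ref{cor:roots}\ref{realpartitem} gives $r^m\abs{\p_r^m\gamma}\lesssim r\jb{r/\ep}^{-2}$, together with $\gamma(r)\leq-\tfrac14 r\jb{r/\ep}^{-2}$. Writing $a:=t\,r\jb{r/\ep}^{-2}\geq0$, each term is bounded by $C a^{j}e^{-a/4}$, where $j$ is the number of factors. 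This is $\lesssim_j1$ uniformly in $a$, hence uniformly in $t$ and $\ep$.

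Combining these facts yields all the claimed orders. The only step needing care is this exponential factor. There, the key is that every derivative of $\gamma$ carries the same size $r\jb{r/\ep}^{-2}$ as $-\gamma$ itself, which is exactly what Corollary~\ref{cor:roots}\ref{realpartitem} provides. Everything else is routine.
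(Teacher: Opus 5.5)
Your proposal is correct and follows the paper's proof essentially step by step. You convert the $r^n\p_r^n$ bounds of Lemmas~\ref{lem:Rpm-bounds}, \ref{lem:Rmu-bounds} and Corollary~\ref{cor:roots} into~\eqref{eq:scaled-order} by the radial chain rule, treat $\mathbb P_k$ by degree-zero homogeneity and $k$ by $\ak^{\abs{\kappa}}\abs{\nabla_k^\kappa k}\lesssim\ak\leq\ep\jb{k/\ep}$, and handle $e^{\gamma(\ak)t}$ through the same key observation as the paper, namely that $r^j\abs{\p_r^j\gamma}\lesssim r\jb{r/\ep}^{-2}\lesssim-\gamma$, combined with Fa\`a di Bruno and the boundedness of $a^je^{-a/4}$.
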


\begin{proof}
Lemma~\ref{lem:Rpm-bounds} gives the scaled order $-1$ of $\ep^2R_\pm$, Lemma~\ref{lem:Rmu-bounds} the scaled order $-2$ of $\ep^2R^\mu_\pm$, and Corollary~\ref{cor:roots}\ref{imaginarypartitem} the scaled orders $1$ and $-1$ of $\lam_\pm$ and $\lam_\pm^{-1}$; in each case the radial chain rule converts the bounds on $r^n\p_r^n$ stated there into~\eqref{eq:scaled-order}. Corollary~\ref{cor:roots}\ref{realpartitem} gives
\begin{align*}
r^j\abs{\p_r^j\gamma(r)}
\lesssim_j\frac{r}{\jb{\frac{r}{\ep}}^2}
\lesssim-\gamma(r),
\qquad j\geq1.
\end{align*}
Repeated differentiation and $\sup_{x\geq0}x^me^{-x}<\infty$ then give $r^n\abs{\p_r^ne^{\gamma(r)t}}\lesssim_n1$. The radial chain rule therefore shows that $e^{\gamma(\ak)t}$ has scaled order $0$ uniformly in $t\geq0$. The entries of $\mathbb P_k=I-\ak^{-2}k\otimes k$ are smooth on $\R^3\setminus\set{0}$ and homogeneous of degree $0$, so each $\kappa$-derivative is homogeneous of degree $-\abs{\kappa}$ and bounded on the unit sphere. By the Leibniz rule, scaled orders add under products. Finally, $\ak^{\abs{\kappa}}\abs{\nabla_k^\kappa k}\lesssim\ak\leq\ep\jb{k/\ep}$.
\end{proof}

\begin{lemma}[Besov estimate for damped amplitudes]\label{lem:osc-multiplier}
Fix $\delta\in(0,1]$ and let $m$ have scaled order $-\nu$ for some $\nu\geq0$. Then, for every $s\in\R$, every $f$ such that $\chi(i\nabla_x)\abs{\nabla_x}^{-\delta}f\in L^1$ and $(1-\chi(i\nabla_x))f\in\Bes^{s-\nu}_{1,1}$, and every $t\geq0$,
\begin{align*}
    \norm{\chi(i\nabla_x)m(i\nabla_x)e^{\gamma(i\nabla_x)t}f}_{L^1}
    &\lesssim\norm{\chi(i\nabla_x)\abs{\nabla_x}^{-\delta}f}_{L^1},\\
    \norm{\br{1-\chi(i\nabla_x)}m(i\nabla_x)e^{\gamma(i\nabla_x)t}f}_{\Bes^{s}_{1,1}}
    &\lesssim\ep^{\nu}\norm{\br{1-\chi(i\nabla_x)}f}_{\Bes^{s-\nu}_{1,1}},
\end{align*}
and consequently, by~\eqref{eq:chi-splitting},
\begin{align*}
    \norm{m(i\nabla_x)e^{\gamma(i\nabla_x)t}f}_{\Bes^{s}_{1,1}}\lesssim\norm{\chi(i\nabla_x)\abs{\nabla_x}^{-\delta}f}_{L^1}+\ep^{\nu}\norm{\br{1-\chi(i\nabla_x)}f}_{\Bes^{s-\nu}_{1,1}}.
\end{align*}
\end{lemma}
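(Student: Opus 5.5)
The plan is to treat the low- and high-frequency parts separately and then recombine them through~\eqref{eq:chi-splitting}. The combined bound follows immediately once the two displayed estimates are proved, because the Besov norm of $m(i\nabla_x)e^{\gamma(i\nabla_x)t}f$ is comparable to the $L^1$ norm of its $\chi$-part plus the Besov norm of its $(1-\chi)$-part. The starting observation is that $m\,e^{\gamma(\ak)t}$ again has scaled order $-\nu$ uniformly in $t\geq0$. This holds because scaled orders add under products and $e^{\gamma(\ak)t}$ has scaled order $0$ uniformly in $t$ by Lemma~\ref{lem:mikhlin-multipliers}. It therefore suffices to treat a single symbol $\tilde m$ of scaled order $-\nu$, with all constants uniform in $t$ and $\ep$.

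For the low-frequency estimate, I will write
\begin{align*}
\chi\tilde m f=\br{\chi_1\ak^{\delta}\tilde m}\br{\chi\abs{\nabla_x}^{-\delta}f},
\end{align*}
where $\chi_1$ is a fattened cutoff equal to $1$ on $\supp\chi$. By Young's inequality, it then remains to show that the symbol $q:=\chi_1\ak^\delta\tilde m$ has an $L^1$ kernel with norm bounded uniformly in $\ep$ and $t$. Since $\nu\geq0$, we have $\jb{k/\ep}^{-\nu}\leq1$, so $\tilde m$ satisfies the Mikhlin-type bounds $\ak^{\abs{\kappa}}\abs{\nabla^\kappa\tilde m}\lesssim1$ uniformly. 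Multiplying by $\ak^\delta$ produces a symbol of Mikhlin order $\delta>0$ supported in $\set{\ak\lesssim1}$. This is exactly the setting of the argument in Lemma~\ref{lem:initial-current-low-frequency}: the shell decomposition $q=\sum_{\ell\leq C}q\vp(2^{-\ell}\cdot)$ together with the kernel bound of Lemma~\ref{lem:multiplier-bounds} gives $\norm{\mathcal F^{-1}q}_{L^1}\lesssim\sum_{\ell\leq C}2^{\ell\delta}\lesssim_\delta1$. The positive power $\delta$ is what makes this dyadic sum converge at low frequencies, where an order-zero symbol alone would give a logarithmically divergent sum. I expect this to be the only delicate point of the proof, and it is precisely why the factor $\abs{\nabla_x}^{-\delta}$ appears.

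For the high-frequency estimate, I will work block by block. On each dyadic block with $\ell\geq0$ meeting $\supp(1-\chi)$, we have $\ak\sim2^\ell\geq1/4$, so $\jb{k/\ep}\sim\ak/\ep$, as recorded after~\eqref{eq:scaled-order}. Consequently $\tilde m$ restricted to the block is $\ep^{\nu}2^{-\nu\ell}$ times a symbol satisfying Mikhlin bounds with uniform constants. Inserting a fattened block cutoff $\tilde\vp_\ell$ and using the uniform $L^1$-boundedness of scaled compactly supported multipliers from~\eqref{eq:scaled-compact-multiplier}, I obtain
\begin{align*}
\norm{P_\ell(1-\chi)\tilde m f}_{L^1}\lesssim\ep^\nu2^{-\nu\ell}\norm{\tilde P_\ell(1-\chi)f}_{L^1}.
\end{align*}
Multiplying by $2^{s\ell}$, summing over $\ell$, and using the finite overlap of the blocks gives $\ep^\nu\norm{(1-\chi)f}_{\Bes^{s-\nu}_{1,1}}$, as required. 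The remaining checks are routine: that the rescaled symbol $\tilde m(2^\ell\cdot)\tilde\vp$ has $C^4$ norms bounded by $\ep^\nu2^{-\nu\ell}$, which follows directly from the scaled-order bounds, and that commuting the $\chi$ cutoffs with $P_\ell$ only involves finitely many blocks, handled exactly as in~\eqref{eq:chi-splitting}.
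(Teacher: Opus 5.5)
Your proposal is correct and follows the paper's proof. The paper likewise notes that $m e^{\gamma t}$ has scaled order $-\nu$ uniformly in $t\geq0$, hence Mikhlin order $0$ on $\set{0<\ak\leq2}$ and order $-\nu$ with constant $\lesssim\ep^{\nu}$ on $\set{\ak\geq\frac14}$. It then invokes Lemma~\ref{lem:multiplier-bounds}\ref{lowfreq-smoothing} and~\ref{highfreq-mikhlin} and recombines via~\eqref{eq:chi-splitting}; your shell decomposition with the factor $\ak^{\delta}$ and your block-by-block high-frequency argument are simply the proofs of those two parts written out inline.
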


\begin{proof}
By Lemma~\ref{lem:mikhlin-multipliers}, $me^{\gamma t}$ has scaled order $-\nu$ uniformly in $t\geq0$. On $\set{0<\ak\leq2}$ it therefore satisfies~\eqref{mikh} with order $0$, since $\jb{k/\ep}^{-\nu}\leq1$, and Lemma~\ref{lem:multiplier-bounds}\ref{lowfreq-smoothing} gives the first estimate. On $\set{\ak\geq\frac14}$ it satisfies~\eqref{mikh} with order $-\nu$ and implicit constant $\lesssim\ep^{\nu}$, and Lemma~\ref{lem:multiplier-bounds}\ref{highfreq-mikhlin} gives the second. The last estimate follows from the first two by~\eqref{eq:chi-splitting}.
\end{proof}

In~\eqref{eq:osc-explicit}, the fixed-data terms are those carrying the propagator $e^{\lam_\pm t}$, with datum $A^0$, $A^1$ or $\p_t^{j}S^{\mathrm{cur}}(0)$. Since $\lam_\pm=\gamma\pm i\omega$, each of them is, up to a constant factor of modulus one,
\begin{align}\label{eq:osc-fixed-mode}
    e^{\pm i\omega(i\nabla_x)t}\,m_\pm(i\nabla_x)e^{\gamma(i\nabla_x)t}\mathsf D,
\end{align}
with a time-independent symbol $m_\pm$ of some scaled order $q$ and a datum $\mathsf D$. Using $\ep A^1=-\ETin$ and $\nabla_x\times A^0=\Bin$, and moving every factor $k$ into the symbol except in $\nabla_x\times A^0$, the pairs $(q,\mathsf D)$ are, for $0\leq j\leq2$,
\begin{align}\label{eq:osc-pairs}
\begin{split}
    E^{\mathrm{osc}}_{T,\pm}:&\quad(0,\ETin),\quad(1,\ep A^0),\quad(-j-1,\p_t^{j}S^{\mathrm{cur}}(0)),\\
    B^{\mathrm{osc}}_{\pm}:&\quad(0,\ETin),\quad(0,\Bin),\quad(-j-1,\p_t^{j}S^{\mathrm{cur}}(0)).
\end{split}
\end{align}
The entries follow by adding the scaled orders of Lemma~\ref{lem:mikhlin-multipliers}, using $\ep A^1=-\ETin$ and $\nabla_x\times A^0=\Bin$. The pair $(1,\ep A^0)$ is the only one of positive scaled order and therefore requires separate treatment below. The datum $A^0$ is handled through the identities
\begin{align}\label{eq:A0-identity}
    A^0=-\Delta_x^{-1}\nabla_x\times\Bin,
    \qquad
    \ak\abs{\widehat A^0(k)}=\abs{\widehat\Bin(k)},
    \qquad k\neq0,
\end{align}
the second since $k\cdot\widehat A^0(k)=0$. The following bound controls all the fixed-data terms.

\begin{lemma}[Common fixed-data bound]\label{lem:osc-fixed-data}
Fix a non-negative integer $N$ and $\delta\in(0,1)$. For every pair $(q,\mathsf D)$ of~\eqref{eq:osc-pairs} and the corresponding amplitude $m_\pm$, uniformly in $0<\ep\leq\ep_0$,
\begin{align*}
    \sup_{t\geq0}\norm{m_\pm(i\nabla_x)e^{\gamma(i\nabla_x)t}\mathsf D}_{\Bes^{N+3}_{1,1}}
    \lesssim \mathcal D^{\mathrm{osc}}_{N,\delta}.
\end{align*}
\end{lemma}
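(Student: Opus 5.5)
The plan is to reduce every pair in~\eqref{eq:osc-pairs} to Lemma~\ref{lem:osc-multiplier} with $s=N+3$, since $m_\pm$ has scaled order $q$ uniformly in $t$ and $e^{\gamma t}$ has scaled order $0$. That lemma bounds the quantity by
\begin{align*}
\norm{\chi(i\nabla_x)\abs{\nabla_x}^{-\delta}\mathsf D}_{L^1}+\ep^{\nu}\norm{(1-\chi(i\nabla_x))\mathsf D}_{\Bes^{N+3-\nu}_{1,1}},\qquad \nu=-q.
\end{align*}
We then check each datum against $\mathcal D^{\mathrm{osc}}_{N,\delta}$.

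For the pairs $(0,\ETin)$ and $(0,\Bin)$ the low-frequency term appears directly in $\mathcal D^{\mathrm{osc}}_{N,\delta}$. The high-frequency term is $\norm{(1-\chi)\ETin}_{\Bes^{N+3}_{1,1}}$ or the analogous term for $\Bin$, which is also present.

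For the current pairs $(-j-1,\p_t^jS^{\mathrm{cur}}(0))$ with $0\leq j\leq2$, we take $\nu=j+1\geq1$ and use $\ep^\nu\leq1$. The high-frequency part is bounded by Lemma~\ref{lem:high-frequency-projected-S} at $t=0$, with $s=N+2-j$, $n=j$ and derivative order $N+4>N+2$. This gives control by $\norm{\jb{v}^{j+1}\fin}_{W^{N+4,1}}\leq\norm{\jb{v}^4\fin}_{W^{N+4,1}}$. The low-frequency part is handled separately for $j=0$ and $j\geq1$. For $j=0$ it equals $\norm{\chi\abs{\nabla_x}^{-\delta}\Jin}_{L^1}$, since $S^{\mathrm{cur}}(0)=\Jin$. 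For $j\geq1$ we apply Lemma~\ref{lem:initial-current-low-frequency}.

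The main obstacle is the pair $(1,\ep A^0)$, which has positive scaled order. Here Lemma~\ref{lem:osc-multiplier} cannot be used as stated. Instead, we rewrite the symbol using $\ak\abs{\widehat A^0}=\abs{\widehat\Bin}$, splitting frequencies with $\chi$.

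\emph{Low frequencies.} On $\set{\ak\leq1}$ we have $\jb{k/\ep}\lesssim\ep^{-1}$, so the symbol is at most $\ep^{-1}$ in size. Factoring $\ep$ back in, $\ep m_\pm$ satisfies~\eqref{mikh} with order $0$ in the $\ep$-uniform sense. Concretely, $\ep\jb{k/\ep}\leq\ep+\ak\leq1+\ak$, and the same holds with derivatives. Lemma~\ref{lem:multiplier-bounds}\ref{lowfreq-smoothing} then bounds this piece by $\norm{\chi\abs{\nabla_x}^{-\delta}A^0}_{L^1}$. Since $A^0=-\Delta_x^{-1}\nabla_x\times\Bin$, this is a term of $\mathcal D^{\mathrm{osc}}_{N,\delta}$.

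\emph{High frequencies.} On $\set{\ak\geq1/4}$ we write $\ep m_\pm\widehat A^0=\ep m_\pm\ak^{-2}(ik\times)^{\ast}\widehat\Bin$, using $\widehat A^0=-ik\times\widehat\Bin/\ak^2$ up to sign. Each factor $k$ contributes $\ep$ times scaled order one, by Lemma~\ref{lem:mikhlin-multipliers}. The homogeneous factor $\ak^{-2}$ combined with one $k$ has order $-1$ and is smooth away from the origin. Hence $\ep m_\pm\cdot ik\ak^{-2}$ obeys~\eqref{mikh} of order $1-1=0$ with constant $\lesssim\ep\cdot\ep^{-1}=1$. Lemma~\ref{lem:multiplier-bounds}\ref{highfreq-mikhlin} then gives the bound $\norm{(1-\chi)\Bin}_{\Bes^{N+3}_{1,1}}$.

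Combining the low- and high-frequency pieces through~\eqref{eq:chi-splitting}, and taking the supremum in $t$ (all bounds being uniform in $t$ and $\ep$), gives the lemma.
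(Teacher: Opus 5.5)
Your proposal is correct and follows the paper's proof essentially step by step. The zero- and negative-order pairs go through Lemma~\ref{lem:osc-multiplier} with $s=N+3$, with the low-frequency current term split into $j=0$ (the $\Jin$ term) and $j=1,2$ (Lemma~\ref{lem:initial-current-low-frequency}), and Lemma~\ref{lem:high-frequency-projected-S} at $t=0$ on the high block. The pair $(1,\ep A^0)$ is handled by a separate $\chi$-splitting, using Lemma~\ref{lem:multiplier-bounds}\ref{lowfreq-smoothing} with $A^0=-\Delta_x^{-1}\nabla_x\times\Bin$ at low frequency and the substitution $\widehat A^0=i\ak^{-2}k\times\widehat\Bin$ with Lemma~\ref{lem:multiplier-bounds}\ref{highfreq-mikhlin} at high frequency.

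The only differences are cosmetic. Your decay exponent $N+4$ in Lemma~\ref{lem:high-frequency-projected-S} is as admissible as the paper's $N+3$. The velocity-weight comparison should read $\lesssim$ via Lemma~\ref{lem:velocity-weights} rather than $\leq$. The sign you hedged on is $+i$.
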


\begin{proof}
The proof is organised by datum; every step below is uniform in $t\geq0$.

\noindent\emph{Data $\ETin$ and $\Bin$.} Both amplitudes have scaled order zero, so Lemma~\ref{lem:osc-multiplier} with $s=N+3$ gives the corresponding terms of $\mathcal D^{\mathrm{osc}}_{N,\delta}$.

\noindent\emph{Data $\p_t^{j}S^{\mathrm{cur}}(0)$.} The multiplier has scaled order $-j-1$, so Lemma~\ref{lem:osc-multiplier} applies with its parameter $\nu=j+1$, and with $s=N+3$ it bounds the norm by
\begin{align*}
\norm{\chi(i\nabla_x)\abs{\nabla_x}^{-\delta}\p_t^{j}S^{\mathrm{cur}}(0)}_{L^1}
+\ep^{j+1}\norm{(1-\chi(i\nabla_x))\p_t^{j}S^{\mathrm{cur}}(0)}_{\Bes^{N+2-j}_{1,1}}.
\end{align*}
The low-frequency term is treated separately according to whether $j=0$. Since $S^{\mathrm{cur}}(0)=\Jin$,
\begin{align*}
\norm{\chi(i\nabla_x)\abs{\nabla_x}^{-\delta}S^{\mathrm{cur}}(0)}_{L^1}
=
\norm{\chi(i\nabla_x)\abs{\nabla_x}^{-\delta}\Jin}_{L^1}.
\end{align*}
For $j=1,2$, Lemma~\ref{lem:initial-current-low-frequency} gives
\begin{align*}
\norm{\chi(i\nabla_x)\abs{\nabla_x}^{-\delta}\p_t^{j}S^{\mathrm{cur}}(0)}_{L^1_x}
\lesssim
\norm{\jb{v}^{j+1}\fin}_{L^1_{x,v}}
\lesssim
\norm{\jb{v}^{4}\fin}_{W^{N+4,1}_{x,v}}.
\end{align*}
On the high block, Lemma~\ref{lem:high-frequency-projected-S} at $t=0$, applied with spatial multi-index $0$, $n=j$, $s=N+2-j$ and decay exponent $N+3$, gives
\begin{align*}
\ep^{j+1}\norm{(1-\chi(i\nabla_x))\p_t^{j}S^{\mathrm{cur}}(0)}_{\Bes^{N+2-j}_{1,1}}
\lesssim
\norm{\jb{v}^{3}\fin}_{W^{N+3,1}_{x,v}},
\end{align*}
where the factor $\ep^{j+1}\leq1$ has been dropped and Lemma~\ref{lem:velocity-weights} has been used to replace the weight $\jb{v}^{j+1}$ by $\jb{v}^{3}$. Thus the low-frequency term for $j=0$ is exactly the current term in $\mathcal D^{\mathrm{osc}}_{N,\delta}$, while the low-frequency terms for $j=1,2$ and all the high-frequency terms are controlled by its ordinary weighted Sobolev term.

\noindent\emph{Datum $\ep A^0$.} Here $q=1$, so the effective symbol $\ep m_\pm$ is not of order zero on the high-frequency block and Lemma~\ref{lem:osc-multiplier} does not apply with a single datum; the norm is split by~\eqref{eq:chi-splitting} instead. On the low block $\ep\jb{k/\ep}\lesssim\ep+\ak\lesssim1$, so $\ep m_\pm e^{\gamma t}$ satisfies~\eqref{mikh} with order $0$ uniformly in $t$, and Lemma~\ref{lem:multiplier-bounds}\ref{lowfreq-smoothing} with the first identity of~\eqref{eq:A0-identity} gives
\begin{align*}
    \norm{\chi(i\nabla_x)m_\pm(i\nabla_x)e^{\gamma(i\nabla_x)t}\,\ep A^0}_{L^1}
    \lesssim\norm{\chi(i\nabla_x)\abs{\nabla_x}^{-\delta}\Delta_x^{-1}\nabla_x\times\Bin}_{L^1}.
\end{align*}
On the high block the substitution $\widehat A^0=i\ak^{-2}k\times\widehat\Bin$ moves the factor $i\ak^{-2}k\times$ into the symbol and the datum becomes $\Bin$: there $\ep\jb{k/\ep}\ak^{-1}\lesssim1$, so the combined symbol satisfies~\eqref{mikh} with order $0$, and Lemma~\ref{lem:multiplier-bounds}\ref{highfreq-mikhlin} gives
\begin{align*}
    \norm{(1-\chi(i\nabla_x))m_\pm(i\nabla_x)e^{\gamma(i\nabla_x)t}\,\ep A^0}_{\Bes^{N+3}_{1,1}}
    \lesssim\norm{(1-\chi(i\nabla_x))\Bin}_{\Bes^{N+3}_{1,1}}.
\end{align*}
Both right-hand sides are terms of $\mathcal D^{\mathrm{osc}}_{N,\delta}$, which proves the lemma.
\end{proof}

The convolution contributions to~\eqref{eq:osc-explicit} obey the following finer, time-integrable bound.

\begin{lemma}[Oscillatory convolution estimate]\label{lem:osc-convolution}
Fix a non-negative integer $N$ and $\delta\in(0,1)$, and let $\sigma_\pm$ be either of the convolution amplitudes $\tau^{\mathrm{cur},3}_\pm$ or $\upsilon^{\mathrm{cur},3}_\pm$ of~\eqref{eq:osc-explicit}. For every multi-index $\alpha$ with $\aal\leq N$, uniformly in $0<\ep\leq\ep_0$, for all $t\geq0$,
\begin{align*}
    \norm{\nabla_x^\alpha\br{\sigma_\pm e^{\lam_\pm t}*_t\p_t^3S^{\mathrm{cur}}(t)}}_{L^\infty}
    \lesssim\frac{\ep}{\jb{t}^{\frac{3}{2}}}\norm{\jb{v}^4\fin}_{W^{\aal+4,1}_{x,v}}
    \lesssim\Lambda_\ep(t)\mathcal D^{\mathrm{osc}}_{N,\delta}.
\end{align*}
\end{lemma}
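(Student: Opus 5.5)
The plan is to write the convolution as a time integral of propagated data and apply the Klein--Gordon dispersive estimate inside the integral:
\begin{align*}
    \nabla_x^\alpha\br{\sigma_\pm e^{\lam_\pm t}*_t\p_t^3S^{\mathrm{cur}}(t)}
    =\int_0^t e^{\pm i\omega(i\nabla_x)(t-s)}\,\sigma_\pm(i\nabla_x)e^{\gamma(i\nabla_x)(t-s)}\nabla_x^\alpha\p_t^3S^{\mathrm{cur}}(s)\,ds.
\end{align*}

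\emph{Integrand bound.} For each $s$, Lemma~\ref{lem:KG-dispersive} bounds the $L^\infty$ norm of the integrand by
\begin{align*}
    \Lambda_\ep(t-s)\norm{\sigma_\pm e^{\gamma(t-s)}\nabla_x^\alpha\p_t^3S^{\mathrm{cur}}(s)}_{\Bes^3_{1,1}}.
\end{align*}
By Lemma~\ref{lem:mikhlin-multipliers}, $\tau^{\mathrm{cur},3}_\pm=-\ep^2\lam_\pm^{-2}R_\pm\mathbb P$ has scaled order $-3$. The amplitude $\upsilon^{\mathrm{cur},3}_\pm=\ep\lam_\pm^{-3}R_\pm\nabla_x\times$ is $\ep^{-1}\cdot\ep^2R_\pm\lam_\pm^{-3}$ times a curl, and the curl contributes $\ep$ times scaled order one. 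So both are symbols of scaled order $-3$, at worst $-2$ after absorbing the curl. The overall $\ep$ in the claim comes from the low-frequency block, not from the symbol order.

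\emph{Splitting the Besov norm by~\eqref{eq:chi-splitting}.}
\begin{itemize}
\item High block: Lemma~\ref{lem:osc-multiplier} gives a factor $\ep^{\nu}$ with $\nu\geq2$, so $\ep^2\leq\ep$. Lemma~\ref{lem:high-frequency-projected-S} then gives decay $\jb{s}^{-M}$ for any $M$, costing at most $\aal+4$ derivatives on $\jb{v}^4\fin$.
\item Low block: I would not use the crude bound of Lemma~\ref{lem:osc-multiplier}. Instead, use that a symbol of scaled order $-\nu$ is bounded by $\jb{k/\ep}^{-\nu}\lesssim\ep\ak^{-1}$ when $\nu\geq1$. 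The $\ak^{-1}$ loss is absorbed by one of the three time derivatives on $S^{\mathrm{cur}}$, since $\p_t^3S^{\mathrm{cur}}$ carries $\ak^3$. Concretely, write $\chi\sigma_\pm e^{\gamma t}=\ep\,\tilde m\,\ak^{-1}$ with $\tilde m$ of order $0$ on $\set{\ak\leq2}$. Lemma~\ref{lem:multiplier-bounds}\ref{lowfreq-smoothing}, with $\ak^{-1}$ paired against one derivative of $S$, together with Lemma~\ref{lem:free-transport-velocity-average-L1} with $n=2$ or $3$, gives $L^1$ decay $\ep\jb{s}^{-\aal-2}$, and at best $\ep\jb{s}^{-\aal-3}$.
\end{itemize}
Combining the blocks, the integrand is bounded by $\ep\,\Lambda_\ep(t-s)\jb{s}^{-3}\norm{\jb{v}^4\fin}_{W^{\aal+4,1}_{x,v}}$.

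\emph{Time integral.} Since $\Lambda_\ep\leq1$, integrate $\ep\jb{s}^{-3}\Lambda_\ep(t-s)$ over $[0,t]$ and split at $s=t/2$.
\begin{itemize}
\item On $[t/2,t]$: $\jb{s}^{-3}\lesssim\jb{t}^{-3}$ and $\int_0^{t/2}\Lambda_\ep\lesssim\ep\log$ terms plus $\ep^{1/2}$ tails, all bounded by $O(1)$ since $\ep\leq1$. This gives $\lesssim\ep\jb{t}^{-3+}$, more than enough.
\item On $[0,t/2]$: $\Lambda_\ep(t-s)\lesssim\Lambda_\ep(t/2)$ and $\jb{s}^{-3}$ is integrable. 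This gives $\ep\,\Lambda_\ep(t)\lesssim\ep\min\set{1,\ep^{1/2}t^{-3/2}}$.
\end{itemize}
For the stated $\ep\jb{t}^{-3/2}$, the crude bound $\Lambda_\ep\leq1$ is insufficient on the early piece when $t\lesssim1$, but there $\jb{t}\sim1$. For $t\geq1$, use $\Lambda_\ep(t/2)\lesssim\ep^{1/2}t^{-3/2}\leq t^{-3/2}$. The pieces combine to $\ep\jb{t}^{-3/2}$.

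\emph{Second inequality.} This follows from $\ep\jb{t}^{-3/2}\lesssim\Lambda_\ep(t)$:
\begin{itemize}
\item for $t\leq\ep$, the left side is at most $\ep\leq1$;
\item for $\ep\leq t\leq1$, it is at most $\ep\lesssim\ep/t$;
\item for $t\geq1$, $\ep t^{-3/2}\leq\ep^{1/2}t^{-3/2}$.
\end{itemize}
Finally, $\norm{\jb{v}^4\fin}_{W^{\aal+4,1}_{x,v}}\leq\mathcal D^{\mathrm{osc}}_{N,\delta}$ because $\aal\leq N$.

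\emph{Main obstacle.} The expected difficulty is the low-frequency block. One must extract both the factor $\ep$ and the $\ak^{-1}$-type gain without losing $L^1$ boundedness, since order-zero symbols near $k=0$ need not be $L^1$-bounded. I would first try pairing $\ak^{-1+\delta}$ with one derivative of $S$, as in Lemma~\ref{lem:initial-current-low-frequency}, keeping order $\delta>0$ so that the shell sum converges. Failing that, I would sacrifice a little time decay, using $\jb{s}^{-2-\delta}$, which is still integrable.
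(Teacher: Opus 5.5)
\textbf{Verdict: genuine gap in the low-frequency block.} Your architecture is the paper's: Duhamel in $s$, Lemma~\ref{lem:KG-dispersive} inside the integral, the splitting~\eqref{eq:chi-splitting}, and a time convolution split at $s=t/2$. The high block and the time integration are fine. On the high block you do need the full scaled order $-3$: then the symbol maps $\Bes^0_{1,1}$ into $\Bes^3_{1,1}$, and Lemma~\ref{lem:high-frequency-projected-S} closes with exactly $\aal+4$ derivatives, whereas order $-2$ would cost $\aal+5$.

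The gap is the low-frequency block, which is the only non-routine step and which you yourself flag. After writing $\chi\sigma_\pm e^{\gamma u}=\ep\tilde m\ak^{-1}$ and pairing $\ak^{-1}$ with one $\p_{x_a}$ from $\p_t^3S^{\mathrm{cur}}=\sum_{\abs{\Gamma}=3}c_\Gamma\p_x^\Gamma G_\Gamma$, you treat $\tilde m\ak^{-1}ik_a$ as a generic order-zero symbol. Such symbols are not bounded on $L^1$, since low-frequency Riesz transforms have kernels decaying only like $\abs{x}^{-3}$.
\begin{itemize}
\item Lemma~\ref{lem:multiplier-bounds}\ref{lowfreq-smoothing} only trades the symbol for $\chi(i\nabla_x)\abs{\nabla_x}^{-\delta}$, which is not $L^1$-bounded either.
\item Absorbing that factor into a further derivative, as in Lemma~\ref{lem:initial-current-low-frequency}, leaves $\aal+1$ derivatives on $G_\Gamma$. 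This gives $\ep\jb{s}^{-\aal-1}$, not the $\ep\jb{s}^{-\aal-2}$ you claim.
\item At $\alpha=0$ the integrand is then only $\ep\jb{s}^{-1}$. The time convolution decays like $\jb{t}^{-1}$, which yields neither inequality as $t\to\infty$. (Your combined bound with $\jb{s}^{-3}$ also overstates even your own low-block claim, though only $\jb{s}^{-3/2}$ is needed.)
\item Your first fallback loses the factor $\ep$. Factoring through the $L^1$-bounded symbol $\chi\ak^{-1+\delta}ik_a$ leaves the cofactor $\chi\sigma_\pm e^{\gamma u}\ak^{1-\delta}$, of size $\sup_{\ak\leq2}\jb{k/\ep}^{-3}\ak^{1-\delta}\sim\ep^{1-\delta}$. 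So you get $\ep^{1-\delta}$ in place of $\ep$.
\item The second fallback names no mechanism.
\end{itemize}

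\textbf{How the paper closes it.} The paper uses Lemma~\ref{lem:multiplier-bounds}\ref{lowfreq-interpolation}. Write $\sigma_\pm e^{\gamma u}\p_{x_a}=\ep\,a_{\pm,u}\abs{\nabla_x}^{-1}\p_{x_a}$, where $a_{\pm,u}:=\ep^{-1}\ak\sigma_\pm e^{\gamma(\ak)u}$ has scaled order $-2$. The geometric-mean bound $\norm{\chi(i\nabla_x)m(i\nabla_x)\nabla_xF}_{L^1}\lesssim\norm{F}_{L^1}^{1/2}\norm{\nabla_xF}_{L^1}^{1/2}$ for order-zero $m$, applied with $F=\nabla_x^{\aal+1}G_\Gamma$, gives $\ep\jb{s}^{-\aal-3/2}$. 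This is half a power weaker than your claim, but it is exactly what the convolution against $\jb{(t-s)^{3/2}\ep^{-1/2}}^{-1}$ needs to return $\ep\jb{t}^{-3/2}$.

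\textbf{Alternative repair.} Your claimed $\ep\jb{s}^{-\aal-2}$ does in fact hold if you keep $\sigma_\pm k_a$ together rather than splitting off a Riesz transform.
\begin{itemize}
\item The symbol $\chi\sigma_\pm e^{\gamma u}ik_a$ vanishes linearly at the origin and decays like $\ep^3\ak^{-2}$ beyond $\ak\sim\ep$. By the scaled-order bounds, its $\ell$-th low shell therefore has kernel of $L^1$ norm $\lesssim2^\ell\jb{2^\ell/\ep}^{-3}$.
\item Since $\sum_{\ell\leq2}2^\ell\jb{2^\ell/\ep}^{-3}\lesssim\ep$, the operator $\chi(i\nabla_x)\sigma_\pm e^{\gamma u}\p_{x_a}$ is bounded on $L^1$ with norm $\lesssim\ep$, uniformly in $u\geq0$.
\item The remaining $\aal+2$ derivatives on $G_\Gamma$ give $\jb{s}^{-\aal-2}$ by the argument of Lemma~\ref{lem:free-transport-velocity-average-L1}, within the $W^{\aal+4,1}$ budget.
\end{itemize}
Either repair completes your argument. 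As written, however, the decisive low-frequency estimate is asserted rather than proved.
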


\begin{proof}
By the amplitude definitions and Lemma~\ref{lem:mikhlin-multipliers}, both $\tau^{\mathrm{cur},3}_\pm$ and $\upsilon^{\mathrm{cur},3}_\pm$ have scaled order $-3$, and so does $\sigma_\pm e^{\gamma(\ak)(t-s)}$, uniformly in $0\leq s\leq t$: for every multi-index $\kappa$,
\begin{align*}
    \ak^{\abs{\kappa}}\abs{\nabla_k^\kappa\sbr{\sigma_\pm(k)e^{\gamma(\ak)(t-s)}}}\lesssim\jb{\frac{k}{\ep}}^{-3}.
\end{align*}
Applying Lemma~\ref{lem:KG-dispersive} to the integrand at time $s$ and retaining only the Klein--Gordon branch of $\Lambda_\ep(t-s)$ gives
\begin{align*}
    &\norm{\nabla_x^\alpha\br{\sigma_\pm e^{\lam_\pm t}*_t\p_t^3S^{\mathrm{cur}}(t)}}_{L^\infty}\lesssim\int_0^t\frac{\norm{\sigma_\pm(i\nabla_x)e^{\gamma(i\nabla_x)(t-s)}\nabla_x^\alpha\p_t^3S^{\mathrm{cur}}(s)}_{\Bes^3_{1,1}}}{\jb{\frac{(t-s)^{\frac{3}{2}}}{\ep^{\frac{1}{2}}}}}\,ds.
\end{align*}
On the high-frequency part $\jb{k/\ep}^{-3}\lesssim\ep^3\ak^{-3}$, so Lemma~\ref{lem:multiplier-bounds}\ref{highfreq-mikhlin} and Lemma~\ref{lem:high-frequency-projected-S}, applied with $n=3$, Besov index $0$ and decay exponent $\aal+4$, give
\begin{align*}
    \norm{(1-\chi(i\nabla_x))\sigma_\pm(i\nabla_x)e^{\gamma(i\nabla_x)(t-s)}\nabla_x^\alpha\p_t^3S^{\mathrm{cur}}(s)}_{\Bes^{3}_{1,1}}
    \lesssim\ep^3\jb{s}^{-\aal-4}\norm{\jb{v}^4\fin}_{W^{\aal+4,1}_{x,v}}.
\end{align*}
On the low-frequency part, set $u:=t-s$ and
\begin{align*}
    a_{\pm,u}(k):=\ep^{-1}\ak\,\sigma_\pm(k)e^{\gamma(\ak)u}.
\end{align*}
Since $\sigma_\pm e^{\gamma u}$ has scaled order $-3$ uniformly in $u\geq0$ and $\ak\leq\ep\jb{k/\ep}$, the symbol $a_{\pm,u}$ has scaled order $-2$, hence in particular scaled order zero, uniformly in $u\geq0$. For each multi-index $\Gamma$ with $\abs{\Gamma}=3$, set
\begin{align*}
    G_\Gamma(t,x):=-\int_{\R^3}v v^\Gamma\fin(x-tv,v)\,dv;
\end{align*}
then~\eqref{eq:Sf-derivatives} gives
\begin{align*}
    \nabla_x^\alpha\p_t^3S^{\mathrm{cur}}
    =\sum_{\abs{\Gamma}=3}c_\Gamma \nabla_x^\alpha\p_x^\Gamma G_\Gamma.
\end{align*}
Each term carries $\aal+3$ spatial derivatives. Use one $\p_{x_a}$ to cancel the factor $\abs{\nabla_x}^{-1}$, in the sense that $\sigma_\pm e^{\gamma(\ak)u}\p_{x_a}=\ep\,a_{\pm,u}\abs{\nabla_x}^{-1}\p_{x_a}$, where $\abs{\nabla_x}^{-1}\p_{x_a}$ is a Riesz transform. The remaining $\aal+2$ derivatives are written as one derivative $\p_{x_b}$ acting on $\nabla_x^{\aal+1}G_\Gamma$, to which Lemma~\ref{lem:multiplier-bounds}\ref{lowfreq-interpolation} applies with the order-zero symbol $a_{\pm,u}\abs{\nabla_x}^{-1}\p_{x_a}$; the argument of Lemma~\ref{lem:free-transport-velocity-average-L1}, applied to $G_\Gamma$, whose velocity weight is bounded by $\jb{v}^4$, then gives, with $\nabla_x^{m}$ denoting any $m$ spatial derivatives,
\begin{align*}
    \norm{\chi(i\nabla_x)\sigma_\pm(i\nabla_x)e^{\gamma(i\nabla_x)(t-s)}\nabla_x^\alpha\p_t^3S^{\mathrm{cur}}(s)}_{L^1}&\lesssim\ep\sum_{\abs{\Gamma}=3}
    \norm{\nabla_x^{\aal+1}G_\Gamma(s)}_{L^1}^{\frac{1}{2}}
    \norm{\nabla_x^{\aal+2}G_\Gamma(s)}_{L^1}^{\frac{1}{2}}
    \\
    &\lesssim\ep \jb{s}^{-\aal-\frac{3}{2}}\norm{\jb{v}^4\fin}_{W^{\aal+4,1}_{x,v}}.
\end{align*}
By~\eqref{eq:chi-splitting}, the two estimates combine, with common factor $\ep$ and common rate $\jb{s}^{-\frac{3}{2}}$, into the $\Bes^3_{1,1}$ norm required by Lemma~\ref{lem:KG-dispersive}. Inserting them into the time integral and using
\begin{align*}
    \int_0^t\jb{\frac{(t-s)^{\frac{3}{2}}}{\ep^{\frac{1}{2}}}}^{-1}\jb{s}^{-\frac{3}{2}}\,ds\lesssim\jb{t}^{-\frac{3}{2}},
\end{align*}
which follows from $\ep\leq 1$ and splitting the convolution at $s=t/2$, proves the first inequality. The second holds since $\aal\leq N$ gives $\norm{\jb{v}^4\fin}_{W^{\aal+4,1}_{x,v}}\leq\mathcal D^{\mathrm{osc}}_{N,\delta}$, and since $\ep\jb{t}^{-3/2}\leq\Lambda_\ep(t)$.
\end{proof}

\begin{proof}[Proof of Proposition~\ref{prop:complex-osc}]
For a pair $(q,\mathsf D)$ of~\eqref{eq:osc-pairs}, set $g_t:=m_\pm(i\nabla_x)e^{\gamma(i\nabla_x)t}\mathsf D$. By~\eqref{eq:osc-fixed-mode}, the corresponding fixed-data term of~\eqref{eq:osc-explicit} is $e^{\pm i\omega(i\nabla_x)t}g_t$. For every multi-index $\alpha$ with $\aal\leq N$, the dispersive estimate~\eqref{eq:KG-dispersive} gives
\begin{align*}
    \norm{\nabla_x^\alpha e^{\pm i\omega(i\nabla_x)t}g_t}_{L^\infty}
    \lesssim\Lambda_\ep(t)\norm{\nabla_x^\alpha g_t}_{\Bes^3_{1,1}}
    \lesssim\Lambda_\ep(t)\norm{g_t}_{\Bes^{N+3}_{1,1}}.
\end{align*}
Lemma~\ref{lem:osc-fixed-data} bounds the last norm by $\mathcal D^{\mathrm{osc}}_{N,\delta}$. Summing over the pairs, adding the convolution contributions by Lemma~\ref{lem:osc-convolution}, and recalling~\eqref{eq:osc-explicit} proves the bounds for $B^{\mathrm{osc}}_\pm$ and $E^{\mathrm{osc}}_{T,\pm}$.

By Proposition~\ref{prop:transverse-decomposition}, the amplitudes for $\nabla_xA^{\mathrm{osc}}_\pm$ are obtained from the magnetic amplitudes by replacing $ik\times$ with $ik\otimes\mathbb P_k$. For the terms involving $A^0$, the identity $\nabla_xA^0=-\nabla_x\Delta_x^{-1}\nabla_x\times\Bin$ gives the datum $\Bin$ with an additional symbol smooth away from the origin and homogeneous of degree zero. This symbol has scaled order $0$, by the same argument as for $\mathbb P_k$ in Lemma~\ref{lem:mikhlin-multipliers}. Absorbing it into the amplitude, the fixed-data terms retain the form~\eqref{eq:osc-fixed-mode} with the same pairs $(q,\mathsf D)$ as the magnetic terms in~\eqref{eq:osc-pairs}. The convolution amplitude, obtained from $\upsilon^{\mathrm{cur},3}_\pm$ by replacing $\nabla_x\times$ with $\nabla_x\mathbb P$, likewise retains scaled order $-3$. The proofs of Lemmas~\ref{lem:osc-fixed-data} and~\ref{lem:osc-convolution} therefore apply to these modified amplitudes, and the preceding argument gives the same bound for $\nabla_xA^{\mathrm{osc}}_\pm$.
\end{proof}

\section{The non-oscillatory transverse remainder}\label{remainderSection}
The remainder fields are estimated directly from the terms of~\eqref{eq:rem-explicit}. Their decay has two sources: the real-root terms carry the damping factor $e^{\lam_0(\ak)t}$, while the endpoint terms decay at the faster rate of the free-transport current.

\begin{proposition}[Remainder bounds]\label{prop:remainder}
Fix a non-negative integer $N$. For every multi-index $\alpha$ with $\aal\leq N$ and every $t\geq0$, the remainder fields of~\eqref{eq:rem-explicit} satisfy, uniformly for $0<\ep\leq\ep_0$,
\begin{align*}
    \norm{\nabla_x^\alpha B^{\mathrm r}(t)}_{L^\infty}+\norm{\nabla_x^\alpha\nabla_xA^{\mathrm r}(t)}_{L^\infty}
    \lesssim\frac{\ep}{\jb{t}^{\frac{\aal+4}{3}}} \mathcal D^{\mathrm r}_N,
    \qquad
    \norm{\nabla_x^\alpha E_T^{\mathrm r}(t)}_{L^\infty}
    \lesssim\frac{\ep^2}{\jb{t}^{\frac{\aal+4}{3}}} \mathcal D^{\mathrm r}_N,
\end{align*}
where
\begin{align}\label{eq:rem-data}
    \mathcal D^{\mathrm r}_N
:=\norm{\ETin}_{\Bes^{N+1}_{1,1}}
+\norm{\Bin}_{\Bes^{N+1}_{1,1}}
+\norm{\Delta_x^{-1}\nabla_x\times\Bin}_{\Bes^{N+1}_{1,1}}
+\norm{\jb{v}^3\fin}_{W^{N+4,1}_{x,v}}.
\end{align}
\end{proposition}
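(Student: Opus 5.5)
The bound will be obtained by estimating each term of~\eqref{eq:rem-explicit} separately in the Fourier $L^1_k$ norm. That norm dominates $L^\infty$ by Lemma~\ref{lem:embeddings}\ref{emb:fourier-L1}, so it suffices to bound $\int\ak^{\aal}\abs{\widehat{\,\cdot\,}(t,k)}\,dk$ for each term. There are two families.

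\emph{Real-root terms.} These carry the damping factor $e^{\lam_0(\ak)t}$, together with the residues $R_0$, $R_0^\mu$ and powers of $\lam_0$. I will use the following bounds:
\begin{itemize}
\item $\abs{R_0}\lesssim r\ep^{-2}\jb{r/\ep}^{-4}$ from Lemma~\ref{lem:R0-bounds};
\item $\abs{R_0^\mu}\lesssim\ep^{-2}\jb{r/\ep}^{-2}$ from Lemma~\ref{lem:Rmu-bounds};
\item $\abs{\lam_0}\leq r$ and $\lam_0\leq-\tfrac12\min\{r^3/\ep^2,r\}$ from Corollary~\ref{cor:roots}\ref{realrootitem}.
\end{itemize}
Multiplying by the amplitude prefactors ($\ep^3$ for the electric amplitudes, $\ep^2$ times a curl for the magnetic ones), each symbol is bounded by $\ep^{2}$ or $\ep$ times a bounded function of $r$. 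Two typical examples:
\begin{itemize}
\item $\ep^3\lam_0^2R_0\lesssim\ep r^3\jb{r/\ep}^{-4}\lesssim\ep^2\cdot r^2\ep^{-1}\jb{r/\ep}^{-1}$, and this is $\lesssim\ep^2$ in the relevant regime once the factor $\ak$ is absorbed using $\ak\abs{\widehat A^0}=\abs{\widehat\Bin}$.
\item $\ep^3\lam_0R_0^\mu\lesssim\ep r\jb{r/\ep}^{-2}\lesssim\ep^2$.
\end{itemize}
The data are controlled by $\sup_k\jb{k}^{N+1}\abs{\widehat g(k)}\lesssim\norm{g}_{\Bes^{N+1}_{1,1}}$. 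The time decay then comes from
\begin{align*}
\int_{\R^3}\ak^{\aal}\jb{k}^{-N-1}e^{-\frac12\min\{\ak^3/\ep^2,\ak\}t}\,dk .
\end{align*}
On the region $\ak\lesssim\ep$ the exponent is $\ak^3t/\ep^2\geq\ak^3t$, so the substitution $\ak\sim t^{-1/3}$ gives $t^{-(\aal+3)/3}$. The extra factor $r/\ep$ or $r$ present in each residue bound supplies the further $t^{-1/3}$, giving the claimed rate $\jb{t}^{-(\aal+4)/3}$. On the region $\ak\gtrsim\ep$ the decay is exponential in $\ak t$, which is better. The convolution terms $e^{\lam_0t}*_t S^{\mathrm{cur}}$ and $e^{\lam_0t}*_t\p_tS^{\mathrm{cur}}$ are treated with Lemma~\ref{lem:current-pointwise} as in the proof of Proposition~\ref{ELtheorem}: bound $\abs{\p_t^j\widehat S^{\mathrm{cur}}(s,k)}\lesssim\ak^j\jb{k,ks}^{-m}$ and split the $s$-integral at $t/2$. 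For $s\le t/2$ the exponential factor $e^{\lam_0(t-s)}$ gives the decay, and for $s\ge t/2$ the source factor $\jb{k,ks}^{-m}$ does. The integral $\int_0^t e^{\lam_0(t-s)}\,ds\lesssim\abs{\lam_0}^{-1}$ costs a factor $\ep^2/r^3$ at low frequency, and this must be compensated by $R_0\lesssim r/\ep^2$ together with the extra $\ak$ from the derivative or the curl.

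\emph{Endpoint terms.} These are $\tau^{\mathrm{cur},j+1}_\pm\p_t^jS^{\mathrm{cur}}(t)$ and $\upsilon^{\mathrm{cur},j+1}_\pm\p_t^jS^{\mathrm{cur}}(t)$. By Lemma~\ref{lem:Rpm-bounds} and Corollary~\ref{cor:roots}\ref{imaginarypartitem}, their symbols are bounded by $\ep^2\cdot\ep^{-2}\jb{r/\ep}^{-1-j}$, with an extra factor $\ep$ in the magnetic case. Since $\jb{r/\ep}^{-1}\le\ep/r$, this gives $\ep^2 r^{-1}$ for the electric terms, which appear only for $j\ge1$, so the factor $\ak^j$ from Lemma~\ref{lem:current-pointwise} cancels the $r^{-1}$. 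For the magnetic terms, $\ep\jb{r/\ep}^{-1-j}\ak$ is bounded by $\ep$ directly. The remaining integral $\int\ak^{\aal+j-1}\jb{k,kt}^{-m}\,dk$ is bounded by~\eqref{eq:freq-integral} by $\jb{t}^{-\aal-j-2}$, which is better than required.

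The $\nabla_xA^{\mathrm r}$ bound follows in the same way, with the curl replaced by $k\otimes\mathbb P_k$.

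The main obstacle is the low-frequency balance in the real-root convolution term of $B^{\mathrm r}$, namely $\ep\lam_0^{0}R_0\,ik\times(e^{\lam_0t}*_tS^{\mathrm{cur}})$. There no time derivative falls on the current, so the accounting has to be checked carefully. I would first try bounding the symbol by $\ep\ak\abs{R_0}\abs{\lam_0}^{-1}\lesssim\ep\ak\cdot(r/\ep^2)(\ep^2/r^3)=\ep r^{-1}$, which is integrable in three dimensions. Combined with the time splitting, this should give at least $\jb{t}^{-4/3}$; if the rate falls short, I would instead integrate by parts once in $s$.
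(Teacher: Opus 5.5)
Your architecture is the paper's: Fourier-$L^1$ bounds term by term, damping from $e^{\lam_0t}$, a split at $s=t/2$ for the real-root convolutions, and Lemma~\ref{lem:current-pointwise} with \eqref{eq:freq-integral} for the endpoint terms. As written, however, the bookkeeping that produces the index $N+1$, the powers of $\ep$ and the rate $\jb{t}^{-(\aal+4)/3}$ fails at concrete points.

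\emph{High frequencies.} You control the data by $\sup_k\jb{k}^{N+1}\abs{\widehat g(k)}$ and bound each amplitude by a constant. Your displayed integral then has integrand $\sim\ak^{\aal-N-1}$ at infinity. For $\aal\geq N-2$ it diverges at $t=0$ and blows up as $t\to0$, so no bound uniform in $t\geq0$ follows.

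Keeping the true decay $\lesssim\ep^2\ak^{-2}$ of the amplitudes relative to $\ETin$, $\Bin$, $\Jin$ on $\ak\geq\ep$ does not rescue the sup approach. At $\aal=N$ the integrand is still $\ak^{-3}$, which diverges logarithmically. What closes is the $\ell^1$ dyadic structure. On $\ak\sim2^\ell$, the shell volume $2^{3\ell}$ and the amplitude $\ep^22^{(\aal-2)\ell}$ combine to $\ep^22^{(\aal+1)\ell}\norm{P_\ell\mathsf D}_{L^1}$, which sums to $\norm{\mathsf D}_{\Bes^{\aal+1}_{1,1}}$. This is the last step of Lemma~\ref{lem:damped-data}, and it needs exactly the $\ak^{-2}$ decay you discard.

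\emph{Low frequencies.} It is not true that every residue carries an extra factor $r$: $R_0^\mu\sim\ep^{-2}$ on $\ak\leq\ep$.

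For $\tau_0^\mu e^{\lam_0t}A^0$ the factor $\ak$ comes from $\abs{\lam_0}\leq\ak$, and for $\upsilon_0^\mu e^{\lam_0t}A^0$ it comes from the curl. In both cases it survives only if the datum on $\ak\leq\ep$ is $A^0=-\Delta_x^{-1}\nabla_x\times\Bin$ itself, which is why that norm appears in $\mathcal D^{\mathrm r}_N$. If you measure these terms through $\Bin$ via $\ak\abs{\widehat A^0}=\abs{\widehat\Bin}$, or bound the symbol by a constant as in your example $\ep r\jb{r/\ep}^{-2}\lesssim\ep^2$, the factor is spent. The region $\ak\leq\ep$ then gives only $\jb{t}^{-(\aal+3)/3}$. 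The paper accordingly uses the pairs $(\ep A^0,\ep\Bin)$ and $(A^0,\Bin)$ for these two terms.

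Likewise the substitution on $\ak\leq\ep$ must be $y=\ak^3t/\ep^2$, not $\ak^3t$. The resulting factor $(\ep^2/\jb{t})^{(\aal+4)/3}$ supplies the powers of $\ep$ that the low-frequency symbols $\ep\ak$ and $\ak$ of these terms lack.

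The same scaled substitution also resolves your ``main obstacle''. On $s\leq t/2$ in the magnetic convolution:
\begin{itemize}
\item the amplitude satisfies $\abs{\upsilon_0^{\mathrm{cur},0}}\lesssim\ak^2/\ep$;
\item the source integrates to $\lesssim\ak^{-1}$;
\item the damping is $e^{-\ak^3t/(4\ep^2)}$.
\end{itemize}
So the region $\ak\leq\ep$ contributes $\ep^{-1}(\ep^2/\jb{t})^{(\aal+4)/3}\leq\ep\jb{t}^{-(\aal+4)/3}$. With your unscaled substitution you would be left with $\ep^{-1}\jb{t}^{-(\aal+4)/3}$. With the scaled one, your split closes this term exactly as in Lemma~\ref{lem:damped-convolution}, with no integration by parts. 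The same computation handles $\tau_0^{\mathrm{cur},1}e^{\lam_0t}*_t\p_tS^{\mathrm{cur}}$, with the factor $\ak$ from $\p_tS^{\mathrm{cur}}$ replacing the curl.

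A smaller slip: in the endpoint terms the magnetic symbol has no extra factor $\ep$. Both symbols are $\lesssim\jb{r/\ep}^{-1-j}\leq(\ep/r)^{j+1}$, which gives the integrand $\ep^{j+1}\ak^{\aal-1}\jb{k,kt}^{-m}$. Your conclusion for those terms stands.
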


The proof uses the following two damping estimates.

\begin{lemma}[Two-region damping estimate]\label{lem:damped-data}
Let $\alpha$ be a multi-index and let $\mathsf D_{\mathrm{lo}}$ and $\mathsf D_{\mathrm{hi}}$ be the possibly different representations of a datum used on $\ak\leq\ep$ and on $\ak\geq\ep$. Suppose that $u$ satisfies, uniformly in $0<\ep\leq\ep_0$, $t\geq0$ and $k\neq0$,
\begin{align*}
    \abs{\widehat u(t,k)}\lesssim e^{\lam_0(\ak)t}
    \begin{mycases}
        \ak\abs{\widehat{\mathsf D}_{\mathrm{lo}}(k)},&$\ak\leq\ep$,\\
        \frac{\ep^2}{\ak^2}\abs{\widehat{\mathsf D}_{\mathrm{hi}}(k)},&$\ak\geq\ep$.
    \end{mycases}
\end{align*}
Then, for every $t\geq0$,
\begin{align*}
    \norm{\nabla_x^\alpha u(t)}_{L^\infty}
    \lesssim\frac{\ep}{\jb{t}^{\frac{\aal+4}{3}}}
    \br{\ep\norm{\mathsf D_{\mathrm{lo}}}_{\Bes^{\aal+1}_{1,1}}+\norm{\mathsf D_{\mathrm{hi}}}_{\Bes^{\aal+1}_{1,1}}}.
\end{align*}
\end{lemma}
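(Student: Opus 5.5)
The plan is to bound the $L^\infty$ norm by the Fourier $L^1$ norm, $\norm{\nabla_x^\alpha u(t)}_{L^\infty}\lesssim\int_{\R^3}\ak^{\aal}\abs{\widehat u(t,k)}\,dk$, and to split this integral into three regions: $\ak\leq\ep$, $\ep\leq\ak\leq1$ and $\ak\geq1$. Write $a:=\aal$. The only input on the real root is Corollary~\ref{cor:roots}\ref{realrootitem}, $\lam_0(r)\leq-\frac12\min\set{r^3/\ep^2,r}$. It gives $\lam_0(r)\leq-r^3/(2\ep^2)$ for $r\leq\ep$, and $\lam_0(r)\leq-r/2$ for $r\geq\ep$ (there $r^3/\ep^2\geq r$). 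Each datum enters through $\sup_{\ak\leq 2}\abs{\widehat{\mathsf D}(k)}\lesssim\sum_{\ell\leq1}\norm{P_\ell\mathsf D}_{L^1}\lesssim\norm{\mathsf D}_{\Bes^{0}_{1,1}}$, using the partition of unity~\eqref{eq:LP-partition}, and through dyadic blocks at high frequency.

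\emph{Region $\ak\leq\ep$.} Polar coordinates bound the contribution by
\begin{align*}
\norm{\mathsf D_{\mathrm{lo}}}_{\Bes^0_{1,1}}\int_0^\ep r^{a+3}e^{-\frac{r^3t}{2\ep^2}}\,dr
\lesssim\norm{\mathsf D_{\mathrm{lo}}}_{\Bes^0_{1,1}}\min\set{\ep^{a+4},\br{\tfrac{\ep^2}{t}}^{\frac{a+4}{3}}},
\end{align*}
where the second bound comes from the substitution $r=(\ep^2/t)^{1/3}s$. For $t\leq1$ I use $\ep^{a+4}\leq\ep^2$. For $t\geq1$ I use $\ep^{2(a+4)/3}\leq\ep^2$, valid since $2(a+4)/3\geq8/3\geq2$ and $\ep\leq1$. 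Either way the contribution is $\lesssim\ep^2\jb{t}^{-(a+4)/3}\norm{\mathsf D_{\mathrm{lo}}}_{\Bes^{a+1}_{1,1}}$.

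\emph{Region $\ep\leq\ak\leq1$.} This is the step I expect to be delicate. The naive bound is $\ep^2\int_\ep^1 r^{a}e^{-rt/2}\,dr\lesssim\ep^2\jb{t}^{-(a+1)}$, which for $a=0$ decays only like $t^{-1}$, short of $t^{-4/3}$. The remedy is to trade part of the spare power of $\ep$ for the constraint $r\geq\ep$, via $1\leq(r/\ep)^{1/3}$. This gives
\begin{align*}
\ep^2\int_\ep^1 r^{a}e^{-\frac{rt}{2}}\,dr\leq\ep^{\frac53}\int_0^1 r^{a+\frac13}e^{-\frac{rt}{2}}\,dr\lesssim\ep^{\frac53}\jb{t}^{-(a+\frac43)}\leq\ep\,\jb{t}^{-\frac{a+4}{3}},
\end{align*}
since $a+\frac43\geq\frac{a+4}{3}$ for all $a\geq0$. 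Multiplying by $\norm{\mathsf D_{\mathrm{hi}}}_{\Bes^0_{1,1}}$ controls this region.

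\emph{Region $\ak\geq1$.} Here $e^{\lam_0t}\leq e^{-t/2}$. I decompose into the dyadic blocks $\vp_\ell$, $\ell\geq0$, on whose supports $\ak\sim2^\ell$ and whose volumes are $\sim2^{3\ell}$. Using $\abs{\widehat{P_\ell\mathsf D}}\leq\norm{P_\ell\mathsf D}_{L^1}$ and the finite overlap of the blocks, the contribution is
\begin{align*}
\lesssim\ep^2e^{-\frac t2}\sum_{\ell\geq0}2^{\ell(a-2)}2^{3\ell}\norm{P_\ell\mathsf D_{\mathrm{hi}}}_{L^1}\lesssim\ep^2e^{-\frac t2}\norm{\mathsf D_{\mathrm{hi}}}_{\Bes^{a+1}_{1,1}}.
\end{align*}
This is better than required, since $\ep^2e^{-t/2}\lesssim\ep\jb{t}^{-(a+4)/3}$.

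Summing the three regions gives the claimed bound. The low region carries the extra factor $\ep$ in front of $\norm{\mathsf D_{\mathrm{lo}}}_{\Bes^{\aal+1}_{1,1}}$, and the two high regions are bounded by $\ep\jb{t}^{-(\aal+4)/3}\norm{\mathsf D_{\mathrm{hi}}}_{\Bes^{\aal+1}_{1,1}}$.
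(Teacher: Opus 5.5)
Your proof is correct and follows the paper's argument: the Fourier-$L^1$ reduction, the same three regions $\ak\leq\ep$, $\ep\leq\ak\leq1$ and $\ak\geq1$ with the real-root bound of Corollary~\ref{cor:roots}, the scaling substitution on the low region, and the dyadic sum at high frequency. The one difference is in the middle region: the paper gains time decay from $\ep e^{-\ep t/4}\lesssim\jb{t}^{-1}$, obtaining $\ep\jb{t}^{-\aal-2}$, whereas your trade $\ep^{1/3}\leq\ak^{1/3}$ yields the weaker but sufficient $\ep^{5/3}\jb{t}^{-\aal-4/3}$; both rest on the same use of $\ak\geq\ep$.
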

\begin{proof}
Corollary~\ref{cor:roots}\ref{realrootitem} bounds $e^{\lam_0(\ak)t}$ by $e^{-\ak^3t/(2\ep^2)}$ for $\ak\leq\ep$ and by $e^{-\ak t/2}$ for $\ak\geq\ep$. By Lemma~\ref{lem:embeddings}\ref{emb:fourier-L1}, it therefore suffices to integrate $\ak^{\aal}\abs{\widehat u}$ in $k$, over $\ak\leq\ep$, over $\ep\leq\ak\leq1$ and over the dyadic shells $\ak\sim2^\ell$ with $\ell\geq0$. Each region meets only finitely many Littlewood--Paley blocks, and there the datum is bounded by the $L^1$ norms of those blocks, which is how the Besov norms arise. On $\ak\leq\ep\leq1/4$ only the block $P_{-1}$ occurs. The substitution $y=\ak^3t/\ep^2$ for $t\geq1$ and direct integration for $t\leq1$ give
\begin{align*}
    \int_{\ak\leq\ep}\ak^{\aal}\abs{\widehat u}\,dk
    \lesssim\br{\frac{\ep^2}{\jb{t}}}^{\frac{\aal+4}{3}}\norm{P_{-1}\mathsf D_{\mathrm{lo}}}_{L^1}
    \lesssim\frac{\ep^2}{\jb{t}^{\frac{\aal+4}{3}}}\norm{\mathsf D_{\mathrm{lo}}}_{\Bes^{\aal+1}_{1,1}}.
\end{align*}
On $\ep\leq\ak\leq1$, the estimate $\ep^2e^{-\ak t/2}\lesssim\ep\jb{t}^{-1}e^{-\ak t/4}$, which holds since $\ak\geq\ep$ and $\ep e^{-\ep t/4}\lesssim\jb{t}^{-1}$, and polar coordinates give
\begin{align*}
    \int_{\ep\leq\ak\leq1}\ak^{\aal}\abs{\widehat u}\,dk
    \lesssim\frac{\ep}{\jb{t}}\norm{\mathsf D_{\mathrm{hi}}}_{\Bes^{0}_{1,1}}\int_0^1r^{\aal}e^{-\frac{rt}{4}}\,dr
    \lesssim\frac{\ep}{\jb{t}^{\aal+2}}\norm{\mathsf D_{\mathrm{hi}}}_{\Bes^{0}_{1,1}}.
\end{align*}
On the shell $\ak\sim2^\ell$ the amplitude $\ep^2\ak^{\aal-2}$ and the volume $2^{3\ell}$ produce the weight $\ep^22^{(\aal+1)\ell}$, while $e^{-\ak t/2}\leq e^{-2^{\ell-2}t}$; hence, using $\ep^2\leq\ep$,
\begin{align*}
    \int_{\ak\geq1}\ak^{\aal}\abs{\widehat u}\,dk
    \lesssim\ep\sum_{\ell\geq0}2^{(\aal+1)\ell}e^{-2^{\ell-2}t}\norm{P_\ell \mathsf D_{\mathrm{hi}}}_{L^1}
    \lesssim\frac{\ep}{\jb{t}^{m}}\norm{\mathsf D_{\mathrm{hi}}}_{\Bes^{\aal+1}_{1,1}}
\end{align*}
for every $m>0$, since $e^{-2^{\ell-2}t}\leq e^{-t/4}\lesssim_m\jb{t}^{-m}$. The claim follows by taking $m\geq(\aal+4)/3$ and using $\aal+2\geq(\aal+4)/3$.
\end{proof}

\begin{lemma}[Damped convolution]\label{lem:damped-convolution}
Let $m\geq2$ be an integer. Uniformly in $0<\ep\leq\ep_0$, for every $t\geq0$ and $k\neq0$,
\begin{align*}
    \int_0^te^{\lam_0(\ak)(t-s)}\frac{ds}{\jb{k,ks}^{m}}
    \lesssim
    \begin{mycases}
        \frac{\ep^2}{\ak^3}\frac{1}{\jb{k,kt}^{m}}
        +\frac{1}{\ak}e^{-\frac{\ak^3t}{4\ep^2}},&$\ak\leq\ep$,\\
        \frac{1}{\ak}\frac{1}{\jb{k,kt}^{m}},&$\ak\geq\ep$.
    \end{mycases}
\end{align*}
\end{lemma}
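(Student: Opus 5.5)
The plan is to split the integral at $s=t/2$, as in the proof of Proposition~\ref{ELtheorem}. On each half, one of the two factors is replaced by its value at $t$, and the other is integrated explicitly. The only input on the kernel is Corollary~\ref{cor:roots}\ref{realrootitem}:
\begin{align*}
e^{\lam_0(\ak)u}\leq e^{-\ak^3u/(2\ep^2)} \text{ for } \ak\leq\ep,
\qquad
e^{\lam_0(\ak)u}\leq e^{-\ak u/2} \text{ for } \ak\geq\ep.
\end{align*}
Write $\kappa(k):=\ak^3/(2\ep^2)$ in the first case and $\kappa(k):=\ak/2$ in the second, so that $e^{\lam_0(\ak)u}\leq e^{-\kappa u}$.

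On the late half $s\in[t/2,t]$, use $\jb{k,ks}\gtrsim\jb{k,kt}$, which holds because $s\geq t/2$. This bounds the contribution by
\begin{align*}
\jb{k,kt}^{-m}\int_0^\infty e^{-\kappa u}\,du=\kappa^{-1}\jb{k,kt}^{-m}.
\end{align*}
The factor $\kappa^{-1}$ equals $2\ep^2/\ak^3$ for $\ak\leq\ep$ and $2/\ak$ for $\ak\geq\ep$. These are exactly the first terms claimed in each case.

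On the early half $s\in[0,t/2]$, the kernel satisfies $e^{\lam_0(t-s)}\leq e^{-\kappa t/2}$. The remaining integral is
\begin{align*}
\int_0^{t/2}\jb{k,ks}^{-m}\,ds\leq\int_0^\infty\jb{k\jb{s}}^{-m}\,ds\lesssim\frac{1}{\ak}\int_0^\infty\jb{\tau}^{-m}\,d\tau\lesssim\frac1{\ak},
\end{align*}
using $\jb{k,ks}\geq\jb{ks}$, the substitution $\tau=\ak s$, and $m\geq2$. So the early half is at most $\ak^{-1}e^{-\kappa t/2}$.

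For $\ak\leq\ep$ this gives $\ak^{-1}e^{-\ak^3t/(4\ep^2)}$, which is precisely the second term in the first case, so that case is finished.

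For $\ak\geq\ep$ the early half needs more work, and this is the main step. We must absorb $\ak^{-1}e^{-\ak t/4}$ into $\ak^{-1}\jb{k,kt}^{-m}$, i.e.\ show
\begin{align*}
e^{-\ak t/4}\lesssim\jb{k,kt}^{-m}.
\end{align*}
\begin{itemize}
\item When $\ak\leq1$, we have $\jb{k,kt}\lesssim\jb{\ak t}$, and $e^{-\ak t/4}\lesssim_m\jb{\ak t}^{-m}$ holds for all values of $\ak t$.
\item When $\ak\geq1$, the extra factor $\jb{k}^{-m}$ cannot come from the exponential, because at $t=0$ it is $1$. In this regime I would not use the crude bound on the early half. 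Instead, keep $\jb{k,ks}^{-m}\leq\jb{k}^{-m}\jb{\ak s}^{-m}\jb{k}^{m}$ more carefully: use $\jb{k,ks}\sim\ak\jb{s}$ and $\jb{k,kt}\sim\ak\jb{t}$ for $\ak\geq1$. Then the early half is at most
\begin{align*}
\ak^{-m}e^{-\ak t/4}\int_0^{t/2}\jb{s}^{-m}\,ds\lesssim\ak^{-m}e^{-t/4}\lesssim\ak^{-m}\jb{t}^{-m}\lesssim\jb{k,kt}^{-m}.
\end{align*}
\end{itemize}
Since $\ak\geq1$ here, $\jb{k,kt}^{-m}\leq\ak^{-1}\jb{k,kt}^{-m}$ is not available in general, so instead I get the bound $\ak^{-m}\jb{t}^{-m}\cdot\jb{t}\ak e^{-\ak t/4}/\ak$. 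Combining $\jb{t}e^{-t/8}\lesssim1$ with the factor $\ak^{-1}$ gained from $e^{-\ak t/8}$ integrated against nothing, the losses are at most polynomial in $\jb{t}$ and are killed by the exponential. Carefully balancing these factors of $\ak$ and $\jb{t}$ is where I expect the bookkeeping to be delicate.
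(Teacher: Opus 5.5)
Your argument is complete for $\ak\le\ep$, and there it is the paper's argument: split at $s=t/2$; on the late half use $\jb{k,ks}\ge\jb{k,kt}/2$ and integrate the exponential; on the early half bound the exponential by $e^{-\ak^3t/(4\ep^2)}$ and integrate $\jb{\ak s}^{-m}$. Two further pieces are also fine: the late half when $\ak\ge\ep$, and the early half when $\ep\le\ak\le1$.

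\textbf{The gap.} The early half for $\ak\ge1$ is left unfinished, and what you do prove there is too weak. Your displayed chain only reaches $\jb{k,kt}^{-m}\sim\ak^{-m}\jb{t}^{-m}$. The claim requires $\ak^{-1}\jb{k,kt}^{-m}\sim\ak^{-m-1}\jb{t}^{-m}$. The intermediate bound $\ak^{-m}e^{-\ak t/4}$ is off by the unbounded factor $\ak$, for instance at $t=1/\ak$. The closing remarks, with a factor $\ak^{-1}$ ``gained from $e^{-\ak t/8}$ integrated against nothing'', are not an argument. The loss comes from the order of operations on $[0,t/2]$: you take the supremum of the exponential and integrate the algebraic factor. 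Integrating $\jb{s}^{-m}$ yields $O(1)$ rather than $\ak^{-1}$. Integrating $\jb{\ak s}^{-m}$, as in your first attempt, yields $\ak^{-1}$ but discards $\jb{k}^{-m}$.

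\textbf{The fix.} Do the opposite, exactly as in the proof of Proposition~\ref{ELtheorem}. On $[0,t/2]$, use $\jb{k,ks}\ge\jb{k}$ together with $\int_0^{t/2}e^{-\ak(t-s)/2}\,ds\le 2\ak^{-1}e^{-\ak t/4}$. This bounds the early half by $2\ak^{-1}\jb{k}^{-m}e^{-\ak t/4}$. Then $e^{-\ak t/4}\lesssim_m\jb{\ak t}^{-m}$ and $\jb{k,kt}\le\jb{k}\jb{\ak t}$ give $\lesssim\ak^{-1}\jb{k,kt}^{-m}$. This works for all $\ak\ge\ep$ at once, with no split at $\ak=1$.

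Your own route can also be repaired. Keep the length of the interval, $\int_0^{t/2}\jb{s}^{-m}\,ds\le t/2$, and use $te^{-\ak t/4}\lesssim\ak^{-1}e^{-\ak t/8}$.

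For comparison, the paper does not split in time at all when $\ak\ge\ep$. It uses $\jb{k,kt}\le\sqrt2\,\jb{k,ks}\jb{\ak(t-s)}$ to bound the whole integral by $\jb{k,kt}^{-m}\int_0^\infty e^{-\ak u/2}\jb{\ak u}^m\,du\lesssim\ak^{-1}\jb{k,kt}^{-m}$.
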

\begin{proof}
The identity $\jb{k,kt}^2=\jb{k}^2+\ak^2t^2$ and $s^2+(t-s)^2\geq t^2/2$ give $\jb{k,kt}\leq\sqrt2\jb{k,ks}\jb{\ak(t-s)}$, and $\jb{k,ks}\geq\jb{k,kt}/2$ for $s\geq t/2$. For $\ak\geq\ep$, where Corollary~\ref{cor:roots}\ref{realrootitem} gives $\abs{\lam_0(\ak)}\geq\ak/2$, the first of these and the substitution $y=\ak(t-s)$ give
\begin{align*}
    \int_0^te^{\lam_0(\ak)(t-s)}\frac{ds}{\jb{k,ks}^{m}}
    \lesssim\frac{1}{\jb{k,kt}^{m}}\int_0^te^{-\frac{\ak(t-s)}{2}}\jb{\ak(t-s)}^{m}\,ds
    \lesssim\frac{1}{\ak\jb{k,kt}^{m}}.
\end{align*}
For $\ak\leq\ep$, where $\abs{\lam_0(\ak)}\geq\ak^3/(2\ep^2)$, split at $s=t/2$. On $[t/2,t]$, the second inequality gives $\jb{k,ks}^{-m}\lesssim\jb{k,kt}^{-m}$, and the integral of the exponential is at most $2\ep^2\ak^{-3}$. On $[0,t/2]$, the exponential is at most $e^{-\ak^3t/(4\ep^2)}$, while $\jb{k,ks}\geq\jb{\ak s}$ and $\int_0^\infty\jb{\ak s}^{-m}\,ds\lesssim\ak^{-1}$.
\end{proof}

\begin{proof}[Proof of Proposition~\ref{prop:remainder}]
The amplitudes in~\eqref{eq:rem-explicit} satisfy, uniformly for $0<\ep\leq\ep_0$ and $r=\ak>0$,
\begin{align}\label{eq:rem-amplitudes}
\begin{alignedat}{2}
    \abs{\tau_0^0(k)}&\lesssim\ep^2r\min\set{r,\frac{\ep^3}{r^2}},
    &\qquad
    \abs{\upsilon_0^0(k)}&\lesssim\ep r\min\set{r,\frac{\ep^3}{r^2}},
    \\
    \abs{\tau_0^1(k)}&\lesssim\ep^2\min\set{r,\frac{\ep^3}{r^2}},
    &
    \abs{\upsilon_0^1(k)}&\lesssim\ep\min\set{r,\frac{\ep^3}{r^2}},
    \\
    \abs{\tau_0^\mu(k)}&\lesssim\ep r\min\set{1,\frac{\ep^2}{r^2}},
    &
    \abs{\upsilon_0^\mu(k)}&\lesssim r\min\set{1,\frac{\ep^2}{r^2}},
    \\
    \abs{\tau_0^{\mathrm{cur},1}(k)}&\lesssim\min\set{r,\frac{\ep^4}{r^3}},
    &
    \abs{\upsilon_0^{\mathrm{cur},0}(k)}&\lesssim\min\set{\frac{r^2}{\ep},\frac{\ep^3}{r^2}},
\end{alignedat}
\end{align}
and
\begin{align}\label{endpoints}
    \abs{\tau_\pm^{\mathrm{cur},j}(k)}+\abs{\upsilon_\pm^{\mathrm{cur},j}(k)}\lesssim\frac{\ep^{j}}{r^{j}},\;\;1\leq j\leq3.
\end{align}
The bounds follow from Corollary~\ref{cor:roots},~\eqref{eq:R0-global-bound} and Lemmas~\ref{lem:Rpm-bounds} and~\ref{lem:Rmu-bounds}. The real-root amplitude $\tau_0^{\mathrm{cur},1}=-\ep^2R_0\mathbb P$ also satisfies the weaker bound
\begin{align}\label{eq:rem-in-weakened}
    \abs{\tau_0^{\mathrm{cur},1}(k)}\lesssim\min\set{r,\frac{\ep^3}{r^2}},
\end{align}
used for the direct application of Lemma~\ref{lem:damped-data}, the sharper bound above being used for the convolution term. The terms are treated in four groups.

\noindent\emph{Fixed terms: data $\ETin$ and $S^{\mathrm{cur}}(0)$.}
Using $\ep A^1=-\ETin$, the bounds~\eqref{eq:rem-amplitudes},~\eqref{eq:rem-in-weakened} and Lemma~\ref{lem:damped-data} give factors $\ep^3$, $\ep^2$ and $\ep^2$ for the terms with amplitudes $\tau_0^1$ and $\upsilon_0^1$ and for the fixed-data term $\tau_0^{\mathrm{cur},1}e^{\lam_0t}S^{\mathrm{cur}}(0)$, respectively. Moreover $S^{\mathrm{cur}}(0)=\Jin$, so that
\begin{align*}
    \norm{S^{\mathrm{cur}}(0)}_{\Bes^{N+1}_{1,1}}
    \lesssim\norm{\jb{v}\fin}_{W^{N+2,1}_{x,v}}
\end{align*}
by Lemma~\ref{lem:embeddings}\ref{emb:sobolev-besov}, and all three terms are controlled by $\mathcal D^{\mathrm r}_N$.

\noindent\emph{Fixed terms: datum $A^0$.}
Using $\ak\abs{\widehat A^0}=\abs{\widehat\Bin}$ from~\eqref{eq:A0-identity}, the pairs $(\mathsf D_{\mathrm{lo}},\mathsf D_{\mathrm{hi}})$ in Lemma~\ref{lem:damped-data} for $\tau_0^0$, $\tau_0^\mu$, $\upsilon_0^0$ and $\upsilon_0^\mu$ are, respectively,
\begin{align*}
    (\ep^2\Bin,\ep^3\Bin),
    \qquad(\ep A^0,\ep\Bin),
    \qquad(\ep\Bin,\ep^2\Bin),
    \qquad(A^0,\Bin).
\end{align*}
In the two equilibrium terms the factor $r$ supplies on $\ak\leq\ep$ the factor $\ak$ required by Lemma~\ref{lem:damped-data}, and on $\ak\geq\ep$ converts $A^0$ into $\Bin$. Measuring the region $\ak\leq\ep$ through $A^0$ rather than $\Bin$ is what supplies that extra power there, hence the rate $\jb{t}^{-(\aal+4)/3}$. The datum $\Bin$ alone would give only $\jb{t}^{-(\aal+3)/3}$. The bounds~\eqref{eq:rem-amplitudes} and Lemma~\ref{lem:damped-data} give the factors $\ep^4$, $\ep^2$, $\ep^3$ and $\ep$, respectively, which are the required electric and magnetic powers.

\noindent\emph{Endpoint terms.}
The endpoint bounds of~\eqref{endpoints} and Lemma~\ref{lem:current-pointwise} with $m=N+4$ give, for $j=0,1,2$,
\begin{align*}
    \ak^{\aal}\br{\abs{\tau_\pm^{\mathrm{cur},j+1}(k)}+\abs{\upsilon_\pm^{\mathrm{cur},j+1}(k)}}\abs{\p_t^{j}\widehat S^{\mathrm{cur}}(t,k)}
    &\lesssim\frac{\ep^{j+1}\ak^{\aal-1}}{\jb{k,kt}^{N+4}}\norm{\jb{v}^{j+1}\fin}_{W^{N+4,1}_{x,v}}.
\end{align*}
This bound holds for both amplitudes over the whole range, only $j=1,2$ occurring in the electric remainder. Lemma~\ref{lem:embeddings}\ref{emb:fourier-L1} and~\eqref{eq:freq-integral} give the faster decay $\jb{t}^{-\aal-2}$. The worst powers are $\ep^2$ for the electric endpoints, attained at $j=1$, and $\ep$ for the magnetic endpoints, attained at $j=0$. Since $\aal+2\geq(\aal+4)/3$, these bounds are stronger than required.

\noindent\emph{Convolution terms.}
For the magnetic convolution, the bound for $\upsilon_0^{\mathrm{cur},0}$ in~\eqref{eq:rem-amplitudes}, Lemma~\ref{lem:current-pointwise} with $m=N+4$ and Lemma~\ref{lem:damped-convolution} give, using $\ep^2\leq\ak^2$ on $\ak\geq\ep$,
\begin{align*}
    \ak^{\aal}\abs{\upsilon_0^{\mathrm{cur},0}(k)}\int_0^te^{\lam_0(\ak)(t-s)}\abs{\widehat S^{\mathrm{cur}}(s,k)}\,ds
    \lesssim\norm{\jb{v}\fin}_{W^{N+4,1}_{x,v}}
    \br{\frac{\ep \ak^{\aal-1}}{\jb{k,kt}^{N+4}}+\frac{\ak^{\aal+1}}{\ep}e^{-\frac{\ak^3t}{4\ep^2}}}.
\end{align*}
For $t\geq1$, integrate the preceding bound in $k$
using~\eqref{eq:freq-integral} and the substitution
$y=\ak^3t/\ep^2$.
For $t\leq1$, use $\abs{\upsilon_0^{\mathrm{cur},0}(k)}\lesssim\ep$
and Lemma~\ref{lem:current-pointwise} with $m=N+4$ to estimate
the original convolution directly.
Fourier inversion therefore gives
\begin{align*}
    \norm{\nabla_x^\alpha\br{\upsilon_0^{\mathrm{cur},0}e^{\lam_0t}*_tS^{\mathrm{cur}}(t)}}_{L^\infty}
    &\lesssim\br{\frac{\ep}{\jb{t}^{\aal+2}}+\frac{1}{\ep}\br{\frac{\ep^2}{\jb{t}}}^{\frac{\aal+4}{3}}}
    \norm{\jb{v}\fin}_{W^{N+4,1}_{x,v}}\\
    &\lesssim\frac{\ep}{\jb{t}^{\frac{\aal+4}{3}}}\norm{\jb{v}\fin}_{W^{N+4,1}_{x,v}},
\end{align*}
where the last inequality uses $\ep^{\frac{2(\aal+4)}{3}-1}\leq\ep$. For $\tau_0^{\mathrm{cur},1}e^{\lam_0t}*_t\p_tS^{\mathrm{cur}}$, the derivative of the current supplies the factor $\ak$ previously supplied by the curl, while the amplitude carries one additional factor~$\ep$. The same argument gives
\begin{align*}
    \norm{\nabla_x^\alpha\br{\tau_0^{\mathrm{cur},1}e^{\lam_0t}*_t\p_tS^{\mathrm{cur}}(t)}}_{L^\infty}
    \lesssim\frac{\ep^2}{\jb{t}^{\frac{\aal+4}{3}}}\norm{\jb{v}^2\fin}_{W^{N+4,1}_{x,v}}.
\end{align*}
By Lemma~\ref{lem:velocity-weights}, all the data norms above are bounded by $\mathcal D^{\mathrm r}_N$, which proves the bounds for $B^{\mathrm r}$ and $E_T^{\mathrm r}$. 

Finally, by Proposition~\ref{prop:transverse-decomposition} the terms of $\nabla_xA^{\mathrm r}$ are those of $B^{\mathrm r}$ with $ik\times$ replaced by $ik\otimes\mathbb P_k$. Since $\abs{\mathbb P_k}\leq1$, the amplitude bounds~\eqref{eq:rem-amplitudes} hold for them unchanged, and $\ak\abs{\widehat A^0}=\abs{\widehat\Bin}$ is used as before, so the same argument gives the asserted bound for $\nabla_xA^{\mathrm r}$.
\end{proof}

\section{Proofs of the main theorem}\label{sec:proofs}
This section proves Theorem~\ref{maintheorem} using the field estimates established above. The electrostatic comparison additionally requires a bound on the impulse of $E_T$ along free characteristics.

The following lemma bounds the functionals $\mathcal H_{N+3}$, $\mathcal D^{\mathrm{osc}}_{N,\delta}$ and $\mathcal D^{\mathrm r}_N$, defined in~\eqref{eq:longitudinal-data},~\eqref{eq:osc-data} and~\eqref{eq:rem-data}, respectively, by the unified data norm $\mathcal D_{N,p,\delta}$ of~\eqref{eq:data-norm}.
\begin{lemma}[Comparison with the unified data norm]\label{lem:data-control}
Fix a non-negative integer $N$, $1\leq p\leq\infty$ and $\delta\in(0,1)$. Then
\begin{align*}
    \mathcal H_{N+3}\sbr{\fin}+\mathcal D^{\mathrm{osc}}_{N,\delta}+\mathcal D^{\mathrm r}_N
    \lesssim\mathcal D_{N,p,\delta},
\end{align*}
with implicit constant depending only on $N$ and $\delta$. In particular, all three quantities are finite under the hypotheses of Theorem~\ref{maintheorem}.
\end{lemma}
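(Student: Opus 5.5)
The plan is to bound the three functionals term by term, using only elementary Littlewood--Paley facts from Appendix~\ref{app:LP}: the splitting \eqref{eq:chi-splitting}, monotonicity of $\Bes^s_{1,1}$ in $s$, low-frequency multiplier bounds, and the embedding of Sobolev into Besov spaces (Lemma~\ref{lem:embeddings}). Three kinds of terms have to be matched against $\mathcal D_{N,p,\delta}$.

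\emph{First}, $\mathcal H_{N+3}[\fin]$. By \eqref{Hs-data-control} it is bounded by $\norm{\jb{v}\fin}_{W^{N+3,1}_{x,v}}$. Since $\jb{v}\le\jb{v}^4$ and $N+3\le N+4$, this is at most the weighted Sobolev term of $\mathcal D_{N,p,\delta}$.

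\emph{Second}, $\mathcal D^{\mathrm{osc}}_{N,\delta}$. Its last two terms already appear in $\mathcal D_{N,p,\delta}$. For the three low-frequency field terms, write $g$ for $\ETin$, $\Bin$ or $\Delta_x^{-1}\nabla_x\times\Bin$. By \eqref{eq:chi-splitting},
\begin{align*}
\norm{\chi(i\nabla_x)\abs{\nabla_x}^{-\delta}g}_{L^1}\lesssim\norm{\abs{\nabla_x}^{-\delta}g}_{\Bes^0_{1,1}}\le\norm{\abs{\nabla_x}^{-\delta}g}_{\Bes^{N+4}_{1,1}}.
\end{align*}
For the two high-frequency terms, I would write $(1-\chi)g=(1-\chi)\abs{\nabla_x}^{\delta}\,\abs{\nabla_x}^{-\delta}g$. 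The symbol $(1-\chi)\ak^\delta$ satisfies \eqref{mikh} with order $\delta$ on $\ak\gtrsim1$, so Lemma~\ref{lem:multiplier-bounds}\ref{highfreq-mikhlin} gives
\begin{align*}
\norm{(1-\chi(i\nabla_x))g}_{\Bes^{N+3}_{1,1}}\lesssim\norm{\abs{\nabla_x}^{-\delta}g}_{\Bes^{N+3+\delta}_{1,1}}\le\norm{\abs{\nabla_x}^{-\delta}g}_{\Bes^{N+4}_{1,1}},
\end{align*}
using $\delta<1$.

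\emph{Third}, $\mathcal D^{\mathrm r}_N$. Its kinetic term $\norm{\jb{v}^3\fin}_{W^{N+4,1}}$ is dominated directly by the weighted Sobolev term. For each field term $g$, split by \eqref{eq:chi-splitting}. The high-frequency piece is handled exactly as above, with index $N+1\le N+3$. For the low-frequency piece, write $\chi g=\chi\abs{\nabla_x}^{\delta}\,\chi(\cdot/2)\abs{\nabla_x}^{-\delta}g$, with the slightly wider cutoff $\chi(\cdot/2)$ equal to one on $\supp\chi$. The symbol $\chi\ak^\delta$ has positive order $\delta$ and compact support. The shell-summation argument of Lemma~\ref{lem:initial-current-low-frequency}, with geometric series $\sum_{\ell\le2}2^{\ell\delta}$, shows that its kernel is in $L^1$. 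Young's inequality then bounds $\norm{\chi g}_{L^1}$ by $\norm{\chi(\cdot/2)\abs{\nabla_x}^{-\delta}g}_{L^1}\lesssim\norm{\abs{\nabla_x}^{-\delta}g}_{\Bes^0_{1,1}}$. Here I use that $\chi(\cdot/2)$ overlaps only $P_{-1}$ and $P_0$, and that these are uniformly $L^1$-bounded.

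The only delicate point is this low-frequency step: a positive-order multiplier must be shown to map $L^1$ to $L^1$ after frequency localisation. The compact-support shell argument from Lemma~\ref{lem:multiplier-bounds} is the intended tool. I expect this to be the main obstacle only in checking that the order-$\delta$ symbol falls within the hypotheses of that lemma, and that the dyadic sum converges for $\delta>0$. The constants depend only on $N$ and $\delta$, and finiteness of all three quantities then follows from $\mathcal D_{N,p,\delta}<\infty$.
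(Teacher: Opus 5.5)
Your proposal is correct and follows the paper's proof essentially step for step. The paper packages your two frequency trades into the single inequality $\norm{G}_{\Bes^{s}_{1,1}}\lesssim\norm{\abs{\nabla_x}^{-\delta}G}_{\Bes^{s'}_{1,1}}$ for $s+\delta\leq s'$. It proves this by Lemma~\ref{lem:multiplier-bounds}\ref{lowfreq-smoothing} with symbol $\equiv1$ on the low block, which is exactly your shell summation, and by Lemma~\ref{lem:multiplier-bounds}\ref{highfreq-mikhlin} with symbol $\ak^{\delta}$ on the high block, using $\delta<1$ as you do.

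One small imprecision: the weight comparisons $\norm{\jb{v}\fin}_{W^{N+3,1}_{x,v}}$, $\norm{\jb{v}^3\fin}_{W^{N+4,1}_{x,v}}\lesssim\norm{\jb{v}^4\fin}_{W^{N+4,1}_{x,v}}$ are not literally ``at most'' or ``direct'', since $v$-derivatives fall on the weight. They hold up to constants by Lemma~\ref{lem:velocity-weights}, which is what the paper invokes.
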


\begin{proof}
For all $s,s'\in\R$ with $s+\delta\leq s'$ and every $G$,
\begin{align}\label{eq:data-trade}
    \norm{G}_{\Bes^{s}_{1,1}}
    \lesssim\norm{\abs{\nabla_x}^{-\delta}G}_{\Bes^{s'}_{1,1}}.
\end{align}
To see this, split by~\eqref{eq:chi-splitting}: on the low block, apply Lemma~\ref{lem:multiplier-bounds}\ref{lowfreq-smoothing} with symbol $\equiv1$; on the high block, $(1-\chi(i\nabla_x))G=\abs{\nabla_x}^{\delta}(1-\chi(i\nabla_x))\abs{\nabla_x}^{-\delta}G$, and $\ak^{\delta}$ satisfies the Mikhlin bound of order $\delta$ on $\set{\ak\geq1/4}$, so Lemma~\ref{lem:multiplier-bounds}\ref{highfreq-mikhlin} yields the claim. 

The field norms of $\mathcal D^{\mathrm{osc}}_{N,\delta}$ and $\mathcal D^{\mathrm r}_N$ involve only $G\in\set{\ETin,\Bin,\Delta_x^{-1}\nabla_x\times\Bin}$, and each is bounded by~\eqref{eq:data-trade}: the low-frequency terms are single blocks of it, while the high-frequency terms are bounded by~\eqref{eq:data-trade}, using $\delta<1$. The current term and the weighted Sobolev norm of $\mathcal D^{\mathrm{osc}}_{N,\delta}$ are terms of $\mathcal D_{N,p,\delta}$. By Lemma~\ref{lem:velocity-weights}, the weighted Sobolev norm of $\mathcal D^{\mathrm r}_N$ is bounded by the latter and $\mathcal H_{N+3}\sbr{\fin}$ is controlled similarly via~\eqref{Hs-data-control}.
\end{proof}

\subsection{Proof of the damping and scattering conclusions}

\begin{proof}[Proof of parts~\ref{maintheorem-longitudinal}--\ref{maintheorem-scattering} of Theorem~\ref{maintheorem}]
Take $c_0\geq\ep_0^{-1}$, so that the preceding estimates apply uniformly for $c\geq c_0$. The unique global mild solution is obtained from the closed equations~\eqref{VolterraF} and~\eqref{eq:wave-closed} through the Duhamel formula~\eqref{eq:vlasov-duhamel}, and the representation formulae of Sections~2--4 apply.

\noindent\emph{Field decay.} For the longitudinal field, the pair $(\rho,E_L)$ satisfies~\eqref{VolterraF}--\eqref{EF-rhoF} with $F_{\mathrm{in}}=\fin$, as established in Section~\ref{sectionlongitudinalfielddecay}, while $\jb{v}\fin\in L^1_{x,v}$ and Lemma~\ref{lem:data-control} bounds $\mathcal H_{N+3}\sbr{\fin}$ by $\mathcal D_{N,p,\delta}$. Proposition~\ref{ELtheorem} with $M=N+3$ is therefore available for every $\aal\leq N$, that is for every $\aal<M-2$, and yields part~\ref{maintheorem-longitudinal}. 

For the transverse fields, the decomposition asserted in part~\ref{maintheorem-transverse} is~\eqref{eq:field-split}. Proposition~\ref{prop:complex-osc} and Lemma~\ref{lem:data-control}, summed over the two signs and over all multi-indices $\alpha$ with $\aal\leq N$, give~\eqref{maintheoremosc} after the substitution $\ep=1/c$. For the remainder, Proposition~\ref{prop:remainder} and Lemma~\ref{lem:data-control} give~\eqref{maintheoremrem}. This proves part~\ref{maintheorem-transverse}.

\noindent\emph{Scattering of the profile.} Write $g(t,x,v):=f(t,x+tv,v)$ for the profile of $f$ along free transport. By the Vlasov equation of~\eqref{linVM}, $\p_tg(t,x,v)=-\nabla_v\mu(v)\cdot E(t,x+tv)$, so that integration in time gives
\begin{align}\label{eq:profile-duhamel}
    g(t,x,v)=\fin(x,v)-\int_0^t\nabla_v\mu(v)\cdot E(s,x+sv)\,ds,
\end{align}
and define
\begin{align}\label{eq:profile-limit}
    f_\infty(x,v):=\fin(x,v)-\int_0^\infty\nabla_v\mu(v)\cdot E(s,x+sv)\,ds.
\end{align}
The field entering~\eqref{eq:profile-limit} is $E=E_L+\ETosc+E_T^{\mathrm r}$, and each of its three parts has been bounded above, leading to the overall rate
\begin{align}\label{eq:E-integrable}
    \norm{\nabla_x^\alpha E(s)}_{L^\infty}
    \lesssim\frac{\mathcal D_{N,p,\delta}}{\jb{s}^{\frac{4}{3}}},
    \qquad \aal\leq N,\quad s\geq0,
\end{align}
the transverse remainder being the slowest of the three at derivative order zero, and the oscillatory part for $\aal\geq1$. Indeed, $\Lambda_\ep(s)\leq1$ for $s\leq1$, while for $s\geq1$ one has $\Lambda_\ep(s)\leq\ep^{1/2}s^{-3/2}\leq s^{-4/3}$ at every order $\aal\leq N$. The longitudinal and electric-remainder estimates decay at least as fast. The right-hand side of~\eqref{eq:E-integrable} is therefore integrable on $[0,\infty)$. Hence the integral in~\eqref{eq:profile-limit} converges absolutely and, spatial translations preserving the $L^\infty_x$ norm at each fixed $v$ and $\norm{\nabla_v\mu}_{L^p_v}$ being finite for the Poisson equilibrium~\eqref{Poisson}, summing over $\aal\leq N$ and using the term $\norm{\fin}_{L^p_vW^{N,\infty}_x}$ of~\eqref{eq:data-norm} gives $f_\infty\in L^p_vW^{N,\infty}_x$ with $\norm{f_\infty}_{L^p_vW^{N,\infty}_x}\lesssim\mathcal D_{N,p,\delta}$. Subtracting \eqref{eq:profile-limit} from~\eqref{eq:profile-duhamel} and using~\eqref{eq:E-integrable} once more,
\begin{align*}
    \norm{\nabla_x^\alpha g(t)-\nabla_x^\alpha f_{\infty}}_{L^p_vL^\infty_x}
    =\norm{\int_t^\infty\nabla_v\mu(v)\cdot\nabla_x^\alpha E(s,x+sv)\,ds}_{L^p_{v}L^\infty_x}
    \lesssim\mathcal D_{N,p,\delta}\int_t^\infty\frac{ds}{\jb{s}^{\frac{4}{3}}}
    \lesssim\frac{\mathcal D_{N,p,\delta}}{\jb{t}^{\frac{1}{3}}},
\end{align*}
and summing over $\aal\leq N$ gives part~\ref{maintheorem-scattering}.
\end{proof}

\subsection{Proof of the electrostatic-limit conclusion}
The comparison rests on the following two lemmas.

\begin{lemma}[Uniform bound on the vector potential]\label{lem:potential-bound}
Fix a non-negative integer $N$, $1\leq p\leq\infty$ and $\delta\in(0,1)$, and let $A$ be the Coulomb-gauge vector potential, given by~\eqref{eq:solution-representation}. Then, uniformly in $0<\ep\leq\ep_0$,
\begin{align*}
    \sup_{t\geq0}\norm{A(t)}_{W^{N,\infty}_x}\lesssim\mathcal D_{N,p,\delta}.
\end{align*}
\end{lemma}

\begin{proof}
By Corollary~\ref{cor:roots}, all three roots have negative real part, with $\abs{\lam_0(r)}\leq r$ and $\abs{\lam_\pm(r)}\lesssim\jb{r/\ep}$. Since $\abs{e^{\lam_jt}}\leq1$ for $t\geq0$, the residue bounds of Lemmas~\ref{lem:R0-bounds}, \ref{lem:Rpm-bounds} and~\ref{lem:Rmu-bounds} give, for $r>0$ and $t\geq0$,
\begin{align*}
    \ep\abs{H(t,r)}
    &\leq\ep\sum_{j\in\set{0,\pm}}\abs{R_j(r)}
    \lesssim\frac{r}{\ep\jb{\frac{r}{\ep}}^{4}}+\frac{1}{\ep\jb{\frac{r}{\ep}}}
    \lesssim\frac{1}{r},\\
    \ep^2\br{\abs{\p_tH(t,r)}+\abs{H^\mu(t,r)}}
    &\leq\ep^2\sum_{j\in\set{0,\pm}}\br{\abs{\lam_j(r)R_j(r)}+\abs{R^\mu_j(r)}}
    \lesssim\frac{r^2}{\jb{\frac{r}{\ep}}^{4}}+1
    \lesssim1,
\end{align*}
the last inequalities because $\ep\jb{r/\ep}\geq r$ and $r^2\jb{r/\ep}^{-4}\leq\ep^2\leq1$. Substituting these bounds into~\eqref{eq:solution-representation}, with $\abs{\mathbb P_k}\leq1$, $\ep A^1=-\ETin$ and $\ak\abs{\widehat A^0(k)}=\abs{\widehat\Bin(k)}$ from~\eqref{eq:A0-identity}, gives
\begin{align}\label{eq:potential-fourier-bound}
    \sup_{t\geq0}\abs{\widehat A(t,k)}
    \lesssim\frac{\abs{\widehat\Bin(k)}+\abs{\widehat\ETin(k)}}{\ak}
    +\frac{1}{\ak}\int_0^\infty\abs{\widehat S^{\mathrm{cur}}(s,k)}\,ds,
    \qquad k\neq0.
\end{align}
For the current term, Lemma~\ref{lem:current-pointwise} with $j=0$ and $m=N+4$, followed by the substitution $u=\ak s/\jb{k}$, for which $\jb{k,ks}=\jb{k}\jb{u}$, gives
\begin{align*}
    \int_0^\infty\abs{\widehat S^{\mathrm{cur}}(s,k)}\,ds
    \lesssim\norm{\jb{v}\fin}_{W^{N+4,1}_{x,v}}\int_0^\infty\frac{ds}{\jb{k,ks}^{N+4}}
    \lesssim\frac{\norm{\jb{v}\fin}_{W^{N+4,1}_{x,v}}}{\ak\jb{k}^{N+3}}.
\end{align*}
After multiplication by $\jb{k}^N\ak^{-1}$, the resulting frequency weight is integrable:
\begin{align*}
    \int_{\R^3}\frac{dk}{\ak^2\jb{k}^{3}}
    =4\pi\int_0^\infty\frac{dr}{\jb{r}^{3}}<\infty.
\end{align*}
For either initial field $G\in\set{\Bin,\ETin}$, the partition~\eqref{eq:LP-partition} and the bound $\abs{\widehat{P_\ell G}}\leq\norm{P_\ell G}_{L^1}$ give
\begin{align*}
    \int_{\R^3}\frac{\jb{k}^{N}}{\ak}\abs{\widehat G(k)}\,dk
    \lesssim\norm{P_{-1}G}_{L^1}+\sum_{\ell\geq0}2^{(N+2)\ell}\norm{P_\ell G}_{L^1}
    =\norm{G}_{\Bes^{N+2}_{1,1}},
\end{align*}
since $\ak^{-1}$ is integrable on the low block, while on the support of the $\ell$-th block, of volume $\lesssim2^{3\ell}$, the weight $\jb{k}^{N}\ak^{-1}$ is of order $2^{(N-1)\ell}$. Applying Lemma~\ref{lem:embeddings}\ref{emb:fourier-L1} to each spatial derivative of order at most $N$ and integrating~\eqref{eq:potential-fourier-bound} now gives
\begin{align*}
    \sup_{t\geq0}\norm{A(t)}_{W^{N,\infty}_x}
    \lesssim\int_{\R^3}\jb{k}^{N}\sup_{t\geq0}\abs{\widehat A(t,k)}\,dk\lesssim\mathcal D_{N,p,\delta},
\end{align*}
the last inequality by~\eqref{eq:data-trade}, by Lemma~\ref{lem:velocity-weights} and the definition~\eqref{eq:data-norm}.
\end{proof}

\begin{lemma}[Transverse impulse along free characteristics]\label{lem:characteristic-impulse}
Fix a non-negative integer $N$, $1\leq p\leq\infty$ and $\delta\in(0,1)$. Then, uniformly in $0<\ep\leq\ep_0$ and $v\in\R^3$,
\begin{align*}
    \sup_{t\geq0}
    \norm{\int_0^t E_T(s,x+sv)\,ds}_{W^{N,\infty}_x}
    \lesssim\ep\jb{v}\mathcal D_{N,p,\delta}.
\end{align*}
\end{lemma}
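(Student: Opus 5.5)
Since $E_T=-\ep\p_tA$, the plan is to write the integrand as a total derivative along the free characteristic:
\begin{align*}
    \frac{d}{ds}\sbr{A(s,x+sv)}=\p_sA(s,x+sv)+(v\cdot\nabla_x)A(s,x+sv),
\end{align*}
so that
\begin{align*}
    \int_0^tE_T(s,x+sv)\,ds
    =-\ep\sbr{A(t,x+tv)-A^0(x)}+\ep\int_0^t(v\cdot\nabla_x)A(s,x+sv)\,ds.
\end{align*}
Spatial translations preserve $W^{N,\infty}_x$. The boundary terms are therefore bounded by $2\ep\sup_{t\geq0}\norm{A(t)}_{W^{N,\infty}_x}\lesssim\ep\mathcal D_{N,p,\delta}$ by Lemma~\ref{lem:potential-bound}. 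This accounts for the factor $\ep$ and for the $\jb{v}$ coming from the second term. That second term is bounded by $\ep\abs{v}\norm{\int_0^t\nabla_xA(s,x+sv)\,ds}_{W^{N,\infty}_x}$. It therefore remains to show that the time integral of $\nabla_xA$ along free characteristics is bounded in $W^{N,\infty}_x$, uniformly in $t$, $v$ and $\ep$.

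For this, I would split $A=A^{\mathrm{osc}}_++A^{\mathrm{osc}}_-+A^{\mathrm r}$ as in Proposition~\ref{prop:transverse-decomposition}. The remainder part is immediate. Proposition~\ref{prop:remainder} gives $\norm{\nabla_x^\alpha\nabla_xA^{\mathrm r}(s)}_{L^\infty}\lesssim\ep\jb{s}^{-4/3}\mathcal D^{\mathrm r}_N$, which is integrable in $s$. Lemma~\ref{lem:data-control} then yields a bound $\lesssim\mathcal D_{N,p,\delta}$, even with a spare factor $\ep$. For the oscillatory part, Proposition~\ref{prop:complex-osc} gives $\norm{\nabla_x^\alpha\nabla_xA^{\mathrm{osc}}_\pm(s)}_{L^\infty}\lesssim\Lambda_\ep(s)\mathcal D^{\mathrm{osc}}_{N,\delta}$. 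Since $\Lambda_\ep$ is comparable to $\min\set{1,\ep/s,\ep^{1/2}s^{-3/2}}$, its integral is
\begin{align*}
    \int_0^\infty\Lambda_\ep(s)\,ds
    \lesssim\ep+\int_\ep^1\frac{\ep}{s}\,ds+\ep^{1/2}\int_1^\infty s^{-3/2}\,ds
    \lesssim\ep\log\frac{1}{\ep}+\ep^{1/2}.
\end{align*}
This is bounded uniformly in $\ep$, which is all that is needed. Summing over $\aal\leq N$ and using Lemma~\ref{lem:data-control} gives
\begin{align*}
    \sup_{t,v}\norm{\int_0^t\nabla_xA(s,x+sv)\,ds}_{W^{N,\infty}_x}\lesssim\mathcal D_{N,p,\delta}.
\end{align*}

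The main obstacle I anticipate is justifying the chain rule and integration by parts at the regularity available, namely that $A$ is $C^1$ in time with values in $W^{N,\infty}_x$. I would handle this on the Fourier side. From the representation~\eqref{eq:solution-representation}, $\widehat A(\cdot,k)$ is $C^1$ in $t$ for each $k$, and the bounds used in Lemma~\ref{lem:potential-bound} together with those on $\ep\p_t\widehat A=-\widehat E_T$ give $k$-integrable majorants locally uniformly in $t$. Alternatively, one can verify the identity for the Fourier-truncated fields and pass to the limit. A secondary concern is that the $L^\infty$ bound on $\nabla_xA^{\mathrm{osc}}$ does not decay for $s\lesssim\ep$. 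This region contributes only $O(\ep)$, so the bound on $\Lambda_\ep$ above already covers it.
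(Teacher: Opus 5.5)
Your proposal is correct and matches the paper's proof step for step: integration by parts along the free characteristic using $E_T=-\ep\p_tA$, Lemma~\ref{lem:potential-bound} for the two endpoint terms, and the decomposition of $A$ together with Propositions~\ref{prop:complex-osc} and~\ref{prop:remainder} and Lemma~\ref{lem:data-control} to bound $\int_0^t\norm{\nabla_xA(s)}_{W^{N,\infty}_x}\,ds$. The only differences are cosmetic: the paper bounds the oscillatory time integral with the cruder $\Lambda_\ep(s)\lesssim\jb{s}^{-3/2}$ instead of your bound $\ep\log(1/\ep)+\ep^{1/2}$, and it does not spell out the justification of the chain rule.
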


\begin{proof}
Since $E_T=-\ep\p_tA$, integration by parts along the free characteristic gives
\begin{align}\label{eq:impulse-identity}
    \int_0^t E_T(s,x+sv)\,ds
    =-\ep\br{A(t,x+tv)-A^0(x)}
    +\ep\int_0^t(v\cdot\nabla_x)A(s,x+sv)\,ds.
\end{align}
Spatial translations preserve the $W^{N,\infty}_x$ norm, so Lemma~\ref{lem:potential-bound}, including its bound at $t=0$, controls the endpoint terms by $\ep\mathcal D_{N,p,\delta}$. For the integral term, the potential decomposition of Proposition~\ref{prop:transverse-decomposition}, Propositions~\ref{prop:complex-osc} and~\ref{prop:remainder}, and Lemma~\ref{lem:data-control} give
\begin{align*}
    \norm{\ep\int_0^t(v\cdot\nabla_x)A(s,x+sv)\,ds}_{W^{N,\infty}_x}
    &\leq\ep\abs{v}\int_0^t\norm{\nabla_xA(s)}_{W^{N,\infty}_x}\,ds\\
    &\lesssim\ep\abs{v}\mathcal D_{N,p,\delta}
    \int_0^t\br{\Lambda_\ep(s)+\frac{\ep}{\jb{s}^{\frac{4}{3}}}}\,ds.
\end{align*}
Since $\ep\leq1$ and $\Lambda_\ep(s)\lesssim\jb{s}^{-3/2}$, both time integrals are bounded independently of $t$ and $\ep$. Combining the two contributions and using $1+\abs{v}\lesssim\jb{v}$ proves the claim.
\end{proof}

\begin{proof}[Proof of the electrostatic-limit conclusion of Theorem~\ref{maintheorem}]
The phase-space discrepancy of~\eqref{eq:phase-space-matching} is abbreviated to $\mathcal M_{N,p}:=\mathcal M_{N,p}(\fin,\fVPin)$. All implicit constants are independent of $c\geq c_0$ and of the data, so that every bound is homogeneous in $\mathcal D_{N,p,\delta}$ and $\mathcal M_{N,p}$.

\noindent\emph{Comparison of the longitudinal fields.} By~\eqref{Hs-data-control} and~\eqref{eq:phase-space-matching}, \begin{align*}
    \mathcal H_{N+3}\sbr{\fin-\fVPin}\lesssim\norm{\jb{v}(\fin-\fVPin)}_{W^{N+3,1}_{x,v}}\leq\mathcal M_{N,p}.
\end{align*} As established in Section~\ref{sectionlongitudinalfielddecay}, the pair $(\rho-\rho^{\mathrm{VP}},E_L-\EVP)$ satisfies~\eqref{VolterraF}--\eqref{EF-rhoF} with datum $\fin-\fVPin$, so, as $\jb{v}(\fin-\fVPin)\in L^1_{x,v}$, Proposition~\ref{ELtheorem} with $M=N+3$ applies to it; summing over $\aal\leq N$, and writing $\EVP=E_L-(E_L-\EVP)$ with the longitudinal bound of part~\ref{maintheorem-longitudinal},
\begin{align}\label{eq:VP-longitudinal-decay}
    \norm{E_L(t)-\EVP(t)}_{W^{N,\infty}}
    \lesssim\frac{\mathcal M_{N,p}}{\jb{t}^2},
    \qquad
    \norm{\EVP(t)}_{W^{N,\infty}}
    \lesssim\frac{\mathcal D_{N,p,\delta}+\mathcal M_{N,p}}{\jb{t}^2}.
\end{align}

\noindent\emph{Convergence of the Vlasov--Poisson profile.} The two systems are linearised about the same equilibrium, so the Vlasov equation of~\eqref{linVP} differs from that of~\eqref{linVM} only in its field, and the Vlasov--Poisson profile $g^{\mathrm{VP}}(t,x,v):=\fVP(t,x+tv,v)$ satisfies~\eqref{eq:profile-duhamel} with $\fVPin$ and $\EVP$ in place of $\fin$ and $E$. Its datum inherits the regularity of $\fin$, since \begin{align*}
\norm{\fVPin}_{L^p_vW^{N,\infty}_x}\leq\norm{\fin}_{L^p_vW^{N,\infty}_x}+\norm{\fin-\fVPin}_{L^p_vW^{N,\infty}_x}\lesssim\mathcal D_{N,p,\delta}+\mathcal M_{N,p}
\end{align*} by~\eqref{eq:phase-space-matching}, and the bound~\eqref{eq:VP-longitudinal-decay} is integrable in time. The argument of part~\ref{maintheorem-scattering} of Theorem~\ref{maintheorem} therefore applies verbatim and shows that $g^{\mathrm{VP}}$ converges in $L^p_vW^{N,\infty}_x$ to
\begin{align*}
    f^{\mathrm{VP}}_\infty(x,v)
    :=\fVPin(x,v)-\int_0^\infty\nabla_v\mu(v)\cdot\EVP(s,x+sv)\,ds.
\end{align*}

\noindent\emph{Comparison of the two profiles.} Subtracting the two Duhamel formulas, splitting $E=E_L+E_T$ and using that the Vlasov--Poisson field is longitudinal,
\begin{align*}
    g(t)-g^{\mathrm{VP}}(t)
    &=(\fin-\fVPin)
    -\int_0^t\nabla_v\mu(v)\cdot\br{E_L(s,x+sv)-\EVP(s,x+sv)}\,ds\\
    &\quad-\int_0^t\nabla_v\mu(v)\cdot E_T(s,x+sv)\,ds,
\end{align*}
the transverse field appearing in the Vlasov--Maxwell profile alone. Translations preserve the Sobolev norm at each fixed $v$, so the three terms may be estimated separately. The datum term is bounded by $\mathcal M_{N,p}$ by the definition~\eqref{eq:phase-space-matching}, and the longitudinal integral by $\mathcal M_{N,p}$ uniformly in $t$ by~\eqref{eq:VP-longitudinal-decay}, the time integral of $\jb{s}^{-2}$ being finite. For the transverse term, $\jb{v}\abs{\nabla_v\mu(v)}\lesssim\jb{v}^{-4}$ for the Poisson equilibrium~\eqref{Poisson}, so that $\norm{\jb{v}\nabla_v\mu}_{L^p_v}\lesssim1$ for every $1\leq p\leq\infty$, and Lemma~\ref{lem:characteristic-impulse} gives, uniformly in $t$,
\begin{align*}
    \norm{\int_0^t\nabla_v\mu(v)\cdot E_T(s,x+sv)\,ds}_{L^p_vW^{N,\infty}_x}
    \lesssim\ep\mathcal D_{N,p,\delta}\norm{\jb{v}\nabla_v\mu}_{L^p_v}
    \lesssim\ep\mathcal D_{N,p,\delta}.
\end{align*}

\noindent\emph{Conclusion.} Collecting the three bounds, and using once more that translations preserve the Sobolev norm at each fixed $v$,
\begin{align*}
    \norm{f(t)-\fVP(t)}_{L^p_vW^{N,\infty}_x}
    =\norm{g(t)-g^{\mathrm{VP}}(t)}_{L^p_vW^{N,\infty}_x}
    \lesssim\mathcal M_{N,p}+\ep\mathcal D_{N,p,\delta}.
\end{align*}
Letting $t\to\infty$ and using the convergence of the two profiles to their scattering states gives
\begin{align*}
    \norm{f_\infty-f^{\mathrm{VP}}_\infty}_{L^p_vW^{N,\infty}_x}
    \lesssim\mathcal M_{N,p}+\ep\mathcal D_{N,p,\delta}.
\end{align*}
Substituting $\ep=1/c$ proves the displayed estimate.

\noindent\emph{Sharpness of the $c^{-1}$ rate.}
Choose a non-zero real-valued divergence-free $A_{\mathrm{in}}\in\mathcal S(\R^3;\R^3)$ whose Fourier transform is supported in an annulus $\set{r_1\leq\ak\leq r_2}$ with $0<r_1<r_2$, and set
\begin{align*}
\Ein=0,\qquad
\fin=\fVPin=0,\qquad
\Bin=\nabla_x\times A_{\mathrm{in}}
\end{align*}
for every $\ep$. These data satisfy~\eqref{compatibilityconditions}, since $\nabla_x\cdot\Bin=0$ while $\fin=0$ gives both $\rho[\fin]=0=\nabla_x\cdot\Ein$ and zero total mass. Here $\ETin=0$ and $\Jin=0$, while $\Bin$ and $\Delta_x^{-1}\nabla_x\times\Bin=-A_{\mathrm{in}}$ have Fourier transforms supported in $\set{r_1\leq\ak\leq r_2}$, on which $\ak^{-\delta}$ is smooth and bounded. Every term of~\eqref{eq:data-norm} is therefore finite, so that $\mathcal D_{N,p,\delta}$ is finite and independent of $\ep$, while $\mathcal M_{N,p}=0$ and $\fVP_\infty=0$. Moreover, $A^0=A_{\mathrm{in}}$, $A^1=0$ and $S^{\mathrm{cur}}=0$, while $\rho\equiv0$ by uniqueness for~\eqref{VolterraF}, so that $E_L\equiv0$ and $E=E_T$. 

By Corollary~\ref{cor:roots}, for each fixed $\ep$ the real parts of the three roots are bounded above by a negative constant, uniformly for $k$ in the compact set $\supp\widehat A_{\mathrm{in}}\subset\R^3\setminus\set{0}$, and no uniformity in $\ep$ is needed here. Hence $\widehat A(t,k)$, a finite sum of exponentials by~\eqref{eq:solution-representation}, decays exponentially in $t$ uniformly on $\supp\widehat A_{\mathrm{in}}$, and Lemma~\ref{lem:embeddings}\ref{emb:fourier-L1} gives $\norm{A(t)}_{W^{N,\infty}_x}\to0$ as $t\to\infty$. Letting $t\to\infty$ in~\eqref{eq:impulse-identity}, and using~\eqref{eq:profile-limit} with $\fin=0$ and $E=E_T$, therefore gives
\begin{align*}
    \ep^{-1}f_\infty(x,v)
    =-\nabla_v\mu(v)\cdot A_{\mathrm{in}}(x)
    -\int_0^\infty\nabla_v\mu(v)\cdot(v\cdot\nabla_x)A(s,x+sv)\,ds,
\end{align*}
where $f_\infty(x,v)=\lim_{t\to\infty}f(t,x+tv,v)$. The integral term is estimated as in the proof of Lemma~\ref{lem:characteristic-impulse},
\begin{align*}
    \norm{\ep^{-1}f_\infty+\nabla_v\mu\cdot A_{\mathrm{in}}}_{L^p_vW^{N,\infty}_x}
    \lesssim\norm{\abs{v}\nabla_v\mu}_{L^p_v}\int_0^\infty\norm{\nabla_xA(s)}_{W^{N,\infty}_x}\,ds
    \lesssim\int_0^\infty\Lambda_\ep(s)\,ds+\ep,
\end{align*}
which tends to $0$ as $\ep\to0$ by dominated convergence, since $\Lambda_\ep(s)\to0$ for every $s>0$ while $\Lambda_\ep(s)\leq\min\set{1,s^{-3/2}}$. The leading term is not identically zero: $A_{\mathrm{in}}\neq0$ and $\nabla_v\mu(v)$ is a non-zero multiple of $v$ for $v\neq0$, so $v\mapsto\norm{\nabla_v\mu(v)\cdot A_{\mathrm{in}}}_{W^{N,\infty}_x}$ is a continuous function which is positive on a non-empty open set. Since $\fVP_\infty=0$, the triangle inequality gives
\begin{align*}
    \lim_{\ep\to0}\ep^{-1}\norm{f_\infty-\fVP_\infty}_{L^p_vW^{N,\infty}_x}
    =\norm{\nabla_v\mu\cdot A_{\mathrm{in}}}_{L^p_vW^{N,\infty}_x}>0.
\end{align*}
Thus the rate cannot in general be improved.
\end{proof}

\appendix

\section{Root estimates for the transverse dispersion function}\label{appendixroots}
This appendix contains the proofs of Lemmas~\ref{lem:real-root} and~\ref{lem:factorisations}.

\begin{proof}[Proof of Lemma~\ref{lem:real-root}]
Recall from~\eqref{P} and the discussion following it that, for every $r>0$, the zeros of $D(\cdot,r)$ are those of the cubic $P(\cdot,r)$, which has exactly one real root $\lam_0(r)$ and a non-real conjugate pair $\lam_\pm(r)$. Throughout, let $\rho>0$ and set $r=\ep\rho$.

Begin with~\ref{realroot}. By~\eqref{eq:P-derivative}, the restriction of $P(\cdot,r)$ to $\R$ is strictly increasing. Since $P(-r,r)=-\ep^2r<0=P(\lam_0(r),r)$, it follows that $\lam_0(r)>-r$; hence $\theta(\rho)>0$. For $\lam=\ep(y-\rho)$ a direct computation gives
\begin{align}\label{eq:Phi}
    P(\lam,r)=\ep^3 \Phi(y,\rho),
    \qquad
    \Phi(y,\rho):=y\sbr{1+\rho^2+\ep^2(\rho-y)^2}-\rho.
\end{align}
The root $\lam_0(r)$ corresponds to $y=(\lam_0(r)+r)/\ep=\theta(\rho)$. Hence $\theta(\rho)$ is the unique real zero of $\Phi(\cdot,\rho)$.

Evaluating $\Phi(\cdot,\rho)$ at the two endpoints, and using $\ep\leq1$,
\begin{align*}
    \Phi\br{\frac{\rho}{2\jb{\rho}^2},\rho}
    &=-\frac{\rho}{2}+\ep^2 \frac{\rho}{2\jb{\rho}^2}
        \br{\rho-\frac{\rho}{2\jb{\rho}^2}}^2
    \leq-\frac{\rho}{2}+\frac{\rho^3}{2\jb{\rho}^2}
    =-\frac{\rho}{2\jb{\rho}^2}<0,\\
    \Phi\br{\frac{\rho}{\jb{\rho}^2},\rho}
    &=\ep^2 \frac{\rho}{\jb{\rho}^2}
        \br{\rho-\frac{\rho}{\jb{\rho}^2}}^2\geq0.
\end{align*}
This proves~\eqref{realrootupperandlowerbound}.

For the derivative bounds, expand
\begin{align*}
    \Phi(y,\rho)=\ep^2y^3-2\ep^2\rho y^2+a(\rho)y-\rho,
    \qquad
    a(\rho):=1+(1+\ep^2)\rho^2,
\end{align*}
so that $\p_y\Phi(y,\rho)=3\ep^2y^2-4\ep^2\rho y+a(\rho)$. Completing the square,
\begin{align}\label{eq:dyPhi-lower}
    \p_y\Phi(y,\rho)
    =1+\rho^2+\ep^2\sbr{3\br{y-\frac{2\rho}{3}}^2-\frac{\rho^2}{3}}
    \geq1+\frac{2}{3}\rho^2\gtrsim\jb{\rho}^2,
    \qquad y,\rho\in\R,
\end{align}
uniformly in $\ep\in(0,1]$; in particular $\p_y\Phi(\theta(\rho),\rho)$ is bounded below away from zero.

The bounds on $\theta^{(n)}$ follow by induction on $n$, the case $n=0$ being \eqref{realrootupperandlowerbound}. Differentiating the identity $\Phi(\theta(\rho),\rho)=0$ in $\rho$,
\begin{align*}
    \p_y\Phi(\theta(\rho),\rho) \theta'(\rho)=1-a'(\rho)\theta(\rho)+2\ep^2\theta(\rho)^2.
\end{align*}
Here the right-hand side is uniformly bounded: by~\eqref{realrootupperandlowerbound}, $\theta(\rho)\leq\rho/\jb{\rho}^2\leq\jb{\rho}^{-1}$, so that, using $\abs{a'}\lesssim\jb{\rho}$ and $\ep\leq1$,
\begin{align*}
    \abs{a'(\rho)\theta(\rho)}+\ep^2\theta(\rho)^2\lesssim1.
\end{align*}
The left-hand factor $\p_y\Phi(\theta(\rho),\rho)$ is bounded below by a constant multiple of $\jb{\rho}^2$ by~\eqref{eq:dyPhi-lower}, so $\abs{\theta'(\rho)}\lesssim\jb{\rho}^{-2}$, the case $n=1$.

Fix $n\geq2$ and assume $\abs{\theta^{(m)}(\rho)}\lesssim\jb{\rho}^{-m-1}$ for all $0\leq m\leq n-1$. Since $\Phi(\cdot,\rho)$ is cubic, the identity is the polynomial relation
\begin{align*}
    \ep^2\theta(\rho)^3-2\ep^2\rho \theta(\rho)^2+a(\rho)\theta(\rho)-\rho=0.
\end{align*}
On differentiating $n$ times and applying the Leibniz rule monomial by monomial, the terms containing $\theta^{(n)}$ arise only from differentiating one factor $\theta$ exactly $n$ times and leaving the remaining factors undifferentiated. These terms sum to $\br{\p_y\Phi(\theta(\rho),\rho)}\theta^{(n)}(\rho)$. Collecting the remaining terms into $\mathcal R_n$ gives
\begin{align}\label{eq:theta-Rn}
\p_y\Phi(\theta(\rho),\rho)\theta^{(n)}(\rho)
=-\mathcal R_n(\rho),
\end{align}
where $\mathcal R_n$ contains only derivatives of $\theta$ of order at most $n-1$.
Since $a$ is quadratic with $\abs{a'}\lesssim\jb{\rho}$ and
$\abs{a''}\lesssim1$, the terms contributed by $a\theta$ are bounded
by a constant multiple of $\jb{\rho}^{1-n}$.
The induction hypothesis and the Leibniz rule similarly bound the
contributions from $\rho\theta^2$ and $\theta^3$ by constant multiples of
$\jb{\rho}^{-n-1}$ and $\jb{\rho}^{-n-3}$, respectively.
Since $\ep\leq1$, it follows that
$\abs{\mathcal R_n(\rho)}\lesssim\jb{\rho}^{1-n}$,
with an implicit constant depending only on $n$.
Combining~\eqref{eq:theta-Rn} with~\eqref{eq:dyPhi-lower} gives
\begin{align*}
    \abs{\theta^{(n)}(\rho)}
    \lesssim\jb{\rho}^{-2}\jb{\rho}^{1-n}
    =\jb{\rho}^{-n-1}.
\end{align*}
This closes the induction and proves~\ref{realroot}.

For \ref{complexroot-zeta}, Vieta's formulas show that the three roots of $P(\cdot,r)$ sum to $-r$, so
\begin{align*}
    \lam_0(r)+2\gamma(r)=-r.
\end{align*}
Together with $\lam_0(r)=-r+\ep\theta(\rho)$ and $\gamma(r)=\ep\zeta(\rho)$, where $r=\ep\rho$, this gives
\begin{align*}
    \zeta(\rho)=-\frac12\theta(\rho).
\end{align*}
Both bounds in \ref{complexroot-zeta} therefore follow from \ref{realroot}, since $\zeta^{(n)}=-\frac12\theta^{(n)}$ for every $n\geq0$.

It remains to prove~\ref{complexroot-beta}. Vieta's formulas give a second relation between the roots: the sum of their pairwise products equals $(r^2+\ep^2)/\ep^2$, that is,
\begin{align*}
2\gamma(r)\lam_0(r)+\gamma(r)^2+\omega(r)^2=\frac{r^2+\ep^2}{\ep^2}.
\end{align*}
Inserting $r=\ep\rho$, $\lam_0(r)=\ep(\theta(\rho)-\rho)$, $\gamma(r)=-\frac{\ep}{2}\theta(\rho)$ and $\omega(r)=\beta(\rho)$ gives
\begin{align}\label{eq:beta-sq}
\beta(\rho)^2=\jb{\rho}^2+\ep^2\psi(\rho),
\qquad
\psi(\rho):=\theta(\rho)\br{\frac34\theta(\rho)-\rho}.
\end{align}
Before estimating $\beta$, it is extended to $\rho\leq0$. So far $\theta$, and with it $\beta$, has been defined only for $\rho>0$. By~\eqref{eq:dyPhi-lower} the function $\Phi(\cdot,\rho)$ is strictly increasing on $\R$ for every $\rho\in\R$, and $\Phi(y,\rho)\to\pm\infty$ as $y\to\pm\infty$, so it has a unique real zero $\widetilde\theta(\rho)$, now defined for all $\rho\in\R$. Since $\p_y\Phi>0$, the implicit function theorem gives $\widetilde\theta\in C^\infty(\R)$. By~\eqref{eq:Phi}, $\Phi(-y,-\rho)=-\Phi(y,\rho)$, so $-\widetilde\theta(\rho)$ is a zero of $\Phi(\cdot,-\rho)$, and uniqueness gives
\begin{align*}
    \widetilde\theta(-\rho)=-\widetilde\theta(\rho).
\end{align*}
For $\rho>0$ this zero is the $\theta(\rho)$ of~\ref{realroot}, so $\widetilde\theta$ is an odd smooth extension of $\theta$ to $\R$, denoted $\theta$ from now on.

Now set
\begin{align*}
    q(\rho):=\jb{\rho}^2+\ep^2\theta(\rho)\br{\frac34\theta(\rho)-\rho},
    \qquad\rho\in\R.
\end{align*}
Since $\theta$ is odd and smooth, $q$ is smooth and even. By~\eqref{eq:beta-sq}, $q(\rho)=\beta(\rho)^2>0$ for $\rho>0$, and $q(0)=1$ because $\theta(0)=0$. Evenness therefore gives $q>0$ on all of $\R$, so the positive square root
\begin{align*}
    \widetilde\beta(\rho):=\sqrt{q(\rho)}
\end{align*}
is a smooth even function on $\R$ agreeing with $\beta$ on $(0,\infty)$. By the standard smoothness criterion for radial functions, $k\mapsto\widetilde\beta(\ak)$ is then smooth on $\R^3$. This extension is denoted $\beta$ from now on.

For the pointwise bound, $0<\theta(\rho)\leq\rho/\jb{\rho}^2\leq\rho$, so $\frac34\theta(\rho)-\rho\leq 0$. Hence $\psi(\rho)\leq0$ and $\beta(\rho)^2\leq\jb{\rho}^2$. Moreover,
\begin{align*}
\psi(\rho)\geq-\rho\theta(\rho)\geq-\frac{\rho^2}{\jb{\rho}^2}.
\end{align*}
Thus, since $\ep\leq1$,
\begin{align*}
\beta(\rho)^2
\geq\jb{\rho}^2-\frac{\rho^2}{\jb{\rho}^2}
=\frac{1+\rho^2+\rho^4}{\jb{\rho}^2}
\geq\frac34\jb{\rho}^2,
\end{align*}
where the final inequality is equivalent to $(1-\rho^2)^2\geq0$. Therefore
\begin{align}\label{eq:beta-pointwise}
\frac{\sqrt3}{2}\jb{\rho}\leq\beta(\rho)\leq\jb{\rho}.
\end{align}

For the higher derivative bounds, record that, for $n\geq1$,
\begin{align*}
    \br{\rho\theta}^{(n)}=\rho\theta^{(n)}+n\theta^{(n-1)},
\end{align*}
so that $\abs{\br{\rho\theta}^{(n)}}\lesssim\jb{\rho}\jb{\rho}^{-n-1}+\jb{\rho}^{-n}\lesssim\jb{\rho}^{-n}$ by~\ref{realroot}, while every term of the Leibniz expansion $\br{\theta^2}^{(n)}=\sum_{j=0}^n\binom{n}{j}\theta^{(j)}\theta^{(n-j)}$ is $\lesssim\jb{\rho}^{-n-2}$. Applied to $\psi=\frac34\theta^2-\rho\theta$ this gives, for every $n\geq0$,
\begin{align}\label{eq:psi-derivs}
\abs{\psi^{(n)}(\rho)}\lesssim\jb{\rho}^{-n}.
\end{align}
Since $\abs{(\jb{\rho}^2)^{(n)}}\lesssim\jb{\rho}^{2-n}$, differentiating \eqref{eq:beta-sq} gives
\begin{align}\label{eq:beta-square-derivs}
\abs{(\beta^2)^{(n)}(\rho)}\lesssim\jb{\rho}^{2-n},
\qquad n\geq0.
\end{align}
The bound $\abs{\beta^{(n)}(\rho)}\lesssim\jb{\rho}^{1-n}$ follows by induction. The case $n=0$ is \eqref{eq:beta-pointwise}. For $n\geq1$, differentiating $\beta^2$ exactly $n$ times gives
\begin{align*}
2\beta(\rho)\beta^{(n)}(\rho)
=(\beta^2)^{(n)}(\rho)
-\sum_{j=1}^{n-1}\binom{n}{j}\beta^{(j)}(\rho)\beta^{(n-j)}(\rho).
\end{align*}
The induction hypothesis and \eqref{eq:beta-square-derivs} bound the right-hand side by $\jb{\rho}^{2-n}$. Dividing by $2\beta(\rho)\gtrsim\jb{\rho}$ gives
\begin{align}\label{eq:beta-general-derivs}
\abs{\beta^{(n)}(\rho)}\lesssim\jb{\rho}^{1-n},
\qquad n\geq0.
\end{align}
This is the general derivative bound in~\ref{complexroot-beta}.

For the sharp bound on $\beta'$, differentiating \eqref{eq:beta-sq} gives
\begin{align}\label{eq:beta-prime-identity}
\beta(\rho)\beta'(\rho)=\rho+\frac{\ep^2}{2}\psi'(\rho).
\end{align}
Since
\begin{align*}
\psi'(\rho)=\frac32\theta(\rho)\theta'(\rho)-\theta(\rho)-\rho\theta'(\rho),
\end{align*}
the bounds in~\ref{realroot} give
\begin{align}\label{eq:psi-prime-sharp}
\abs{\psi'(\rho)}
\leq K\frac{\rho}{\jb{\rho}^2}
\leq K\rho
\end{align}
for an absolute constant $K>0$. Choose $\ep_0\in(0,1]$ so that $\ep_0^2K\leq1$. Then, for $\ep\in(0,\ep_0]$, the perturbation in \eqref{eq:beta-prime-identity} is at most $\rho/2$, and hence
\begin{align*}
\frac{\rho}{2}
\leq\beta(\rho)\beta'(\rho)
\leq\frac{3\rho}{2}.
\end{align*}
In particular $\beta'(\rho)>0$. Dividing by $\beta(\rho)$ and using \eqref{eq:beta-pointwise} gives
\begin{align*}
\frac12\frac{\rho}{\jb{\rho}}
\leq\beta'(\rho)
\leq\sqrt3\frac{\rho}{\jb{\rho}}.
\end{align*}

For $\beta''$, differentiating \eqref{eq:beta-prime-identity} gives
\begin{align}\label{eq:beta-double-identity}
\beta(\rho)\beta''(\rho)+\beta'(\rho)^2
=1+\frac{\ep^2}{2}\psi''(\rho).
\end{align}
Multiplying \eqref{eq:beta-double-identity} by $\beta(\rho)^2$, and using
\begin{align*}
\beta(\rho)^2\beta'(\rho)^2
=\br{\rho+\frac{\ep^2}{2}\psi'(\rho)}^2
\end{align*}
from \eqref{eq:beta-prime-identity}, together with \eqref{eq:beta-sq}, gives
\begin{align*}
\beta(\rho)^3\beta''(\rho)
&=\beta(\rho)^2
-\br{\rho+\frac{\ep^2}{2}\psi'(\rho)}^2
+\frac{\ep^2}{2}\beta(\rho)^2\psi''(\rho)\\
&=1+\ep^2\sbr{
\psi(\rho)-\rho\psi'(\rho)
+\frac12\beta(\rho)^2\psi''(\rho)
}
-\frac{\ep^4}{4}\psi'(\rho)^2.
\end{align*}
By \eqref{eq:psi-derivs}, \eqref{eq:psi-prime-sharp}, and \eqref{eq:beta-pointwise},
\begin{align*}
\abs{\psi(\rho)}
+\abs{\rho\psi'(\rho)}
+\abs{\beta(\rho)^2\psi''(\rho)}
+\abs{\psi'(\rho)^2}
\leq M
\end{align*}
for an absolute constant $M>0$. Hence the perturbative part of the right-hand side is bounded in absolute value by $M\ep^2$. Shrinking $\ep_0$ if necessary so that $M\ep_0^2\leq\frac12$, for all $\ep\in(0,\ep_0]$,
\begin{align*}
\frac12
\leq\beta(\rho)^3\beta''(\rho)
\leq\frac32.
\end{align*}
Thus $\beta''(\rho)>0$. Dividing by $\beta(\rho)^3$ and using \eqref{eq:beta-pointwise} gives
\begin{align*}
\frac{1}{2\jb{\rho}^3}
\leq\beta''(\rho)
\leq\frac{4}{\sqrt3\jb{\rho}^3}.
\end{align*}
Together with \eqref{eq:beta-pointwise}, \eqref{eq:beta-general-derivs}, and the bound for $\beta'$, this proves~\ref{complexroot-beta}.
\end{proof}

\begin{proof}[Proof of Lemma~\ref{lem:factorisations}]
As established in the proof of Lemma~\ref{lem:real-root}, $\lam_0(\ep\rho)=\ep(\theta(\rho)-\rho)$ and $\gamma(\ep\rho)=-\frac{\ep}{2}\theta(\rho)$. With $\omega(\ep\rho)=\beta(\rho)$, the root differences~\eqref{eq:pq} read
\begin{align*}
    p_\pm(\rho)&=\ep u(\rho)\pm i\beta(\rho),
    &
    q_\pm(\rho)&=\ep w(\rho)\pm i\beta(\rho),
    \\
    u(\rho)&:=\rho-\frac12\theta(\rho),
    &
    w(\rho)&:=\rho-\frac32\theta(\rho).
\end{align*}
Since $\theta$ and $\beta$ are real-valued, so are $u$ and $w$; thus $\ep u(\rho)$ and $\ep w(\rho)$ are the real parts of $p_\pm(\rho)$ and $q_\pm(\rho)$, and $\pm\beta(\rho)$ their imaginary parts.

By Lemma~\ref{lem:real-root}\ref{realroot} and~\ref{complexroot-beta},
\begin{align*}
    \abs{\theta^{(m)}(\rho)}\leq C_m \jb{\rho}^{-m-1},
    \qquad
    \abs{\beta^{(m)}(\rho)}\leq C_m \jb{\rho}^{1-m},
    \qquad m\geq0,
\end{align*}
with constants $C_m$ independent of $\ep$ and $\rho$. Hence $u$ and $w$ satisfy $\abs{u^{(m)}}+\abs{w^{(m)}}\lesssim\jb{\rho}^{1-m}$; this uses $\abs{u},\abs{w}\lesssim\jb{\rho}$ for $m=0$, $\abs{u'},\abs{w'}\lesssim1$ for $m=1$, and $u^{(m)}=-\frac12\theta^{(m)}$, $w^{(m)}=-\frac32\theta^{(m)}$ for $m\geq2$. Consequently, for $\ep\leq1$, \eqref{eq:Rpm-factor-derivs} holds.

For the lower bounds, Lemma~\ref{lem:real-root}\ref{complexroot-beta} gives $\beta(\rho)\geq c_\beta\jb{\rho}$, so $\abs{1/\beta(\rho)}\leq\jb{\rho}^{-1}/c_\beta$; since $\abs{q_\pm(\rho)}^2=\ep^2w(\rho)^2+\beta(\rho)^2\geq\beta(\rho)^2$, this proves~\eqref{eq:q-beta-lower}.

The reciprocal bounds~\eqref{eq:reciprocal-derivs} follow by induction on $n$. The case $n=0$ is \eqref{eq:q-beta-lower}. For $n\geq1$, differentiating $(1/q_\pm)q_\pm=1$,
\begin{align*}
    \br{\frac{1}{q_\pm}}^{(n)}=-\frac{1}{q_\pm}\sum_{j=1}^n\binom{n}{j}q_\pm^{(j)}\br{\frac{1}{q_\pm}}^{(n-j)},
\end{align*}
and \eqref{eq:Rpm-factor-derivs} with the induction hypothesis give $\abs{q_\pm^{(j)}(1/q_\pm)^{(n-j)}}\lesssim\jb{\rho}^{1-j}\jb{\rho}^{-(n-j)-1}=\jb{\rho}^{-n}$ for $1\leq j\leq n$. The sum has $n$ terms and its binomial coefficients depend only on $n$, so both are absorbed into a constant depending on $n$ alone; combined with $\abs{1/q_\pm}\lesssim\jb{\rho}^{-1}$, this gives $\abs{(1/q_\pm)^{(n)}}\lesssim\jb{\rho}^{-n-1}$, with a constant independent of $\ep$ and $\rho$. The bound on $(1/\beta)^{(n)}$ is identical.
\end{proof}
\section{Oscillatory-integral estimates}\label{app:dispersive}

This appendix establishes the general dispersive estimates used in the proof of Lemma~\ref{lem:KG-dispersive}. For the standard Klein--Gordon phase, kernel estimates at low and high frequencies are classical~\cite{GinibreVelo1985}. For the phase considered here, the argument follows Han-Kwan, Nguyen and Rousset~\cite[Appendix~E]{HanKwanNguyenRousset2025LinearVM}. Unlike their estimates, the bounds here retain the intermediate wave branch. The constants are also tracked uniformly in $\ep$, and the frequency-localised bounds are summed in Besov norms with dyadic summability index~$1$.

\subsection{Auxiliary oscillatory-integral estimates}\label{app:aux}
A radial function on $\R^3$ and its profile on $[0,\infty)$ are denoted by the same symbol, so that $\vp(k)=\vp(\ak)$. For a non-negative integer $N$ and a set $S\subset\R^3$ on which $u$ has continuous derivatives up to order $N$, $\norm{u}_{C^N(S)}:=\max_{\aal\leq N}\sup_{\xi\in S}\abs{\p^\alpha u(\xi)}$; the set is omitted when it is $\R^3$.

The one-dimensional input is the second-order van der Corput estimate with an amplitude. In the applications the lower bound on the second derivative of the phase varies with the frequency scale, so the estimate is used in the following form, in which the dependence on that lower bound is displayed.

\begin{proposition}[van der Corput estimate with amplitude]\label{prop:vdc-weighted}
Let $\psi\in C^2(a,b)$ be real-valued with $\abs{\psi''(x)}\geq M>0$ for all $x\in(a,b)$, and let $f\in C^1\br{[a,b]}$. Then, for all $t>0$,
    \begin{align}\label{eq:vdc-weighted}
        \abs{\int_a^be^{it\psi(x)}f(x)\,dx}
        \lesssim\frac{1}{\sqrt{Mt}}\sbr{\abs{f(b)}+\int_a^b\abs{f'(x)}\,dx},
    \end{align}
with an absolute implicit constant, independent of $\psi$, of $M$, of $t$ and of the interval $(a,b)$.
\end{proposition}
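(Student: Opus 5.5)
The plan is to reduce to the classical van der Corput lemma without amplitude and then handle the amplitude by integration by parts (an Abel summation argument). First I would establish the amplitude-free bound: if $\abs{\psi''}\geq M$ on $(a,b)$, then for every subinterval $[a,x]\subset[a,b]$,
\begin{align*}
    \abs{\int_a^xe^{it\psi(y)}\,dy}\leq\frac{C}{\sqrt{Mt}},
\end{align*}
with $C$ absolute. The proof of this is the standard one. Since $\psi''$ is continuous and never vanishes, it has a fixed sign, so $\psi'$ is strictly monotone. Fix $\lambda>0$. The set $\set{y:\abs{\psi'(y)}<\lambda}$ is then an interval, and its length is at most $2\lambda/M$ by the mean value theorem. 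On the complement there are at most two intervals on which $\abs{\psi'}\geq\lambda$ and $\psi'$ is monotone. On each of these the first-order van der Corput estimate applies: write $e^{it\psi}=(it\psi')^{-1}\p_ye^{it\psi}$, integrate by parts, and use that $1/\psi'$ is monotone, so its total variation is at most $2/\lambda$. Each such piece is therefore bounded by $3/(\lambda t)$. Choosing $\lambda=\sqrt{M/t}$ balances the two contributions and gives $C/\sqrt{Mt}$. The bound depends only on $M$ and $t$, uniformly in the subinterval, because restricting to $[a,x]$ preserves the hypothesis.

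Second, I would insert the amplitude. Set $I(x):=\int_a^xe^{it\psi(y)}\,dy$, which is continuous with $I(a)=0$ and $\abs{I(x)}\leq C(Mt)^{-1/2}$ for all $x\in[a,b]$. Since $f\in C^1([a,b])$, integration by parts gives
\begin{align*}
    \int_a^be^{it\psi}f\,dx=f(b)I(b)-\int_a^bI(x)f'(x)\,dx,
\end{align*}
and hence
\begin{align*}
    \abs{\int_a^be^{it\psi}f\,dx}\leq\frac{C}{\sqrt{Mt}}\sbr{\abs{f(b)}+\int_a^b\abs{f'}\,dx},
\end{align*}
which is~\eqref{eq:vdc-weighted}.

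The only delicate point is at the endpoints. The function $\psi$ is assumed to be only $C^2$ on the open interval, so I would first prove the estimate on $[a+\eta,b-\eta]$ and then let $\eta\to0$. The integrand is bounded, so the integral converges, and the right-hand side changes continuously because $f$ is $C^1$ up to the boundary. All constants come from the two elementary steps above, so they are absolute and do not depend on $\psi$, $M$, $t$ or $(a,b)$. I expect no real obstacle. The main care needed is to check that the first-order step uses only monotonicity of $\psi'$ and not a sign condition. This holds because the variation of $1/\psi'$ on a monotone piece where $\abs{\psi'}\geq\lambda$ is at most $1/\lambda$, and this is what makes the constant uniform.
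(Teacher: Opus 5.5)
Your proof is correct and is essentially the paper's argument: the paper cites Stein's second-order van der Corput lemma (Chapter~VIII, \S1.2, Proposition~2 with $k=2$) and the corollary that inserts the amplitude by the same integration by parts against $\int_a^x e^{i\Lambda u(y)}\,dy$, then rescales via $u=\psi/M$, $\Lambda=Mt$; you write out that argument directly with $M$ kept, which also makes explicit that only $C^2$ regularity of the phase on the open interval is used. One cosmetic slip: $\psi'$ has a fixed sign on each piece, so $1/\psi'$ varies there by at most $1/\lambda$ (as you say at the end), not $2/\lambda$, and this is what gives $3/(\lambda t)$; either way the constant is absolute.
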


\begin{proof}
Stein~\cite{stein-harmonic-analysis}, Chapter~VIII, \S1.2, Proposition~2 with $k=2$, together with the corollary following it, gives the normalised estimate
    \begin{align*}
        \abs{\int_a^be^{i\Lambda u(x)}f(x)\,dx}
        \lesssim\frac{1}{\sqrt{\Lambda}}\sbr{\abs{f(b)}+\int_a^b\abs{f'(x)}\,dx},
        \qquad\Lambda>0,
    \end{align*}
for real-valued $u\in C^2(a,b)$ with $\abs{u''}\geq1$, the constant there being independent of $u$, of $\Lambda$ and of the interval, and the argument uses only two derivatives of the phase. Applying this to $u=\psi/M$, which satisfies $\abs{u''}\geq1$, with $\Lambda=Mt$ yields~\eqref{eq:vdc-weighted}.
\end{proof}

A three-dimensional stationary-phase estimate is needed at low frequencies. The following quantitative version of the standard stationary-phase theorem records the dependence of the implicit constant needed in the application; see, for example, H\"ormander~\cite[Theorem~7.7.5]{Hormander1990}.

\begin{proposition}[Quantitative stationary phase]\label{prop:stationary-phase}
Let $B\subset\R^3$ be an open ball of radius $R$, let $\kappa>0$, and let $\psi\in C^3(B)$ be real-valued with $\norm{\psi}_{C^3(B)}$ finite and
    \begin{align}\label{eq:hessian-lower}
        D^2\psi(\xi)\geq\kappa \mathrm{Id},\qquad\xi\in B,
    \end{align}
and suppose that $\nabla\psi(\xi_0)=0$ for some $\xi_0\in B$. Then, for every $a\in C_c^2(B)$ and every $t>0$,
    \begin{align}\label{eq:stationary-phase}
        \abs{\int_{B}e^{it\psi(\xi)}a(\xi)\,d\xi}
        \lesssim\frac{1}{t^{\frac{3}{2}}}\norm{a}_{C^2(\R^3)},
    \end{align}
with implicit constant depending only on $\kappa$, on $R$ and on $\norm{\psi}_{C^3(B)}$.
\end{proposition}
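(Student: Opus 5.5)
The plan is the standard quantitative stationary-phase argument, tracking constants only through $\kappa$, $R$ and $\norm{\psi}_{C^3(B)}$.

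\emph{Localisation and uniqueness of the critical point.} Since $D^2\psi\geq\kappa\,\mathrm{Id}$ on the convex set $B$, the gradient map is strongly monotone:
\begin{align*}
\br{\nabla\psi(\xi)-\nabla\psi(\eta)}\cdot(\xi-\eta)\geq\kappa\abs{\xi-\eta}^2 .
\end{align*}
Hence $\xi_0$ is the unique critical point in $B$, and $\abs{\nabla\psi(\xi)}\geq\kappa\abs{\xi-\xi_0}$ on $B$.

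\emph{Splitting the amplitude.} Fix a cutoff $\chi_0\in C_c^\infty(\R^3)$ with $\chi_0=1$ near the origin, and decompose
\begin{align*}
a=a\,\chi_0\br{(\xi-\xi_0)t^{1/2}}+a\br{1-\chi_0\br{(\xi-\xi_0)t^{1/2}}}.
\end{align*}
The first piece is supported in a ball of volume $\lesssim t^{-3/2}$. Bounding its integral trivially gives a contribution $\lesssim t^{-3/2}\norm{a}_{C^0}$.

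\emph{Away from $\xi_0$.} On the support of the second piece, integrate by parts twice with the operator
\begin{align*}
L=\frac{\nabla\psi\cdot\nabla}{it\abs{\nabla\psi}^2},
\end{align*}
which satisfies $L e^{it\psi}=e^{it\psi}$. There are no boundary terms because $a\in C_c^2(B)$. Each application of the transpose $L^{T}$ produces factors bounded by
\begin{align*}
t^{-1}\abs{\xi-\xi_0}^{-1}\quad\text{times one derivative of the amplitude, or}\quad t^{-1}\abs{\xi-\xi_0}^{-2}\norm{\psi}_{C^2}\ \text{(or }C^3\text{)} .
\end{align*}
Derivatives falling on the cutoff cost $t^{1/2}\sim\abs{\xi-\xi_0}^{-1}$ on their support. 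Two integrations therefore bound the integrand by
\begin{align*}
t^{-2}\abs{\xi-\xi_0}^{-4}\norm{a}_{C^2}+t^{-2}\abs{\xi-\xi_0}^{-3}\norm{a}_{C^1}+t^{-2}\abs{\xi-\xi_0}^{-2}\norm{a}_{C^2},
\end{align*}
with constants depending on $\kappa$ and $\norm{\psi}_{C^3}$. Integrating over $t^{-1/2}\lesssim\abs{\xi-\xi_0}\leq 2R$, the worst term gives
\begin{align*}
t^{-2}\int_{t^{-1/2}}^{2R}r^{-4}r^2\,dr\lesssim t^{-3/2},
\end{align*}
while the others give $t^{-2}\log$ or $t^{-2}R$. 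Both are $\lesssim_R t^{-3/2}$ for $t\geq1$.

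\emph{Small times.} For $t\leq1$ the trivial bound $\abs{B}\norm{a}_{C^0}\lesssim R^3\norm{a}_{C^0}\leq R^3 t^{-3/2}\norm{a}_{C^2}$ suffices.

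\emph{Main obstacle.} The critical step is the bookkeeping in the integration by parts. One must check that the second derivatives of $\nabla\psi/\abs{\nabla\psi}^2$ need only $C^3$ control of $\psi$ and produce exactly $\abs{\xi-\xi_0}^{-2}$, using the lower bound $\abs{\nabla\psi}\geq\kappa\abs{\xi-\xi_0}$. The expected loss is then at worst $r^{-4}$, and its integral against $r^2\,dr$ stops at the scale $t^{-1/2}$, yielding exactly $t^{-3/2}$. Alternatively, one can simply cite H\"ormander's Theorem~7.7.5 with $k=1$. That bound needs $a\in C^2$ in dimension $3$ through the quantity $\sum_{\aal\leq 2}\sup\abs{\p^\alpha a}$, with constants uniform over phases bounded in $C^3$ with Hessian bounded below.
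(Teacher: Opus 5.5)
Your integration-by-parts argument is correct and is essentially the paper's proof. The paper likewise derives $\abs{\nabla\psi(\xi)}\geq\kappa\abs{\xi-\xi_0}$ from monotonicity, bounds a ball of radius $\sim t^{-1/2}$ about $\xi_0$ by its volume, and integrates by parts twice with $V=\nabla\psi/\abs{\nabla\psi}^2$ elsewhere; the only difference is that it sums dyadic shells $\abs{\xi-\xi_0}\sim\sigma_m$, each contributing $t^{-2}\sigma_m^{-1}$, where you integrate the pointwise $r^{-4}$ bound radially.

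Two side remarks are inaccurate, though neither affects your main argument.
\begin{itemize}
\item $\nabla^2V$ is of size $\abs{\xi-\xi_0}^{-3}$, not $\abs{\xi-\xi_0}^{-2}$. The $D^3\psi$ contribution is only $r^{-2}$ and is absorbed using $r\leq 2R$, which is where the $R$-dependence enters. The bound $r^{-3}$ is still consistent with your $r^{-4}$ count.
\item H\"ormander's Theorem~7.7.5 with $k=1$ is not an adequate substitute. There the remainder is only $O(t^{-1})\norm{a}_{C^2}$, which is weaker than $t^{-3/2}$ in dimension three, and the phase must be $C^4$.
\end{itemize}
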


\begin{proof}
Since $B$ is convex, \eqref{eq:hessian-lower} and $\nabla\psi(\xi_0)=0$ give
    \begin{align*}
        \abs{\nabla\psi(\xi)}\abs{\xi-\xi_0}
        \geq\br{\nabla\psi(\xi)-\nabla\psi(\xi_0)}\cdot(\xi-\xi_0)
        \geq\kappa\abs{\xi-\xi_0}^2,
    \end{align*}
and therefore
    \begin{align}\label{eq:gradient-lower}
        \abs{\nabla\psi(\xi)}\geq\kappa\abs{\xi-\xi_0},\qquad\xi\in B.
    \end{align}
The profiles $\chi$ and $\vp$ of the fixed Littlewood--Paley partition serve here as bump functions, at the scale $\rho$ and centred at $\xi_0$: $\chi(\cdot/2)$ equals $1$ on $\set{\abs{\xi}\leq1}$ and is supported in $\set{\abs{\xi}\leq2}$, and $\vp=\chi(\cdot/2)-\chi$ is supported in $\set{1/2\leq\abs{\xi}\leq2}$. For $\rho>0$ write $\sigma_m:=2^m\rho$, and choose the smallest integer $M\geq0$ such that $2^{M+1}\rho\geq4R$. The telescoping identity $\chi(\xi/2)+\sum_{m=1}^M\vp(2^{-m}\xi)=\chi(2^{-(M+1)}\xi)$ then shows that the functions
    \begin{align*}
        a_0:=a \chi\br{\frac{\cdot-\xi_0}{2\rho}},\qquad
        a_m:=a \vp\br{\frac{\cdot-\xi_0}{\sigma_m}},\quad 1\leq m\leq M,
    \end{align*}
satisfy $a=a_0+\sum_{m=1}^Ma_m$. Here $\supp a_m\subset\set{\sigma_m/2\leq\abs{\xi-\xi_0}\leq2\sigma_m}$, and $a_m$ vanishes identically unless $\sigma_m\leq4R$, so that
    \begin{align}\label{eq:am-derivatives}
        \norm{\p^\alpha a_m}_{L^\infty}\lesssim\frac{\norm{a}_{C^2}}{\sigma_m^{\abs{\alpha}}},
        \qquad1\leq m\leq M,\ \abs{\alpha}\leq2,
    \end{align}
uniformly in $m$, with implicit constant depending on $R$.

The first piece is bounded by the measure of its support,
    \begin{align*}
        \abs{\int_{B}e^{it\psi}a_0\,d\xi}\lesssim\norm{a}_{L^\infty}\rho^3.
    \end{align*}
For $m\geq1$, set $V:=\nabla\psi/\abs{\nabla\psi}^2$, which is of class $C^2$ on $B\setminus\set{\xi_0}$, and $Sv:=-(it)^{-1}\nabla\cdot(Vv)$. Since $(it)^{-1}V\cdot\nabla e^{it\psi}=e^{it\psi}$, integrating by parts twice gives
    \begin{align*}
        \int_{B}e^{it\psi}a_m\,d\xi=\int_{B}e^{it\psi}S^2a_m\,d\xi.
    \end{align*}
On $\supp a_m$ the bound~\eqref{eq:gradient-lower} reads $\abs{\nabla\psi}\geq\kappa\sigma_m/2$, whence
    \begin{align*}
        \abs{V}\lesssim\frac{1}{\sigma_m},\qquad
        \abs{\nabla V}\lesssim\frac{1}{\sigma_m^2},\qquad
        \abs{\nabla^2V}\lesssim\frac{1}{\sigma_m^3},
    \end{align*}
with implicit constants depending only on $\kappa$, $R$ and $\norm{\psi}_{C^3(B)}$; the bound on $\nabla^2V$ uses $\sigma_m\leq4R$ to absorb a term of size $\sigma_m^{-2}$ into $\sigma_m^{-3}$. Expanding $S^2a_m$ produces the four terms $V^2\p^2a_m$, $V(\nabla V)\p a_m$, $(\nabla V)^2a_m$ and $V(\nabla^2V)a_m$, each divided by $t^2$; by the display above and~\eqref{eq:am-derivatives}, every one of them is bounded by $t^{-2}\sigma_m^{-4}\norm{a}_{C^2}$ up to such a constant. Since $\supp a_m$ has measure $\lesssim\sigma_m^3$,
    \begin{align*}
        \abs{\int_{B}e^{it\psi}a_m\,d\xi}
        \lesssim\sigma_m^3\frac{\norm{a}_{C^2}}{t^2\sigma_m^4}
        =\frac{\norm{a}_{C^2}}{t^2\sigma_m}.
    \end{align*}
Summing the geometric series $\sum_{m\geq1}\sigma_m^{-1}\lesssim\rho^{-1}$ and adding the first piece,
    \begin{align*}
        \abs{\int_{B}e^{it\psi}a\,d\xi}
        \lesssim\norm{a}_{C^2}\br{\rho^3+\frac{1}{t^2\rho}}.
    \end{align*}
The choice $\rho:=t^{-1/2}$ gives~\eqref{eq:stationary-phase}.
\end{proof}

The radial Fourier formula used below introduces the kernel $4\pi\sin(r)/r$. The following lemma decomposes this kernel into the two oscillatory factors $e^{\pm ir}$ with smooth amplitudes whose derivatives satisfy symbol-type decay. Fix once and for all a cutoff $\chi_0\in C^\infty\br{[0,\infty),\R}$ with $\chi_0(r)=1$ for $r\leq1/2$ and $\chi_0(r)=0$ for $r\geq1$.

\begin{lemma}[Radial kernel decomposition]
Define
    \begin{align*}
        Z(r):=-\frac{2\pi i}{r}\br{1-\frac{\chi_0(r)}{2}\br{1+e^{-2ir}}},\qquad r>0.
    \end{align*}
Then $Z$ extends to a smooth function on $[0,\infty)$ which satisfies, for every $j\geq0$,
    \begin{align}\label{eq:Z-decay}
        \abs{\p_r^jZ(r)}\lesssim_j\frac{1}{(1+r)^{1+j}},
    \end{align}
and, for all $r\geq0$, with $\sin(r)/r$ understood by continuous extension, with value $1$ at $r=0$,
    \begin{align}\label{eq:Z-identity}
        e^{ir}Z(r)+e^{-ir}\overline{Z(r)}=\frac{4\pi\sin(r)}{r}.
    \end{align}
\end{lemma}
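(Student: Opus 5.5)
The identity~\eqref{eq:Z-identity} is a direct algebraic check, and the decay~\eqref{eq:Z-decay} will be handled separately on $r\geq1$ and on $r\leq1$, the only delicate point being the apparent singularity at $r=0$.

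\emph{The identity.} Since $\chi_0$ is real, $\overline{Z(r)}=\frac{2\pi i}{r}\bigl(1-\frac{\chi_0}{2}(1+e^{2ir})\bigr)$. Expanding,
\begin{align*}
e^{ir}Z(r)+e^{-ir}\overline{Z(r)}
=-\frac{2\pi i}{r}\Bigl[e^{ir}-\tfrac{\chi_0}{2}\bigl(e^{ir}+e^{-ir}\bigr)-e^{-ir}+\tfrac{\chi_0}{2}\bigl(e^{-ir}+e^{ir}\bigr)\Bigr].
\end{align*}
The two $\chi_0$ terms cancel exactly, leaving $-\frac{2\pi i}{r}(2i\sin r)=\frac{4\pi\sin r}{r}$. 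At $r=0$ the identity then follows by continuity once smoothness of $Z$ is known.

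\emph{Smoothness at the origin.} On $[0,1/2]$ we have $\chi_0=1$, so
\begin{align*}
Z(r)=-\frac{2\pi i}{r}\cdot\frac{1-e^{-2ir}}{2}=-\pi i\,\frac{1-e^{-2ir}}{r}.
\end{align*}
Here $(1-e^{-2ir})/r=\int_0^1 2i\,e^{-2irs}\,ds$ is an entire function of $r$. Hence $Z$ extends smoothly to $[0,\infty)$, with $Z(0)=-2\pi$, consistent with $2\,\Re Z(0)=4\pi$. On the compact set $[0,1]$, all derivatives of $Z$ are then bounded, which is~\eqref{eq:Z-decay} there, since $(1+r)\sim1$. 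On $[1/2,1]$, $Z$ is smooth simply because $r$ is bounded away from zero.

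\emph{Decay at infinity.} For $r\geq1$ we have $\chi_0=0$, so $Z(r)=-2\pi i/r$ and $\partial_r^jZ=c_j r^{-1-j}\lesssim_j(1+r)^{-1-j}$.

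There is no real obstacle here. The only step needing care is the removable singularity at $0$, which the integral representation settles.
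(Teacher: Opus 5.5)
Your proof is correct and follows essentially the same route as the paper: direct expansion for the identity (the $\chi_0$ terms cancel), the explicit form on $[0,1/2]$ for smoothness at the origin (the paper writes it as $2\pi e^{-ir}\sin(r)/r$, while you use the integral representation of $(1-e^{-2ir})/r$), and $Z(r)=-2\pi i/r$ on $[1,\infty)$ for the decay. There is one harmless slip: $-\pi i\cdot 2i=2\pi$, so $Z(0)=2\pi$ rather than $-2\pi$, which is exactly what your own check $2\Re Z(0)=4\pi$ requires; the value is not used anywhere else, so the argument is unaffected.
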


\begin{proof}
For $r>0$ the function $Z$ is smooth by construction. For $0<r<1/2$, where $\chi_0(r)=1$,
    \begin{align*}
        Z(r)=-\frac{2\pi i}{r}\br{1-\frac{1}{2}\br{1+e^{-2ir}}}
        =-\frac{\pi i}{r}\br{1-e^{-2ir}}
        =2\pi e^{-ir}\frac{\sin(r)}{r}.
    \end{align*}
Since $r\mapsto\sin(r)/r$ extends smoothly to $r=0$, this representation shows that $Z$ extends smoothly at the origin and that $\abs{\p_r^jZ(r)}\lesssim_j1$ for $0\leq r\leq1/2$. For $r\geq1$, where $\chi_0(r)=0$, the formula reduces to $Z(r)=-2\pi i/r$, so that $\abs{\p_r^jZ(r)}\lesssim_jr^{-1-j}$. On the transition region $r\in[1/2,1]$ the function $Z$ is smooth and $(1+r)^{-1-j}\sim1$, which completes~\eqref{eq:Z-decay}. The identity~\eqref{eq:Z-identity} follows for $r>0$ by direct computation, $\chi_0$ being real-valued, and extends to $r=0$ by continuity.
\end{proof}

\subsection{Dyadic frequencies}\label{app:dyadic}
The hypotheses below are modelled on a Klein--Gordon dispersion relation $\beta(r)\sim\jb{r}$: the bounds on $\beta',\beta'',\beta'''$ are exactly those satisfied by $\sqrt{1+r^2}$, and the last branch of the lemma, $\lam^{5/2}/t^{3/2}$, is the three-dimensional Klein--Gordon dyadic dispersive rate.

\begin{lemma}[Dyadic dispersive estimate]\label{lem:dyadic-dispersive}
Let $\beta\in C^3\br{[0,\infty),\R}$ and suppose there exist positive constants $c_1,C_1,c_2,C_2,C_3$ such that, for all $r\geq0$,
    \begin{align*}
        \frac{c_1r}{\jb{r}}\leq\beta'(r)\leq\frac{C_1r}{\jb{r}},\qquad
        \frac{c_2}{\jb{r}^3}\leq\beta''(r)\leq\frac{C_2}{\jb{r}^3},\qquad
        \abs{\beta'''(r)}\leq\frac{C_3}{\jb{r}^2}.
    \end{align*}
Let $\vp\in C_c^2(\R^3)$ be radial with $\supp\vp\subset\set{\frac14\leq\abs{\xi}\leq4}$. Then, for every $\lam\geq1$ and $t>0$,
    \begin{align}\label{eq:disp-bound}
        \norm{\int_{\R^3}e^{\pm i\beta(\ak)t+ix\cdot k}\vp\br{\frac{k}{\lam}}\,dk}_{L^\infty}
        \lesssim\min\set{\lam^3,\frac{\lam^2}{t},\frac{\lam^{\frac{5}{2}}}{t^{\frac{3}{2}}}},
    \end{align}
where the implicit constant depends only on $c_1,C_1,c_2,C_2,C_3$ and on $\norm{\vp}_{C^2(\R^3)}$.
\end{lemma}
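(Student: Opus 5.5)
Writing $I(x):=\int_{\R^3}e^{\pm i\beta(\ak)t+ix\cdot k}\vp(k/\lam)\,dk$, I would prove the three bounds in \eqref{eq:disp-bound} separately and take the minimum. The bound $\lam^3$ is trivial, since $\vp$ is bounded and the support of the integrand has volume $\lesssim\lam^3$. For the other two, I use radiality and pass to polar coordinates. With $r=\ak$ and $s:=\abs{x}$, the angular integral gives
\begin{align*}
I(x)=\int_0^\infty e^{\pm i\beta(r)t}\vp\br{\tfrac{r}{\lam}}\frac{4\pi\sin(rs)}{rs}r^2\,dr .
\end{align*}
By \eqref{eq:Z-identity}, this equals $\sum_\pm$ of one-dimensional integrals of the form
\begin{align*}
\int e^{i(\pm\beta(r)t\pm' rs)}\vp\br{\tfrac{r}{\lam}}r^2\,\mathcal Z(rs)\,dr,
\end{align*}
where $\mathcal Z\in\set{Z,\overline Z}$ satisfies the symbol bounds \eqref{eq:Z-decay}. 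The integration range is $r\in[\lam/4,4\lam]$.

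\textbf{The wave branch $\lam^2/t$.} On the support, $\beta''(r)\geq c_2\jb{r}^{-3}\gtrsim\lam^{-3}$. Proposition~\ref{prop:vdc-weighted} applies with phase $\beta(r)\pm' rs/t$ (which has the same second derivative), $M\sim\lam^{-3}$, and amplitude $f(r)=\vp(r/\lam)r^2\mathcal Z(rs)$. I bound $\abs{f}+\int\abs{f'}\lesssim\lam^2\min\{1,(\lam s)^{-1}\}$ using \eqref{eq:Z-decay}. This gives $\lesssim\lam^{3/2}t^{-1/2}\lam^2\min\{1,(\lam s)^{-1}\}$, which is good only when $s\gtrsim t$.

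When $s$ is not comparable to the group velocity times $t$, I use non-stationary phase instead. Here $\beta'(r)\sim1$ on the support since $\lam\geq1$. In the region $s\ll t$ or $s\gg t$, the phase derivative $\abs{\beta'(r)t\pm rs\cdot s/r}\gtrsim t+s$, and one or two integrations by parts give $\lam^3(\lam(t+s))^{-1}\lesssim\lam^2/t$. In the remaining region $s\sim t$, the van der Corput bound yields $\lam^{7/2}t^{-1/2}(\lam t)^{-1}=\lam^{5/2}t^{-3/2}$. This is better than $\lam^2/t$ only when $t\gtrsim\lam$.

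When $t\lesssim\lam$, I instead bound by the angular (wave) decay: integrating by parts once in $r$ against $e^{\pm'irs}$ in the $s\sim t$ region gives $\lam^3\cdot(\lam s)^{-1}\sim\lam^2/t$ plus terms where derivatives fall on $e^{i\beta t}$. Those terms cost $\beta' t\lesssim t$ per unit $r$-length, which is acceptable since $t/s\sim1$. More simply, I use the trivial bound $\int\lam^2(\lam s)^{-1}\,dr\sim\lam^2/s$ from \eqref{eq:Z-decay}, which already gives $\lam^2/t$ in the region $s\sim t$.

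\textbf{The Klein--Gordon branch $\lam^{5/2}t^{-3/2}$.} By the above, this is already established in the regions $s\sim t$ (van der Corput combined with the $(\lam s)^{-1}$ kernel decay) and $s\not\sim t$ (non-stationary phase). The only remaining issue is whether the non-stationary bound is strong enough. I would integrate by parts twice, tracking $\beta''\lesssim\lam^{-3}$ and $\beta'''\lesssim\lam^{-2}$. This gives $\lam^3(\lam t)^{-2}\cdot$(derivative losses $\lesssim1$) $=\lam/t^2$, and $\lam t^{-2}\leq\lam^{5/2}t^{-3/2}$ whenever $t\gtrsim\lam^{-3}$. For smaller $t$, the trivial $\lam^3$ bound is already smaller.

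The main obstacle is the stationary region $s\approx\beta'(r_0)t$. There I must verify carefully that the amplitude's total variation is $\lam^2(\lam s)^{-1}$ uniformly, which relies on \eqref{eq:Z-decay} with $j=1$. I must also confirm that the constants depend only on $c_1,C_1,c_2,C_2,C_3$ and $\norm{\vp}_{C^2}$. All constants depend only on these quantities, and the minimum of the three bounds gives \eqref{eq:disp-bound}.
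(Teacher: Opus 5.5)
Your argument is correct and is essentially the paper's proof. The shared steps are the radial reduction through the $Z$-decomposition and the trivial $\lam^3$ bound. Then two integrations by parts give $\lam/t^{2}$ wherever $\abs{\beta'(r)t\pm\abs{x}}\gtrsim t+\abs{x}$. Finally, in the region $\abs{x}\sim t$, the $(\lam\abs{x})^{-1}$ decay of $Z$ gives $\lam^2/t$ directly, and van der Corput gives $\lam^{5/2}t^{-3/2}$.

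The differences are only cosmetic:
\begin{itemize}
\item You work on $[\lam/4,4\lam]$ without rescaling, so the van der Corput constant is $M\sim\lam^{-3}$ rather than $\lam^{-1}$.
\item Off the stationary region you get the wave branch by a single integration by parts, whereas the paper interpolates $\lam/t^2$ with $\lam^3$.
\item The phase derivative you wrote as $\beta'(r)t\pm rs\cdot s/r$ should read $\beta'(r)t\pm s$ (a typo).
\end{itemize}
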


\begin{proof}
It suffices to treat the $+\beta$ case, the $-\beta$ case being identical. Fix $x\in\R^3$ and $t>0$. The kernel $\sin(\ax r)/(\ax r)$ below is again understood by continuous extension, since the $L^\infty_x$ bound includes $x=0$. The radial Fourier identity and~\eqref{eq:Z-identity} give
    \begin{align*}
        \int_{\R^3}e^{i\beta(\ak)t+ix\cdot k}\vp\br{\frac{k}{\lam}}\,dk
        &=\int_0^\infty e^{i\beta(r)t}\vp\br{\frac{r}{\lam}}\frac{4\pi\sin(\ax r)}{\ax r}r^2\,dr\\
        &=\int_0^\infty e^{i\beta(r)t}\vp\br{\frac{r}{\lam}}
          \br{e^{i\ax r}Z(\ax r)+e^{-i\ax r}\overline{Z(\ax r)}}r^2\,dr.
    \end{align*}
Setting $Z_+=Z$ and $Z_-=\overline Z$, and substituting $r\mapsto\lam r$,
    \begin{align*}
        \int_{\R^3}e^{i\beta(\ak)t+ix\cdot k}\vp\br{\frac{k}{\lam}}\,dk
        =\lam^3\sum_\pm\int_{\frac{1}{4}}^4e^{it\psi^\pm_\lam(\ax,r)}Z_\pm(\ax\lam r)\vp(r)r^2\,dr,
    \end{align*}
where the phase is
    \begin{align*}
        \psi^\pm_\lam(\ax,r):=\beta(\lam r)\pm\frac{\ax}{t}\lam r,
    \end{align*}
with $\p_r\psi^\pm_\lam=\lam\beta'(\lam r)\pm\frac{\ax}{t}\lam$ and $\p_r^2\psi^\pm_\lam=\lam^2\beta''(\lam r)$.

For $r\in[\frac14,4]$ and $\lam\geq1$ one has $\lam r\geq\frac14$ and $\jb{\lam r}\sim\lam$. Since $s\mapsto s/\jb{s}$ is increasing and takes values in $[\frac{1/4}{\jb{1/4}},1)$ on $[\frac14,\infty)$, the hypotheses give $\beta'(\lam r)\sim1$, $\abs{\beta''(\lam r)}\lesssim\lam^{-3}$ and $\abs{\beta'''(\lam r)}\lesssim\lam^{-2}$. Moreover, \eqref{eq:Z-decay} and the chain rule yield, for $j\leq2$,
    \begin{align*}
        \abs{\p_r^j\sbr{Z_\pm(\lam\ax r)}}
        =(\lam\ax)^j\abs{Z_\pm^{(j)}(\lam\ax r)}
        \lesssim\frac{(\lam\ax)^j}{(1+\lam\ax r)^{1+j}}\lesssim1,
    \end{align*}
so that $g(r):=Z_\pm(\lam\ax r)\vp(r)r^2$ satisfies
    \begin{align}\label{eq:g-bound}
        \abs{g(r)}+\abs{g'(r)}+\abs{g''(r)}\lesssim\norm{\vp}_{C^2}.
    \end{align}
Bounding the radial integral directly by~\eqref{eq:g-bound} gives $\abs{\lam^3\int_{1/4}^4e^{it\psi^\pm_\lam}g(r)\,dr}\lesssim\lam^3$ for both signs and every $x$, which is the first branch of~\eqref{eq:disp-bound}; the two dispersive branches are obtained in two cases.

\medskip \noindent\emph{Case 1: $\p_r\psi^\pm_\lam$ bounded away from zero.} Suppose $\abs{\p_r\psi^\pm_\lam(\ax,r)}\gtrsim\lam\br{1+\frac{\ax}{t}}$ on $[\frac14,4]$. Because $\vp$ and $\vp'$ vanish at the endpoint radii, integrating by parts twice in $r$ produces no boundary terms and gives
    \begin{align*}
        \lam^3\int_{\frac{1}{4}}^4e^{it\psi^\pm_\lam}g(r)\,dr
        =-\frac{\lam^3}{t^2}\int_{\frac{1}{4}}^4e^{it\psi^\pm_\lam}
          \p_r\br{\frac{1}{\p_r\psi^\pm_\lam}\p_r\sbr{\frac{1}{\p_r\psi^\pm_\lam}g(r)}}\,dr,
    \end{align*}
where, for any real-valued $h\in C^3$ with $h'$ non-vanishing,
    \begin{align*}
        \p_r\br{\frac{1}{h'}\p_r\sbr{\frac{1}{h'}g}}
        =\br{\frac{3(h'')^2}{(h')^4}-\frac{h'''}{(h')^3}}g
        -\frac{3h''}{(h')^3}g'+\frac{1}{(h')^2}g''.
    \end{align*}
For $r\in[\frac14,4]$ and $\lam\geq1$, the derivative bounds on $\beta$ yield
    \begin{align*}
        \abs{\p_r^2\psi^\pm_\lam}=\lam^2\abs{\beta''(\lam r)}\lesssim\lam^{-1},\qquad
        \abs{\p_r^3\psi^\pm_\lam}=\lam^3\abs{\beta'''(\lam r)}\lesssim\lam,
    \end{align*}
while $\abs{\p_r\psi^\pm_\lam}\gtrsim\lam\br{1+\frac{\ax}{t}}\geq\lam$. Each of the four coefficients in the expansion above, evaluated at $h=\psi^\pm_\lam$, is therefore bounded by $\lam^{-2}$, so that, with~\eqref{eq:g-bound},
    \begin{align*}
        \abs{\lam^3\int_{\frac{1}{4}}^4e^{it\psi^\pm_\lam}g(r)\,dr}\lesssim\frac{\lam}{t^2}.
    \end{align*}
The geometric means of this bound and of the direct bound $\lam^3$, taken with equal weights and with weights $3/4$ and $1/4$ respectively, give
    \begin{align}\label{eq:disp-nonstationary}
        \abs{\lam^3\int_{\frac{1}{4}}^4e^{it\psi^\pm_\lam}g(r)\,dr}
        \lesssim\min\set{\br{\frac{\lam}{t^2}}^{\frac{1}{2}}\br{\lam^3}^{\frac{1}{2}},\br{\frac{\lam}{t^2}}^{\frac{3}{4}}\br{\lam^3}^{\frac{1}{4}}}
        =\min\set{\frac{\lam^2}{t},\frac{\lam^{\frac{3}{2}}}{t^{\frac{3}{2}}}}.
    \end{align}
Using $\lam\geq1$, together with the direct bound, yields~\eqref{eq:disp-bound}.

\medskip \noindent\emph{Application of Case 1.} The $\psi^+_\lam$ term always falls in Case 1: since $\beta'(\lam r)\geq c_1\frac{\lam r}{\jb{\lam r}}\gtrsim1$ on $[\frac14,4]$,
    \begin{align*}
        \p_r\psi^+_\lam(\ax,r)\gtrsim\lam\br{1+\frac{\ax}{t}}.
    \end{align*}
For the $\psi^-_\lam$ term, since $r\mapsto r/\jb{r}$ is increasing on $[0,\infty)$, the hypotheses provide $c_1'>0$ with $c_1'\leq\beta'(\lam r)\leq C_1$ for all $r\in[\frac14,4]$ and $\lam\geq1$. Hence, if either $\frac{\ax}{t}\geq3C_1$ or $\frac{\ax}{t}\leq\frac{c_1'}{3}$, then
    \begin{align*}
        \abs{\p_r\psi^-_\lam(\ax,r)}\gtrsim\lam\br{1+\frac{\ax}{t}},
    \end{align*}
and \eqref{eq:disp-nonstationary} applies.

\medskip \noindent\emph{Case 2: $\frac{c_1'}{3}\leq\frac{\ax}{t}\leq3C_1$ in the $\psi^-_\lam$ term.} In this regime a critical point of $\psi^-_\lam$ may lie in $[\frac14,4]$. By~\eqref{eq:Z-decay} and $\ax\sim t$ in this regime, for $r\in[\frac14,4]$,
    \begin{align*}
        \abs{g(r)}
        \lesssim\frac{1}{1+\lam\ax r}
        \lesssim\frac{1}{\lam t},
        \qquad
        \abs{g'(r)}
        \lesssim\frac{\lam\ax}{(1+\lam\ax r)^2}+\frac{1}{1+\lam\ax r}
        \lesssim\frac{1}{\lam t}.
    \end{align*}
Estimating the radial integral in absolute value therefore gives
    \begin{align*}
        \abs{\lam^3\int_{\frac{1}{4}}^4e^{it\psi^-_\lam}g(r)\,dr}
        \leq\lam^3\int_{\frac{1}{4}}^4\abs{g(r)}\,dr
        \lesssim\frac{\lam^2}{t}.
    \end{align*}
For the remaining branch, the second-derivative lower bound is used: for $r\in[\frac14,4]$ and $\lam\geq1$, $\jb{\lam r}\sim\lam$, so that
    \begin{align*}
        \abs{\p_r^2\psi^-_\lam(\ax,r)}=\lam^2\abs{\beta''(\lam r)}
        \geq c_2\frac{\lam^2}{\jb{\lam r}^3}\gtrsim\frac{1}{\lam}.
    \end{align*}
Since $g(4)=0$, because $\vp(4)=0$, Proposition~\ref{prop:vdc-weighted} and the bound on $g'$ give
    \begin{align*}
        \abs{\lam^3\int_{\frac{1}{4}}^4e^{it\psi^-_\lam}g(r)\,dr}
        \lesssim\frac{\lam^3}{\sqrt{\frac{t}{\lam}}}\int_{\frac{1}{4}}^4\abs{g'(r)}\,dr
        \lesssim\frac{\lam^3}{\sqrt{\frac{t}{\lam}}}\frac{1}{\lam t}
        =\frac{\lam^{\frac{5}{2}}}{t^{\frac{3}{2}}}.
    \end{align*}
Together with the direct bound $\lam^3$, this gives~\eqref{eq:disp-bound} in Case 2 as well.
\end{proof}

\begin{corollary}[Dyadic dispersive estimate, $\ep$-scaled]\label{cor:dispersive-eps}
Under the hypotheses of Lemma~\ref{lem:dyadic-dispersive}, for all $\ep\in(0,1]$, $\lam\geq\ep$ and $t>0$,
    \begin{align*}
        \norm{\int_{\R^3}e^{\pm i\beta(\frac{\ak}{\ep})t+ix\cdot k}\vp\br{\frac{k}{\lam}}\,dk}_{L^\infty}
        \lesssim\min\set{\lam^3,\frac{\ep\lam^2}{t},\frac{\ep^{\frac{1}{2}}\lam^{\frac{5}{2}}}{t^{\frac{3}{2}}}},
    \end{align*}
with constant as in Lemma~\ref{lem:dyadic-dispersive}.
\end{corollary}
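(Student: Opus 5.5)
The plan is to reduce Corollary~\ref{cor:dispersive-eps} to Lemma~\ref{lem:dyadic-dispersive} by rescaling the frequency variable by $\ep$. First, substitute $k=\ep k'$ in the integral:
\begin{align*}
    \int_{\R^3}e^{\pm i\beta(\frac{\ak}{\ep})t+ix\cdot k}\vp\br{\frac{k}{\lam}}\,dk
    =\ep^3\int_{\R^3}e^{\pm i\beta(\abs{k'})t+i(\ep x)\cdot k'}\vp\br{\frac{k'}{\lam/\ep}}\,dk'.
\end{align*}
The supremum over $x$ of the right-hand side equals the supremum over $y=\ep x$. The time variable is unchanged, and the dyadic scale becomes $\lam':=\lam/\ep$, which is at least $1$ precisely because $\lam\geq\ep$. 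The hypotheses on $\beta$ and $\vp$ do not involve $\ep$, so Lemma~\ref{lem:dyadic-dispersive} applies at scale $\lam'$ with the same constants.

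This yields the bound
\begin{align*}
    \ep^3\min\set{\br{\frac{\lam}{\ep}}^3,\frac{(\lam/\ep)^2}{t},\frac{(\lam/\ep)^{\frac52}}{t^{\frac32}}}
    =\min\set{\lam^3,\frac{\ep\lam^2}{t},\frac{\ep^{\frac12}\lam^{\frac52}}{t^{\frac32}}},
\end{align*}
which is the claim. The powers of $\ep$ are $\ep^{3-3}$, $\ep^{3-2}$ and $\ep^{3-5/2}$ respectively.

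There is no genuine obstacle here. The only point to check is that the constant stays uniform: it depends only on $c_1,C_1,c_2,C_2,C_3$ and $\norm{\vp}_{C^2}$, and none of these change under the substitution, since the profile $\vp$ is evaluated at $k'/\lam'$ exactly as in the lemma. The restriction $\ep\leq1$ is not needed beyond guaranteeing that the range $\lam\geq\ep$ is nonempty in the intended application.
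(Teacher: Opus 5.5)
Your proposal is correct and is the paper's own argument: the substitution $k\mapsto\ep k$, an application of Lemma~\ref{lem:dyadic-dispersive} at frequency parameter $\lam/\ep\geq1$ with $x$ replaced by $\ep x$ (which leaves the $L^\infty_x$ norm unchanged), and multiplication by the Jacobian $\ep^3$. Your side remark is also accurate: the hypothesis $\ep\leq1$ is not used in the proof itself.
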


\begin{proof}
The change of variables $k\mapsto \ep k$ gives
    \begin{align*}
        \int_{\R^3}e^{\pm i\beta\br{\frac{\ak}{\ep}}t+ix\cdot k}\vp\br{\frac{k}{\lam}}\,dk
        =\ep^3\int_{\R^3}e^{\pm i\beta(\abs{k})t+i(\ep x)\cdot k}
          \vp\br{\frac{k}{\frac{\lam}{\ep}}}\,dk.
    \end{align*}
Lemma~\ref{lem:dyadic-dispersive} applies with frequency parameter $\lam/\ep\geq 1$, and multiplying its bound by the Jacobian $\ep^3$ yields $\ep^3\min\{(\lam/\ep)^3,(\lam/\ep)^2/t,(\lam/\ep)^{5/2}/t^{3/2}\}=\min\{\lam^3,\ep\lam^2/t,\ep^{1/2}\lam^{5/2}/t^{3/2}\}$.
\end{proof}

\subsection{Low frequencies}\label{app:lowfreq}

On the frequency ball the phase $\beta(\ak)t+x\cdot k$ may have a critical point, and Proposition~\ref{prop:stationary-phase} is used there in place of the radial reduction.

\begin{lemma}[Low-frequency dispersive estimate]\label{lem:low-frequency-dispersive}
Let $\beta\in C^\infty\br{[0,\infty)}$ be such that the radial function $k\mapsto\beta(\ak)$ is smooth on $\R^3$, and suppose there exist constants $c_1,c_2>0$ such that, for all $r\in[0,6]$,
    \begin{align*}
        \beta'(r)\geq\frac{c_1r}{\jb{r}},\qquad
        \beta''(r)\geq\frac{c_2}{\jb{r}^3}.
    \end{align*}
Let $\vartheta\in C_c^2(\R^3)$ be radial with $\supp\vartheta\subset\set{\ak\leq4}$. Then, for all $t>0$ and $x\in\R^3$,
    \begin{align*}
        \abs{\int_{\R^3}e^{\pm i\beta(\ak)t+ix\cdot k}\vartheta(k)\,dk}
        \lesssim\frac{1}{t^{\frac{3}{2}}},
    \end{align*}
where the implicit constant depends only on $c_1$, $c_2$, $\norm{\beta(\abs{\cdot})}_{C^3(\set{\ak\leq6})}$ and $\norm{\vartheta}_{C^2(\R^3)}$.
\end{lemma}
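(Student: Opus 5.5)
The plan is to apply Proposition~\ref{prop:stationary-phase} when a critical point of the phase lies near the support of $\vartheta$, and to integrate by parts otherwise. Fix $t>0$ and $x$. Write the phase as $\Psi(k):=\pm\beta(\ak)+k\cdot x/t$, so that the integral is $\int e^{it\Psi(k)}\vartheta(k)\,dk$. By taking complex conjugates (and replacing $x$ by $-x$) it suffices to treat the $+$ sign. For $\ak\leq6$ the Hessian of $k\mapsto\beta(\ak)$ has eigenvalues $\beta''(\ak)$ in the radial direction and $\beta'(\ak)/\ak$ in the tangential directions. The hypotheses bound these below by $c_2\jb{6}^{-3}$ and $c_1\jb{6}^{-1}$, respectively; at $k=0$, smoothness and $\beta'(0)=0$ give the tangential value $\beta''(0)\geq c_2$. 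Hence $D^2\Psi\geq\kappa\,\mathrm{Id}$ on $\set{\ak<6}$, with $\kappa$ depending only on $c_1,c_2$, and the Hessian does not depend on $x/t$. Moreover $\norm{\Psi}_{C^3}$ is controlled by $\norm{\beta(\abs{\cdot})}_{C^3(\set{\ak\leq6})}$ up to the linear term, and the linear term does not affect the second and third derivatives. I expect that the proof of Proposition~\ref{prop:stationary-phase} uses only $D^2\psi$ and $D^3\psi$; this needs to be checked, or else the constant linear part of the gradient must be absorbed.

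\emph{Case 1: a critical point exists in $B:=\set{\ak<5}$.} Then $\nabla\Psi(\xi_0)=0$ with $\xi_0\in B$. Apply Proposition~\ref{prop:stationary-phase} on $B$, whose radius is $5$, with amplitude $\vartheta\in C^2_c(B)$. This is legitimate since $\supp\vartheta\subset\set{\ak\leq4}$. The result is the bound $t^{-3/2}\norm{\vartheta}_{C^2}$. The $C^3$ norm of $\psi$ enters there only through the bounds on $V=\nabla\psi/\abs{\nabla\psi}^2$. Those bounds use $\abs{\nabla\psi}\gtrsim\abs{\xi-\xi_0}$ together with bounds on $D^2\psi$ and $D^3\psi$; they do not use an upper bound on $\nabla\psi$ itself. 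So the constant is uniform in $x/t$. This uniformity is the main point to verify, and I would re-run the displayed proof to confirm it.

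\emph{Case 2: no critical point in $B$.} By strong convexity, $\abs{\nabla\Psi(k)}\geq\kappa\,\mathrm{dist}(k,\text{critical point})$ along segments, provided the critical point exists in a larger convex region. More simply, $\nabla\Psi(k)=\beta'(\ak)k/\ak+x/t$. If $\abs{x}/t\geq 2\sup_{\ak\leq4}\beta'$, then $\abs{\nabla\Psi}\gtrsim1+\abs{x}/t$ on the support. Otherwise the map $k\mapsto\beta'(\ak)k/\ak$ is a diffeomorphism (its Jacobian is the Hessian, bounded below by $\kappa$ on $\set{\ak<6}$) whose image of $B$ contains a ball around the image of $\set{\ak\leq4}$. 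If $-x/t$ is not attained in $B$, then monotonicity of the gradient gives $\abs{\nabla\Psi(k)}\geq\kappa\cdot1$ for $\ak\leq4$. Indeed, $(\nabla\Psi(k)-\nabla\Psi(k'))\cdot(k-k')\geq\kappa\abs{k-k'}^2$ on $\set{\ak<6}$. Taking $k'$ on the ray toward the would-be critical point at the boundary of $B$ shows $\abs{\nabla\Psi(k)}\geq\kappa\cdot\mathrm{dist}(k,\partial B)\geq\kappa$. The case $\abs{x}/t$ large is handled by the first bound. In either situation, integrate by parts twice with the operator $S$ from the proof of Proposition~\ref{prop:stationary-phase}, where $V$ and its first two derivatives are now bounded uniformly. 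This gives $t^{-2}\norm{\vartheta}_{C^2}$.

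Finally, for $t\leq1$ the trivial bound $\norm{\vartheta}_{L^1}\lesssim\norm{\vartheta}_{C^2}$ suffices. For $t\geq1$ we have $t^{-2}\leq t^{-3/2}$, so both cases combine to give the claimed $t^{-3/2}$ bound. The main obstacle is Case 2 near the boundary of the critical-point region, namely making the lower bound on $\abs{\nabla\Psi}$ uniform in $x/t$. The monotonicity argument above handles it, because the buffer between radius $4$ and radius $5$ stays inside the region $\set{\ak<6}$ where the Hessian bound holds.
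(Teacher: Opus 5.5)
Your proof is correct. Case~1 is essentially the paper's. The paper sets $y=x/t$ and splits at $\abs{y}\le A:=\beta'(5)$, which is exactly the condition that $\nabla\phi_y$ vanishes somewhere in $\set{\ak\le5}$. It then applies Proposition~\ref{prop:stationary-phase} on the ball of radius $6$. Since $\abs{y}\le\beta'(5)$ in that case, the full $C^3$ norm of the phase is already controlled, so your concern about uniformity in $x/t$ does not arise. Your observation that the proof of that proposition only uses $D^2\psi$, $D^3\psi$ and the Hessian lower bound is nonetheless accurate.

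The genuine difference is in the non-stationary regime $\abs{y}\ge\beta'(5)$.
\begin{itemize}
\item The paper passes to radial coordinates and splits $\sin(\ax r)/(\ax r)$ with the kernel $Z$. It then applies van der Corput (Proposition~\ref{prop:vdc-weighted}) to the phases $\beta(r)\pm\abs{y}r$, using only $\beta''\ge c_2\jb{4}^{-3}$. The extra factor $\ax^{-1}\le(At)^{-1}$ comes from $\int\abs{g'}\lesssim\ax^{-1}$. This reuses the one-dimensional machinery of Lemma~\ref{lem:dyadic-dispersive} and relies on $\vartheta$ being radial.
\item You instead integrate by parts twice in $\R^3$ with $V=\nabla\Psi/\abs{\nabla\Psi}^2$. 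This yields the stronger rate $t^{-2}$ and needs neither radial $\vartheta$ nor the $Z$-decomposition. It does require a uniform lower bound on $\abs{\nabla\Psi}$ over $\set{\ak\le4}$.
\end{itemize}

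Your justification of that lower bound via a ``ray toward the would-be critical point'' is loosely phrased, since no such point need exist. The clean version uses the radial structure. The map $\nabla[\beta(\ak)]=\beta'(\ak)k/\ak$ sends $\set{\ak<5}$ onto $\set{\abs{z}<\beta'(5)}$, because $\beta'(0)=0$ and $\beta'$ is strictly increasing. So the absence of a critical point in $B$ means exactly $\abs{y}\ge\beta'(5)$. Hence, for $\ak\le4$,
\begin{align*}
\abs{\nabla\Psi(k)}\ge\beta'(5)-\beta'(4)\ge c_2\jb{5}^{-3}.
\end{align*}
Alternatively, strong monotonicity on the ball of radius $1$ about $k$, together with the acute-angle (Brouwer) lemma, gives $\abs{\nabla\Psi(k)}\ge\kappa$. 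With this bound, $\abs{V}$, $\abs{\nabla V}$ and $\abs{\nabla^2V}$ are controlled by $c_2$ and $\norm{\beta(\abs{\cdot})}_{C^3(\set{\ak\le6})}$. Combining with the trivial bound for $t\le1$ then closes the argument, with constants of the form stated in the lemma.
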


\begin{proof}
It suffices to treat the $+\beta$ case: complex conjugation turns the $-\beta$ integral into the $+\beta$ integral with $x$ replaced by $-x$ and $\vartheta$ by its complex conjugate, which changes neither the hypotheses nor the $C^2$ norm. Fix $x\in\R^3$ and $t>0$, write $y:=x/t$, and set
    \begin{align*}
        \phi_y(k):=\beta(\ak)+y\cdot k,
    \end{align*}
which is of class $C^3$ on the ball $\set{\ak<6}$ because $k\mapsto\beta(\ak)$ is smooth. Its Hessian is independent of $y$ and, for $k\neq0$, is given by
    \begin{align*}
        D^2\phi_y(k)_{ij}
        =\beta''(\ak)\frac{k_ik_j}{\ak^2}
        +\frac{\beta'(\ak)}{\ak}\br{\delta_{ij}-\frac{k_ik_j}{\ak^2}}.
    \end{align*}
Its eigenvalues are $\beta''(\ak)$ in the radial direction and $\beta'(\ak)/\ak$ in each of the two tangential directions. At the origin, $D^2\phi_y(0)=\beta''(0)\mathrm{Id}$. The hypotheses give $\beta''(r)\geq c_2\jb{6}^{-3}$ and $\beta'(r)/r\geq c_1\jb{6}^{-1}$ for $0<r\leq6$, so that, by continuity at the origin,
    \begin{align}\label{eq:low-freq-hessian}
        D^2\phi_y(k)\geq\kappa \mathrm{Id},\qquad
        \kappa:=\min\br{\frac{c_2}{\jb{6}^3},\frac{c_1}{\jb{6}}}>0,
    \end{align}
uniformly in $y$. Smoothness of the radial extension at the origin implies $\beta'(0)=0$, and $\beta''>0$ on $[0,6]$ makes $\beta'$ strictly increasing there; set $A:=\beta'(5)>0$.

\medskip \noindent\emph{Case 1: $\abs{y}\leq A$.} The critical point equation $\nabla\phi_y(k)=\beta'(\ak)\frac{k}{\ak}+y=0$ has the solution $k_c=0$ if $y=0$, and otherwise $k_c=-\rho\frac{y}{\abs{y}}$, where $\rho\in[0,5]$ is the unique root of $\beta'(\rho)=\abs{y}$, which exists by continuity and strict monotonicity of $\beta'$ together with $\beta'(0)=0$ and $\abs{y}\leq\beta'(5)$. In either case $\phi_y$ has a critical point in the ball $\set{\ak<6}$, and
    \begin{align*}
        \norm{\phi_y}_{C^3(\set{\ak<6})}
        \lesssim\norm{\beta(\abs{\cdot})}_{C^3(\set{\ak\leq6})}+A.
    \end{align*}
Proposition~\ref{prop:stationary-phase}, applied on that ball with the uniform Hessian bound~\eqref{eq:low-freq-hessian} and the amplitude $\vartheta$, gives
    \begin{align*}
        \abs{\int_{\R^3}e^{it\phi_y(k)}\vartheta(k)\,dk}\lesssim\frac{1}{t^{\frac{3}{2}}}.
    \end{align*}

\medskip \noindent\emph{Case 2: $\abs{y}\geq A$.} Passing to radial coordinates and using~\eqref{eq:Z-identity},
    \begin{align*}
        \int_{\R^3}e^{it\phi_y(k)}\vartheta(k)\,dk
        =\sum_\pm\int_0^4e^{it\psi^\pm_y(r)}Z_\pm(\ax r)\vartheta(r)r^2\,dr,
    \end{align*}
where $\psi^\pm_y(r):=\beta(r)\pm\abs{y} r$. Since $\beta'$ is strictly increasing on $[0,6]$ and $\abs{y}\geq\beta'(5)$, one has $\p_r\psi^-_y(r)=\beta'(r)-\abs{y}\leq\beta'(4)-\beta'(5)<0$ on $[0,4]$, so $\psi^-_y$ has no critical point there. The second-derivative lower bound
    \begin{align*}
        \abs{\p_r^2\psi^\pm_y(r)}=\beta''(r)\geq\frac{c_2}{\jb{4}^3}=:m>0,
        \qquad r\in[0,4],
    \end{align*}
treats both signs uniformly and allows Proposition~\ref{prop:vdc-weighted} to be applied to $g(r):=Z_\pm(\ax r)\vartheta(r)r^2$, which satisfies $g(4)=0$ because $\vartheta(4)=0$. This gives
    \begin{align*}
        \abs{\int_0^4e^{it\psi^\pm_y(r)}g(r)\,dr}
        \lesssim\frac{1}{\sqrt{mt}}\int_0^4\abs{g'(r)}\,dr.
    \end{align*}
By~\eqref{eq:Z-decay},
    \begin{align*}
        \abs{g'(r)}\lesssim\frac{\ax r^2}{(1+\ax r)^2}+\frac{r}{1+\ax r}
        \lesssim\frac{r}{1+\ax r},
    \end{align*}
so that $\int_0^4\abs{g'(r)}\,dr\lesssim\ax^{-1}$. Since $\ax\geq At$,
    \begin{align*}
        \abs{\int_{\R^3}e^{it\phi_y(k)}\vartheta(k)\,dk}
        \lesssim\frac{1}{\sqrt t}\frac{1}{\ax}
        \leq\frac{1}{\sqrt t}\frac{1}{At}
        \lesssim\frac{1}{t^{\frac{3}{2}}}. \tag*{\qedhere}
    \end{align*}
\end{proof}

The change of variables $k\mapsto \ep k$ expands the support of the amplitude to $\set{\abs{k}\leq4/\ep}$, so the $\ep$-scaled low-frequency bound is not a direct application of Lemma~\ref{lem:low-frequency-dispersive}: the rescaled amplitude is decomposed by Littlewood--Paley, and both frequency-localised estimates are applied to the pieces. This is where the two dispersive branches $\ep t^{-1}$ and $\ep^{1/2}t^{-3/2}$ are produced.

\begin{corollary}[Low-frequency dispersive estimate, $\ep$-scaled]\label{cor:low-freq-eps}
Let $\beta\in C^\infty\br{[0,\infty),\R}$ satisfy the hypotheses of Lemmas~\ref{lem:dyadic-dispersive} and~\ref{lem:low-frequency-dispersive}, and let $\vartheta\in C_c^2(\R^3)$ be radial with $\supp\vartheta\subset\set{\ak\leq4}$. Then, for all $\ep\in(0,1]$ and $t>0$,
    \begin{align*}
        \norm{\int_{\R^3}e^{\pm i\beta(\frac{\ak}{\ep})t+ix\cdot k}\vartheta(k)\,dk}_{L^\infty}
        \lesssim\min\set{1,\frac{\ep}{t},\frac{\ep^{\frac{1}{2}}}{t^{\frac{3}{2}}}},
    \end{align*}
where the implicit constant depends only on $c_1,c_2,C_1,C_2,C_3$, $\norm{\beta(\abs{\cdot})}_{C^3(\set{\ak\leq6})}$, $\norm{\vartheta}_{C^2(\R^3)}$ and the fixed Littlewood--Paley partition.
\end{corollary}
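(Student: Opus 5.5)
The plan is to bound the kernel $K_\ep(t,x):=\int_{\R^3}e^{\pm i\beta(\ak/\ep)t+ix\cdot k}\vartheta(k)\,dk$ by each of the three branches separately. The bound $\abs{K_\ep}\lesssim1$ is immediate from $\norm{\vartheta}_{L^1}\lesssim\norm{\vartheta}_{C^2}$.

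For the dispersive branches, change variables $k\mapsto\ep k$ as in Corollary~\ref{cor:dispersive-eps}:
\begin{align*}
K_\ep(t,x)=\ep^3\int_{\R^3}e^{\pm i\beta(\ak)t+i(\ep x)\cdot k}\vartheta(\ep k)\,dk .
\end{align*}
The amplitude $\vartheta(\ep\cdot)$ is supported in $\set{\ak\leq4/\ep}$. We decompose it with the fixed partition~\eqref{eq:LP-partition}:
\begin{align*}
\vartheta(\ep k)=\chi(k)\vartheta(\ep k)+\sum_{0\leq\ell\leq L}\vp(2^{-\ell}k)\vartheta(\ep k),
\end{align*}
where $2^{L}\sim\ep^{-1}$, so there are $O(\log(1/\ep))$ dyadic pieces.

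\emph{Low piece.} Apply Lemma~\ref{lem:low-frequency-dispersive} with amplitude $\chi\,\vartheta(\ep\cdot)$. Its $C^2$ norm is bounded uniformly in $\ep\leq1$, and it is radial with support in $\set{\ak\leq1}$. This gives $\ep^3t^{-3/2}$, which is at most $\ep^{1/2}t^{-3/2}$ and at most $\ep t^{-1}$ when $t\geq\ep^4$. When $t\leq\ep^4$ it is dominated by the trivial bound $\ep^3\lesssim\ep t^{-1}$.

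\emph{Dyadic pieces.} Write $\vp(2^{-\ell}k)\vartheta(\ep k)=\Phi_\ell(k/2^\ell)$ with $\Phi_\ell(\xi):=\vp(\xi)\vartheta(\ep2^\ell\xi)$. Since $\ep2^\ell\lesssim1$, $\Phi_\ell$ is radial, supported in $\set{1/2\leq\abs{\xi}\leq2}$, and has $C^2$ norm bounded uniformly in $\ell$ and $\ep$. Lemma~\ref{lem:dyadic-dispersive} with $\lam=2^\ell$ then bounds the $\ell$-th piece by
\begin{align*}
\ep^3\min\set{2^{3\ell},\frac{2^{2\ell}}{t},\frac{2^{5\ell/2}}{t^{3/2}}}.
\end{align*}
Summing the second branch over $2^\ell\lesssim\ep^{-1}$ gives $\ep^3\ep^{-2}t^{-1}=\ep t^{-1}$. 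Summing the third gives $\ep^3\ep^{-5/2}t^{-3/2}=\ep^{1/2}t^{-3/2}$. Both are geometric sums dominated by their top term, so no logarithm appears. Adding the low piece yields the claimed minimum.

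The main obstacle is the dyadic step: Lemma~\ref{lem:dyadic-dispersive} is stated for a fixed $\vp$, whereas here the amplitude $\Phi_\ell$ depends on $\ell$ and $\ep$. This is harmless, because the constant in that lemma depends only on $\norm{\vp}_{C^2}$ and on the support $\set{1/4\leq\abs{\xi}\leq4}$, both of which we control uniformly. We must also check that $\vartheta(\ep2^\ell\xi)$ is $C^2$ uniformly, which holds since its derivatives carry factors $(\ep2^\ell)^j\lesssim1$.
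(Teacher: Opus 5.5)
Your proposal is correct and follows the paper's proof essentially step by step. You rescale $k\mapsto\ep k$, split $\vartheta(\ep\cdot)$ along the fixed Littlewood--Paley partition up to $2^L\sim\ep^{-1}$, apply Lemma~\ref{lem:low-frequency-dispersive} to the low piece and Lemma~\ref{lem:dyadic-dispersive} with $\lam=2^\ell$ to the uniformly $C^2$-bounded rescaled dyadic amplitudes, and sum the geometric series; the only cosmetic differences are that you take the trivial branch directly from $\norm{\vartheta}_{L^1}$ and compare the low piece with the wave branch through the threshold $t=\ep^4$, where the paper checks the three time regimes separately.
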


\begin{proof}
The change of variables $k\mapsto \ep k$ gives
    \begin{align}\label{eq:low-freq-eps-sub}
        \int_{\R^3}e^{\pm i\beta\br{\frac{\ak}{\ep}}t+ix\cdot k}\vartheta(k)\,dk
        =\ep^3\int_{\R^3}e^{\pm i\beta(\abs{k})t+i(\ep x)\cdot k}\vartheta(\ep k)\,dk.
    \end{align}
The amplitude $\vartheta(\ep k)$ is supported in $\set{\abs{k}\leq4/\ep}$, which exceeds the support allowed in Lemma~\ref{lem:low-frequency-dispersive}, and is therefore decomposed along the fixed Littlewood--Paley partition~\eqref{eq:LP-partition}, with $\vp_{-1}$ supported in $\set{\abs{k}\leq2}$ and $\vp_{\ell}$ in $\set{2^{\ell-1}\leq\abs{k}\leq2^{\ell+1}}$. The pieces with $2^{\ell-1}>4/\ep$ vanish on $\supp\vartheta(\ep \cdot)$; this covers every $\ell>L$, where $L$ is the smallest integer such that $2^L\geq8/\ep$, so that, by minimality,
    \begin{align*}
        \vartheta(\ep k)=\vartheta(\ep k)\vp_{-1}(k)
        +\sum_{\ell=0}^L\vartheta(\ep k)\vp(2^{-\ell}k),\qquad \frac{8}{\ep}\leq2^L<\frac{16}{\ep}.
    \end{align*}

\medskip \noindent\emph{Low piece.} The amplitude $a_{\mathrm{lo}}(k):=\vartheta(\ep k)\vp_{-1}(k)$ is supported in $\set{\abs{k}\leq2}$ and satisfies, since $\ep\leq1$, $\norm{a_{\mathrm{lo}}}_{C^2(\R^3)}\lesssim\norm{\vartheta}_{C^2}\norm{\vp_{-1}}_{C^2}$ uniformly in $\ep$. Lemma~\ref{lem:low-frequency-dispersive}, applied with amplitude $a_{\mathrm{lo}}$ and with $x$ replaced by $\ep x$, bounds the integral of this piece by $t^{-3/2}$, while bounding it directly by $\norm{a_{\mathrm{lo}}}_{L^\infty}$ times the measure of $\set{\abs{k}\leq2}$ bounds it by a constant; hence
    \begin{align*}
        \abs{\int_{\R^3}e^{\pm i\beta(\abs{k})t+i(\ep x)\cdot k}a_{\mathrm{lo}}(k)\,dk}
        \lesssim\min\set{1,\frac{1}{t^{\frac{3}{2}}}}.
    \end{align*}

\medskip \noindent\emph{Dyadic pieces.} For $0\leq \ell\leq L$, set $\vp^{(\ell)}(\xi):=\vartheta(\ep 2^\ell\xi)\vp(\xi)$, so that $\vartheta(\ep k)\vp(2^{-\ell}k)=\vp^{(\ell)}(k/2^\ell)$ and $\supp\vp^{(\ell)}\subset\set{1/2\leq\abs{\xi}\leq2}$. On this support $\abs{\xi}\leq2$ and $\ep 2^\ell\leq\ep 2^L<16$, so the Leibniz rule gives the uniform $C^2$ bound
    \begin{align*}
        \norm{\vp^{(\ell)}}_{C^2(\R^3)}
        \lesssim\sum_{n=0}^2(\ep 2^\ell)^n\norm{\vartheta}_{C^n}\norm{\vp}_{C^{2-n}}
        \lesssim\norm{\vartheta}_{C^2}\norm{\vp}_{C^2},
    \end{align*}
uniformly in $\ell$ and $\ep$. Lemma~\ref{lem:dyadic-dispersive}, applied with amplitude $\vp^{(\ell)}$, parameter $\lam=2^\ell\geq1$ and $x$ replaced by $\ep x$, yields
    \begin{align*}
        \abs{\int_{\R^3}e^{\pm i\beta(\abs{k})t+i(\ep x)\cdot k}\vp^{(\ell)}\br{\frac{k}{2^\ell}}\,dk}
        \lesssim\min\set{2^{3\ell},\frac{2^{2\ell}}{t},\frac{2^{\frac{5\ell}{2}}}{t^{\frac{3}{2}}}}.
    \end{align*}

\medskip \noindent\emph{Conclusion.} The two families are combined with the prefactor $\ep^3$ in~\eqref{eq:low-freq-eps-sub}. For the dyadic pieces, a sum of minima is at most the minimum of the sums, and $\sum_{\ell=0}^L 2^{\theta \ell}\lesssim2^{\theta L}$ for $\theta>0$ with $2^L<16/\ep$, so that
    \begin{align*}
        \ep^3\sum_{\ell=0}^L\min\set{2^{3\ell},\frac{2^{2\ell}}{t},\frac{2^{\frac{5\ell}{2}}}{t^{\frac{3}{2}}}}\lesssim\ep^3\min\set{2^{3L},\frac{2^{2L}}{t},\frac{2^{\frac{5L}{2}}}{t^{\frac{3}{2}}}}
        \lesssim\min\set{1,\frac{\ep}{t},\frac{\ep^{\frac{1}{2}}}{t^{\frac{3}{2}}}}.
    \end{align*}
The low piece contributes $\ep^3\min\set{1,t^{-3/2}}$, which is bounded by the same three-term minimum in each of the three time regimes: for $t\leq\ep$ the minimum equals $1\geq\ep^3$; for $\ep\leq t\leq\ep^{-1}$ it equals $\ep/t\geq\ep^2\geq\ep^3$; and for $t\geq\ep^{-1}$ it equals $\ep^{1/2}t^{-3/2}\geq\ep^3t^{-3/2}$. Adding the two contributions proves the corollary.
\end{proof}

\subsection{Besov summation and the uniform-in-time estimate}\label{app:besov}
The bounds from the preceding two subsections are now summed
over the Littlewood--Paley decomposition.
Write $e^{\pm i\beta(\abs{\nabla_x}/\ep)t}$ for the Fourier
multiplier with symbol $e^{\pm i\beta(\ak/\ep)t}$.
The resulting dispersive estimate for $t>0$ is then combined
with a direct short-time bound to obtain an estimate uniform
for $t\geq0$.

\begin{lemma}[Besov dispersive estimate]\label{lem:short-time-trivial-besov}
Let $\beta\in C^\infty\br{[0,\infty),\R}$ and suppose there exist positive constants $c_\beta,C_\beta$ such that, for all $r\geq0$,
    \begin{align*}
        \frac{c_\beta r}{\jb{r}}\leq\beta'(r)\leq\frac{C_\beta r}{\jb{r}},\qquad
        \frac{c_\beta }{\jb{r}^3}\leq\beta''(r)\leq\frac{C_\beta }{\jb{r}^3},\qquad
        \abs{\beta'''(r)}\leq\frac{C_\beta }{\jb{r}^2}.
    \end{align*}
Suppose in addition that the radial function $k\mapsto\beta(\ak)$ is smooth on $\R^3$. Then, for all $\ep\in(0,1]$, $t\geq 0$ and $f\in\Bes^{3}_{1,1}$, with the minimum understood to equal $1$ at $t=0$,
    \begin{align*}
        \norm{e^{\pm i\beta\br{\frac{\abs{\nabla_x}}{\ep}}t}f}_{\Bes^0_{\infty,1}}
        \lesssim\min\set{1,\frac{\ep}{t},\frac{\ep^{\frac{1}{2}}}{t^{\frac{3}{2}}}}\norm{f}_{\Bes^{3}_{1,1}},
    \end{align*}
with implicit constant depending only on $c_\beta$, $C_\beta$, $\norm{\beta(\abs{\cdot})}_{C^3(\set{\ak\leq6})}$ and the fixed Littlewood--Paley partition.
\end{lemma}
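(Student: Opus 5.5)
Decompose $f$ along the fixed Littlewood--Paley partition, $f=\sum_{\ell\geq-1}P_\ell f$, and estimate each block of the output in $L^\infty$ by Young's inequality. Two facts do the work: the multiplier commutes with the $P_j$, and by finite overlap $P_j e^{\pm i\beta(\abs{\nabla_x}/\ep)t}f=\sum_{\abs{\ell-j}\leq1}e^{\pm i\beta(\abs{\nabla_x}/\ep)t}P_jP_\ell f$. It therefore suffices to show that, for each $\ell\geq-1$,
\begin{align*}
\norm{e^{\pm i\beta(\frac{\abs{\nabla_x}}{\ep})t}\widetilde P_\ell g}_{L^\infty}\lesssim\min\set{1,\frac{\ep}{t},\frac{\ep^{\frac12}}{t^{\frac32}}}2^{3\ell}\norm{g}_{L^1}.
\end{align*}
Here $\widetilde P_\ell$ is a fattened projection whose symbol $\widetilde\vp(2^{-\ell}\cdot)$ is supported in $\set{2^{\ell-2}\leq\ak\leq2^{\ell+2}}$ for $\ell\geq0$, and in $\set{\ak\leq4}$ for $\ell=-1$. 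We then apply it with $g=P_\ell f$ and sum. Summation gives $\sum_\ell 2^{3\ell}\norm{P_\ell f}_{L^1}=\norm{f}_{\Bes^3_{1,1}}$, up to the finite-overlap constant. By Young's inequality, the block estimate reduces to an $L^\infty_x$ bound on the kernel $K_\ell(t,x)=(2\pi)^{-3}\int e^{\pm i\beta(\ak/\ep)t+ix\cdot k}\widetilde\vp(2^{-\ell}k)\,dk$.

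\emph{Kernel bounds.} For $\ell=-1$, Corollary~\ref{cor:low-freq-eps} applies directly with $\vartheta=\widetilde\vp_{-1}$ (supported in $\set{\ak\leq4}$). It gives $\norm{K_{-1}}_{L^\infty}\lesssim\min\set{1,\ep/t,\ep^{1/2}t^{-3/2}}$. This is the claim, since $2^{-3}\sim1$.

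For $\ell\geq0$, set $\lam=2^\ell\geq1\geq\ep$. Corollary~\ref{cor:dispersive-eps} applies with the radial amplitude $\widetilde\vp$ supported in $\set{1/4\leq\abs{\xi}\leq4}$ and gives
\begin{align*}
\norm{K_\ell}_{L^\infty}\lesssim\min\set{2^{3\ell},\frac{\ep2^{2\ell}}{t},\frac{\ep^{\frac12}2^{\frac{5\ell}{2}}}{t^{\frac32}}}\leq2^{3\ell}\min\set{1,\frac{\ep}{t},\frac{\ep^{\frac12}}{t^{\frac32}}}.
\end{align*}
The last step uses $2^{2\ell},2^{5\ell/2}\leq2^{3\ell}$. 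The hypotheses of both corollaries are exactly the assumed bounds on $\beta',\beta'',\beta'''$. The $C^3$ norm on $\set{\ak\leq6}$ and the smoothness of $k\mapsto\beta(\ak)$ enter only through the low-frequency lemma.

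\emph{Case $t=0$.} Here the multiplier is the identity. Then $\norm{P_jf}_{L^\infty}\lesssim 2^{3j}\norm{P_jf}_{L^1}$ by Bernstein, or equivalently by the kernel bound $\norm{\mathcal F^{-1}\widetilde\vp_j}_{L^\infty}\lesssim2^{3j}$. This gives the bound $\norm{f}_{\Bes^3_{1,1}}$, consistent with the convention that the minimum equals $1$.

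\emph{Main obstacle.} The only point needing care is the bookkeeping: the fattened symbols have to fit the support hypotheses of the two corollaries ($\set{\ak\leq4}$ and $\set{1/4\leq\abs{\xi}\leq4}$), and their $C^2$ norms have to be uniform in $\ell$. Both hold because $\widetilde\vp(2^{-\ell}\cdot)$ is a dilate of one fixed profile. The analytic content has already been supplied by the appendix lemmas.
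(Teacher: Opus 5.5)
Your proposal is correct and follows essentially the same route as the paper: fattened Littlewood--Paley kernels, bounded by Corollary~\ref{cor:low-freq-eps} on the low block and by Corollary~\ref{cor:dispersive-eps} with $\lam=2^\ell$ on the dyadic blocks, followed by Young's inequality and summation. The differences are cosmetic: the paper keeps the dispersive branches at cost $2^{5\ell/2}$ (so only $\Bes^{5/2}_{1,1}$ is needed there) and proves the trivial branch separately by Fourier inversion, whereas you fold everything into $2^{3\ell}$ using the $\lam^3$ branch directly; also, your finite-overlap step is unnecessary, since the multiplier commutes with $P_j$.
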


\begin{proof}
The definition of the Besov norm gives
    \begin{align*}
        \norm{e^{\pm i\beta\br{\frac{\abs{\nabla_x}}{\ep}}t}f}_{\Bes^0_{\infty,1}}
        =\norm{e^{\pm i\beta\br{\frac{\abs{\nabla_x}}{\ep}}t}P_{-1}f}_{L^\infty}+\sum_{\ell\geq0}\norm{e^{\pm i\beta\br{\frac{\abs{\nabla_x}}{\ep}}t}P_\ell f}_{L^\infty}.
    \end{align*}
To estimate each block by Young's inequality, write $e^{\pm i\beta(\abs{\nabla_x}/\ep)t}P_\ell f$ as a convolution with $P_\ell f$. Fix radial cutoffs $\widetilde\vp_{-1},\widetilde\vp\in C_c^2(\R^3)$ such that
    \begin{align*}
        \widetilde\vp_{-1}=1\ \text{ on }\set{\ak\leq2},\qquad
        \supp\widetilde\vp_{-1}\subset\set{\ak\leq4},
    \end{align*}
    \begin{align*}
        \widetilde\vp=1\ \text{ on }\set{\frac12\leq\ak\leq2},\qquad
        \supp\widetilde\vp\subset\set{\frac14\leq\ak\leq4},
    \end{align*}
and set $\widetilde\vp_{\ell}:=\widetilde\vp(2^{-\ell}\cdot)$ for $\ell\geq0$. Then $\widetilde\vp_{\ell}=1$ on $\supp\vp_{\ell}$ for every $\ell \geq-1$, while $\supp\widetilde\vp_{-1}$ and $\supp\widetilde\vp$ fall within the supports allowed in Lemmas~\ref{lem:low-frequency-dispersive} and~\ref{lem:dyadic-dispersive} respectively. Since $\widetilde\vp_{\ell}\vp_{\ell}=\vp_{\ell}$, Fourier inversion and Fubini's theorem represent
\begin{align*}
    e^{\pm i\beta\br{\frac{\abs{\nabla_x}}{\ep}}t}P_\ell f(x)=K_{\ep,\ell}(t,\cdot)*P_\ell f(x),\qquad K_{\ep,\ell}(t,x):=\frac{1}{(2\pi)^3}\int_{\R^3}e^{\pm i\beta\br{\frac{\ak}{\ep}}t+ix\cdot k}\widetilde\vp_{\ell}(k)\,dk.
\end{align*}
By Corollary~\ref{cor:low-freq-eps} applied to $\widetilde\vp_{-1}$ for $\ell=-1$, and Corollary~\ref{cor:dispersive-eps} applied to $\widetilde\vp$ with $\lam=2^\ell$ for $\ell\geq0$, the low block satisfies
    \begin{align*}
        \norm{K_{\ep,-1}(t)}_{L^\infty_x}\lesssim\min\set{1,\frac{\ep}{t},\frac{\ep^{\frac{1}{2}}}{t^{\frac{3}{2}}}},
    \end{align*}
while, for $\ell\geq0$, the wave branch contributes $\ep 2^{2\ell}t^{-1}$ and the Klein--Gordon branch $\ep^{1/2}2^{5\ell/2}t^{-3/2}$, so that, by $2^{2\ell}\leq2^{5\ell/2}$,
    \begin{align*}
        \norm{K_{\ep,\ell}(t)}_{L^\infty_x}
        \lesssim\min\set{\frac{\ep 2^{2\ell}}{t},\frac{\ep^{\frac{1}{2}}2^{\frac{5\ell}{2}}}{t^{\frac{3}{2}}}}
        \leq2^{\frac{5\ell}{2}}\min\set{\frac{\ep}{t},\frac{\ep^{\frac{1}{2}}}{t^{\frac{3}{2}}}}.
    \end{align*}
Young's inequality then yields, for all $\ell \geq-1$,
    \begin{align*}
        \norm{e^{\pm i\beta\br{\frac{\abs{\nabla_x}}{\ep}}t}P_\ell f}_{L^\infty}
        \lesssim2^{\frac{5\ell}{2}}\min\set{\frac{\ep}{t},\frac{\ep^{\frac{1}{2}}}{t^{\frac{3}{2}}}}\norm{P_\ell f}_{L^1},
    \end{align*}
the case $\ell=-1$ following from the low-block kernel bound after enlarging the constant by $2^{5/2}$. Therefore
    \begin{align*}
        \norm{e^{\pm i\beta\br{\frac{\abs{\nabla_x}}{\ep}}t}f}_{\Bes^0_{\infty,1}}
        \lesssim\min\set{\frac{\ep}{t},\frac{\ep^{\frac{1}{2}}}{t^{\frac{3}{2}}}}
        \sum_{\ell \geq-1}2^{\frac{5}{2}\ell}\norm{P_\ell f}_{L^1}
        \leq\min\set{\frac{\ep}{t},\frac{\ep^{\frac{1}{2}}}{t^{\frac{3}{2}}}}\norm{f}_{\Bes^{\frac{5}{2}}_{1,1}}.
    \end{align*}

For the complementary bound no kernel is required: since $\widehat{P_\ell f}$ is supported in $\supp\vp_{\ell}$, Fourier inversion gives
    \begin{align*}
        \norm{e^{\pm i\beta\br{\frac{\abs{\nabla_x}}{\ep}}t}P_\ell f}_{L^\infty}
        \lesssim\int_{\supp\vp_{\ell}}\abs{\widehat{P_\ell f}(k)}\,dk
        \lesssim2^{3\ell}\norm{P_\ell f}_{L^1},
    \end{align*}
uniformly in $\ep$ and $t\geq0$. Summing,
    \begin{align*}
        \norm{e^{\pm i\beta\br{\frac{\abs{\nabla_x}}{\ep}}t}f}_{\Bes^0_{\infty,1}}
        \lesssim\sum_{\ell \geq-1}2^{3\ell}\norm{P_\ell f}_{L^1}
        \leq\norm{f}_{\Bes^3_{1,1}}.
    \end{align*}
Taking the minimum of the three bounds and using $\norm{\cdot}_{\Bes^{5/2}_{1,1}}\leq\norm{\cdot}_{\Bes^3_{1,1}}$ yields the result.
\end{proof}

\section{Littlewood--Paley, multiplier and embedding estimates}\label{app:LP}
This appendix collects the Littlewood--Paley, multiplier and embedding estimates used throughout, in the notation of the Besov norm defined in Section~\ref{mainresultssubsection}, together with an elementary lemma on velocity weights. 

\subsection{Basic Littlewood--Paley estimates}\label{app:LP-basic}
First, the splitting~\eqref{eq:chi-splitting} is verified as follows. Since $\chi(\cdot/2)=1$ on $\supp\chi$, the definition $\vp_0=\chi(\cdot/2)-\chi$ gives $\chi(1-\chi)=\chi\vp_0$ and $\vp_0=\chi(\cdot/2)(1-\chi)$, while $\chi\vp_{\ell}=0$ for $\ell\geq1$. These identities give the high-block equivalence, for all $s\in\R$ and $1\leq p\leq\infty$,
\begin{align}\label{eq:high-blocks}
    \norm{(1-\chi(i\nabla_x))f}_{\Bes^s_{p,1}}
    \sim
    \sum_{\ell\geq0}2^{s\ell}\norm{P_\ell f}_{L^p}.
\end{align}
The left side is bounded by the right block by block: its low term equals $\norm{\chi(i\nabla_x)P_0f}_{L^p}$, its block $\ell\geq0$ equals $\norm{(1-\chi(i\nabla_x))P_\ell f}_{L^p}$, and both are controlled by the $L^p$ boundedness of $\chi(i\nabla_x)$ and $1-\chi(i\nabla_x)$. Conversely, $P_\ell f=P_\ell (1-\chi(i\nabla_x))f$ for $\ell\geq1$, while $P_0f=(P_{-1}+P_0)(1-\chi(i\nabla_x))f$ because $\chi(\cdot/2)=\vp_{-1}+\vp_0$; so every block of the right side is bounded by terms of the left. Adding the low term $\norm{\chi(i\nabla_x)f}_{L^p}$ to both sides of~\eqref{eq:high-blocks} proves~\eqref{eq:chi-splitting}, by the definition of the norm. 

A basic tool in this appendix is the following scale-uniform Fourier multiplier estimate. For $a\in C_c^\infty(\R^3)$, $\ell\in\mathbb{Z}$ and $1\leq p\leq\infty$, the operator $a(2^{-\ell}i\nabla_x)$ is bounded on $L^p$ uniformly in $\ell$ and $p$, with norm controlled by a fixed norm of the symbol:
\begin{align}\label{eq:scaled-compact-multiplier}
    \norm{a(2^{-\ell}i\nabla_x)f}_{L^p}\leq\norm{\mathcal F^{-1}a}_{L^1}\norm{f}_{L^p},
    \qquad
    \norm{\mathcal F^{-1}a}_{L^1}\lesssim\norm{(1-\Delta_k)^{2}a}_{L^1}.
\end{align}
Indeed, $a(2^{-\ell}i\nabla_x)f=2^{3\ell}(\mathcal F^{-1}a)(2^{\ell}\cdot)*f$, so Young's inequality and a change of variables give
\begin{align*}
    \norm{a(2^{-\ell}i\nabla_x)f}_{L^p}
    \leq2^{3\ell}\norm{(\mathcal F^{-1}a)(2^{\ell}\cdot)}_{L^1}\norm{f}_{L^p}
    =\norm{\mathcal F^{-1}a}_{L^1}\norm{f}_{L^p},
\end{align*}
and the kernel is integrable: since $\jb{x}^{4}\mathcal F^{-1}a=\mathcal F^{-1}\sbr{(1-\Delta_k)^{2}a}$,
\begin{align*}
    \norm{\mathcal F^{-1}a}_{L^1}
    \leq\norm{\jb{x}^{-4}}_{L^1}\norm{\mathcal F^{-1}\sbr{(1-\Delta_k)^{2}a}}_{L^\infty}
    \lesssim\norm{(1-\Delta_k)^{2}a}_{L^1}.
\end{align*}
In particular $\chi(i\nabla_x)$ and the $P_\ell $ are bounded on $L^p$ uniformly in $\ell$, and so is $1-\chi(i\nabla_x)$, the difference of the identity and $\chi(i\nabla_x)$; and, since $\sigma\chi=(\sigma\chi(\cdot/2))\chi$, every $\sigma$ smooth on a neighbourhood of $\set{\ak\leq2}$ acts boundedly on the low block:
\begin{align}\label{eq:low-blocks}
    \norm{\sigma(i\nabla_x)\chi(i\nabla_x)f}_{L^p}\lesssim_{\sigma}\norm{\chi(i\nabla_x)f}_{L^p}.
\end{align}

Choose a smooth cutoff $\widetilde\vp$ equal to $1$ on
$\set{1/2\leq\ak\leq2}$ and supported in
$\set{\frac14\leq\ak\leq4}$.
For $\ell\geq0$, set
$\widetilde\vp_{\ell}:=\widetilde\vp(2^{-\ell}\cdot)$,
so that $\widetilde\vp_{\ell}\vp_{\ell}=\vp_{\ell}$.

\begin{lemma}[Bernstein estimates and derivative characterisation]\label{lem:LP-derivative-estimates}
Let $1\leq p\leq\infty$. For every integer $M\geq0$ and every $\ell\geq0$,
\begin{align}\label{eq:block-derivatives}
    2^{M\ell}\norm{P_\ell f}_{L^p}\sim\sum_{\abs{\gamma}=M}\norm{P_\ell \nabla_x^{\gamma}f}_{L^p}.
\end{align}
Consequently, for every integer $N\geq0$ and every $g$,
\begin{align}\label{eq:besov-derivatives}
    \norm{g}_{\Bes^{N}_{p,1}}\sim\sum_{\aal\leq N}\norm{\nabla_x^\alpha g}_{\Bes^{0}_{p,1}}.
\end{align}
\end{lemma}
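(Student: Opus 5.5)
The plan is to argue block by block: first the single-block equivalence~\eqref{eq:block-derivatives} through two multiplier identities on a fattened annulus, and then~\eqref{eq:besov-derivatives} by summing those bounds and treating the low block separately.

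\emph{Upper bound in~\eqref{eq:block-derivatives}.} Fix $\ell\geq0$ and $\abs{\gamma}=M$. Since $\widetilde\vp_\ell\vp_\ell=\vp_\ell$, we can write
\begin{align*}
P_\ell\nabla_x^\gamma f=2^{M\ell}\,a_\gamma(2^{-\ell}i\nabla_x)P_\ell f,
\qquad a_\gamma(\xi):=(-i\xi)^\gamma\widetilde\vp(\xi),
\end{align*}
up to the sign convention of the multiplier; note that $a_\gamma\in C_c^\infty$. The scale-uniform bound~\eqref{eq:scaled-compact-multiplier} then gives $\norm{P_\ell\nabla_x^\gamma f}_{L^p}\lesssim2^{M\ell}\norm{P_\ell f}_{L^p}$, uniformly in $\ell$ and $p$.

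\emph{Lower bound in~\eqref{eq:block-derivatives}.} The reverse inequality rests on the identity $\abs{\xi}^{2M}=\sum_{\abs{\gamma}=M}c_\gamma\xi^{2\gamma}$, with multinomial coefficients $c_\gamma>0$. Set
\begin{align*}
b_\gamma(\xi):=c_\gamma\,\overline{(-i\xi)^\gamma}\,\abs{\xi}^{-2M}\widetilde\vp(\xi).
\end{align*}
This is smooth and compactly supported because $\widetilde\vp$ vanishes near the origin. On $\supp\vp$ we have $\sum_\gamma b_\gamma(\xi)(-i\xi)^\gamma=1$, and hence
\begin{align*}
P_\ell f=2^{-M\ell}\sum_{\abs{\gamma}=M}b_\gamma(2^{-\ell}i\nabla_x)P_\ell\nabla_x^\gamma f .
\end{align*}
Applying~\eqref{eq:scaled-compact-multiplier} again gives $2^{M\ell}\norm{P_\ell f}_{L^p}\lesssim\sum_\gamma\norm{P_\ell\nabla_x^\gamma f}_{L^p}$. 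The only point that needs care here is that the symbols $a_\gamma,b_\gamma$ are independent of $\ell$ and $p$.

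\emph{The Besov characterisation~\eqref{eq:besov-derivatives}.} We compare the two sides block by block. For $\ell\geq0$, the estimate~\eqref{eq:block-derivatives} with $M=N$ gives
\begin{align*}
2^{N\ell}\norm{P_\ell g}_{L^p}\sim\sum_{\abs{\gamma}=N}\norm{P_\ell\nabla^\gamma g}_{L^p}.
\end{align*}
The same estimate with $M=\aal\leq N$ gives $\norm{P_\ell\nabla^\alpha g}_{L^p}\lesssim2^{\aal\ell}\norm{P_\ell g}_{L^p}\leq2^{N\ell}\norm{P_\ell g}_{L^p}$.

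On the low block, $\norm{P_{-1}\nabla^\alpha g}_{L^p}\lesssim\norm{P_{-1}g}_{L^p}$ by~\eqref{eq:low-blocks}, since the symbol $(ik)^\alpha$ is smooth. Conversely, $P_{-1}g$ already appears on the right-hand side as the $\alpha=0$ term. Summing over $\ell$ and over $\aal\leq N$ gives both directions, each up to constants depending only on $N$.

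I expect no real obstacle. The only delicate point is the lower Bernstein bound, which requires the inverse symbol $b_\gamma$ to be supported away from the origin. This is precisely why the argument is restricted to $\ell\geq0$, with the low block handled separately.
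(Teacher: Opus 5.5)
Your proof is correct and takes essentially the same route as the paper. The paper bounds the derivatives through the fixed symbol $(i\xi)^\gamma\widetilde\vp(\xi)$ and~\eqref{eq:scaled-compact-multiplier}. It obtains the reverse bound from the multinomial identity $\ak^{2M}=\sum_\gamma c_\gamma(k^\gamma)^2$, inverted with the symbol $\widetilde\vp(\xi)\xi^\gamma\abs{\xi}^{-2M}$. It then gets the Besov equivalence by summing blocks and using~\eqref{eq:low-blocks} on the low block. Your sign hedge on $(\pm i\xi)^\gamma$ is harmless, since it only contributes a factor $(-1)^M$.
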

\begin{proof}
For the bound $\gtrsim$ in~\eqref{eq:block-derivatives},
let $\abs{\gamma}=M$ and set
$a_\gamma(\xi):=(i\xi)^{\gamma}\widetilde\vp(\xi)
\in C_c^\infty(\R^3)$.
The symbol of $P_\ell\nabla_x^{\gamma}$ is
$2^{M\ell}a_\gamma(2^{-\ell}k)\vp_{\ell}(k)$.
Applying~\eqref{eq:scaled-compact-multiplier} to $a_\gamma$
gives $\norm{P_\ell\nabla_x^{\gamma}f}_{L^p}
\lesssim2^{M\ell}\norm{P_\ell f}_{L^p}$.
For the bound $\lesssim$, since
$\ak^{2M}=\sum_{\abs{\gamma}=M}c_\gamma(k^{\gamma})^2$
by the multinomial theorem,
\begin{align*}
    \vp_{\ell}(k)=\sum_{\abs{\gamma}=M}c_\gamma\frac{\widetilde\vp_{\ell}(k)k^{\gamma}}{\ak^{2M}}\,\vp_{\ell}(k)k^{\gamma},
\end{align*}
where $\vp_{\ell}(k)k^{\gamma}\widehat f$ is the Fourier transform of $i^{-M}P_\ell \nabla_x^{\gamma}f$, and $\widetilde\vp_{\ell}(k)k^{\gamma}\ak^{-2M}$ is $2^{-M\ell}$ times the dilate by $2^{\ell}$ of the fixed symbol $\widetilde\vp(\xi)\xi^{\gamma}\abs{\xi}^{-2M}\in C_c^\infty(\R^3)$; \eqref{eq:scaled-compact-multiplier} applied to that symbol gives the bound $\lesssim$.

Finally, \eqref{eq:besov-derivatives} follows from the definition of the Besov norm,~\eqref{eq:low-blocks} and~\eqref{eq:block-derivatives}.
\end{proof}
\subsection{Embedding and comparison estimates}\label{app:LP-norms}
The Besov norms compare with the Fourier, $L^\infty$ and Sobolev norms as follows.
\begin{lemma}[Continuous embeddings]\label{lem:embeddings}
Equip the space
\begin{align*}
\mathcal F^{-1}L^1_k
:=
\set{f\in\mathcal S'(\R^3):\widehat f\in L^1_k}
\end{align*}
with the norm $\norm{f}_{\mathcal F^{-1}L^1_k}:=\norm{\widehat f}_{L^1_k}$. Then the following continuous embeddings hold:
\begin{enumerate}[label=(\roman*)]
\item\label{emb:fourier-L1} $\mathcal F^{-1}L^1_k
\hookrightarrow
\Bes^0_{\infty,1}
\hookrightarrow
L^\infty_x.$
\item\label{emb:sobolev-besov} For every $1\leq p\leq\infty$, every integer $m\geq0$ and every $s<m+1$, $W^{m+1,p}\hookrightarrow\Bes^{s}_{p,1}$.
\item\label{emb:besov-sobolev} For every $1\leq p\leq\infty$ and every integer $m\geq0$, $\Bes^{m}_{p,1}\hookrightarrow W^{m,p}.$ In particular, $\Bes^0_{p,1}\hookrightarrow L^p.$
\end{enumerate}
\end{lemma}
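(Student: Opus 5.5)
We prove the three embeddings in turn; each reduces to a blockwise bound followed by dyadic summation, using the Littlewood--Paley facts of Appendix~\ref{app:LP-basic}.

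For~\ref{emb:fourier-L1}, the first embedding follows from the Fourier bound $\norm{P_\ell f}_{L^\infty}\leq(2\pi)^{-3}\int\vp_\ell(k)\abs{\widehat f(k)}\,dk$ for each $\ell\geq-1$. Since $0\leq\vp_\ell\leq1$, the cutoffs are non-negative, and the partition~\eqref{eq:LP-partition} sums to one, we get $\sum_{\ell\geq-1}\vp_\ell\equiv1$. Summing over $\ell$ then gives $\norm{f}_{\Bes^0_{\infty,1}}\lesssim\norm{\widehat f}_{L^1_k}$. Non-negativity of $\vp=\chi(\cdot/2)-\chi$ requires $\chi$ to be radially non-increasing; if this is not part of the convention, one uses $\abs{\vp_\ell}\lesssim1$ together with finite overlap instead. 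For the second embedding, write $f=\sum_{\ell\geq-1}P_\ell f$. The series converges absolutely in $L^\infty$ whenever $f\in\Bes^0_{\infty,1}$, and its limit agrees with $f$ in $\mathcal S'$ because the partition sums to one pointwise and the partial symbols $\chi(2^{-L-1}\cdot)$ converge boundedly. The triangle inequality then yields $\norm{f}_{L^\infty}\leq\norm{f}_{\Bes^0_{\infty,1}}$. The same argument in $L^p$ gives $\Bes^0_{p,1}\hookrightarrow L^p$.

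For~\ref{emb:besov-sobolev}, apply~\eqref{eq:besov-derivatives} to write $\norm{g}_{\Bes^m_{p,1}}\sim\sum_{\aal\leq m}\norm{\nabla_x^\alpha g}_{\Bes^0_{p,1}}$. Then apply the $L^p$ case of~\ref{emb:fourier-L1} to each $\nabla_x^\alpha g$.

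For~\ref{emb:sobolev-besov}, the low block is controlled by $\norm{P_{-1}f}_{L^p}\lesssim\norm{f}_{L^p}$, using~\eqref{eq:scaled-compact-multiplier}. For $\ell\geq0$,~\eqref{eq:block-derivatives} with $M=m+1$ and uniform $L^p$-boundedness of $P_\ell$ give
\begin{align*}
2^{s\ell}\norm{P_\ell f}_{L^p}\lesssim 2^{(s-m-1)\ell}\sum_{\abs{\gamma}=m+1}\norm{\nabla_x^\gamma f}_{L^p}.
\end{align*}
Summing the geometric series converges because $s<m+1$, so $\norm{f}_{\Bes^s_{p,1}}\lesssim\norm{f}_{W^{m+1,p}}$.

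The only delicate point is the identification $f=\sum_\ell P_\ell f$ in the endpoint case $p=\infty$, where Schwartz functions are not dense. We handle it through convergence in $\mathcal S'$ rather than in norm. Since the limit is already known to be bounded, the inequality passes to the limit.
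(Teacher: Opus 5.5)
Your proposal is correct and follows the paper's proof essentially step for step: the blockwise Fourier bound with finite overlap of the $\vp_\ell$ for the first embedding in (i), absolute convergence of $\sum_{\ell\geq-1}P_\ell f$ in $L^p$ for $\Bes^0_{p,1}\hookrightarrow L^p$, the Bernstein estimate~\eqref{eq:block-derivatives} with $M=m+1$ plus geometric summation for (ii), and the derivative characterisation~\eqref{eq:besov-derivatives} for (iii). Identifying the limit with $f$ through the partial sums $\chi(2^{-L-1}i\nabla_x)f\to f$ in $\mathcal S'$ simply spells out what the paper compresses into ``by the partition of unity its sum is $f$''.
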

\begin{proof}
Since $\sum_{\ell \geq-1}\norm{P_\ell f}_{L^p}=\norm{f}_{\Bes^0_{p,1}}$, the series $\sum_{\ell \geq-1}P_\ell f$ converges absolutely in $L^p$, and by~\eqref{eq:LP-partition} its sum is $f$; hence $\norm{f}_{L^p}\leq\norm{f}_{\Bes^0_{p,1}}$ for every $1\leq p\leq\infty$, which is both the second embedding of~\ref{emb:fourier-L1}, at $p=\infty$, and the final assertion of~\ref{emb:besov-sobolev}.
For the first embedding of~\ref{emb:fourier-L1}, set $\Omega_\ell:=\supp\vp_{\ell}$ for $\ell \geq-1$. Then $\norm{P_\ell f}_{L^\infty_x}\lesssim\int_{\Omega_\ell}\abs{\widehat f(k)}\,dk$, so, by the finite overlap of the sets $\Omega_\ell$,
\begin{align*}
    \norm{f}_{\Bes^0_{\infty,1}}
    \lesssim\sum_{\ell \geq-1}\int_{\Omega_\ell}\abs{\widehat f(k)}\,dk
    \lesssim\norm{\widehat f}_{L^1_k}.
\end{align*}
For~\ref{emb:sobolev-besov}, \eqref{eq:block-derivatives} with $M=m+1$ and~\eqref{eq:scaled-compact-multiplier} give $\norm{P_\ell f}_{L^p}\lesssim2^{-(m+1)\ell}\norm{f}_{W^{m+1,p}}$ for every $\ell\geq0$, so the high blocks satisfy
\begin{align*}
    \sum_{\ell\geq0}2^{s\ell}\norm{P_\ell f}_{L^p}
    \lesssim
    \sum_{\ell\geq0}2^{(s-m-1)\ell}\norm{f}_{W^{m+1,p}}
    \lesssim
    \norm{f}_{W^{m+1,p}},
\end{align*}
the series converging because $s<m+1$, with constant depending on $m+1-s$, while the low block satisfies $\norm{\chi(i\nabla_x)f}_{L^p}\lesssim\norm{f}_{L^p}$ by~\eqref{eq:scaled-compact-multiplier}.
For~\ref{emb:besov-sobolev}, the embedding $\Bes^0_{p,1}\hookrightarrow L^p$ just proved and~\eqref{eq:besov-derivatives} give, for every multi-index $\alpha$ with $\aal\leq m$,
\begin{align*}
    \norm{\nabla_x^\alpha f}_{L^p}
    \leq\norm{\nabla_x^\alpha f}_{\Bes^0_{p,1}}
    \lesssim\norm{f}_{\Bes^{m}_{p,1}},
\end{align*}
and summing over $\aal\leq m$ gives the claim.
\end{proof}

Velocity weights in the norms $\norm{\jb{v}^{q}h}_{W^{m,1}_{x,v}}$ are handled by the following elementary estimates.
\begin{lemma}[Velocity weights]\label{lem:velocity-weights}
Let $m\geq0$ and $q\geq0$ be integers. For every $h$ for which the right-hand side is finite,
\begin{align*}
    \sum_{\aal+\aab\leq m}\norm{\jb{v}^{q}\nabla_x^{\alpha}\nabla_v^{\beta}h}_{L^1_{x,v}}
    \lesssim_{m,q}\norm{\jb{v}^{q}h}_{W^{m,1}_{x,v}}.
\end{align*}
Consequently, for every integer $0\leq r\leq q$ and every multi-index $\kappa$ with $\abs{\kappa}\leq q$,
\begin{align*}
    \norm{\jb{v}^{r}h}_{W^{m,1}_{x,v}}+\norm{v^{\kappa}h}_{W^{m,1}_{x,v}}
    \lesssim_{m,q}\norm{\jb{v}^{q}h}_{W^{m,1}_{x,v}}.
\end{align*}
\end{lemma}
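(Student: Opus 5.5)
The plan is to prove the weighted estimate first and deduce the two consequences from it. The idea is to commute the weight $\jb{v}^q$ past the derivatives. For a multi-index $\beta$, write $\jb{v}^q\nabla_v^\beta h=\nabla_v^\beta(\jb{v}^qh)-[\nabla_v^\beta,\jb{v}^q]h$. By the Leibniz rule, the commutator is a finite sum of terms $\nabla_v^{\beta-\beta'}(\jb{v}^q)\,\nabla_v^{\beta'}h$ with $\beta'<\beta$. Each coefficient satisfies $\abs{\nabla_v^{\beta-\beta'}\jb{v}^q}\lesssim_{q,\beta}\jb{v}^{q}$, since every derivative of $\jb{v}^q$ is bounded by $\jb{v}^{q-\abs{\beta-\beta'}}\leq\jb{v}^q$. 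Spatial derivatives commute with the weight, so $\nabla_x^\alpha$ simply passes through.

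This gives the pointwise recursion
\begin{align*}
\norm{\jb{v}^q\nabla_x^\alpha\nabla_v^\beta h}_{L^1}\leq\norm{\nabla_x^\alpha\nabla_v^\beta(\jb{v}^qh)}_{L^1}+C\sum_{\beta'<\beta}\norm{\jb{v}^q\nabla_x^\alpha\nabla_v^{\beta'}h}_{L^1}.
\end{align*}
I would run an induction on $\aab$ at fixed $\alpha$, with the case $\beta=0$ trivial. The lower-order terms are then bounded by $\norm{\jb{v}^qh}_{W^{m,1}_{x,v}}$, because $\aal+\abs{\beta'}<m$. Summing over $\aal+\aab\leq m$ proves the first display.

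For the consequences, take $r\leq q$ and write $\jb{v}^rh=\jb{v}^{r-q}\cdot\jb{v}^qh$. Expand $\nabla_x^\alpha\nabla_v^\beta(\jb{v}^rh)$ by the Leibniz rule. Each derivative of $\jb{v}^r$ is bounded by $\jb{v}^{r}\leq\jb{v}^q$, so every term is bounded by $\jb{v}^q\abs{\nabla_x^\alpha\nabla_v^{\beta'}h}$. The first display then controls these terms. The monomial $v^\kappa$ with $\abs{\kappa}\leq q$ is handled identically, using $\abs{\nabla_v^{\beta''}v^\kappa}\lesssim\jb{v}^{\abs{\kappa}}\leq\jb{v}^q$.

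The only point requiring care is that all implicit constants depend solely on $m$ and $q$; this holds because finitely many multi-indices and finitely many derivative bounds for $\jb{v}^q$ are involved. If $h$ is not smooth, the identities hold distributionally, and the bounds extend by density under finiteness of the right-hand side. There is no genuine obstacle here: the whole proof is bookkeeping of the Leibniz rule.
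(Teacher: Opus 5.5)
Your proposal is correct and follows the paper's proof. The paper also uses the Leibniz rule with the bound $\abs{\nabla_v^{\gamma}\jb{v}^{q}}\lesssim\jb{v}^{q}$ and an induction on $\aab$ for the first estimate, then the Leibniz rule for $\jb{v}^{r}$ and $v^{\kappa}$ combined with the first estimate for the consequences.
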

\begin{proof}
Since $\abs{\nabla_v^{\gamma}\jb{v}^{q}}\lesssim\jb{v}^{q}$ for every multi-index $\gamma$, the first estimate follows from the Leibniz rule by induction on $\aab$. The derivatives of $\jb{v}^{r}$ and $v^{\kappa}$ are likewise bounded by constant multiples of $\jb{v}^{q}$, so the Leibniz rule and the first estimate give the second.
\end{proof}

\subsection{Multiplier estimates}\label{app:LP-multipliers}
The lemma below concerns symbols satisfying, for some order $\nu\in\R$, the homogeneous Mikhlin bound
\begin{align}\label{mikh}
    \abs{\nabla_k^\gamma m(k)}
    \lesssim\ak^{\nu-\abs{\gamma}},
    \qquad\abs{\gamma}\leq4,
\end{align}
on the region specified in each statement, and in general singular at the origin. The estimates depend linearly on the implicit constant in~\eqref{mikh}. Part~\ref{highfreq-mikhlin} is the multiplier theorem of~\cite[Proposition~2.78]{BahouriCheminDanchin2011} applied on the high-frequency block, where the singularity is invisible; it is proved directly below in the norms of Section~\ref{mainresultssubsection}. On the low block the symbol is instead traded against the smoothing operator $\abs{\nabla_x}^{-\delta}$ in part~\ref{lowfreq-smoothing}, while part~\ref{lowfreq-interpolation}, a variant of~\cite[Lemma~C.4]{HanKwanNguyenRousset2025LinearVM} used for the convolution term of Section~\ref{complexrootSection}, bounds the symbol applied to a derivative by a geometric mean. The lemma is stated for scalar symbols and applies entry by entry to the matrix-valued amplitudes of Section~\ref{complexrootSection}, with the symbol bounds measured in a fixed matrix norm.
\begin{lemma}[Mikhlin bounds]\label{lem:multiplier-bounds}
Let $m\in C^\infty(\R^3\setminus\set{0})$ and $1\leq p\leq\infty$. Each estimate below holds for every $f\in\mathcal S'(\R^3)$ for which the right-hand side is finite, the left-hand side being defined by the identity used in its proof.
\begin{enumerate}[label=(\roman*)]
\item\label{highfreq-mikhlin} If $m$ satisfies~\eqref{mikh} on $\set{\ak\geq\frac14}$, then, for every $s\in\R$,
\begin{align*}
    \norm{m(i\nabla_x)(1-\chi(i\nabla_x))f}_{\Bes^s_{p,1}}
    \lesssim_{s,\nu}\norm{(1-\chi(i\nabla_x))f}_{\Bes^{s+\nu}_{p,1}}.
\end{align*}
\item\label{lowfreq-smoothing} If $m$ satisfies~\eqref{mikh} with $\nu=0$ on $\set{0<\ak\leq2}$, then, for every $\delta\in(0,1]$,
\begin{align*}
    \norm{\chi(i\nabla_x)m(i\nabla_x)f}_{L^p}\lesssim_{\delta}\norm{\chi(i\nabla_x)\abs{\nabla_x}^{-\delta}f}_{L^p}.
\end{align*}
\item\label{lowfreq-interpolation} Under the same hypothesis on $m$,
\begin{align*}
    \norm{\chi(i\nabla_x)m(i\nabla_x)\nabla_xf}_{L^p}
    \lesssim\norm{f}_{L^p}^{\frac{1}{2}}\norm{\nabla_xf}_{L^p}^{\frac{1}{2}}.
\end{align*}
\end{enumerate}
\end{lemma}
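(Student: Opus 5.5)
The three estimates in Lemma~\ref{lem:multiplier-bounds} will be proved separately, each by dyadic decomposition of the symbol and the scale-uniform kernel bound~\eqref{eq:scaled-compact-multiplier}.

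\emph{Part~\ref{highfreq-mikhlin}.} By~\eqref{eq:high-blocks} it suffices to bound $2^{s\ell}\norm{P_\ell m(i\nabla_x)(1-\chi(i\nabla_x))f}_{L^p}$ for $\ell\geq0$, plus the single low term. For $\ell\geq1$ the symbol $\vp_\ell(1-\chi)=\vp_\ell$, and $P_\ell m(i\nabla_x)f=\br{\widetilde\vp_\ell m}(i\nabla_x)P_\ell f$. Write $\widetilde\vp_\ell m=2^{\nu\ell}a_\ell(2^{-\ell}\cdot)$ with $a_\ell(\xi):=2^{-\nu\ell}\widetilde\vp(\xi)m(2^\ell\xi)$. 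The Mikhlin bound~\eqref{mikh} on $\set{\ak\geq\frac14}$ gives $\abs{\nabla_\xi^\gamma a_\ell}\lesssim1$ for $\abs{\gamma}\leq4$, uniformly in $\ell$, on the fixed annulus $\set{\frac14\leq\abs\xi\leq4}$. Hence $\norm{(1-\Delta)^2a_\ell}_{L^1}\lesssim1$, and~\eqref{eq:scaled-compact-multiplier} yields $\norm{P_\ell m(i\nabla_x)f}_{L^p}\lesssim2^{\nu\ell}\norm{P_\ell f}_{L^p}$. The blocks $\ell=0$ and $\ell=-1$ only see frequencies in $\set{\frac14\leq\ak\leq2}$ after multiplication by $1-\chi$. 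There $m(1-\chi)\widetilde\vp$ is a fixed $C^4_c$ symbol, so the same argument bounds these blocks by $\norm{(1-\chi(i\nabla_x))f}_{L^p}$, which is in turn controlled by the $\Bes^{s+\nu}_{p,1}$ norm. Summing in $\ell$ gives the claim.

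\emph{Part~\ref{lowfreq-smoothing}.} Write $\chi m=\sum_{\ell\leq1}\chi m\,\vp(2^{-\ell}\cdot)$, a homogeneous shell decomposition over $\ell\in\mathbb Z$, $\ell\leq1$, which converges for $k\neq0$. Then
\[
\chi(i\nabla_x)m(i\nabla_x)f=\sum_{\ell\leq1}q_\ell(i\nabla_x)\,\chi(i\nabla_x)\abs{\nabla_x}^{-\delta}f,
\]
where $q_\ell:=\vp(2^{-\ell}\cdot)m\ak^\delta\cdot\br{\chi/\chi(\cdot/2)}$. The extra factor is handled via $\chi=\chi\,\chi(\cdot/2)$ and the low-block identity~\eqref{eq:low-blocks}; alternatively one can simply keep $\chi$ on $f$ and treat the symbol $\chi m\ak^\delta\vp(2^{-\ell}\cdot)$ directly. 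Rescaling shows that $q_\ell(2^\ell\cdot)$ has $C^4$ norms $\lesssim2^{\delta\ell}$ on the fixed annulus, using~\eqref{mikh} with $\nu=0$ and the derivatives of $\chi$, which are bounded. So~\eqref{eq:scaled-compact-multiplier} gives operator norm $\lesssim2^{\delta\ell}$, and the series $\sum_{\ell\leq1}2^{\delta\ell}$ converges since $\delta>0$. This is the same kernel bound quoted in Lemma~\ref{lem:initial-current-low-frequency}.

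\emph{Part~\ref{lowfreq-interpolation}.} This is the main obstacle, since the previous trick would need $\abs{\nabla}^{-1}$ without a gain. Split the shells at a level $L\in\mathbb Z$, $L\leq1$, to be optimised.
\begin{itemize}[label={}]
\item For shells $\ell\leq L$, write the symbol as $\br{\chi m\vp(2^{-\ell}\cdot)}(ik)$ acting on $f$. The rescaled symbol $\vp(\xi)\,i2^\ell\xi\,m(2^\ell\xi)\chi(2^\ell\xi)$ has $C^4$ norm $\lesssim2^\ell$, so the contribution is $\lesssim\sum_{\ell\leq L}2^\ell\norm f_{L^p}\lesssim2^L\norm f_{L^p}$.
\item For shells $L<\ell\leq1$, let the symbol act on $\nabla_xf$ with symbol $\chi m\vp(2^{-\ell}\cdot)$. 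This has operator norm $\lesssim1$ per shell, which gives $\abs{L}\norm{\nabla f}$, a logarithmic loss. To avoid it, insert $\ak^{-1}k/\ak\cdot k$: act with $\chi m\vp(2^{-\ell}\cdot)\ak^{-2}k\otimes$ on $\nabla_x\nabla_xf$? That changes the norm. Instead, use that $\vp(2^{-\ell}\cdot)k\ak^{-2}$ applied to $\nabla f$ recovers $f$ at cost $2^{-\ell}$, giving per shell $\min\set{2^\ell\norm f,\norm{\nabla f}}$.
\end{itemize}
Summing $\sum_\ell\min\set{2^\ell a,b}$ still loses a logarithm when the minimum switches. I would therefore follow the cited~\cite[Lemma~C.4]{HanKwanNguyenRousset2025LinearVM} and split the kernel in physical space rather than in frequency. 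Write $K:=\mathcal F^{-1}\br{\chi m\,ik}$, with $\abs K\lesssim\abs x^{-4}$ at infinity and $\abs K\lesssim\abs x^{-3}$ near the origin, obtained by summing the shell kernel bounds. Then split $K=K\mathbf 1_{\abs x\leq R}+K\mathbf 1_{\abs x>R}$. The near part is written as a derivative of an integrable kernel of mass $\lesssim R$ acting on $\nabla f$. The far part has $L^1$ mass $\lesssim R^{-1}$ and acts on $f$ directly. Optimising $R=\br{\norm f_{L^p}/\norm{\nabla f}_{L^p}}^{1/2}$ yields the geometric mean. The main difficulty is making the near part precise: one writes $K=\nabla\cdot\widetilde K$ with $\abs{\widetilde K}\lesssim\abs x^{-2}$, derived from the shell decomposition of $\chi m$ alone.
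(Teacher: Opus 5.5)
Your part (iii) has a genuine gap. You reach the right per-shell bound, $\min\set{2^\ell\norm{f}_{L^p},\norm{\nabla_xf}_{L^p}}$ for $\ell\leq2$, which is exactly the bound the paper uses. But your claim that summing it loses a logarithm is mistaken, and it leads you to discard a complete argument. The target is the geometric mean. With $a=\norm{f}_{L^p}$ and $b=\norm{\nabla_xf}_{L^p}$, one has $\min\set{2^\ell a,b}\leq2^{\ell/2}(ab)^{1/2}$, so the sum over $\ell\leq2$ is at most $(ab)^{1/2}\sum_{\ell\leq2}2^{\ell/2}\lesssim(ab)^{1/2}$; this is the paper's proof. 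The logarithm you see is a loss only relative to $b$. For $a\geq b$ one has $b\log(a/b)\lesssim(ab)^{1/2}$, so even your split at level $L$ closes, with $2^L\sim(b/a)^{1/2}$. For $a<b$, put every shell on the $f$ side.

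The physical-space argument you substitute is only sketched, and its key step fails as written. Since $\chi m\,ik$ is bounded and compactly supported, $K$ is a bounded, smooth, integrable function with $\int K\,dx=0$. The truncation $K\mathbf 1_{\ax\leq R}$ therefore has integral $-\int_{\ax>R}K\,dx$, which need not vanish. So it cannot in general be written as the divergence of an integrable field $\widetilde K$, whose divergence would have zero integral. Repairing this needs a smooth cutoff, the identity $K=\nabla_x\mathcal F^{-1}(\chi m)$, and a commutator term acting on $f$; none of these is supplied. Appealing to the cited lemma does not check that its variant applies here. As written, (iii) is unproved, although the one-line fix above completes your own frequency-side argument.

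Parts (i) and (ii) follow the paper's route. In (i) this is the rescaled block bound via \eqref{eq:scaled-compact-multiplier}; in (ii) it is moving $\ak^{\delta}$ into the symbol and summing the shell bounds $2^{\delta\ell}$ over $\ell\leq2$. Two slips need fixing.

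In (i), you bound the blocks $\ell=-1,0$ by $\norm{(1-\chi(i\nabla_x))f}_{L^p}$ and say this is controlled by $\norm{(1-\chi(i\nabla_x))f}_{\Bes^{s+\nu}_{p,1}}$. That is false when $s+\nu<0$. Either bound these blocks by $\norm{P_0f}_{L^p}+\norm{P_1f}_{L^p}$, or do as the paper does: replace $m$ by $(1-\chi(2\cdot))m$, which leaves $m(1-\chi)$ unchanged, and apply \eqref{eq:high-blocks} on both sides.

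In (ii), the factor to insert is $\chi(\cdot/2)$, which equals $1$ on $\supp\chi$. Your expression $\chi/\chi(\cdot/2)$ is garbled. Your alternative of keeping $\chi$ both in the symbol and on $f$ produces $\chi^2m$ instead of $\chi m$.
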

\begin{proof}
For~\ref{highfreq-mikhlin}, set $\widetilde m:=\br{1-\chi(2\cdot)}m$, which vanishes on $\set{\ak\leq\frac14}$ and satisfies~\eqref{mikh} on $\R^3\setminus\set{0}$. Since $\br{1-\chi(2\cdot)}\br{1-\chi}=1-\chi$, the left side of~\ref{highfreq-mikhlin} equals $\norm{(1-\chi(i\nabla_x))\widetilde m(i\nabla_x)f}_{\Bes^s_{p,1}}$, so by~\eqref{eq:high-blocks} it suffices to prove the block bound
\begin{align*}
    2^{s\ell}\norm{P_\ell \widetilde m(i\nabla_x)f}_{L^p}\lesssim2^{(s+\nu)\ell}\norm{P_\ell f}_{L^p},\qquad \ell\geq0.
\end{align*}
This is the Bernstein estimate of Lemma~\ref{lem:LP-derivative-estimates} with $(ik)^{\gamma}$ replaced by $\widetilde m$: as there, $P_\ell \widetilde m(i\nabla_x)=(\widetilde\vp_{\ell}\widetilde m)(i\nabla_x)P_\ell $, and the rescaled symbols $2^{-\nu \ell}\widetilde\vp(\xi)\widetilde m(2^{\ell}\xi)$ are supported in $\set{\frac14\leq\abs{\xi}\leq4}$ with derivatives bounded uniformly in $\ell$ by~\eqref{mikh}, so \eqref{eq:scaled-compact-multiplier} applies.
Both low-frequency bounds rest on the same shell decomposition. By the telescoping definition of $\vp$, $\sum_{\ell\leq2}\vp(2^{-\ell}k)=1$ for $0<\ak\leq2$, so any symbol $\sigma$ supported in $\set{\ak\leq2}$ splits, for $k\neq0$, as $\sigma=\sum_{\ell\leq2}\sigma_\ell$ with $\sigma_\ell:=\sigma\vp(2^{-\ell}\cdot)$. If $\sigma$ satisfies~\eqref{mikh} with some $\nu\geq0$, write $\sigma_\ell(k)=2^{\ell\nu}a_\ell(2^{-\ell}k)$: then $\abs{\p_\xi^\gamma a_\ell(\xi)}\lesssim1$ uniformly in $\ell\leq2$, with $a_\ell$ supported in $\set{1/2\leq\abs{\xi}\leq2}$, so, by~\eqref{eq:scaled-compact-multiplier},
\begin{align*}
    \norm{\mathcal F^{-1}\sigma_\ell}_{L^1}=2^{\ell\nu}\norm{\mathcal F^{-1}a_\ell}_{L^1}\lesssim2^{\ell\nu}.
\end{align*}
For~\ref{lowfreq-smoothing}, since $\chi(\cdot/2)=1$ on $\supp\chi$, $\chi m=\tau\chi\ak^{-\delta}$ with $\tau:=\chi(\cdot/2)m\ak^{\delta}$, so that $\chi(i\nabla_x)m(i\nabla_x)f=\tau(i\nabla_x)\sbr{\chi(i\nabla_x)\abs{\nabla_x}^{-\delta}f}$. The symbol $\tau$ is supported in $\set{\ak\leq2}$ and satisfies~\eqref{mikh} with $\nu=\delta$ by the Leibniz rule, so
\begin{align*}
    \norm{\mathcal F^{-1}\tau}_{L^1}\leq\sum_{\ell\leq2}\norm{\mathcal F^{-1}\tau_\ell}_{L^1}\lesssim\sum_{\ell\leq2}2^{\ell\delta}<\infty,
\end{align*}
the series converging because $\delta>0$, and Young's inequality gives the claim. For~\ref{lowfreq-interpolation}, the symbols $\sigma:=\chi m$ and $ik\sigma$ satisfy~\eqref{mikh} with $\nu=0$ and $\nu=1$ respectively, so, $ik\sigma_\ell$ being the $\ell$-th shell of $ik\sigma$, the shell bounds give $\norm{\mathcal F^{-1}\sigma_\ell}_{L^1}\lesssim1$ and $\norm{\mathcal F^{-1}(ik\sigma_\ell)}_{L^1}\lesssim2^{\ell}$. As $\nabla_x$ has symbol $ik$, the function $\sigma_\ell(i\nabla_x)\nabla_xf$ is also the Fourier multiplier with symbol $ik\sigma_\ell$ applied to $f$, so Young's inequality, applied to either expression, gives
\begin{align*}
    \norm{\sigma_\ell(i\nabla_x)\nabla_xf}_{L^p}
    \lesssim\min\br{\norm{\nabla_xf}_{L^p},2^{\ell}\norm{f}_{L^p}}
    \leq2^{\frac{\ell}{2}}\norm{f}_{L^p}^{\frac{1}{2}}\norm{\nabla_xf}_{L^p}^{\frac{1}{2}},
\end{align*}
the last step by $\min(a,b)\leq\sqrt{ab}$, and summing the resulting geometric series over $\ell\leq2$ proves the claim.
\end{proof}
\begingroup
\renewcommand{\addcontentsline}[3]{}
\section*{Acknowledgements}

The author would like to thank Cl\'ement Mouhot for many valuable discussions and for his generous guidance on the mathematical context and presentation of the results.

\section*{Declaration on the use of AI}
During the preparation of this work, the author used OpenAI's ChatGPT (5.6 Sol and later 6 Astra) to assist with bibliographical searches, reference checking, and the organisation and exposition of the manuscript. Except where attributed to other sources, all mathematical results and arguments are the author's own. The author independently verified all mathematical arguments and references and takes full responsibility for the manuscript.
\endgroup
\printbibliography

@article{Landau1946,
  author  = {Landau, Lev D.},
  title   = {On the vibrations of the electronic plasma},
  journal = {Akad. Nauk SSSR. Zhurnal Eksper. Teoret. Fiz.},
  volume  = {16},
  pages   = {574--586},
  year    = {1946},
  addendum = {Russian; English translation in J. Phys. (USSR) 10 (1946), 25--34},
}

@article{MouhotVillani2011,
  author  = {Mouhot, Cl{\'e}ment and Villani, C{\'e}dric},
  title   = {On {L}andau damping},
  journal = {Acta Math.},
  volume  = {207},
  number  = {1},
  pages   = {29--201},
  year    = {2011},
  doi     = {10.1007/s11511-011-0068-9},
}

@article{HanKwanNguyenRousset2021Screened,
  author  = {Han-Kwan, Daniel and Nguyen, Toan T. and Rousset, Fr{\'e}d{\'e}ric},
  title   = {Asymptotic stability of equilibria for screened {V}lasov--{P}oisson systems via pointwise dispersive estimates},
  journal = {Ann. PDE},
  volume  = {7},
  number  = {2},
  pages   = {Paper No. 18, 37 pp.},
  year    = {2021},
  doi     = {10.1007/s40818-021-00110-5},
}

@article{HanKwanNguyenRousset2021Linearized,
  author  = {Han-Kwan, Daniel and Nguyen, Toan T. and Rousset, Fr{\'e}d{\'e}ric},
  title   = {On the linearized {V}lasov--{P}oisson system on the whole space around stable homogeneous equilibria},
  journal = {Comm. Math. Phys.},
  volume  = {387},
  number  = {3},
  pages   = {1405--1440},
  year    = {2021},
  doi     = {10.1007/s00220-021-04228-2},
}

@article{IonescuPausaderWangWidmayer2024Poisson,
  author     = {Ionescu, Alexandru D. and Pausader, Benoit and Wang, Xuecheng and Widmayer, Klaus},
  title      = {Nonlinear {L}andau damping for the {V}lasov--{P}oisson system in $\mathbb{R}^3$: the {P}oisson equilibrium},
  journal    = {Ann. PDE},
  volume     = {10},
  number     = {1},
  pages      = {Paper No. 2, 78 pp.},
  year       = {2024},
  doi        = {10.1007/s40818-023-00161-w},
  eprint     = {2205.04540},
  eprinttype = {arxiv},
}

@article{BedrossianMasmoudiMouhot2022,
  author  = {Bedrossian, Jacob and Masmoudi, Nader and Mouhot, Cl{\'e}ment},
  title   = {Linearized wave-damping structure of {V}lasov--{P}oisson in $\mathbb R^3$},
  journal = {SIAM J. Math. Anal.},
  volume  = {54},
  number  = {4},
  pages   = {4379--4406},
  year    = {2022},
  doi     = {10.1137/20M1386141},
}

@article{BedrossianMasmoudiMouhot2018,
  author  = {Bedrossian, Jacob and Masmoudi, Nader and Mouhot, Cl{\'e}ment},
  title   = {Landau damping in finite regularity for unconfined systems with screened interactions},
  journal = {Comm. Pure Appl. Math.},
  volume  = {71},
  number  = {3},
  pages   = {537--576},
  year    = {2018},
  doi     = {10.1002/cpa.21730},
}

@article{HanKwanNguyenRousset2025LinearVM,
  author  = {Han-Kwan, Daniel and Nguyen, Toan T. and Rousset, Fr{\'e}d{\'e}ric},
  title   = {Linear {L}andau damping for the {V}lasov--{M}axwell system in $\mathbb{R}^3$},
  journal = {Ann. PDE},
  volume  = {11},
  number  = {2},
  pages      = {Paper No. 26, 91 pp.},
  year       = {2025},
  doi        = {10.1007/s40818-025-00217-z},
  eprint     = {2402.11402},
  eprinttype = {arxiv},
}

@incollection{AsanoUkai1986,
  author    = {Asano, Kiyoshi and Ukai, Seiji},
  title     = {On the {V}lasov--{P}oisson limit of the {V}lasov--{M}axwell equation},
  booktitle = {Patterns and Waves: Qualitative Analysis of Nonlinear Differential Equations},
  editor    = {Nishida, Takaaki and Mimura, Masayasu and Fujii, Hiroshi},
  series    = {Stud. Math. Appl.},
  volume    = {18},
  pages     = {369--383},
  publisher = {North-Holland, Amsterdam},
  year      = {1986},
  doi       = {10.1016/S0168-2024(08)70137-1},
}

@article{Degond1986,
  author  = {Degond, Pierre},
  title   = {Local existence of solutions of the {V}lasov--{M}axwell equations and convergence to the {V}lasov--{P}oisson equations for infinite light velocity},
  journal = {Math. Methods Appl. Sci.},
  volume  = {8},
  number  = {4},
  pages   = {533--558},
  year    = {1986},
  doi     = {10.1002/mma.1670080135},
}

@article{Schaeffer1986,
  author  = {Schaeffer, Jack},
  title   = {The classical limit of the relativistic {V}lasov--{M}axwell system},
  journal = {Comm. Math. Phys.},
  volume  = {104},
  number  = {3},
  pages   = {403--421},
  year    = {1986},
  doi     = {10.1007/BF01210948},
}

@article{BrigouleixHanKwan2022,
  author     = {Brigouleix, Nicolas and Han-Kwan, Daniel},
  title      = {The non-relativistic limit of the {V}lasov--{M}axwell system with uniform macroscopic bounds},
  journal    = {Ann. Fac. Sci. Toulouse Math. (6)},
  volume     = {31},
  number     = {2},
  pages      = {545--594},
  year       = {2022},
  doi        = {10.5802/afst.1702},
  eprint     = {2004.13323},
  eprinttype = {arxiv},
}

@article{HanKwanNguyenRousset2018LongTime,
  author  = {Han-Kwan, Daniel and Nguyen, Toan T. and Rousset, Fr{\'e}d{\'e}ric},
  title   = {Long time estimates for the {V}lasov--{M}axwell system in the non-relativistic limit},
  journal = {Comm. Math. Phys.},
  volume  = {363},
  number  = {2},
  pages   = {389--434},
  year    = {2018},
  doi     = {10.1007/s00220-018-3208-7},
}

@article{HanKwanNguyen2016Instability,
  author     = {Han-Kwan, Daniel and Nguyen, Toan T.},
  title      = {Nonlinear instability of {V}lasov--{M}axwell systems in the classical and quasineutral limits},
  journal    = {SIAM J. Math. Anal.},
  volume     = {48},
  number     = {5},
  pages      = {3444--3466},
  year       = {2016},
  doi        = {10.1137/15M1028765},
  eprint     = {1506.08537},
  eprinttype = {arxiv},
}

@book{stein-harmonic-analysis,
  author    = {Stein, Elias M.},
  title     = {Harmonic Analysis: Real-Variable Methods, Orthogonality, and Oscillatory Integrals},
  series    = {Princeton Mathematical Series},
  volume    = {43},
  publisher = {Princeton University Press, Princeton, NJ},
  year      = {1993},
  note      = {With the assistance of Timothy S. Murphy},
}

@book{BahouriCheminDanchin2011,
  author    = {Bahouri, Hajer and Chemin, Jean-Yves and Danchin, Rapha{\"e}l},
  title     = {Fourier Analysis and Nonlinear Partial Differential Equations},
  series    = {Grundlehren der mathematischen Wissenschaften},
  volume    = {343},
  publisher = {Springer, Heidelberg},
  year      = {2011},
  doi       = {10.1007/978-3-642-16830-7},
}

@book{Hormander1990,
  author    = {H{\"o}rmander, Lars},
  title     = {The Analysis of Linear Partial Differential Operators {I}: Distribution Theory and {F}ourier Analysis},
  series    = {Classics in Mathematics},
  publisher = {Springer-Verlag, Berlin},
  year      = {2003},
  note      = {Reprint of the second (1990) edition},
  doi       = {10.1007/978-3-642-61497-2},
}

@article{Bigorgne2022VM,
  author  = {Bigorgne, L{\'e}o},
  title   = {Global existence and modified scattering for the solutions to the {V}lasov--{M}axwell system with a small distribution function},
  journal = {Anal. PDE},
  volume  = {18},
  number  = {3},
  pages   = {629--714},
  year       = {2025},
  doi        = {10.2140/apde.2025.18.629},
  eprint     = {2208.08360},
  eprinttype = {arxiv},
}

@online{Bigorgne2023ScatteringMap,
  author     = {Bigorgne, L{\'e}o},
  title      = {Scattering map for the {V}lasov--{M}axwell system around source-free electromagnetic fields},
  year       = {2023},
  eprint     = {2312.12214},
  eprinttype = {arxiv},
}

@article{HongPankavich2026,
  author  = {Hong, Younghun and Pankavich, Stephen},
  title   = {The nonrelativistic limit of scattering states for the {V}lasov equation with short-range interaction potentials},
  journal = {SIAM J. Math. Anal.},
  volume  = {58},
  number  = {4},
  pages   = {3720--3750},
  year       = {2026},
  doi        = {10.1137/25M1798551},
  eprint     = {2509.08072},
  eprinttype = {arxiv},
}

@article{IonescuPausaderWangWidmayer2023Stability,
author = {Ionescu, Alexandru D. and Pausader, Benoit and Wang, Xuecheng and Widmayer, Klaus},
title = {On the stability of homogeneous equilibria in the {V}lasov--{P}oisson system on $\mathbb{R}^3$},
journal = {Class. Quantum Grav.},
volume = {40},
number = {18},
pages = {Paper No. 185007, 32 pp.},
year = {2023},
doi = {10.1088/1361-6382/acebb0},
eprint = {2305.11166},
eprinttype = {arxiv},
}

@article{GlasseySchaeffer1994,
  author  = {Glassey, Robert and Schaeffer, Jack},
  title   = {Time decay for solutions to the linearized {V}lasov equation},
  journal = {Transport Theory Statist. Phys.},
  volume  = {23},
  number  = {4},
  pages   = {411--453},
  year    = {1994},
  doi     = {10.1080/00411459408203873},
}

@article{GlasseySchaeffer1995,
  author  = {Glassey, Robert and Schaeffer, Jack},
  title   = {On time decay rates in {L}andau damping},
  journal = {Comm. Partial Differential Equations},
  volume  = {20},
  number  = {3--4},
  pages   = {647--676},
  year    = {1995},
  doi     = {10.1080/03605309508821107},
}

@article{Weibel1959,
  author  = {Weibel, Erich S.},
  title   = {Spontaneously growing transverse waves in a plasma due to an anisotropic velocity distribution},
  journal = {Phys. Rev. Lett.},
  volume  = {2},
  number  = {3},
  pages   = {83--84},
  year    = {1959},
  doi     = {10.1103/PhysRevLett.2.83},
}

@article{NguyenSurvival2026,
  author     = {Nguyen, Toan T.},
  title      = {Landau damping and survival threshold},
  journal    = {J. Funct. Anal.},
  volume     = {290},
  number     = {8},
  pages      = {Paper No. 111357},
  year       = {2026},
  doi        = {10.1016/j.jfa.2026.111357},
  eprint     = {2305.08672},
  eprinttype = {arxiv},
}

@article{Bedrossian2021Echoes,
  author  = {Bedrossian, Jacob},
  title   = {Nonlinear echoes and {L}andau damping with insufficient regularity},
  journal = {Tunis. J. Math.},
  volume  = {3},
  number  = {1},
  pages   = {121--205},
  year    = {2021},
  doi     = {10.2140/tunis.2021.3.121},
  eprint  = {1605.06841},
  eprinttype = {arxiv},
}

@article{GlasseyStrauss1987,
author = {Glassey, Robert T. and Strauss, Walter A.},
title = {Absence of shocks in an initially dilute collisionless plasma},
journal = {Comm. Math. Phys.},
volume = {113},
number = {2},
pages = {191--208},
year = {1987},
doi = {10.1007/BF01223511},
}

@article{Bigorgne2020SharpAsymptotics,
author = {Bigorgne, L{\'e}o},
title = {Sharp asymptotic behavior of solutions of the 3d {V}lasov--{M}axwell system with small data},
journal = {Comm. Math. Phys.},
volume = {376},
number = {2},
pages = {893--992},
year = {2020},
doi = {10.1007/s00220-019-03604-3},
eprint = {1812.11897},
eprinttype = {arxiv},
}

@article{PankavichBenArtzi2025,
author = {Pankavich, Stephen and Ben-Artzi, Jonathan},
title = {Modified scattering of solutions to the relativistic {V}lasov--{M}axwell system inside the light cone},
journal = {J. Lond. Math. Soc.},
volume = {112},
number = {5},
pages = {Paper No. e70346},
year = {2025},
doi = {10.1112/jlms.70346},
eprint = {2306.11725},
eprinttype = {arxiv},
}

@article{GinibreVelo1985,
author = {Ginibre, Jean and Velo, Giorgio},
title = {Time decay of finite energy solutions of the non linear {K}lein--{G}ordon and {S}chr{\"o}dinger equations},
journal = {Ann. Inst. H. Poincar{\'e} Phys. Th{\'e}or.},
volume = {43},
number = {4},
pages = {399--442},
year = {1985},
}

@article{NguyenWeiZhang2025,
  author  = {Nguyen, Quoc-Hung and Wei, Dongyi and Zhang, Zhifei},
  title   = {A new proof of nonlinear {L}andau damping for the 3{D} {V}lasov--{P}oisson system near {P}oisson equilibrium},
  journal = {Acta Math. Sci.},
  volume  = {45},
  number  = {6},
  pages   = {2669--2684},
  year       = {2025},
  doi        = {10.1007/s10473-025-0616-6},
  eprint     = {2411.18408},
  eprinttype = {arxiv},
}

@article{Wang2022VM,
author = {Wang, Xuecheng},
title = {Propagation of regularity and long time behavior of the 3{D} massive relativistic transport equation {II}: {V}lasov--{M}axwell system},
journal = {Comm. Math. Phys.},
volume = {389},
number = {2},
pages = {715--812},
year = {2022},
doi = {10.1007/s00220-021-04257-x},
eprint = {1804.06566},
eprinttype = {arxiv},
}

@article{WeiYang2021,
author = {Wei, Dongyi and Yang, Shiwu},
title = {On the 3{D} relativistic {V}lasov--{M}axwell system with large {M}axwell field},
journal = {Comm. Math. Phys.},
volume = {383},
number = {3},
pages = {2275--2307},
year = {2021},
doi = {10.1007/s00220-021-04001-5},
eprint = {2005.06130},
eprinttype = {arxiv},
}

@article{Breton2026ModifiedScattering,
author = {Breton, Emile},
title = {Modified scattering for small data solutions to the {V}lasov--{M}axwell system: a short proof},
journal = {Asymptot. Anal.},
volume = {148},
number = {2},
pages = {707--725},
year = {2026},
doi = {10.1177/09217134251378921},
eprint = {2503.01677},
eprinttype = {arxiv},
}

@article{Breton2026Completeness,
author = {Breton, Emile},
title = {A note on the non {$L^1$}-asymptotic completeness of the {V}lasov--{M}axwell system},
journal = {Kinet. Relat. Models},
volume = {25},
pages = {42--56},
year = {2027},
doi = {10.3934/krm.2026023},
eprint = {2509.04025},
eprinttype = {arxiv},
}

\end{document}